\documentclass[12pt]{amsart}

\usepackage{newtxtext}
\usepackage{newtxmath}

\usepackage{latexsym,amscd, graphicx, color, amsthm, bm, amsmath, cancel, enumitem,young,chessboard,mathtools}  
\usepackage{tikz}
\usepackage[margin=1in]{geometry}
\usepackage{hyperref}
\usepackage[all,cmtip]{xy}

\numberwithin{equation}{section}

\newtheorem{theorem}{Theorem}[section]
\newtheorem{proposition}[theorem]{Proposition}
\newtheorem{corollary}[theorem]{Corollary}
\newtheorem{lemma}[theorem]{Lemma}
\newtheorem{conjecture}[theorem]{Conjecture}
\newtheorem{observation}[theorem]{Observation}

\newtheorem{example}[theorem]{Example}

\newtheorem{definition}[theorem]{Definition}

\theoremstyle{definition}

\newcommand{\Hilb}{{\mathrm{Hilb}}}

\newcommand{\low}{{\mathrm{low}}}

\newcommand{\symm}{{\mathfrak{S}}}

\newcommand{\stair}{{\mathrm{st}}}

\newcommand{\llambda}{{{\bm \lambda}}}
\newcommand{\mmu}{{{\bm \mu}}}
\newcommand{\ggamma}{{\bm\gamma}}

\newcommand{\OP}{{\mathcal{OP}}}

\newcommand{\EEE}{{\mathcal{E}}}

\newcommand{\sign}{{\mathrm{sign}}}
\newcommand{\Stir}{{\mathrm{Stir}}}
\newcommand{\Hom}{{\mathrm{Hom}}}

\newcommand{\Ind}{{\mathrm{Ind}}}
\newcommand{\Res}{{\mathrm{Res}}}

\newcommand{\DDD}{{\mathfrak{D}}}
\newcommand{\FFF}{{\mathfrak{F}}}

\newcommand{\AAA}{{\mathcal{A}}}
\newcommand{\BBB}{{\mathcal{B}}}

\newcommand{\LLL}{{\mathcal{L}}}

\newcommand{\PPP}{{\mathfrak{P}}}

\newcommand{\FF}{{\mathbb{F}}}
\newcommand{\CC}{{\mathbb{C}}}

\newcommand{\RR}{{\mathbb{R}}}
\newcommand{\TT}{{\mathbb{T}}}
\newcommand{\ZZ}{{\mathbb{Z}}}

\newcommand{\XX}{{\mathbf{X}}}

\newcommand{\xx}{{\mathbf{x}}}
\newcommand{\yy}{{\mathbf{y}}}

\newcommand{\sss}{{\mathbf{s}}}

\newcommand{\Frob}{{\mathrm{Frob}}}
\newcommand{\grFrob}{{\mathrm{grFrob}}}
\newcommand{\Low}{{\mathrm{Low}}}

\newcommand{\Gale}{{\mathrm{Gale}}}
\newcommand{\antisymm}{\varepsilon_{\bm\lambda}}
\newcommand{\SH}{SI_{n,r}^\perp}

\begin{document}

\title[Superspace coinvariants for wreath products]
{Superspace coinvariants for wreath products}

\author[Sutanay Bhattacharya]{Sutanay Bhattacharya}
\author[Brendon Rhoades]{Brendon Rhoades}
\address{Department of Mathematics, University of California, San Diego}
\email{(subhattacharya, bprhoades)@ucsd.edu}

\begin{abstract}
    Let $\Omega$ be the superspace ring of regular differential forms on the affine space $\CC^n$. If $G \subseteq GL_n(\CC)$ is a complex reflection group, the {\em $G$-superspace coinvariant ring} is the quotient $SR_G := \Omega_n/SI_G$ where $SI_G \subseteq \Omega$ is the ideal generated by $G$-invariants with vanishing constant term. We study this ring when $G = \ZZ_r \wr \symm_n$ is the group of $r$-colored permutation matrices. We prove a conjecture of Sagan and Swanson on a monomial basis for $SR_G$ and give an Operator Theorem description of its inverse system. We also give a combinatorial model for the ungraded and exterior-graded structure of $SR_G$ as a $G$-module.
\end{abstract}

\maketitle

\section{Introduction}
\label{sec:Intro}

Let $V = \CC^n$ be a finite-dimensional complex vector space and let $S := \CC[V] = \CC[x_1,\dots,x_n]$ be the ring of regular functions on $V$, and let $G \subseteq GL(V) = GL_n(\CC)$ be a complex reflection group. The natural action of $G$ on $V$ induces an action of $G$ on $S$. The {\em $G$-coinvariant ring} is the quotient $R_G := S/I_G$ where $I_G := (S^G_+) \subseteq S$ is the ideal generated by $G$-invariants with vanishing constant term. The quotient $R_G$ is a graded $G$-module, and when $G$ is a complex reflection group Chevalley \cite{Chevalley} proved that $R_G$ is a graded refinement of the regular representation $\CC[G]$.

Let $\Omega := \CC[V] \otimes \wedge(V^*) = \CC[x_1,\dots,x_n] \otimes \wedge \{ \theta_1,\dots,\theta_n\}$ be the ring of regular differential forms on $\CC^n$, otherwise known as the {\em superspace ring}. The ring $\Omega$ is bigraded by placing the $x$-variables in degree $(1,0)$ and the $\theta$-variables in degree $(0,1)$. Drawing terminology from physics, we refer to the $x$-variables as {\em bosonic}  and the $\theta$-variables as {\em fermionic}. Any reflection group $G \subseteq GL(V)$ acts naturally on $\Omega$ in a bigraded fashion. 
The {\em $G$-superspace coinvariant ring} is $$SR_G := \Omega/SI_G$$ where $SI_G := (\Omega^G_+) \subseteq \Omega$ is the (two-sided) ideal generated by $G$-invariants with vanishing constant term. The ring $SR_G$ is a bigraded $G$-module. A $GL_n(\FF_q)$-analog of the Hilbert series result was recently obtained by Rhoades and Wilson \cite{RWq}.

When $G = \symm_n$ is the group of $n \times n$ permutation matrices, the study of $SR_n := SR_{\symm_n}$ was initiated in the late 2010s by the Fields Institute Combinatorics Group (see \cite{Zabrocki}). A family of conjectures about  $SR_n$ were resolved over the last few years. These included calculating the bigraded Hilbert series of $SR_n$ \cite{RW}, finding an explicit monomial basis of $SR_n$ \cite{ACKMR}, and determining the bigraded $\symm_n$-isomorphism type of $SR_n$ \cite{MRW}. 

The structure of $SR_G$ remains mysterious for most complex reflection groups $G$. Before this paper, the vector space dimension of $SR_{n,r}$ was unknown for $r > 2$. Swanson and Wallach \cite{SW} determined the bigraded module structure of $SR_G$ for $n \leq 2$.  In this paper we study the case where $G = \ZZ_r \wr \symm_n$ is the group of $r$-colored permutation matrices. Elements of $\ZZ_r \wr \symm_n$ are $n \times n$ monomial matrices in which the unique nonzero entry in any row or column is an $r$th root-of-unity. For $r = 1$ we recover $\symm_n$ and for $r = 2$ we obtain the hyperoctohedral group of signed permutations. As reflected by the notation, we can think of $\ZZ_r \wr \symm_n$ as a wreath product of the cyclic group of order $r$ with $\symm_n$. In reflection group notation, one usually writes $\ZZ_r \wr \symm_n = G(r,1,n)$. To avoid clutter, we write
\begin{equation}
    SR_{n,r} := SR_{\ZZ_r \wr \symm_n}
\end{equation}
for the superspace coinvariant ring attached to $\ZZ_r \wr \symm_n$. Our main results are as follows.

\begin{itemize}
    \item 
    Sagan and Swanson conjectured \cite{SS} an explicit monomial basis for $SR_{n,r}$; we prove their conjecture (Theorem~\ref{thm:ss-basis}). This gives the first explicit basis of $SR_{n,r}$ for $r > 2$, and the first monomial basis of $SR_{n,r}$ in the hyperoctohedral case $r =2$. 
    
    \item As a corollary (Corollary~\ref{cor:hilbert}), the bigraded Hilbert series $\Hilb(SR_{n,r};q,z)$ is governed by the Sagan--Swanson $q$-Stirling numbers associated to $\ZZ_r \wr \symm_n$.
    
    \item We give a combinatorial model for the ungraded (and exterior-graded) $\ZZ_r \wr \symm_n$-module structure of $SR_{n,r}$ using new objects called {\em $r$-ified ordered set partitions} (Theorem~\ref{thm: module iso}). This combinatorial model admits a natural refinement to the fermionic pieces of $SR_{n,r}$   (Theorem~\ref{thm:fermionic-piece-iso}). In the hyperoctohedral case $r=2$, we identify $SR_{n,2}$ with the $\det$-twisted action of the type B Weyl group on the type B Coxeter complex (Corollary~\ref{cor:B-identification}).
\end{itemize}

When $r=2$, the first author \cite{Bhattacharya} used the theory of hyperplane arrangements to describe a non-monomial vector space basis of $SR_{n,2}$.  The monomial basis of $SR_{n,r}$ from the first bullet point will be used in a crucial way to compute the module structure of $SR_{n,r}$ in the third bullet point. Along the way to proving our results, we establish an `Operator Theorem' (Theorem~\ref{thm:operator}) which characterizes the Macaulay-inverse system attached to $SR_{n,r}$ in terms of certain `higher Euler operators' on $\Omega$. 

Let $G$ be a Weyl group acting on its reflection representation $V \cong \CC^n$. We write $\OP_G$ for the Coxeter complex whose faces are formed by the reflecting hyperplanes of $G$ and $\OP_{G,k}$ for its set of $k$-dimensional faces. In type $A_{n-1}$ we may identify $\OP_{G,k}$ with the family of ordered set partitions of $[n]$ with $k$ blocks. The set $\OP_G$ carries a natural permutation action of $G$ and its subsets $\OP_{G,k}$ are stable under this action. Murai--Rhoades--Wilson conjectured \cite[Conj. 7.1]{MRW} that for all $d \geq 0$ there exists a $G$-equivariant surjection
\begin{equation}
\label{eq:surjection-conjecture}
\varphi:
    \text{(fermionic degree $n-d$ part of } SR_G) \twoheadrightarrow \CC[\OP_G] \otimes \det
\end{equation}
where $\det$ is the linear determinant character of $G$.
It is proven in \cite{MRW} that such a $\varphi$ exists (and is an isomorphism) in type A. Our results imply that $\varphi$ exists (and is an isomorphism) in type BC,  giving more evidence for \cite[Conj. 7.1]{MRW}. It was observed in \cite{MRW} that $\varphi$ exists (but just barely fails to be an isomorphism) in type F. It is conjectured that $\varphi$ is an isomorphism in type D.

The conjectural surjection~\eqref{eq:surjection-conjecture} is reminiscent of diagonal coinvariant theory. Let $G$ be an irreducible Weyl group with reflection representation $V$ and let $V^*$ be the dual of $V$.  Then $G$ acts on the space $\CC[V \oplus V^*]$ in a bigraded fashion. The {\em diagonal coinvariant ring} is the quotient
\begin{equation}
    DR_G := (\CC[V \oplus V^*])/ ( \CC[V \oplus V^*]^G_+)
\end{equation}
of this space by the ideal generated by $G$-invariants with vanishing constant term.  Let $Q$ be the root lattice associated to $G$ and let $h$ be the Coxeter number. The {\em finite torus} associated to $G$ is the quotient group $Q/(h+1)Q$. This set carries an action of $G$. Gordon used rational Cherednik algebras to prove \cite{Gordon} that there exists a $G$-equivariant  surjection
\begin{equation}
\label{eq:gordon-surjection}
    \psi: DR_G \twoheadrightarrow \CC[Q/(h+1)Q] \otimes \det
\end{equation}
for any Weyl group $G$. Thanks to Haiman's work on Hilbert schemes \cite{Haiman}, the surjection \eqref{eq:gordon-surjection} is an isomorphism in type A.  However, \eqref{eq:gordon-surjection} already fails to be an isomorphism in type BC. In this sense, the Coxeter complex $\OP_G$ gives a slightly `more faithful' combinatorial approximation to the superspace coinvariant ring $SR_G$ than the finite torus $Q/(h+1)$ to the diagonal coinvariant ring $DR_G$. Working over the ring $\wedge (V \oplus V^*)$ with two copies of $n$ fermionic variables, Kim--Rhoades \cite{KR} gave a completely uniform description of the bigraded $G$-module structure of
\begin{equation}
FDR_G := \wedge (V \oplus V^*) / ( \wedge (V \oplus V^*)^G_+)
\end{equation}
for any complex reflection group $G$. This shows a general trend of fermionic variables giving rise to more uniformly combinatorial (and more easily analyzed) quotient rings than bosonic variables. Lentfer \cite{Lentfer} proved a combinatorial supersymmetry result conjectured in type A by F. Bergeron \cite{Bergeron} which gives a duality between the setting of arbitrarily many sets of bosonic variables and that of arbitrarily many sets of fermionic variables.

The rest of our paper is organized as follows. In {\bf Section~\ref{sec:Background}} we give background on (super)commutative algebra and the representation theory of $\ZZ_r \wr \symm_n$. In {\bf Section~\ref{sec:Basis}} we prove the Sagan--Swanson conjecture to obtain a monomial basis for $SR_{n,r}$ and prove our Operator Theorem. In {\bf Section~\ref{sec:Module}} we use this basis to determine the ungraded and fermionic-graded module structure of $SR_{n,r}$. We close in {\bf Section~\ref{sec:Conclusion}} with some open problems.

\section{Background}
\label{sec:Background}

\subsection{Combinatorics} 
Given $n \geq 0$ we write $[n] := \{1,\dots,n\}$. For $0 \leq r \leq n$, the {\em Gale order} $\leq_\Gale$ is the partial order on the family of subsets of $[n]$ of size $r$ defined by
\[
\{ i_1 < \cdots < i_r \} \leq_\Gale \{j_1 < \cdots < j_r \}
\]
if $i_s \leq j_s$ for all $s = 1, \dots, r$. 

A {\em partition} of $n \geq 0$ is a weakly decreasing sequence $\lambda = (\lambda_1 \geq \cdots \geq \lambda_k)$ of positive integers with $\lambda_1 + \cdots + \lambda_k = n$. We write $\lambda \vdash n$ to mean that $\lambda$ is a partition of $n$ and $|\lambda| = n$ for the sum of the parts of $\lambda$. More generally, given $n,r \geq 0$, an {\em $r$-partition of $n$} is a sequence $\llambda = (\lambda^{(1)},\dots,\lambda^{(r)})$ of $r$ partitions such that $|\lambda^{(1)}| + \cdots + |\lambda^{(r)}| = n$. We write $\llambda \vdash_r n$ to mean that $\llambda$ is an $r$-partition of $n$ and write $|\llambda| := |\lambda^{(1)}| + \cdots + |\lambda^{(r)}| = n.$

Let $\xx = (x_1,x_2,\dots)$ be an infinite list of variables and let $\Lambda = \bigoplus_{n \geq 0} \Lambda_n$ be the graded ring of symmetric functions over the ground field $\CC(q,z)$. We refer the reader to Macdonald's book \cite{Macdonald} as an excellent  symmetric function theory reference. Given $d > 0$, we have the {\em elementary symmetric function} $e_d(\xx)$, the {\em complete homogeneous symmetric function} $h_d(\xx)$, and the {\em power sum symmetric function} $p_d(\xx)$ of degree $d$ given by
\begin{equation}
    h_d(\xx) := \sum_{1 \leq i_1 \leq \cdots \leq i_d} x_{i_1} \cdots x_{i_d}, \quad \quad
    e_d(\xx) := \sum_{1 \leq i_1 < \cdots < i_d} x_{i_1} \cdots x_{i_d}, \quad \quad
    p_d(\xx) := \sum_{i \geq 1} x_i^d.
\end{equation}
We will sometimes consider these polynomials over finite variable sets.

Bases of $\Lambda_n$ are indexed by partitions $\lambda \vdash n$. In this paper we will use the {\em elementary basis} $\{e_\lambda(\xx)\}_{\lambda \vdash n}$, the {\em complete homogeneous basis} $\{ h_\lambda(\xx) \}_{\lambda \vdash n}$, the {\em power sum basis} $\{ p_\lambda(\xx) \}_{\lambda \vdash n}$,  and the {\em Schur basis} $\{ s_\lambda(\xx) \}_{\lambda \vdash n}.$ The first three of these bases are given by
\begin{equation}
    e_\lambda(\xx) := \prod_{i=1}^k e_{\lambda_i}(\xx), \quad \quad 
    h_\lambda(\xx) := \prod_{i=1}^k h_{\lambda_i}(\xx), \quad \quad
    p_\lambda(\xx) := \prod_{i=1}^k p_{\lambda_i}(\xx)
\end{equation}
for any partition $\lambda = (\lambda_1, \dots, \lambda_k) \vdash n$. Assuming $\lambda = (\lambda_1 \geq \cdots \geq \lambda_N \geq 0)$ has $\leq N$ parts, the {\em Schur polynomial} in the finite variable set $\{x_1,\dots,x_N\}$ is given by the {\em bialternant formula}
\begin{equation}
    s_\lambda(x_1,\dots,x_N) := \frac{\det(x_j^{\lambda_i + i - 1})_{1 \leq i, j \leq N}}{\det(x_j^{i-1})_{1 \leq i, j \leq N}}.
\end{equation}
This ratio is a symmetric polynomial in $\{x_1,\dots,x_N\}$ whose $N \to \infty$ limit is a well-defined formal power series in $\xx = (x_1,x_2,\dots)$; this is the {\em Schur function}
\begin{equation}
    s_\lambda(\xx) := \lim_{N \to \infty} s_\lambda(x_1,\dots,x_N).
\end{equation} We will also use the combinatorial definition of $s_\lambda(\xx)$ given by
\begin{equation}
    s_\lambda(\xx) := \sum_T \xx^T
\end{equation}
where $T$ ranges over {\em semistandard Young tableaux} of shape $\lambda$ (see e.g, \cite{Macdonald}) and $\xx^T := x_1^{c_1(T)} x_2^{c_2(T)} \cdots $ where $c_i(T)$ is the number of $i$'s in $T$.
We write $\langle -, - \rangle$ for the {\em Hall inner product} on $\Lambda$ with respect to which the Schur basis is orthonormal.

Let $r \geq 1$ and consider $r$ infinite alphabets of variables \[\xx^{(1)} = (x_1^{(1)}, x_2^{(1)}, \dots), \quad  \dots \quad , \,  \xx^{(r)} = (x_r^{(1)}, x_r^{(2)}, \dots ).\] 
We write $\Lambda^{(r)}$ for the ring of formal power series in these variables of bounded degree which are symmetric in each of the $r$ variable sets separately. From an algebraic point of view, we have the identification \[\Lambda^{(r)} = \overbrace{\Lambda \otimes \cdots \otimes \Lambda}^r\] of $\Lambda^{(r)}$ with the $r$-fold tensor product of $\Lambda$ with itself. We have the direct sum decomposition
\begin{equation}
    \Lambda^{(r)} = \bigoplus_{n_1,\dots,n_r \geq 0} (\Lambda_{n_1} \otimes \cdots \otimes \Lambda_{n_r}).
\end{equation}
The ring $\Lambda^{(r)}$ is graded by placing the direct summand $\Lambda_{n_1} \otimes \cdots \otimes \Lambda_{n_r}$ in degree $n_1 + \cdots + n_r$. Given any partition-indexed basis $\{ f_\lambda(\xx) \}$ of $\Lambda$ where $f_\lambda$ is homogeneous of degree $|\lambda|$, we have an induced $r$-partition indexed basis $\{ f_\llambda \}$ of $\Lambda^{(r)}$ given by
\begin{equation}
    f_\llambda := f_{\lambda^{(1)}}(\xx^{(1)}) \otimes \cdots \otimes f_{\lambda^{(r)}}(\xx^{(r)}) \quad \text{ for } \llambda = (\lambda^{(1)}, \dots, \lambda^{(r)}) \text{ an $r$-partition}.
\end{equation}
Observe that $\deg(f_\llambda) = |\llambda|.$ The Hall inner product extends to $\Lambda^{(r)}$ by declaring the Schur basis $\{ s_\llambda \}$ to be orthonormal. i.e.
\begin{equation}
    \langle s_\llambda, s_\mmu \rangle := \prod_{i=1}^r \langle s_{\lambda^{(i)}}, s_{\mu^{(i)}} \rangle  = \begin{cases}
        1 & \text{if $\llambda = \mmu$,} \\
        0 & \text{otherwise.}
    \end{cases}
\end{equation}

\subsection{Commutative algebra}\label{sectionCommAlg} Let $V = \CC^n$ be an $n$-dimensional complex vector space. As in the introduction, we write $S = \CC[V] \cong \CC[x_1,\dots,x_n]$ for the ring of polynomial functions on $V$ with its standard grading $\deg(x_i) = 1$. Given any homogeneous ideal $I \subseteq S$, the quotient $S/I = \bigoplus_{i \geq 0} (S/I)_i$ is a graded $\CC$-vector space. The {\em Hilbert series} of $S/I$ is given by
\begin{equation}
    \Hilb(S/I;q) := \sum_{i \geq 0} \dim_\CC (S/I)_i \cdot q^i
\end{equation}
where $q$ is a formal variable tracking bosonic (commutative) degree. More generally, if $V = \bigoplus_{i \geq 0} V_i$ is a graded $\CC$-vector space with each piece $V_i$ finite-dimensional we write
\begin{equation}
    \Hilb(V;q) = \sum_{i \geq 0} \dim_\CC V_i \cdot q^i
\end{equation}
for its graded dimension.

A sequence $f_1, \dots, f_t \in S$ of homogeneous polynomials of positive degree is a {\em regular sequence} if for all $1 \leq i \leq t$ the map
\[
f_i \times (-): S/(f_1,\dots,f_{i-1}) \longrightarrow S/(f_1,\dots, f_{i-1})
\]
induced by multiplication by $f_i$ is injective. If $f_1, \dots, f_n$ is a regular sequence of the maximum possible length $n$ we have
\begin{equation}
    \Hilb(S/I;q) = [\deg(f_1)]_q \cdots [\deg(f_n)]_q
\end{equation}
where
\begin{equation}
    [m]_q := 1 + q + q^2 + \cdots + q^{m-1} \quad (m \geq 0).
\end{equation}
A homogeneous ideal $I \subseteq S$ is a {\em complete intersection} if it can be generated by a regular sequence. A useful criterion for proving that a sequence is regular is as follows.

\begin{lemma}
    \label{lem:locus-criterion}
    Let $f_1,\dots,f_n \in S = \CC[x_1,\dots,x_n]$ be a list of $n$ homogeneous polynomials of positive degree. The sequence $f_1,\dots,f_n$ is a regular sequence if and only if the system of polynomial equations $f_1 = \cdots = f_n = 0$ has only the trivial solution $0 \in \CC^n$.
\end{lemma}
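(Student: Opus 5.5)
This is the standard equivalence between regular sequences of length $n$ in $S$ and homogeneous systems of parameters. We prove it through Krull dimension and the Cohen--Macaulay property of $S$. Set $I = (f_1,\dots,f_n)$. By the homogeneous Nullstellensatz, the zero locus $Z(I) \subseteq \CC^n$ is $\{0\}$ if and only if $\sqrt{I} = (x_1,\dots,x_n)$. This holds if and only if $S/I$ is finite-dimensional over $\CC$, i.e.\ $\dim S/I = 0$ in the sense of Krull dimension. The locus $Z(I)$ is a cone because the $f_i$ are homogeneous, so it is either $\{0\}$ or contains a line through the origin.

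For the forward direction, suppose $f_1,\dots,f_n$ is regular. We show by induction that $\dim S/(f_1,\dots,f_i) = n-i$. At each step $f_i$ is a nonzerodivisor on $S/(f_1,\dots,f_{i-1})$, so it lies in no minimal prime (indeed no associated prime) of that ring. Hence cutting by $f_i$ drops the Krull dimension by exactly one; in the graded setting Krull's principal ideal theorem gives a drop of at most one, and avoidance of minimal primes gives a drop of at least one. At $i=n$ we get $\dim S/I = 0$, so $Z(I)=\{0\}$. Alternatively, the Hilbert series identity $\Hilb(S/I;q)=\prod[\deg f_i]_q$ stated just above shows directly that $S/I$ is finite-dimensional.

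For the converse, suppose $Z(I)=\{0\}$. Then $\dim S/I = 0$, and since each quotient by one element drops dimension by at most one, $\dim S/(f_1,\dots,f_i) = n-i$ exactly for every $i$. So $f_1,\dots,f_n$ is a homogeneous system of parameters for the graded ring $S$. Localized at the irrelevant ideal, $S$ is a regular, hence Cohen--Macaulay, local ring. In a Cohen--Macaulay local ring every system of parameters is a regular sequence, and the graded version transfers back to $S$ because homogeneous zerodivisors are detected after localization at the irrelevant ideal. This gives regularity.

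The main obstacle is this last step, the Cohen--Macaulay input. The cleanest route I would use is the finiteness argument. Because $Z(I)=\{0\}$, $S$ is a finitely generated graded module over $A=\CC[f_1,\dots,f_n]$, and the $f_i$ are algebraically independent since $\dim A = \dim S = n$. By Hilbert's syzygy theorem, or the graded Auslander--Buchsbaum formula, $S$ has depth $n$ and is therefore a free $A$-module. A free module over a polynomial ring has the variables $f_1,\dots,f_n$ as a regular sequence. Since the paper treats this as a standard fact, I would cite it (e.g.\ from Bruns--Herzog or Stanley's survey on invariants) rather than reprove it in full.
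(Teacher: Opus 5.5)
The paper gives no proof of this lemma. It appears in the background section as a standard fact, and is later used to certify that $p_{J,r,1},\dots,p_{J,r,n}$ and $\varphi_r(g_1),\dots,\varphi_r(g_n)$ are regular sequences. So there is no argument of the authors' to compare against, and your proposal has to stand on its own.

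It does. Your proof is correct, and it is the standard one. You identify both conditions with ``$f_1,\dots,f_n$ is a homogeneous system of parameters,'' which is equivalent to $\dim_\CC S/(f_1,\dots,f_n)<\infty$ via the Nullstellensatz. You then use that $S_{\mathfrak m}$, with $\mathfrak m=(x_1,\dots,x_n)$, is Cohen--Macaulay. The dimension count is sound in both directions: avoiding every minimal prime forces a drop of at least one, and the graded principal ideal theorem allows a drop of at most one, so $\dim S/(f_1,\dots,f_i)=n-i$. The transfer from $S_{\mathfrak m}$ back to $S$ is also fine, since associated primes of graded modules are homogeneous and lie in $\mathfrak m$.

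One attribution in your alternative route should be corrected. Hilbert's syzygy theorem and Auslander--Buchsbaum do not show that $S$ has depth $n$ as an $A$-module. They only convert that depth statement into freeness, via $\mathrm{pd}_A S = n-\operatorname{depth}_A S = 0$. The depth itself comes from the fact that grade is unchanged under two passages: from $A_+$ to $A_+S=(f_1,\dots,f_n)S$, and then to its radical $\mathfrak m$. Hence $\operatorname{depth}_A S=\operatorname{grade}(\mathfrak m,S)=n$, because $x_1,\dots,x_n$ is $S$-regular. This is really the same Cohen--Macaulay input as your first route, so the two routes are not independent; citing either is fine.
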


Let $I \subseteq S$ be an ideal and let $f \in S$. The {\em colon ideal} is 
\begin{equation}
    (I:f) := \{g \in S \,:\, f \cdot g \in I \}.
\end{equation}
It is not hard to see that $(I:f)$ is an ideal in $S$ which contains $I$.
We will need a criterion which first appeared in the context of Hessenberg varieties \cite{AHMMS} for showing that colon ideals are generated by regular sequences.

\begin{lemma}
    \label{lem:regular-criterion}
    {\em (Abe--Horiguchi--Maeno--Murai--Sato \cite[Lem. 2.4]{AHMMS})} Let $I \subseteq S$ be a graded ideal such that $S/I$ is a finite-dimensional $\CC$-vector space. Let $d \geq 0$ be maximal such that $(S/I)_d \neq 0$. Assume further that $I$ is a complete intersection. Let $f \in S$ be a homogeneous element and assume that $f \notin I$.

    Suppose $f_1, f_2, \dots, f_n \in S$ is a regular sequence of homogeneous polynomials contained in $(I:f)$ such that  
    $\deg(f_1) + \deg(f_2) +  \cdots + \deg(f_n) + \deg(f) = d$.
    We have the equality of ideals
    \begin{equation}
        (I:f) = (f_1, f_2, \dots, f_n).
    \end{equation}
\end{lemma}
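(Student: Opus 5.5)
The idea is to squeeze $(I:f)$ between $J := (f_1,\dots,f_n)$ and itself using Gorenstein duality. Since $I$ is a complete intersection with $S/I$ finite-dimensional, $A := S/I$ is an Artinian graded Gorenstein algebra with socle degree $d$. Likewise, $J$ is generated by a regular sequence of length $n$, so $B := S/J$ is Artinian Gorenstein. By the Hilbert series formula for regular sequences, its socle degree is $e := \sum_i (\deg f_i - 1) = \sum_i \deg f_i - n$. The containment $J \subseteq (I:f)$ is given, so we have a surjection $S/J \twoheadrightarrow S/(I:f)$. Our goal is to show this surjection is an isomorphism. (I note that the degree hypothesis should be read with $\deg(f_i)-1$ in place of $\deg(f_i)$; that is, $\sum_i(\deg f_i - 1) + \deg f = d$. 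This is the version I will use, since it is what the socle computation requires.)

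The key steps, in order, are these.

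First, the quotient $S/(I:f)$ is isomorphic, via multiplication by $f$, to the principal ideal $fA \subseteq A$, shifted by $\deg f$. A quotient of a Gorenstein Artinian ring by an annihilator ideal is again Gorenstein: $S/(I:f) = A/\mathrm{Ann}_A(f)$. Its socle is one-dimensional, in degree $d - \deg f$. This holds because the pairing $A_k \times A_{d-k} \to A_d$ is perfect and $f \neq 0$ in $A$, so $f$ pairs nontrivially with something of degree $d-\deg f$. In particular, $S/(I:f)$ is nonzero in degree $d - \deg f = e$.

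Second, we have a surjection of Gorenstein Artinian algebras $\pi: B \twoheadrightarrow C := S/(I:f)$, both of socle degree $e$. I claim $\pi$ is injective. Suppose $\ker \pi \neq 0$. Every nonzero ideal of an Artinian local (graded) ring contains a nonzero socle element. The socle of $B$ is the one-dimensional space $B_e$, so $\ker\pi \supseteq B_e$. But then $C_e = \pi(B_e) = 0$, contradicting the first step. Hence $\ker \pi = 0$, and $(I:f) = J$.

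The main obstacle is the first step: establishing Gorenstein duality for $A$. This amounts to the fact that the socle of a complete intersection is one-dimensional in top degree, and that multiplication $A_k \times A_{d-k} \to A_d \cong \CC$ is perfect. I would cite this as standard: complete intersections are Gorenstein, and Macaulay duality then applies. Given this, the nonvanishing of $C$ in degree $e$ follows by pairing $f$ with a complementary element. The rest is the formal socle argument above.
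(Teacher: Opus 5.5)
The paper gives no proof of this lemma; it quotes it from Abe--Horiguchi--Maeno--Murai--Sato. Your argument is correct and is the standard Gorenstein socle-degree argument. $S/(f_1,\dots,f_n)$ is a complete intersection with one-dimensional socle in degree $\sum_i(\deg f_i-1)$. On the other side, $S/(I:f)\cong fA$ (up to shift) is nonzero in degree $d-\deg f$ by the perfect pairing on $A=S/I$. So a nonzero kernel of the surjection would have to contain the socle, which would kill that degree. Only the nonvanishing $(S/(I:f))_{d-\deg f}\neq 0$ is actually used; you do not need $S/(I:f)$ itself to be Gorenstein.

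Your correction of the degree hypothesis is also right, and in fact necessary. The same surjection gives $d-\deg f\le \sum_i \deg f_i - n$, so the condition as printed can never hold when $n\ge 1$. The corrected condition $\sum_i(\deg f_i-1)+\deg f=d$ is the one that actually holds where the lemma is applied. In Lemma~\ref{lem:colon-ideal-lemma} one has $\sum_{i=1}^n(ri-1)=\deg f_{J,r}+\sum_i \stair(J,r)_i$, and the extra ``$+n$'' in the paper's degree count there is the matching slip.
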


For $1 \leq i \leq n$, we have the operator $\partial_i: S \to S$ on $S = \CC[x_1,\dots,x_n]$ given by partial differentiation with respect to $x_i$.  Since mixed partials commute, given $f \in S$ we have a well-defined operator $\partial f: S \to S$ given by
\begin{equation}
    \partial f := f(\partial_1, \dots, \partial_n) \quad \text{where} \quad f= f(x_1,\dots,x_n).
\end{equation}
This gives rise to a module structure $\odot: S \times S \to S$ defined by
\begin{equation}
    f \odot g := \partial f(g).
\end{equation}
If $I \subseteq S$ is a graded ideal, the {\em inverse system} (or {\em harmonic space}) is given by
\begin{equation}
    I^\perp := \{ g \in S \,:\, f \odot g = 0 \text{ for all } f \in I \}.
\end{equation}
It can be shown that $I^\perp$ is a graded subspace of $S$ with the same Hilbert series as $S/I$.

Also as in the introduction, we write $\Omega = \CC[V] \otimes_\CC \wedge(V^*)$ for the rank $n$ superspace ring. Fixing the standard basis for $V = \CC^n$ induces the identification $\Omega = \CC[x_1,\dots,x_n] \otimes_\CC \wedge \{ \theta_1,\dots,\theta_n\}$. We may regard $\Omega$ as the unital associative $\CC$-algebra with generators $x_1,\dots,x_n, \theta_1,\dots ,\theta_n$ subject to the relations
\begin{equation}
    x_i x_j = x_j x_i \quad \quad x_i \theta_j = \theta_j x_i \quad \quad \theta_i \theta_j = - \theta_j \theta_i
\end{equation}
for all $1 \leq i,j \leq n$.
Since we are not in characteristic 2, we have $\theta_i^2 = 0$ for all $i$. 

The $\CC$-algebra $\Omega = \bigoplus_{i,j \geq 0} \Omega_{i,j}$ is bigraded with $\Omega_{i,j} = \CC[V]_i \otimes_\CC \wedge^j(V^*).$ If $I \subseteq \Omega$ is a bigraded left (or right) ideal, then $I$ is automatically a two-sided ideal and the quotient $\Omega/I = \bigoplus_{i,j \geq 0}(\Omega/I)_{i,j}$ is a bigraded vector space. We write
\begin{equation}
    \Hilb(\Omega/I;q,z) := \sum_{i,j \geq 0} \dim_\CC (\Omega/I)_{i,j} \cdot q^i z^j
\end{equation}
for its bigraded dimension, where $z$ is a formal variable tracking fermionic (exterior) degree. Similarly, if $V \subseteq \Omega$ is a bigraded subspace with $V = \bigoplus_{i,j \geq 0} V_{i,j}$, we write
\begin{equation}
    \Hilb(V;q,z) := \sum_{i,j \geq 0} \dim_\CC(V_{i,j}) \cdot q^i z^j
\end{equation}
for its bigraded dimension.

For $1 \leq i \leq n$, the {\em fermionic partial derivative} (or {\em contraction operator}) $\partial_i^\theta: \Omega \to \Omega$ is the $S$-module homomorphism characterized by
\begin{equation}
    \partial_i^\theta: \theta_{j_1} \cdots \theta_{j_k} \mapsto \begin{cases}
        (-1)^{s-1} \theta_{j_1} \cdots \widehat{\theta_{j_s}} \cdots \theta_{j_k} & \text{if $j_s = i$,} \\
        0 & \text{if $i \neq j_1,\dots,j_k$}
    \end{cases}
\end{equation}
for any distinct indices $1 \leq j_1,\dots,j_k \leq n$. The usual partial derivative $\partial_i$ acts on $\Omega$ via the first tensor factor $\Omega = \CC[x_1,\dots,x_n] \otimes \wedge \{ \theta_1,\dots,\theta_n\}$.  It can be shown that the $\partial_i$ and the $\partial_i^\theta$ satisfy the same relations as $\Omega$, i.e.
\[
\partial_i \partial_j = \partial_j \partial_i, \quad 
\partial_i \partial^\theta_j = \partial^\theta_j \partial_i, \quad\partial^\theta_i \partial^\theta_j = - \partial^\theta_j \partial^\theta_i
\]
for all $1 \leq i , j \leq n$. Consequently, if $f = f(x_1,\dots,x_n, \theta_1,\dots,\theta_n) \in \Omega$, we have a well-defined operator $\partial f : \Omega \to \Omega$ given by 
\begin{equation}
    \partial f := f(\partial_1, \dots, \partial_n, \partial^\theta_1, \dots, \partial^\theta_n).
\end{equation}
As in the commutative case, this gives a module structure $\odot: \Omega \times \Omega \to \Omega$ defined by 
\begin{equation}
    f \odot g := \partial f(g).
\end{equation}
If $I \subseteq \Omega$ is a bigraded ideal, the {\em inverse system} is again defined as
\begin{equation}
    I^\perp := \{ g \in \Omega \,:\, f \odot g = 0 \text{ for all } f \in I \}.
\end{equation}
One again has that $I^\perp$ is a bigraded subspace of $\Omega$ with the same bigraded Hilbert series as $\Omega/I$.

\subsection{Representation theory}\label{sectionRepTheory} As in the introduction, we write $\ZZ_r \wr \symm_n$ for the group of $n \times n$ monomial matrices whose nonzero entries are $r^{th}$ roots-of-unity. For example, if $r = 5$ and $n = 4$ we have
\[
\begin{pmatrix}
    0 & \zeta^2 & 0 & 0 \\
    0 & 0 & 0 & \zeta^4 \\
    1 & 0 & 0 & 0 \\
    0 & 0 & \zeta^4 & 0
\end{pmatrix} \in \ZZ_5 \wr \symm_4 \quad \text{where } \zeta = \exp(2 \pi i /5).
\]
There is a natural epimorphism $\ZZ_r \wr \symm_n \twoheadrightarrow \symm_n$ given by replacing the nonzero entries of a matrix $g \in \ZZ_r \wr \symm_n$ with 1s; the image of $g$ under the map is the {\em underlying permutation matrix} of $g$. 
We review the representation theory of $\ZZ_r \wr \symm_n$, starting with the case of $\symm_n$.

Irreducible representations of $\symm_n$ are in one-to-one  correspondence with partitions of $n$. Given $\lambda \vdash n$, we write $V^\lambda$ for the corresponding irreducible $\symm_n$-module. If $V$ is any finite-dimensional $\symm_n$-module, there are unique multiplicities $c_\lambda \geq 0$ so that 
\begin{equation}V \cong \bigoplus_{\lambda \vdash n} c_\lambda V^\lambda.\end{equation}  
The {\em Frobenius image} of $V$ is the symmetric function 
\begin{equation}
\Frob(V) := \sum_{\lambda \vdash n} c_\lambda \cdot s_\lambda
\end{equation} given by replacing each irreducible $V^\lambda$ with the corresponding Schur function $s_\lambda$. More generally, if $V = \bigoplus_{i \geq 0} V_i$ or $V = \bigoplus_{i,j \geq 0} V_{i,j}$ is a graded or bigraded $\symm_n$-module, we have the {\em (bi)graded Frobenius images}
\begin{equation}
\grFrob(V;q) = \sum_{i \geq0} \Frob(V_i) \cdot q^i \quad \text{or} \quad 
\grFrob(V;q,z) = \sum_{i,j \geq 0} \Frob(V_{i,j}) \cdot q^i z^j.
\end{equation}

The irreducible representations of $\ZZ_r \wr \symm_n$ (and more generally those of $G \wr \symm_n$ where $G$ is a finite group) were constructed by Specht \cite{Specht} in his thesis. His construction is as follows.

The group $\ZZ_r \wr \symm_n$ is generated by its subgroup $(\ZZ_r)^n$ of diagonal matrices together with its subgroup $\symm_n$ of permutation matrices (consisting only of 0s and 1s). Let $U$ be a $\ZZ_r$-module and $V$ be an $\symm_n$-module. We build a $(\ZZ_r \wr \symm_n)$-module $U \wr V$ with underlying vector space $U^{\otimes n} \otimes V$ by the rules
\begin{equation}
    \mathrm{diag}(g_1,\dots,g_n) \cdot (u_1 \otimes \cdots \otimes u_n \otimes v) := (g_1 \cdot u_1) \otimes \cdots \otimes (g_n \cdot u_n) \otimes v
\end{equation}
for all diagonal matrices $\mathrm{diag}(g_1,\dots,g_n) \in \ZZ_r \wr \symm_n$ with $g_i \in \ZZ_r$ and
\begin{equation}
    w \cdot (u_1 \otimes \cdots \otimes u_n \otimes v) := u_{w^{-1}(1)} \otimes \cdots \otimes u_{w^{-1}(n)} \otimes (w \cdot v)
\end{equation}
for all $w \in \symm_n$. If $V$ is an irreducible $\ZZ_r$-module and $U$ is an irreducible $\symm_n$-module, then $U \wr V$ is an irreducible $(\ZZ_r \wr \symm_n)$-module, but not all irreducible $(\ZZ_r \wr \symm_n)$-modules are obtained in this way.

Let $\alpha = (\alpha_1,\dots,\alpha_r)$ be a list of $r$ nonnegative integers with $\alpha_1 + \cdots + \alpha_r = n$. We have a block diagonal subgroup
\begin{equation}
    \ZZ_r \wr \symm_\alpha := \ZZ_r \wr \symm_{\alpha_1} \times \cdots \times \ZZ_r \wr \symm_{\alpha_r} \subseteq \ZZ_r \wr \symm_n.
\end{equation}
Given $(\ZZ_r \wr \symm_{\alpha_i})$-modules $W_i$, the tensor product $W_1 \otimes \cdots \otimes W_r$ is naturally a $(\ZZ_r \wr \symm_\alpha)$-module and the induction $\Ind_{\ZZ_r \wr \symm_\alpha}^{\ZZ_r \wr \symm_n}(W_1 \otimes \cdots \otimes W_r)$ is a $(\ZZ_r \wr \symm_n)$-module.

Let $\zeta \in \CC^\times$ be a primitive $r^{th}$ root-of-unity. We index the irreducible representations of $\ZZ_r = \langle \zeta \rangle$ in the following (slightly nonstandard) way. For $1 \leq i \leq r$ let $\rho_i: \ZZ_r \to GL_1(\CC)$  be the map $\rho_i: \zeta \mapsto \zeta^i$ and let $U_i$ be the vector space $\CC^1$ with the $\ZZ_r$-module structure afforded by $\rho_i$. Thus $U_r$ is the trivial representation and $U_1, \dots, U_{r-1}$ are the nontrivial irreducible representations of $\ZZ_r$.

Irreducible representations of $\ZZ_r \wr \symm_n$ are indexed by $r$-partitions of $n$. Given an $r$-partition $\llambda = (\lambda^{(1)}, \dots, \lambda^{(r)}) \vdash_r n$, define $V^\llambda$ to be the induced $(\ZZ_r \wr \symm_n)$-module
\begin{equation}
    V^\llambda := \Ind_{\ZZ_r \wr \symm_\alpha}^{\ZZ_r \wr \symm_n}((U_1 \wr V^{\lambda^{(1)}}) \otimes \cdots \otimes (U_r \wr V^{\lambda^{(r)}}))
\end{equation}
where $\alpha_i = |\lambda^{(i)}|$. If $V$ is a finite-dimensional $(\ZZ_r \wr \symm_n)$-module, there are unique multiplicities $c_\llambda \geq 0$ so that 
\begin{equation}
    V \cong \bigoplus_{\llambda \vdash_r n} c_\llambda \cdot V^\llambda.
\end{equation}
As in the case of $\symm_n$, we define $\Frob(V) \in \Lambda^{(r)}$ by
\begin{equation}
    \Frob(V) := \sum_{\llambda \vdash_r n} c_\llambda \cdot s_\llambda.
\end{equation}
If $V = \bigoplus_{i \geq 0} V_i$ or $V = \bigoplus_{i,j \geq 0} V_{i,j}$ are graded or bigraded $(\ZZ_r \wr \symm_n)$-modules, we define $\grFrob(V;q)$ or $\grFrob(V;q,z)$ via
\begin{equation}
    \grFrob(V;q) := \sum_{i \geq 0} \Frob(V_i) \cdot q^i \quad \text{or} \quad \grFrob(V;q,z) := \sum_{i,j \geq 0} \Frob(V_{i,j}) \cdot q^i z^j.
\end{equation}

Let $V = \CC^n$. Regarding $\ZZ_r \wr \symm_n$ as a subgroup of $GL_n(\CC) = GL(V)$, we have a natural `defining action' of $\ZZ_r \wr \symm_n$ on $V$. Making the identifications $S = \CC[V]$ and $\Omega = \CC[V] \otimes \wedge(V^*)$, we have induced actions of $\ZZ_r \wr \symm_n$ on $S$ and $\Omega$. These actions are characterized by
\begin{equation}
    \mathrm{diag}( \zeta^{j_1}, \dots, \zeta^{j_i}) \cdot x_i = (\zeta^{j_j})^{-1} x_i \quad \text{and} \quad 
    \mathrm{diag}( \zeta^{j_1}, \dots, \zeta^{j_i}) \cdot \theta_i = (\zeta^{j_j})^{-1} \theta_i
\end{equation}
for diagonal matrices $\mathrm{diag}( \zeta^{j_1}, \dots, \zeta^{j_i}) \in \ZZ_r \wr \symm_n$ and 
\begin{equation}
    w \cdot x_i = x_{w(i)} \quad \text{and} \quad w \cdot \theta_i = \theta_{w(i)}
\end{equation}
for permutation matrices $w \in \symm_n \subseteq \ZZ_r \wr \symm_n$.

Given a graded ideal $I \subseteq S$ which is stable under this action, the inverse system $I^\perp$ is a graded $(\ZZ_r \wr \symm_n)$-module and we have $\grFrob(S/I;q) = \grFrob(I^\perp;q) \in \Lambda^{(r)}$. Similarly, if $I \subseteq \Omega$ is a $(\ZZ_r \wr \symm_n)$-stable bigraded ideal, then $I^\perp \subseteq \Omega$ is a graded $(\ZZ_r \wr \symm_n)$-module and we have $\grFrob(\Omega/I;q,z) = \grFrob(I^\perp;q,z)$.

A simple example involving the cyclic group of order $r$ should help solidify our wreath product conventions.

\begin{example}
    \label{ex:rank-1}
    Let $n = 1$ and consider the bigraded action of $\ZZ_r \wr \symm_1 \cong \ZZ_r$ on $\Omega = \CC[x_1] \otimes \wedge \{ \theta_1\}$. Let $\zeta = \exp(2 \pi i /r)$ and write $\ZZ_r = \langle \zeta \rangle$. The action of $\ZZ_r$ on $\Omega$ is characterized by
    \[
    \zeta \cdot (x_1^i) = \zeta^{-i} x_1^i \quad \text{and} \quad \zeta \cdot (x_1^i \theta_1) = \zeta^{-(i+1)} x_1^i \theta_1.
    \]
    It is easily seen that the invariant subalgebra $\Omega^{\ZZ_r}$ is generated by $\{ x_1^r, x_1^{r-1}\theta_1\}$, in agreement with Solomon's Theorem~\ref{thm:solomon}. The superspace coinvariant ring 
    \[
    SR_{1,r} = \Omega/(\Omega^{\ZZ_r}_+) = \Omega/(x_1^r, x_1^{r-1} \theta_1)
    \]
    has vector space basis
    \[
    \{ 1, \theta_1, x_1, x_1 \theta_1, x_1^2, x_1^2 \theta_1, \dots, x_1^{r-2} \theta_1, x_1^{r-1} \}
    \]
    and this basis is an eigenbasis for the action of $\zeta \in \ZZ_r$. Using this basis one readily computes the bigraded Frobenius image
    \begin{multline*}
    \grFrob(SR_{1,r};q,z) = \\ s_{\varnothing, \dots, \varnothing,  \varnothing,1} + (z + q) \cdot s_{\varnothing, \dots, \varnothing, 1, \varnothing} + (qz + q^2) \cdot s_{\varnothing, \dots, 1, \varnothing, \varnothing} + \cdots + (q^{r-2}z + q^{r-1}) \cdot s_{1, \dots ,\varnothing,\varnothing,\varnothing}.
    \end{multline*}
    Our  unconventional indexing of the irreducible $\ZZ_r$-modules $U_1, U_2, \dots, U_r$ was designed to produce the relatively nice pattern in the bigraded character displayed above.
\end{example}

There are two linear characters of $\ZZ_r\wr\symm_n$ that are worth noting explicitly. The first is $\sign$, given by $$\sign(g):=\text{ determinant of the underlying permutation matrix of }g.$$ The second is $\chi$, given by $$\chi(g):=\text{ product of the non-zero entries in the matrix of }g.$$
With our notational conventions, these irreducible characters correspond to the $r$-partitions
\begin{equation}
    \sign  \leftrightarrow (\varnothing,\ldots,\varnothing,(1^n)) \quad \text{and} \quad 
    \chi \leftrightarrow ((n),\varnothing,\ldots,\varnothing).
\end{equation}

\section{Monomial Basis}
\label{sec:Basis}

\subsection{The ring $SR_{n,r}$} Regarding $\Omega = \CC[x_1,\dots,x_n] \otimes \wedge \{\theta_1,\dots,\theta_n\}$ as the algebra of regular differential forms on $\CC^n$, we have the {\em Euler operator} $d: \Omega \to \Omega$ given by
\begin{equation}
    df :=\sum_{i=1}^n (\partial_i f) \cdot \theta_i.
\end{equation}
As in the introduction, we let $\Omega^{\ZZ_r \wr \symm_n} \subseteq \Omega$ be the $(\ZZ_r \wr \symm_n)$-invariant subalgebra of $\Omega$, write $SI_{n,r} := (\Omega^{\ZZ_r \wr \symm_n}_+) \subseteq \Omega$ for the (two-sided) ideal of $\Omega$ generated by $(\ZZ_r \wr \symm_n)$-invariants with vanishing constant term, and let $SR_{n,r} := \Omega/SI_{n,r}$ be the corresponding quotient ring.

 For $i \geq 0$, write $p_i \in S$ for the power sum of degree $i$, i.e.
\begin{equation}
    p_i := x_1^i + \cdots + x_n^i.
\end{equation}
The polynomial $p_i$ is $(\ZZ_r \wr \symm_n)$-invariant if and only if $r \mid i$. The algebra $S^{\ZZ_r \wr \symm_n}$ of polynomial $(\ZZ_r \wr \symm_n)$-invariants is freely generated by $\{ p_r, p_{2r}, \dots, p_{nr} \}$. The polynomial ideal $I_{n,r} := (S^{\ZZ_r \wr \symm_n}_+)$ generated by $(\ZZ_r \wr \symm_n)$-invariants with vanishing constant term therefore satisfies
\begin{equation}
    I_{n,r} = (S^{\ZZ_r \wr \symm_n}_+) = (p_r, p_{2r}, \dots, p_{nr}).
\end{equation}
A fundamental result of Solomon yields an explicit generating set of the superspace ideal $SI_{n,r} \subseteq \Omega$. A finite subgroup $G \subseteq GL_n(\CC)$ is a {\em complex reflection group} if the algebra $S^G$ of polynomial $G$-invariants can be generated by $n$ homogeneous elements $f_1,\dots,f_n \in S^G$ of positive degree. Such $f_1,\dots,f_n$ are called a {\em fundamental system of polynomial invariants} for $G$.

\begin{theorem}
    \label{thm:solomon}
    {\em (Solomon \cite{Solomon})} Let $G \subseteq GL_n(\CC)$ be a complex reflection group and let $f_1,\dots,f_n \in S^G$ be a fundamental system of polynomial invariants. Then $\Omega^G$ is a free $S^G$-algebra of rank $2^n$ with basis $\{ d f_{i_1} \cdots d f_{i_p} \,:\, 1 \leq i_1 < \cdots < i_p \leq n \}$.
\end{theorem}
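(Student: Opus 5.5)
The plan is the classical argument via the Jacobian and averaging. Write $J := \det(\partial f_i/\partial x_j)$ and let $\mathcal{B} := \{ d f_{i_1} \cdots d f_{i_p} \,:\, 1 \leq i_1 < \cdots < i_p \leq n \}$. The proof splits into three steps: check that the elements of $\mathcal{B}$ are invariant, show they are linearly independent over $S$ (and hence over $S^G$), and show they span $\Omega^G$ over $S^G$. The algebra structure is then automatic. Since $\Omega^G$ is a supercommutative $S^G$-algebra and the $df_i$ are odd of fermionic degree $1$, we have $(df_i)^2 = 0$, so $\Omega^G$ is the exterior algebra over $S^G$ on $df_1,\dots,df_n$.

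\textbf{Invariance.} The operator $d$ is $GL(V)$-equivariant: it is the exterior derivative on forms, and it commutes with linear changes of coordinates when the $\theta_i$ transform like the $dx_i$. Hence each $df_i$ is $G$-invariant, and so are all products of them.

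\textbf{Independence.} Since $\theta_1,\dots,\theta_n$ is a basis of $\wedge^1$, we have
\[
df_{i_1} \cdots df_{i_p} \;=\; \sum_{J'} \det\bigl(\partial f_{i_a}/\partial x_{j_b}\bigr)_{a,b}\, \theta_{J'},
\]
where the sum runs over $p$-subsets $J' = \{j_1 < \cdots < j_p\}$ and the coefficients are $p \times p$ minors of the Jacobian matrix. Because the $f_i$ are algebraically independent, $J \neq 0$; this is the Jacobian criterion in characteristic $0$. Therefore the Jacobian matrix is invertible over the fraction field $K = \CC(x_1,\dots,x_n)$, and the $df_i$ form a $K$-basis of $K \otimes \wedge^1$. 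It follows that the products in $\mathcal{B}$ form a $K$-basis of $K \otimes \wedge$. In particular $\mathcal{B}$ is $S$-linearly independent.

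\textbf{Spanning (main step).} Take $\omega \in \Omega^G$ of fermionic degree $p$. By the previous step, over $K$ we can write $\omega = \sum_I c_I\, df_I$ with $c_I \in K$. Solving by Cramer's rule, equivalently by wedging $\omega$ with the complementary $df_{I^c}$ and comparing against $df_1 \cdots df_n = J\,\theta_1 \cdots \theta_n$, gives $c_I = g_I / J$ with $g_I \in S$. Because $\omega$ and the $df_I$ are invariant and $\mathcal{B}$ is a $K$-basis, uniqueness of coefficients shows each $c_I$ is $G$-invariant.

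The crux is to show $c_I \in S$, that is, $J \mid g_I$. For this I would use the standard fact that $J = \prod_H \ell_H^{e_H - 1}$ up to a scalar, where $\ell_H$ cuts out the reflecting hyperplane $H$ and $e_H$ is the order of the pointwise stabilizer of $H$. Fix a reflecting hyperplane $H$ and choose coordinates in which a generator $s$ of the stabilizer of $H$ acts as $x_1 \mapsto \zeta x_1$, with $\zeta$ a primitive $e = e_H$ root of unity, and fixes $x_2,\dots,x_n$. Since $g_I = c_I J$ with $c_I$ invariant, $g_I$ transforms under $s$ by the same character as $J$, namely $\det^{-1}$ up to convention. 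So every monomial in $g_I$ has $x_1$-exponent $\equiv e-1 \pmod e$, which forces $x_1^{e-1} \mid g_I$. The linear forms $\ell_H$ for distinct $H$ are pairwise coprime, so $J \mid g_I$.

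Hence $c_I \in S \cap K^G = S^G$, and $\omega$ lies in the $S^G$-span of $\mathcal{B}$. The delicate points, which I expect to take the most care, are the Jacobian factorization (Steinberg's theorem, i.e.\ $\deg J = \sum_H (e_H - 1)$) and this local divisibility argument at each hyperplane.
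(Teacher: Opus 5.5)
Your proposal is correct and is Solomon's classical argument. The paper only cites Theorem~\ref{thm:solomon}, but its proof of the variant Proposition~\ref{prop:modified-generation} goes through the same steps: $K$-linear independence via the top-degree product, $G$-invariance of the $K$-coefficients, clearing denominators by multiplying with the complementary product, and divisibility of the resulting semi-invariant by the Jacobian. There are two small differences: the paper makes the coefficients invariant by averaging with $\eta_{n,r}$ where you use uniqueness of $K$-coefficients, and it cites \cite[\S 3, Lemma]{Solomon} for the final divisibility, which you instead derive from $J \doteq \prod_H \ell_H^{e_H-1}$ together with the local eigenvalue computation at each reflecting hyperplane.
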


Applying Solomon's Theorem~\ref{thm:solomon} in the case $G = \ZZ_r \wr \symm_n$, the superspace ideal $SI_{n,r} \subseteq \Omega$ has generating set 
\begin{equation}
\label{eq:power-sum-generators}
    SI_{n,r} = ( p_r, p_{2r} ,\dots, p_{nr}, dp_r, dp_{2r}, \dots, dp_{nr}).
\end{equation}
The generators in \eqref{eq:power-sum-generators} will be useful in our analysis of $SR_{n,r}$.

\subsection{Staircases} 
Sagan and Swanson conjectured \cite[Conj. 5.6]{SS} a basis for the quotient ring $SR_{n,r}$. For $J \subseteq [n]$, write
\begin{equation}
\theta_J := \prod_{j \in J} \theta_j
\end{equation}
where the product of exterior variables is taken in increasing order. The {\em $(J,r)$-staircase} associated to $\ZZ_r \wr \symm_n$ is the length $n$ sequence \[\stair(J,r) = (\stair(J,r)_1,\dots,\stair(J,r)_n)\] given as follows. Let $U :=[n] - J$ be the complement of $J$ in $[n]$ and write $U = \{u_1 < \cdots < u_k \}$. We define 
\begin{equation}
\stair(J,r)_{u_i} := ir -1.
\end{equation}
Given $j \in J$, there exists a unique $0 \leq i \leq k+1$  such that $u_{i-1} < j < u_i$ where we set $u_0 := 0$ and $u_{k+1} := n+1$. We define
\begin{equation}
\stair(J,r)_j := ir - 2.
\end{equation}

\begin{example}
\label{ex:staircase}
Suppose $n = 9, r = 3,$ and $J = \{3,4,6,9\}$. Then 
$\stair(J,3) = (\stair(J,3)_1, \dots, \stair(J,3)_9)$ is given by
\[
\stair(J,3) = (2,5,\underline{7},\underline{7},8,\underline{10},11,14,\underline{16})
\]
where the underlined numbers are in positions indexed by $J$. The sequence $\stair(J,3)$ is shown in Figure~\ref{fig:staircase} with elements of $J$ bolded.
\end{example}

\begin{figure}
\begin{tikzpicture}[x=0.6cm, y=0.35cm, line join=round]
  \foreach \h [count=\i from 1] in {2,5,7,7,8,10,11,14,16} {
    \foreach \j in {1,...,\h} {
      \draw (\i-1,\j-1) rectangle (\i,\j);
    }
  }

  \foreach \i in {3,4,6,9} {
    \draw[line width=1.4pt] (\i-1,0) -- (\i,0);
  }

  \node[left] at (-0.5,-1) {$i$};
  \foreach \i in {1,...,9} {
    \ifnum\i=3
      \node at (\i-0.5,-1) {\textbf{\i}};
    \else\ifnum\i=4
      \node at (\i-0.5,-1) {\textbf{\i}};
    \else\ifnum\i=6
      \node at (\i-0.5,-1) {\textbf{\i}};
    \else\ifnum\i=9
      \node at (\i-0.5,-1) {\textbf{\i}};
    \else
      \node at (\i-0.5,-1) {\i};
    \fi\fi\fi\fi
  }

  \node[left] at (-0.5,-2.6) {$\mathrm{st}(J,r)_i$};
  \foreach \i/\v in {1/2,2/5,3/7,4/7,5/8,6/10,7/11,8/14,9/16} {
    \ifnum\i=3
      \node at (\i-0.5,-2.6) {\textbf{\v}};
    \else\ifnum\i=4
      \node at (\i-0.5,-2.6) {\textbf{\v}};
    \else\ifnum\i=6
      \node at (\i-0.5,-2.6) {\textbf{\v}};
    \else\ifnum\i=9
      \node at (\i-0.5,-2.6) {\textbf{\v}};
    \else
      \node at (\i-0.5,-2.6) {\v};
    \fi\fi\fi\fi
  }
\end{tikzpicture}
\caption{The staircase $\stair(J,r)$ for $n = 9, r = 3$, and $J = \{3,4,6,9\}$.}
\label{fig:staircase}
\end{figure}

For $J \subseteq [n]$, let $\AAA_{n,r}(J,r) \subseteq \Omega$ be the following set of monomials in the $x$-variables:
\begin{equation}
    \AAA_{n,r}(J,r) := \{ x_1^{a_1} \cdots x_n^{a_n} \,:\, a_i \leq \stair(J,r)_i \}.
\end{equation}
In other words, the set $\AAA_{n,r}(J,r)$ consists of monomials in the $x$-variables whose exponent sequences are componentwise $\leq \stair(J,r)$. We also consider the family of superspace monomials
\begin{equation}
\label{eq:basis-monomials}
    \AAA_{n,r} := \bigsqcup_{J \subseteq [n]} \AAA_{n,r}(J,r) \cdot \theta_J.
\end{equation}

\begin{example}
    \label{ex:staircase-two}
    Continuing the case $(n,r,J) = (9,3,\{3,4,6,9\})$ of Example~\ref{ex:staircase}, the subset $J \subseteq [n]$ contributes
    \[
        \AAA_{n,r}(J,r) \cdot \theta_J = \{ x_1^{a_1} \cdots x_9^{a_9} \cdot \theta_{3469} \,:\, (a_1,\dots,a_9) \leq (2,5,7,7,8,10,11,14,16) \}
    \]
    to the set $\AAA_{n,r}$ where $\leq$ is componentwise inequality.
\end{example}

Observe that the union in \eqref{eq:basis-monomials} is in fact disjoint since the $\theta_J$ monomials are distinct. The Sagan--Swanson basis conjecture for $SR_{n,r}$ reads as follows.

\begin{conjecture}
    \label{conj:SS} {\em (Sagan--Swanson \cite[Conj. 5.6]{SS})}
    The set of monomials $\AAA_{n,r}$ descends to a vector space basis of $SR_{n,r}$.
\end{conjecture}

In this section we prove Conjecture~\ref{conj:SS}. We will also show that the constituent sets $\AAA_{n,r}(J,r)$ of $\AAA_{n,r}$ descend to vector space bases of certain quotient rings $S/(I_{n,r}:f_{J,r})$ (Proposition~\ref{prop:colon-ideal-quotient-basis}). Conjecture~\ref{conj:SS} has already been proven for the symmetric group. 

\begin{theorem}
    \label{thm:sn-basis}
    {\em (Angarone--Commins--Karn--Murai--R. \cite{ACKMR})}
    Conjecture~\ref{conj:SS} is true when $r=1$.
\end{theorem}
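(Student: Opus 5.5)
The plan is to combine a dimension count with a spanning (or independence) argument, organized by the fermionic support $J$. For $r=1$ we have $SI_{n,1}=(p_1,\dots,p_n,dp_1,\dots,dp_n)$, and $\stair(J,1)_{u_i}=i-1$ on the complement $U=\{u_1<\cdots<u_k\}$ of $J$. By the work of Rhoades--Wilson \cite{RW}, the bigraded Hilbert series of $SR_n$ is $\sum_k [k]_q!\,\Stir_q(n,k)\,z^{n-k}$ (up to the usual $q$-Stirling convention). A direct lattice-point count should show that $\sum_J \Hilb(\AAA_{n,1}(J,1)\theta_J;q,z)$ equals the same series, by recursing on whether $n\in J$. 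Granting this, it suffices to prove either spanning or linear independence of $\AAA_{n,1}$ in $SR_n$. I would prove independence through the inverse system.

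\textbf{Filtration by fermionic support.} Fix a total order on the subsets $J\subseteq[n]$ refining the Gale order in each fermionic degree. Take the harmonic space $SI_{n,1}^\perp$, or dually filter $SR_n$ in fermionic degree $|J|$ by the $\theta_J$-components. For each $J$ the aim is to produce a polynomial $f_J\in S$ with three properties:
\begin{itemize}
\item the component of $SR_n$ along $\theta_J$, modulo the contributions of higher $J'$, is a quotient of $S/(I_{n,1}:f_J)$;
\item $(I_{n,1}:f_J)$ is generated by a regular sequence whose degrees are the successive entries of $\stair(J,1)+1$;
\item the monomials in $\AAA_{n,1}(J,1)$ form a basis of $S/(I_{n,1}:f_J)$.
\end{itemize}
The natural candidate for $f_J$ is a Vandermonde-type product in the variables $x_j$, $j\in J$, which arises when the fermionic derivatives $\partial_j^\theta$ act on $dp_i$-products.

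\textbf{Colon ideal step.} To identify $(I_{n,1}:f_J)$, write down explicit elements $g_1,\dots,g_n$ of the colon ideal. These should be truncated complete homogeneous polynomials $h_{a}(x_{i},\dots,x_n)$ of the appropriate degrees, possibly with variables in $J$ handled separately. Lemma~\ref{lem:locus-criterion} then shows they form a regular sequence, by checking that their common zero locus is $\{0\}$ via a triangular elimination. Next, verify the degree identity $\sum\deg g_i+\deg f_J=\binom n2$, the socle degree of $S/I_{n,1}$. Lemma~\ref{lem:regular-criterion} then gives $(I_{n,1}:f_J)=(g_1,\dots,g_n)$. The leading terms of the $g_i$ in lex order are pure powers $x_i^{\stair(J,1)_i+1}$, so these generators are a Gröbner basis and the standard monomials are exactly $\AAA_{n,1}(J,1)$.

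\textbf{Main obstacle.} The hard step is the compatibility between the filtration and the colon ideals. One must show that an element $\sum_J g_J\theta_J\in SI_{n,1}$ whose top surviving component is $g_J\theta_J$ forces $g_J\in(I_{n,1}:f_J)$. This is what makes $\AAA_{n,1}$ independent. I would attack it by contracting with $\partial^\theta_{[n]-J}$ and multiplying by suitable $dp$'s so as to land in top fermionic degree. There, $SR_n$ in fermionic degree $n$ is $S/I_{n,1}\cdot\theta_{[n]}$ twisted by the Jacobian, and the Jacobian factor specializes to $f_J$. Combined with the dimension count, independence gives the basis.
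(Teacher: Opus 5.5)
\textbf{Main gap.} Your argument hinges on the claim that $(I_{n,1}:f_J)$ is generated by polynomials whose lex leading terms are the pure powers $x_i^{\stair(J,1)_i+1}$, so that $\AAA_{n,1}(J,1)$ is its set of standard monomials. This claim is false. It is exactly the ingredient the paper does not prove but takes from \cite{ACKMR}, whose argument goes through Solomon--Terao algebras; the paper stresses that no elementary proof of spanning or independence is known.

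The ideal that actually governs the $\theta_J$-piece of your filtration is $(I_{n,1}:f_{J,1})$ with $f_{J,1}=\prod_{j\in J}x_j\prod_{i>j}(x_j-x_i)$. It involves all later variables, not just those indexed by $J$. Your candidate generators do not lie in this ideal. For $n=3$, $J=\{2\}$, one has $(x_2+x_3)f_{J,1}=x_2^3-x_2x_3^2$, and its pairing with $\delta_{3,1}$ is $2\neq 0$.

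More decisively, no choice of generators or monomial order can work. Take $n=4$, $J=\{2\}$. Then $\stair(J,1)=(0,0,1,2)$ and $(I_{4,1}:f_{J,1})=(e_1,\,2x_2+x_3+x_4,\,h_2(x_2,x_3,x_4),\,h_3(x_2,x_4))$. Eliminating $x_1,x_2$ with the linear generators shows that this ideal contains
\[
3x_3^2+2x_3x_4+3x_4^2 \qquad \text{and} \qquad x_4^2\,(x_3-3x_4).
\]
Suppose some monomial order had initial ideal $(x_1,x_2,x_3^2,x_4^3)$, the only monomial ideal with standard set $\AAA_{4,1}(\{2\},1)$. The leading term of the quadric would have to be $x_3^2$, forcing $x_3>x_4$. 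The leading term of the cubic would have to be $x_4^3$, forcing $x_4>x_3$. So $\AAA_{4,1}(\{2\},1)$ is a basis of the quotient, as \cite{ACKMR} proves, without being a standard-monomial basis for any term order. A Gr\"obner argument cannot deliver the fact you need.

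\textbf{The step you flagged.} The step you call the main obstacle is actually the routine part, but your mechanism for it does not work. Contraction does not preserve $SI_{n,1}$; for instance $\partial^\theta_1(dp_1)=1$. Also, for $r=1$ the top fermionic piece of $SR_{n,1}$ is zero, since $\theta_1\theta_2\cdots\theta_n=dp_1\cdot\theta_2\cdots\theta_n\in SI_{n,1}$, so landing there gives no information.

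What works is the inverse-system triangularity of Lemma~\ref{lem:D-leading}. Its argument applies for $r=1$, since $SI_{n,1}^\perp$ is closed under every $d_j$. It shows that $\DDD_{J,1}(\delta_{n,1})\in SI_{n,1}^\perp$ has $\theta_J$-coefficient $\pm f_{J,1}\odot\delta_{n,1}$ and otherwise involves only $\theta_{J'}$ with $J'<_\Gale J$. Pair it with a relation $\sum_K g_K\theta_K\in SI_{n,1}$ whose Gale-\emph{minimal} (not top) support element is $J$. This gives $(g_Jf_{J,1})\odot\delta_{n,1}=0$, i.e.\ $g_J\in(I_{n,1}:f_{J,1})$.

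Together with the Rhoades--Wilson dimension count, this reduces the theorem exactly to the linear independence of $\AAA_{n,1}(J,1)$ modulo $(I_{n,1}:f_{J,1})$, which is \cite[Thm.~7.1]{ACKMR}. That is the missing idea, and your proposal does not supply it.
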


When $r = 2$, the first author \cite{Bhattacharya} gave a non-monomial basis of $SR_{n,2}$. We will use Theorem~\ref{thm:sn-basis} in the proof of Conjecture~\ref{conj:SS}.

\subsection{Spanning} There is no available elementary proof that $\AAA_{n,1}$ spans or is linearly independent in $SR_{n,1}$. The proof of Theorem~\ref{thm:sn-basis} in \cite{ACKMR} uses {\em Solomon--Terao algebras} of hyperplane arrangements \cite{AMMN}. Starting from Theorem~\ref{thm:sn-basis}, we will use more elementary reasoning to deduce the fact that $\AAA_{n,r}$ spans $SR_{n,r}$. 
We introduce a ring embedding to transfer between the understood case of $r=1$ and the general setting of $r \geq 1$.

\begin{definition}
\label{def:expansion-homomorphism}
Let $\varphi_r: \Omega \to \Omega$ be the `expansion homomorphism' defined on generators by
\begin{equation}
    \varphi_r: x_i \mapsto x_i^r \quad \text{and} \quad \varphi_r: \theta_i \mapsto x_i^{r-1} \cdot \theta_i.
\end{equation}
\end{definition}

It is easy to see that the assignments $\varphi_r(x_i)$ and $\varphi_r(\theta_i)$ given by Definition~\ref{def:expansion-homomorphism} satisfy
\begin{equation}
    \varphi_r(x_i) \varphi_r(x_j) = \varphi_r(x_j) \varphi_r(x_i) \quad 
    \varphi_r(x_i) \varphi_r(\theta_j) = \varphi_r(\theta_j) \varphi_r(x_i)  \quad \varphi_r(\theta_i) \varphi_r(\theta_j) = - \varphi_r(\theta_j) \varphi_r(\theta_i)
\end{equation}
for all $1 \leq i,j  \leq n$.
In particular, Definition~\ref{def:expansion-homomorphism} gives a well-defined $\CC$-algebra homomorphism $\varphi_r: \Omega \to \Omega$. The action of $\varphi_r$ on power sums $p_i$ and their images $dp_i$ under the Euler derivation $d$ is given by
\begin{equation}
    \varphi_r: p_i \mapsto p_{ri} \quad \text{and} \quad \varphi_r: dp_i \mapsto \frac{1}{r} \cdot dp_{ri}
\end{equation}
for all $i \geq 1$. A glance at the generating sets of $SI_{n,1}$ and $SI_{n,r}$ in \eqref{eq:power-sum-generators} yields $\varphi_r(SI_{n,1}) \subseteq SI_{n,r}$. We will use the map $\varphi_r$ to deduce spanning for general $r \geq 1$ from the case $r=1$.

It will be useful to identify superspace monomials $m \in \Omega$ which may be expressed as a product $m = \varphi_r(m') \cdot m''$ where $m'$ is a superspace monomial and $m'' \in S$ is a monomial in the $x$-variables alone. Consider a superspace monomial $m = (x_1^{a_1} \cdots x_n^{a_n}) \cdot \theta_J$ where $J \subseteq [n]$.  An index $1 \leq i \leq n$ is {\em $r$-low in $m$} if
\begin{itemize}
    \item we have $i \in J$ so that $\theta_i$ appears in $m$, and
    \item the exponent $a_i$ of $x_i$ satisfies $a_i < r-1$.
\end{itemize}
Write $\Low_r(m)$ for the set of low indices and $\low_r(m)$ for the number of low indices, i.e.
\begin{equation}
    \Low_r(m) := \{ 1 \leq i \leq n \,:\, i \text{ is $r$-low in $m$} \}, \quad \quad \low_r(m) := \# \Low_r(m).
\end{equation}

\begin{example}
    \label{ex:low-example}
    Suppose $n = 6$ and $r = 3$ and let $m$ be the superspace monomial
\[
m = x_1^0 x_2^1 x_3^4 x_4^1 x_5^2 x_6^0  \cdot \theta_2 \theta_5 \theta_6. 
\]
We have 
\[\Low_3(m) = \{2,6\} \quad \text{so that} \quad \low_3(m) = 2.\]
\end{example}

Observe that $\low_1 \equiv 0$ in the case $r=1$, so the statistic $\low_r(m)$ is uninteresting for the symmetric group. The statistic $\low_r(m)$ will facilitate the induction argument showing that $\AAA_{n,r}$ spans $SR_{n,r}$ (Lemma~\ref{lem:spanning}). The key observation is that $\low_r(m)$ measures the failure of $m$ to lie in the image of $\varphi_r$ up to multiplication by a purely bosonic monomial.

\begin{observation}
    \label{obs:low-image}
    Let $m \in \Omega_n$ be a superspace monomial. We have $\low_r(m) = 0$ if and only if 
    \[
    m = x_1^{a_1} \cdots x_n^{a_n} \cdot \varphi_r(m')
    \]
    for some $a_1, \dots, a_n \geq 0$ and some superspace monomial $m' \in \Omega$.
\end{observation}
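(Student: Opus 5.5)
We prove the two directions separately, working with the explicit form of $\varphi_r$ on monomials. Write $m = x_1^{a_1}\cdots x_n^{a_n}\cdot\theta_J$. For a superspace monomial $m' = x_1^{b_1}\cdots x_n^{b_n}\cdot\theta_{J'}$, the multiplicativity of $\varphi_r$ together with Definition~\ref{def:expansion-homomorphism} gives
\[
\varphi_r(m') = \Bigl(\prod_{i=1}^n x_i^{r b_i}\Bigr)\cdot\prod_{j \in J'}\bigl(x_j^{r-1}\theta_j\bigr) = \Bigl(\prod_{i=1}^n x_i^{rb_i + (r-1)[i \in J']}\Bigr)\cdot \theta_{J'},
\]
where $[i \in J']$ is $1$ if $i\in J'$ and $0$ otherwise. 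Commuting the even factors $x_j^{r-1}$ past the $\theta$'s introduces no signs, and the product of the $\theta_j$ taken in increasing order is exactly $\theta_{J'}$.

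\textbf{Forward direction.} Suppose $m = x_1^{c_1}\cdots x_n^{c_n}\cdot\varphi_r(m')$ with $c_i \geq 0$. Comparing fermionic parts forces $J' = J$. Comparing exponents gives
\[
a_i = c_i + rb_i + (r-1)[i\in J].
\]
Hence $a_i \geq r-1$ for every $i\in J$. So no index is $r$-low, and $\low_r(m)=0$.

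\textbf{Reverse direction.} Suppose $\low_r(m)=0$, so $a_i \geq r-1$ for all $i\in J$. Take $m' := \theta_J$, so that
\[
\varphi_r(m') = \Bigl(\prod_{j\in J} x_j^{r-1}\Bigr)\theta_J.
\]
Now set $c_i := a_i - (r-1)[i\in J]$. Each $c_i$ is nonnegative precisely because of the hypothesis, and $m = x_1^{c_1}\cdots x_n^{c_n}\cdot\varphi_r(\theta_J)$ as required.

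The only point needing any care is the sign bookkeeping in $\varphi_r(\theta_{J'})$, and this is harmless because the inserted factors $x_j^{r-1}$ are central in $\Omega$. Otherwise the argument is a direct exponent comparison, so we expect no real obstacle.
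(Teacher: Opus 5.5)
Your proof is correct, and it is the direct exponent comparison that the paper leaves implicit (the paper states this observation without proof). The one difference is your choice of witness. When the paper actually builds such a factorization, in the proof of Lemma~\ref{lem:spanning}, it chooses $m'$ by the division algorithm so that the leftover bosonic cofactor has every exponent at most $r-1$, because that bound is what that argument needs. Your minimal witness $m'=\theta_J$ is enough for the observation as stated.
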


Given $J \subseteq [n]$ and $r \geq 1$, we want to understand how the sets of superspace monomials $\varphi_r(\AAA_{n,1}(J,1))$ and $\AAA_{n,r}(J,r)$ are related. An example will show the general pattern.

\begin{example}
    \label{ex:expansion}
    As in Examples~\ref{ex:staircase} and \ref{ex:staircase-two}, we
    let $n = 9, r = 3,$ and $J = \{3,4,6,9\}$. The $1$-staircase associated to $J$ is 
\[
\stair(J,1) = (0,1,\underline{1},\underline{1},2,\underline{2},3,4,\underline{4})
\]
where the underlined numbers are in positions indexed by $J$. Applying $\varphi_3$ to monomials in the set $\AAA_{n,1}(J,1) \cdot \theta_J$ yields
\[
\varphi_3(\AAA_{n,1}(J,1) \cdot \theta_J) = 
\{ x_1^{a_1} \cdots x_9^{a_9} \,:\, (a_1,\dots,a_9) \leq (0,3,5,5,6,8,9,12,14)  \} \cdot \theta_J
\]
where the inequality $\leq$ is componentwise. On the other hand, one has 
\[
\stair(J,3) = (2,5,\underline{7},\underline{7},8,\underline{10},11,14,\underline{16})
\]
so that
\[
\AAA_{n,3}(J,3) \cdot \theta_J = \{ x_1^{b_1} \cdots x_n^{b_n} \,:\, (b_1,\dots,b_n) \leq (2,5,7,7,8,10,11,14,16) \} \cdot \theta_J.
\]
We have the vector addition
\[
(2,5,7,7,8,10,11,14,16) = (0,3,5,5,6,8,9,12,14) + (2,2,2,2,2,2,2,2,2).
\]
\end{example}

Example~\ref{ex:expansion} generalizes as follows.

\begin{observation}
    \label{obs:addition}
    Let $J \subseteq [n]$ and assume $1 \notin J$. One has 
    \[ \AAA_{n,r}(J,r) \cdot \theta_J = 
    \varphi_r(\AAA_{n,1}(J,1) \cdot \theta_J) \times \{x_1^{c_1} \cdots x_n^{c_n} \,:\, c_1, \dots, c_n \leq r-1 \}
    \]
    where the notation $\{-\} \times \{-\}$ stands for all possible products of a monomial in the first set with a monomial in the second set.
\end{observation}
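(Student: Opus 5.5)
The plan is to reduce the statement to an entrywise identity of exponent bounds. Then each monomial in either set factors as required, coordinate by coordinate.

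\emph{Step 1: compute $\varphi_r$ on $\AAA_{n,1}(J,1)\cdot\theta_J$.} A monomial $x_1^{a_1}\cdots x_n^{a_n}\theta_J$ with $a \leq \stair(J,1)$ is sent to $\prod_i x_i^{r a_i + (r-1)[i\in J]}\,\theta_J$. Since $\varphi_r$ is a homomorphism and the $x$-factors commute past the $\theta$'s, no sign arises. So $\varphi_r(\AAA_{n,1}(J,1)\cdot\theta_J)$ consists of the monomials $x^{e}\theta_J$ with $e_i = r a_i + (r-1)[i\in J]$ and $0\le a_i\le \stair(J,1)_i$.

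\emph{Step 2: multiply by the box $\{x^c : c_i\le r-1\}$.} Every integer $b_i\geq 0$ can be written uniquely as $r a_i + c_i$ with $0\le c_i\le r-1$. For $i\notin J$, the products therefore give exactly the exponents $b_i$ with $0\le b_i\le r\,\stair(J,1)_i + r-1$.

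For $i\in J$, the products give exactly the exponents $b_i$ with
\[
r-1 \le b_i \le r\,\stair(J,1)_i + 2(r-1).
\]
Every value in this range occurs, by writing $b_i-(r-1) = r a_i + c_i$. The product set is therefore the set of $x^b\theta_J$ with $b$ in a box whose lower bound is $r-1$ on $J$ and $0$ off $J$.

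\emph{Step 3: compare with $\stair(J,r)$.} Suppose $u_{i-1} < j < u_i$ with $j\in J$. Then $\stair(J,1)_j = i-1$ and $\stair(J,r)_j = ir-2 = r(i-1)+2(r-1)-r+\dots$; more precisely, $r(i-1)+2(r-1) = ir + r - 2$. Likewise, for $u_i\notin J$ we have $\stair(J,1)_{u_i} = i-1$, and $r(i-1)+r-1 = ir-1 = \stair(J,r)_{u_i}$, so the two bounds agree off $J$. (Here I use that the $r=1$ staircase gives $i-1$ in both cases: $1\cdot i - 1$ and $1\cdot i - 2 + 1$.)

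The main obstacle is the $\theta$-indices: there the two sides differ both in the lower bound $r-1$ and in the upper bound. I would first recheck the convention in Example~\ref{ex:expansion}, where $\varphi_3$ of a $\theta_J$-monomial gives $(0,3,5,5,\dots)$. This shows that the paper records exponents \emph{without} the extra $x_j^{r-1}$ on $J$, i.e.\ it writes exponents as $r a_i$ throughout; this matches Observation~\ref{obs:addition} when $\stair(J,1)_j=i-1$ on $J$. Indeed, for $j\in J$ we get $r\,\stair(J,1)_j + (r-1) = r(i-1)+r-1 = ir-1$, but $\stair(J,r)_j = ir-2$. So the intended reading must be that $\stair(J,1)_j$ equals $1\cdot i-2 = i-2$ on $J$ (in the example, $\stair(J,1)_3=1$ with $i=3$).

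With that reading, the computation is as follows. For $j\in J$, the factor $x_j^{r-1}$ from $\varphi_r(\theta_j)$ together with $c_j\le r-1$ gives the range
\[
b_j \in \bigl[r-1,\; r(i-2)+2(r-1)\bigr] = [r-1,\, ir-2].
\]
The upper end is exactly $\stair(J,r)_j$. I will carefully reconcile this lower bound $r-1$, which appears on the right-hand side, with the left-hand side. I expect the intended statement to be read with $\theta_j$ absorbing $x_j^{r-1}$, in line with the paper's convention in Example~\ref{ex:expansion}. The hypothesis $1\notin J$ guarantees $i\ge 2$ on $J$, so that $\stair(J,1)_j\ge 0$ and the $r=1$ staircase is nonnegative. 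The remainder is the unique base-$r$ division argument of Step 2.
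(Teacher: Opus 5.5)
Your Steps 1 and 2 are correct. Step 3, however, contains a slip, and it ends by deferring the one point that actually decides the matter.

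\textbf{The slip.} By definition, for $j\in J$ with $u_{i-1}<j<u_i$ one has $\stair(J,1)_j=i-2$, not $i-1$. This is the definition, not an \emph{intended reading}. With it, the upper bounds agree everywhere:
\begin{itemize}
\item off $J$: $r(i-1)+(r-1)=ir-1=\stair(J,r)_{u_i}$;
\item on $J$: $r(i-2)+2(r-1)=ir-2=\stair(J,r)_j$.
\end{itemize}
So there is no upper-bound discrepancy. You also misread Example~\ref{ex:expansion}. The entries $5,5,8,14$ in positions $3,4,6,9$ of $(0,3,5,5,6,8,9,12,14)$ equal $3\,\stair(J,1)_j+2$, so the example \emph{does} include the factor $x_j^{r-1}$ coming from $\varphi_r(\theta_j)$. (The paper gives no proof beyond that example, which only tracks maximal exponents.)

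\textbf{The real gap is the lower bound, and it cannot be reconciled.} Write $x^b=x_1^{b_1}\cdots x_n^{b_n}$. Your own computation shows that, for $r\ge 2$, the right-hand side is exactly the set of $x^b\theta_J$ with $b\le\stair(J,r)$ and $b_j\ge r-1$ for every $j\in J$. By contrast, $\AAA_{n,r}(J,r)\cdot\theta_J$ imposes no lower bound. For instance, $\theta_J$ itself lies on the left but not on the right whenever $J\neq\varnothing$.

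Concretely, take $n=2$, $r=2$, $J=\{2\}$. The left side $\{x_1^{b_1}x_2^{b_2}\theta_2 : b_1\le 1,\ b_2\le 2\}$ has six elements. The right side $x_2\theta_2\cdot\{x_1^{c_1}x_2^{c_2} : c_1,c_2\le 1\}$ has four.

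So the equality as literally stated is false for $r\ge 2$ and $J\neq\varnothing$. No convention of $\theta_j$ \emph{absorbing} $x_j^{r-1}$ exists in the paper, and none would rescue it: shifting the exponents on $J$ by $r-1$ would break the upper bound instead.

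\textbf{What your argument does prove, and what you should state instead,} is the inclusion $\varphi_r(\AAA_{n,1}(J,1)\cdot\theta_J)\times\{x^c : c_i\le r-1\}\subseteq\AAA_{n,r}(J,r)\cdot\theta_J$. More precisely, the left side of this inclusion equals the set of monomials $m\in\AAA_{n,r}(J,r)\cdot\theta_J$ with $\low_r(m)=0$ (compare Observation~\ref{obs:low-image}).

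That inclusion is all the paper ever uses. In Lemma~\ref{lem:spanning} the observation is invoked only to conclude $\varphi_r(b)\cdot\prod_{i\in T_1}x_i^{c_i}\in\AAA_{T_1,r}$. Monomials with low indices are handled separately there, by the induction on $\low_r(m)$.
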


If $1 \in J$ we have $\AAA_{n,1}(J,1)  =\varnothing$ and Observation~\ref{obs:addition} does not hold. We are ready to prove that $\AAA_{n,r}$ descends to a spanning set of $SR_{n,r}$.

\begin{lemma}
    \label{lem:spanning}
    The set of monomials $\AAA_{n,r}$ descends to a spanning set of $SR_{n,r}$.
\end{lemma}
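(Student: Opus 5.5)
We argue by induction, showing that every superspace monomial $m = x_1^{a_1}\cdots x_n^{a_n}\theta_J$ lies in the span of $\AAA_{n,r}$ modulo $SI_{n,r}$. The induction runs on the bidegree of $m$, which is fixed since $SI_{n,r}$ is bigraded. Within a fixed bidegree we induct on $\low_r(m)$, and then on a suitable term order such as revlex on exponent vectors, with the $\theta$-support compared by Gale order. There are two cases.

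\textbf{Case $\low_r(m)=0$.} By Observation~\ref{obs:low-image} we can write $m = x^{\mathbf a}\varphi_r(m')$. First we reduce the purely bosonic factor. Since $p_r,\dots,p_{nr}$ form a regular sequence with standard-monomial basis $\{x^{\mathbf c}: c_i \le ir-1\}$ (the Artin-type basis for $S/I_{n,r}$), we may first write $x^{\mathbf a} = \varphi_r(x^{\mathbf b})\, x^{\mathbf c}$ with every $c_i \le r-1$, by dividing exponents by $r$. The monomial $m'' := x^{\mathbf b} m'$ then satisfies $m = x^{\mathbf c}\varphi_r(m'')$. Next we apply Theorem~\ref{thm:sn-basis}: modulo $SI_{n,1}$, $m''$ is a linear combination of elements of $\AAA_{n,1}$. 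Applying $\varphi_r$ and using $\varphi_r(SI_{n,1})\subseteq SI_{n,r}$, we get that modulo $SI_{n,r}$, $m$ is a combination of terms $x^{\mathbf c}\varphi_r(\mu\theta_K)$ with $\mu\theta_K \in \AAA_{n,1}$. Since $\AAA_{n,1}(K,1)=\varnothing$ when $1\in K$, each such $K$ omits $1$. By Observation~\ref{obs:addition}, each resulting term lies in $\AAA_{n,r}(K,r)\theta_K$. This closes the case.

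\textbf{Case $\low_r(m)>0$.} Pick a low index $i\in J$ with $a_i<r-1$. The goal is to raise the exponent of $x_i$ using the generators $dp_{kr}=kr\sum_j x_j^{kr-1}\theta_j$. Multiplying $dp_r$ by $m/(x_i^{a_i}\theta_i)$ times a sign, up to an adjustment of powers, expresses $x_i^{r-1}\theta_i\cdot(\text{rest})$ as $-\sum_{j\ne i} x_j^{r-1}\theta_j\cdot(\text{rest})$. This is not directly applicable, because $a_i<r-1$ means $m$ is not divisible by $x_i^{r-1}\theta_i$. So instead I would work in the quotient differently, using a dimension or $\ZZ_r$-eigenspace argument. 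The torus $(\ZZ_r)^n$ acts diagonally and $SI_{n,r}$ is stable under it, so we can restrict to a single weight space. On the weight space of $m$, the exponent of each $x_i$ in $x_i^{a_i}\theta_i$ is fixed modulo $r$ at $a_i+1$. Multiplying $m$ by invariants and then reducing, the concrete plan is to show that low monomials either already lie in $\AAA_{n,r}$, or can be rewritten via a relation of the form $x^{\mathbf e}\,dp_{kr}$ as a combination of monomials that are smaller in the term order or have fewer low indices.

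I expect the low case to be the main obstacle: finding the right multiplier so that the leading term of the relation is exactly $m$. For that I would try to show directly that a low monomial outside $\AAA_{n,r}$ has some exponent $a_u$ at a non-$J$ position exceeding the staircase, and then use the bosonic relations $p_{kr}$ together with the previous case, treating $x_i^{a_i}\theta_i$ as an inert factor. Concretely, I would peel off the low variables $\theta_i x_i^{a_i}$ and apply the $\low=0$ argument to the remaining monomial in the variables indexed by $[n]\setminus \Low_r(m)$. This uses $SI_{n-\ell,r}$ for those variables, obtained by restricting the power sums after splitting each $p_{kr}$ and $dp_{kr}$ into the low and non-low parts, and the low parts are handled by induction on degree.
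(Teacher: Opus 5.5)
Your case $\low_r(m)=0$ is correct and is exactly the paper's argument: transfer Theorem~\ref{thm:sn-basis} through $\varphi_r$ and apply Observation~\ref{obs:addition}. The remark about the Artin basis of $S/I_{n,r}$ is not needed.

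The gap is the case $\low_r(m)>0$, which is where the content of the lemma lies. The weight-space and term-order ideas you float are never developed, and your first attempt with $dp_r$ fails, as you note. Your final sketch is the paper's decomposition: factor $m=\pm m_0 m_1$ with $T_0=\Low_r(m)$ and $m_0=\prod_{i\in T_0}x_i^{a_i}\cdot\theta_{T_0}$, run the $\low_r=0$ argument on $m_1\in\Omega_{T_1}$, and split the generators. But your way of closing it does not work. The error terms $\tilde f_k\cdot p_{kr}(x_i : i\in T_0)\cdot m_0$ have the same bidegree as $m$, so ``induction on degree'' gives nothing.

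Three facts close the argument, and you would need to state and check each.

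(i) The fermionic generators produce no error terms at all. We have $dp_{kr}(x_i : i\in T_1)\cdot m_0=dp_{kr}(x_1,\dots,x_n)\cdot m_0$, since $\theta_i m_0=0$ for $i\in T_0$.

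(ii) Every monomial of $\tilde f_k\cdot p_{kr}(x_i:i\in T_0)\cdot m_0$ has exactly $\low_r(m)-1$ low indices. Multiplying $x_i^{a_i}$ by $x_i^{kr}$ with $k\ge 1$ lifts the exponent above $r-2$. Also $\tilde f_k=\prod_{i\in T_1}x_i^{c_i}\cdot\varphi_r(f_k)$ has no low indices, because every $\theta_i$ in a $\varphi_r$-image comes with $x_i^{r-1}$. This is why the coefficients must come from the $\varphi_r$-transfer of a relation modulo $SI_{T_1,1}$. An arbitrary expression of $m_1$ modulo $SI_{T_1,r}$ could have coefficients carrying new low indices in $T_1$, and then the count need not drop.

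(iii) $m_0\cdot\AAA_{T_1,r}\subseteq\AAA_{n,r}$ up to sign. The positions of $[n]$ outside $K\sqcup T_0$ are exactly the positions of $T_1$ outside $K$. So the staircase values at positions of $T_1$ are the same whether computed in $T_1$ or in $[n]$. Meanwhile $a_i\le r-2\le\stair(J,r)_i$ for $i\in T_0$.

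With (i)--(iii) the proof is a single induction on $\low_r(m)$; no induction on bidegree or term order is needed.
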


\begin{proof}
    Let $J \subseteq [n]$ and fix a superspace monomial 
    \begin{equation}m = x_1^{a_1} \cdots x_n^{a_n} \cdot \theta_J \in \Omega_n\end{equation} with bosonic exponents $a_1,\dots,a_n \geq 0$ and fermionic product $\theta_J$. We show that $m$ lies in the span of $\AAA_{n,r}$ modulo $SI_{n,r}$.

    We use the low indices of $m$ to factor the superspace ring $\Omega$.  Consider the partition $[n] = T_0 \sqcup T_1$ where
    \begin{equation}
        T_0 := \Low_r(m) \quad \text{and} \quad T_1 := [n] - \Low_r(m).
    \end{equation}
    For $i = 0,1$ we write $\Omega_{T_i}$ for the superspace ring over variables indexed by $T_i$. That is, we have
    \begin{equation}
        \Omega_{T_0} := \CC[x_i \,:\, i \in T_0] \otimes \wedge \{ \theta_i \,:\, i \in T_0 \} \quad \text{and} \quad 
        \Omega_{T_1}  = \CC[x_i \,:\, i \in T_1 ] \otimes \wedge \{ \theta_i \,:\, i \in T_1\}.
    \end{equation}
     We define ideals 
    \begin{equation}
        SI_{T_0,r} \subseteq \Omega_{T_0} \quad \text{and} \quad SI_{T_1,r} \subseteq \Omega_{T_1}
    \end{equation}
    in the obvious way.  The sets $T_i$ have a total order inherited from $[n]$. This gives rise to sets of monomials
    \begin{equation}
        \AAA_{T_0,r} \subseteq \Omega_{T_0} \quad \text{and} \quad \AAA_{T_1,r} \subseteq \Omega_{T_1}.
    \end{equation}

    The ring $\Omega$ factors as $\Omega = \Omega_{T_0} \otimes \Omega_{T_1}$.
    Recalling that every low index has a corresponding $\theta$-variable, the monomial $m \in \Omega$ has decomposition $m = \pm m_0 \cdot  m_1$ where 
    \begin{equation}
        m_0 = \prod_{i \in T_0} x_i^{a_i} \times \theta_{T_0} \in \Omega_{T_0} \quad \text{and} \quad 
        m_1 = \prod_{i \in T_1} x_i^{a_i} \times \theta_{T_1 \cap J} \in \Omega_{T_1}
    \end{equation}
    and the sign $\pm$ depends on the sets $T_0 \subseteq J$.
    Since $a_i < r-1$ for all $i \in T_0$, we have
    \begin{equation}
    \label{eq:SS-containment}
        m_0 \cdot \AAA_{T_1,r} \subseteq \AAA_{n,r},
    \end{equation}
    where in the containment \eqref{eq:SS-containment} we consider superspace monomials up to sign. We have 
    \begin{equation}
        \Low_r(m_0) = \Low_r(m) =  T_0 \quad \text{and} \quad \Low_r(m_1) = \varnothing.
    \end{equation}
    For all $i \in T_1$, apply the division algorithm to obtain integers $q_i \geq 0$ and $0 \leq c_i \leq r-1$ such that
    \begin{align}
        a_i &= q_i r + c_i \quad (i \in T_1 - J), \\
        a_i-r+1 &= q_i r + c_i \quad (i \in T_1 \cap J).
    \end{align}
    Since $\low_r(m_1) = 0$, we know $a_i \geq r-1$ for all $i \in T_1 \cap J$, so we indeed have $q_i \geq 0$ for all $i$.

    Let $m_1' \in \Omega_{T_1}$ be the superspace monomial given by
    \begin{equation}
        m_1' := \prod_{i \in T_1} x_i^{q_i} \cdot \theta_{T_1 \cap J}.
    \end{equation}
    As in Observation~\ref{obs:low-image}, the monomials $m_1'$ and $m_1$ are related under the ring map $\varphi_r$ by
    \begin{equation}
        m_1 = \varphi_r(m_1') \times \prod_{i \in T_1} x_i^{c_i}.
    \end{equation}
    Theorem~\ref{thm:sn-basis} furnishes constants $\alpha_b \in \CC$ and superspace elements $f_j,g_j \in \Omega_{T_1}$ so that 
    \begin{equation}
    \label{eq:span-one}
        m_1' = \sum_{b \in \AAA_{T_1,1}} \alpha_b \cdot b + \sum_{j=1}^{\# T_1} f_j \cdot p_j(x_i \,:\, i \in T_1) + \sum_{j=1}^{\# T_1} g_j \cdot dp_j(x_i \,:\, i \in T_1).
    \end{equation}
    Applying $\varphi_r$ to both sides of Equation~\eqref{eq:span-one} and multiplying by $\prod_{i \in T_1} x_i^{c_i}$ results in
    \begin{multline}
        \label{eq:span-two}
        m_1 = \varphi_r(m_1') \cdot \prod_{i \in T_1} x_i^{a_i} 
         = 
        \sum_{b \in \AAA_{T_1,1}} \alpha_b \cdot \left[ \varphi_r(b) \cdot \prod_{i \in T_1} x_i^{c_i} \right] \\  + \sum_{j=1}^{\# T_1} \tilde{f}_j \cdot p_{rj}(x_i \,:\, i \in T_1)  + \sum_{j=1}^{\# T_1}  \tilde{g}_j \cdot dp_{rj}(x_i \,:\, i \in T_1)
    \end{multline}
    where
    \begin{equation}
        \tilde{f}_j = \prod_{i \in T_1} x_i^{c_i} \times \varphi_r(f_j) \quad \text{and} \quad \tilde{g}_j = \frac{1}{r} \prod_{i \in T_1} x_i^{c_i} \times \varphi_r(g_j).
    \end{equation}
    Next, we multiply both sides of \eqref{eq:span-two} by $m_0 = \prod_{i \in T_0} x_i^{a_i} \cdot \theta_{T_0}$. Up to an immaterial sign which does not affect the spanning statement we wish to prove, this results in
    \begin{multline}
        \label{eq:span-three}
        \pm m = m_1 \cdot m_0  = 
        \sum_{b \in \AAA_{T_1,1}} \alpha_b \cdot \left[ \varphi_r(b) \cdot \prod_{i \in T_1} x_i^{c_i} \cdot m_0 \right] 
         \\ + \sum_{j=1}^{\# T_1} \tilde{f}_j \cdot p_{rj}(x_i \,:\, i \in T_0) \cdot m_0  + \sum_{j=1}^{\# T_1}  \tilde{g}_j \cdot dp_{rj}(x_i \,:\, i \in T_1) \cdot m_0.
    \end{multline}
    Since $\theta_i^2 = 0$ for all $1 \leq i \leq n$, Equation~\eqref{eq:span-three} may be rewritten using \[p_{rj}(x_1,\dots,x_n) = p_{rj}(x_i \,:\, i \in T_0) + p_{rj}(x_i \,:\, i \in T_1)\] as 
    \begin{multline}
        \label{eq:span-four}
        \pm m = m_1 \cdot m_0  = 
        \sum_{b \in \AAA_{T_1,1}} \alpha_b \cdot \left[ \varphi_r(b) \cdot \prod_{i \in T_1} x_i^{c_i} \cdot m_0 \right] - \sum_{j=1}^{\# T_1} \tilde{f}_j \cdot p_{rj}(x_i \,:\, i \in T_1) \cdot m_0 
         \\ + \sum_{j=1}^{\# T_1} \tilde{f}_j \cdot p_{rj}(x_1, \dots, x_n) \cdot m_0  + \sum_{j=1}^{\# T_1}  \tilde{g}_j \cdot dp_{rj}(x_1,\dots x_n) \cdot m_0.
    \end{multline}

The two sums on the lower line of Equation~\eqref{eq:span-four} are contained in the ideal $SI_{n,r}$ so that 
\begin{multline}
        \label{eq:span-five}
        \pm m = m_1 \cdot m_0 \\ \equiv
        \sum_{b \in \AAA_{T_1,1}} \alpha_b \cdot \left[ \varphi_r(b) \cdot \prod_{i \in T_1} x_i^{c_i} \cdot m_0 \right] 
        - \sum_{j=1}^{\# T_1} \tilde{f}_j \cdot p_{rj}(x_i \,:\, i \in T_0) \cdot m_0  \mod SI_{n,r}.
\end{multline}
For any $b \in \AAA_{T_1,1}$, Observation~\ref{obs:addition} implies that $\varphi_r(b) \cdot \prod_{i \in T_1} x_i^{c_i} \in \AAA_{T_1,r}$. Furthermore, since the monomial $m_0 = \prod_{i \in T_0} x_i^{a_i} \times \theta_{T_0}$ satisfies $a_i < r-1$ for all $i \in T_0$, we have $\varphi_r(b) \cdot \prod_{i \in T_1} x_i^{c_i} \cdot m_0 \in \AAA_{n,r}.$ The first sum on the lower line of \eqref{eq:span-five} is therefore a $\CC$-linear combination of elements of $\AAA_{n,r}$.

We use \eqref{eq:span-five} to prove that $m$ lies in the span of $\AAA_{n,r}$ modulo $SI_{n,r}$ by induction on $\low_r(m)$.  If $\low_r(m) = 0$ we have $T_0 = \varnothing$ so that $p_{rj}(x_i \,:\, i \in T_0) = 0$ for $i = 1, 2, \dots, \# T_1$. The second term on the lower line of \eqref{eq:span-five} vanishes in this case and we deduce that  $m$ lies in the span of $\AAA_{n,r}$ modulo $SI_{n,r}$ as desired. 

Suppose $\low_r(m) > 0$. We examine the second term on the lower line of \eqref{eq:span-five} in detail. Let $1 \leq j \leq \# T_1$. Then $\tilde{f}_j = \prod_{i \in T_i} x_i^{c_i} \times \varphi_r(f_j)$ where $f_j \in \Omega_{T_1}$. In particular, we have $\tilde{f}_j \in \Omega_{T_1}$ and (by Observation~\ref{obs:low-image}) every superspace monomial $\hat{m}$ appearing in the expansion of $\tilde{f}_j$ satisfies $\low_r(\hat{m}) = 0.$ On the other hand, we have $m_0 \in \Omega_{T_0}$ and $\low_r(m_0) = \# T_0$ is as large as possible. Since $j \geq 1$, every monomial $\hat{m}'$ appearing in the expansion of $p_{rj}(x_i \,:\, i \in T_0) \cdot m_0 = \sum_{i \in T_0} x_i^{rj} \cdot m_0$ satisfies $\low_r(\hat{m}') = \low_r(m_0) - 1$. We deduce that every monomial $\hat{m}''$ appearing in the expansion of $\tilde{f}_j \cdot p_{rj}(x_i \,:\, i \in T_0) \cdot m_0$ satisfies $\low_r(\hat{m}'') = \low_r(m_0) - 1 = \low_r(m) - 1$. We inductively conclude that 
\begin{equation}
    \sum_{j = 1}^{\# T_1} \tilde{f}_j \cdot p_{rj}(x_i \,:\, i \in T_0) \cdot m_0 \in \mathrm{span}_\CC (\AAA_{n,r}) \mod SI_{n,r}
\end{equation}
so that \eqref{eq:span-five} forces 
\begin{equation}
    m \in \mathrm{span}_\CC(\AAA_{n,r}) \mod SI_{n,r}
\end{equation}
as required.
\end{proof}

\subsection{The inverse system $SI_{n,r}^\perp$}\label{subsec: inverse system}
We want to upgrade the spanning result of Lemma~\ref{lem:spanning} to prove that $\AAA_{n,r}$ descends to a vector space basis of $SR_{n,r}$. We achieve this by bounding the (bigraded) dimension of $SR_{n,r}$ from below. Thanks to the equality $\Hilb(SR_{n,r};q,z) = \Hilb(SI_{n,r}^\perp; q,z)$, it is equivalent to bound the inverse system $SI_{n,r}^\perp$ from below; this section provides the lemmata necessary to do this.

We begin with the inverse system $I_{n,r}^\perp \subseteq S$ of the polynomial coinvariant quotient $S/I_{n,r}$ associated to $\ZZ_r \wr \symm_n$. Let $\delta_{n,r} \in S$ be the polynomial
\begin{equation}
\label{eq:vandermonde}
    \delta_{n,r} := (x_1 \cdots x_n)^{r-1} \times \prod_{1 \leq i < j \leq n} (x_i^r - x_j^r).
\end{equation}
When $r = 1$, this is the classical Vandermonde determinant. The following classical result of Steinberg \cite{Steinberg} states that $\delta_{n,r}$ generates the inverse system $I_{n,r}^\perp$ under the $\odot$-action of partial differentiation.

\begin{theorem}
    \label{thm:steinberg}
    {\em (Steinberg \cite{Steinberg})}
    The inverse system $I_{n,r}^\perp \subseteq S$ is the smallest linear subspace containing $\delta_{n,r}$ which is closed under the partial derivatives $\partial_1, \dots, \partial_n$.
\end{theorem}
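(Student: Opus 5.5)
The plan is the standard argument for Steinberg's theorem. It combines a Hilbert-series count with the fact that the top degree of $S/I_{n,r}$ is one-dimensional and spanned by an anti-invariant.

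Let $D := \{ f \odot \delta_{n,r} : f \in S \}$ be the smallest $\partial$-closed subspace containing $\delta_{n,r}$. The proof has two halves: first $D \subseteq I_{n,r}^\perp$, then equality by a dimension count.

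\textbf{Step 1: $\delta_{n,r} \in I_{n,r}^\perp$.}
Since $I_{n,r} = (p_r,\dots,p_{nr})$ and $\odot$ is a module action, it suffices to show $p_{kr} \odot \delta_{n,r} = 0$ for $k=1,\dots,n$.

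The polynomial $\delta_{n,r}$ is $\chi'$-semi-invariant, where $\chi'$ is the character by which $\ZZ_r \wr \symm_n$ acts on the product of the reflecting linear forms raised to (order $-1$). Concretely:
\begin{itemize}
\item it is $\symm_n$-alternating;
\item under a diagonal element $\mathrm{diag}(\zeta^{j_1},\dots,\zeta^{j_n})$ it picks up the factor $\prod_i \zeta^{-(r-1)j_i}$.
\end{itemize}
Because $p_{kr}$ is invariant, the operator $\partial p_{kr}$ commutes with the group action. Hence $p_{kr}\odot\delta_{n,r}$ is again semi-invariant of the same type, of degree $\deg \delta_{n,r} - kr$.

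Next, any polynomial that is semi-invariant of this type is divisible by each $x_i^{r-1}$. It is also divisible by each $x_i^r - x_j^r$: it is alternating under the transposition $(ij)$, and under the colored transpositions $x_i \leftrightarrow \zeta^a x_j$, which are reflections whose fixed hyperplanes are $x_i = \zeta^a x_j$. So it is divisible by $\delta_{n,r}$ and must vanish in lower degree. This gives $p_{kr}\odot\delta_{n,r}=0$.

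Since $I_{n,r}^\perp$ is closed under every $\partial_i$ (because $I_{n,r}$ is an ideal), it follows that $D \subseteq I_{n,r}^\perp$.

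\textbf{Step 2: equality by counting.}
The power sums $p_r,\dots,p_{nr}$ form a regular sequence by Lemma~\ref{lem:locus-criterion}: they vanish simultaneously only at $0$, via Newton's identities applied to $x_i^r$. Therefore
\[
\Hilb(S/I_{n,r};q)=\prod_{k=1}^n [kr]_q .
\]
This has top degree $\sum_k (kr-1) = \deg \delta_{n,r}$, with one-dimensional top piece, and total dimension $r^n n! = |\ZZ_r\wr\symm_n|$.

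Consider the pairing $S/I_{n,r}\times I_{n,r}^\perp \to \CC$. The map $f \mapsto f\odot\delta_{n,r}$ induces a map $S/I_{n,r} \to D$. I will show it is injective. Suppose $f \notin I_{n,r}$ is homogeneous. The coinvariant ring is Gorenstein, being a complete intersection, so there is $g$ with $fg$ spanning the one-dimensional top degree of $S/I_{n,r}$. The top degree of $I_{n,r}^\perp$ is spanned by $\delta_{n,r}$, since $\delta_{n,r}$ is a nonzero element of that degree and $I_{n,r}^\perp$ has the same Hilbert series as $S/I_{n,r}$. The perfect pairing in top degree then gives $(fg)\odot\delta_{n,r}\neq 0$, hence $f\odot\delta_{n,r}\neq 0$.

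Consequently
\[
\dim D \ge \dim S/I_{n,r} = \dim I_{n,r}^\perp,
\]
and combined with $D \subseteq I_{n,r}^\perp$ we get $D=I_{n,r}^\perp$.

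\textbf{Main obstacle.}
The delicate step is the divisibility argument in Step 1. One has to identify the reflections of $\ZZ_r\wr\symm_n$ correctly: the diagonal reflections of order $r$ with hyperplanes $x_i=0$, and the order-2 reflections with hyperplanes $x_i=\zeta^a x_j$. One then has to check that $\delta_{n,r}$ vanishes to exactly the required order along each hyperplane, namely order $r-1$ along $x_i=0$ and order $1$ along $x_i=\zeta^a x_j$. Alternatively, one could invoke the general fact that for a complex reflection group the relevant semi-invariants form a free rank-one module over the invariants.
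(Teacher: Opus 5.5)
Your proof is correct. In Step~1 both facts you need hold.
\begin{itemize}
\item The operator $\partial p_{kr}=\sum_i\partial_i^{kr}$ commutes with the action of $\ZZ_r\wr\symm_n$. A diagonal element rescales $\partial_i$ by the inverse of the root of unity that rescales $x_i$, and that factor raised to the $kr$-th power is $1$.
\item A polynomial transforming like $\delta_{n,r}$ is divisible by $\delta_{n,r}$. All its $x_i$-exponents are $\equiv r-1 \pmod r$, and it is anti-invariant under every order-two reflection $x_i\leftrightarrow\zeta^a x_j$. By unique factorization it is then divisible by the product of the pairwise non-associate factors $x_i^{r-1}$ and $x_i-\zeta^a x_j$, and that product is $\delta_{n,r}$.
\end{itemize}
In Step~2, let $d=\deg\delta_{n,r}$. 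The identification $(I_{n,r}^\perp)_d=\CC\,\delta_{n,r}$ combines with Poincar\'e duality for the Artinian complete intersection $S/I_{n,r}$. Together they give injectivity of $f\mapsto f\odot\delta_{n,r}$ on each graded piece. This extends to all of $S/I_{n,r}$ because distinct degrees are sent to distinct degrees.

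There is no internal argument to compare against: the paper does not prove Theorem~\ref{thm:steinberg}. It quotes the result from Steinberg and uses it as a black box, almost always in the form $f\odot\delta_{n,r}=0\iff f\in I_{n,r}$. For instance, the proof of Lemma~\ref{lem:colon-ideal-lemma} uses it to get $f_{J,r}\notin I_{n,r}$ and $p_{J,r,i}\,f_{J,r}\in I_{n,r}$, and the lower-bound lemmas of Section~\ref{sec:Module} use it the same way. That equivalence is exactly what you establish: $\delta_{n,r}\in I_{n,r}^\perp$ from Step~1 plus the injectivity from Step~2.

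So your proposal supplies a self-contained proof of a cited result, following the standard route of semi-invariant divisibility plus Gorenstein duality. Its only outside input is the socle/duality property of graded Artinian complete intersections. That is the same kind of commutative-algebra fact the paper already imports in Lemmas~\ref{lem:locus-criterion} and~\ref{lem:regular-criterion}.
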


In order to describe the inverse system  $SI_{n,r}^\perp \subseteq \Omega$ we will need more operators. For $j \geq 1$, let $d_j: \Omega \to \Omega$ be the {\em higher Euler operator} given by
\begin{equation}
    d_j(f) := \sum_{i=1}^n \partial_i^j f \cdot \theta_i.
\end{equation}
When $j = 1$ this is the classical Euler operator. In general, the operator $d_j$ lowers bosonic degree by $j$ while raising fermionic degree by $1$. The $d_j$ operators satisfy $d_{j_1} d_{j_2} = -d_{j_2} d_{j_1}$ for all $j_1,j_2 > 0$. Given $J \subseteq n$, we write
\begin{equation}
    d_J := d_{j_1} \cdots d_{j_k} \quad \text{where $J = \{j_1 < \cdots < j_k \}$}   
\end{equation}
for the corresponding composition of $d_j$-operators.
Swanson and Wallach \cite{SW} proved that certain $d_j$ operators preserve the inverse system $SI_{n,r}^\perp$.

\begin{theorem}
    \label{thm:swanson-wallach} {\em (Swanson--Wallach \cite{SW})}
    Suppose $j \equiv 1 \mod r$. The operator $d_j: \Omega \to \Omega$ preserves the inverse system $SI_{n,r}^\perp$. 
\end{theorem}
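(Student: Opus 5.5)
The plan is to reduce the statement to a purely algebraic fact about the ideal $SI_{n,r}$, using a supercommutator computation. Fix $j \equiv 1 \mod r$. The first step is to introduce the odd superderivation $D_j : \Omega \to \Omega$ determined by
\[
D_j(x_i) = 0, \qquad D_j(\theta_i) = x_i^j, \qquad D_j(FG) = D_j(F)\,G + (-1)^{|F|} F\, D_j(G),
\]
where $|F|$ is fermionic degree. This is well defined because it respects the defining relations of $\Omega$. For instance, $D_j(\theta_i\theta_k + \theta_k\theta_i) = x_i^j\theta_k - \theta_i x_k^j + x_k^j\theta_i - \theta_k x_i^j = 0$, and $D_j(\theta_i^2)=0$ similarly. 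The claim I will prove is that for every homogeneous $F \in \Omega$ we have the operator identity
\begin{equation*}
\partial F \circ d_j \;=\; (-1)^{|F|}\, d_j \circ \partial F \;+\; \partial\bigl(D_j(F)\bigr).
\end{equation*}

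To establish the identity, I first check it on generators. The operator $\partial x_i = \partial_i$ commutes with $d_j = \sum_k \partial_k^j(-)\cdot\theta_k$, matching $D_j(x_i)=0$. The contraction $\partial_i^\theta$ satisfies $\partial_i^\theta\bigl(\partial_k^j f\cdot \theta_k\bigr) = -\partial_k^j(\partial_i^\theta f)\cdot\theta_k + \delta_{ik}\,\partial_i^j f$. Hence $\partial_i^\theta d_j + d_j \partial_i^\theta = \partial_i^j = \partial(x_i^j)$, which matches $D_j(\theta_i) = x_i^j$. Next, both sides of the identity behave the same way under products. If the identity holds for $F$ and $G$, then expanding $\partial(FG)\circ d_j = \partial F\circ\partial G\circ d_j$ and moving $d_j$ left twice gives the graded Leibniz rule, which is exactly how $D_j(FG)$ is built. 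By linearity the identity then holds for all $F$. Sign bookkeeping in this step is the most error-prone part, so I will verify it carefully on a monomial such as $\theta_1\theta_2$.

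With the identity in hand, the main point is that $D_j(SI_{n,r}) \subseteq SI_{n,r}$. By the Leibniz rule it suffices to check generators from \eqref{eq:power-sum-generators}, since $D_j(aH) = D_j(a)H \pm a D_j(H)$ lies in the ideal whenever $H$ is a generator with $D_j(H) \in SI_{n,r}$. For the bosonic generators, $D_j(p_{kr}) = 0$. For the fermionic ones,
\[
D_j(dp_{kr}) = D_j\Bigl(\sum_i kr\, x_i^{kr-1}\theta_i\Bigr) = kr \sum_i x_i^{kr-1+j} = kr\, p_{kr+j-1}.
\]
Because $j \equiv 1 \mod r$, the degree $kr+j-1$ is a positive multiple of $r$. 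So $p_{kr+j-1}$ is a $(\ZZ_r\wr\symm_n)$-invariant with vanishing constant term, and therefore lies in $SI_{n,r}$. This is precisely where the hypothesis on $j$ enters.

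The conclusion is then immediate. Let $g \in SI_{n,r}^\perp$ and $F \in SI_{n,r}$ be homogeneous. Then
\[
F \odot d_j g = \pm\, d_j(F\odot g) + D_j(F)\odot g = 0 + 0,
\]
since $F\odot g = 0$ and $D_j(F) \in SI_{n,r}$. Hence $d_j g \in SI_{n,r}^\perp$. I expect the only real obstacle to be the consistent handling of signs in the operator identity. The conceptual content is the single computation $D_j(dp_{kr}) = kr\,p_{kr+j-1}$.
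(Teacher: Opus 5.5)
The paper gives no proof of this statement. It imports it from Swanson--Wallach and uses it as a black box, for instance in Lemma~\ref{lem:D-leading} and Theorem~\ref{thm:operator}. Your argument is a valid self-contained replacement with the right mechanism. Your $D_j=\sum_i x_i^j\partial_i^\theta$ is exactly the adjoint of the paper's $d_j$ for the pairing $(F,g)\mapsto$ constant term of $F\odot g$. It kills $p_{kr}$ and sends $dp_{kr}$ to $kr\,p_{kr+j-1}$. The hypothesis $j\equiv 1 \pmod r$ is precisely what makes $p_{kr+j-1}$ invariant. This shows where the congruence condition enters, which the citation alone does not.

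However, two of your sign claims are false as written, so the operator identity you state is not correct.

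\emph{First}, the paper's $d_j$ multiplies by $\theta_k$ on the \emph{right}, while $\partial_i^\theta$ is the left contraction. Hence $\partial_i^\theta(g\theta_k)=(\partial_i^\theta g)\theta_k+(-1)^{|g|}\delta_{ik}g$, which gives $\partial_i^\theta\circ d_j-d_j\circ\partial_i^\theta=\partial_i^j\circ\epsilon$, where $\epsilon f=(-1)^{|f|}f$. For example, with $f=x_2^j\theta_1$ one gets $\partial_1^\theta d_jf=j!\,\theta_2$, while your formula gives $-j!\,\theta_2$. Your anticommutator is correct for $d_j':=\sum_k\theta_k\partial_k^j$, and $d_j=d_j'\circ\epsilon$.

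\emph{Second}, even for $d_j'$ the Leibniz step goes the other way. Set $\Delta(F):=\partial F\circ d_j'-(-1)^{|F|}d_j'\circ\partial F$. The product expansion gives $\Delta(FG)=\partial F\circ\Delta(G)+(-1)^{|G|}\Delta(F)\circ\partial G$. So $\Delta(F)=\partial(D_j'F)$ for the \emph{right} odd derivation $D_j'(FG)=F\,D_j'(G)+(-1)^{|G|}D_j'(F)\,G$, not your left one. For example, $\Delta(\theta_1\theta_2)=\partial(x_2^j\theta_1-x_1^j\theta_2)=-\partial\bigl(D_j(\theta_1\theta_2)\bigr)$.

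Both defects are repairable without new ideas:
\begin{itemize}
\item On fermionic-homogeneous $F$ one has $D_j'F=(-1)^{|F|-1}D_jF$.
\item $SI_{n,r}$ and $SI_{n,r}^\perp$ are bigraded. So $D_j'(SI_{n,r})\subseteq SI_{n,r}$ follows from your generator computation, and $\epsilon$ preserves $SI_{n,r}^\perp$.
\end{itemize}
The final step then reads $F\odot d_jg=(-1)^{|F|}d_j'(F\odot\epsilon g)+D_j'(F)\odot\epsilon g=0$, which completes the proof.
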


We will see  (Theorem~\ref{thm:operator}) that the $d_j$ with $j \equiv 1 \mod r$ are the only extra ingredient needed to generate the inverse system $SI_{n,r}^\perp$. To this end, we use these $d_j$ to construct certain elements of $SI_{n,r}^\perp$ with favorable triangularity properties with respect to fermionic monomials. Our construction of these inverse system elements will be slightly indirect.

Let $\yy = (y_1, y_2, \dots )$ be a new set of variables. We define two matrices as follows. The $k$-row {\em power matrix} $P_{k,r}(\yy)$ is the $k \times n$ matrix whose $(i,j)$-entry is
\begin{equation}
P_{k,r}(\yy)_{i,j} := y_i^{r \cdot (n-j) + 1}.
\end{equation}
The $k$-row {\em factor matrix} $F_{k,r}(\xx,\yy)$ is the $k \times n$ matrix whose $(i,j)$-entry is 
\begin{equation}
    F_{k,r}(\xx,\yy)_{i,j} := y_i \cdot (y_i^r - x_{j+1}^r)(y_i^r - x_{j+2}^r) \cdots (y_i^r - x_n^r).
\end{equation}

\begin{example}
\label{ex:power-factor}
Suppose $n = 5$, $k = 3$, and $r=4$. The power matrix is given by
\[
P_{3,4}(\yy) = 
\begin{footnotesize}
\begin{pmatrix}
    y_1^{17} & y_1^{13} & y_1^9 & y_1^5 & y_1 \\
     y_2^{17} & y_2^{13} & y_2^9 & y_2^5 & y_2 \\
    y_3^{17} & y_3^{13} & y_3^9 & y_3^5 & y_3
\end{pmatrix}\end{footnotesize}
\]
and the factor matrix $F_{3,4}(\xx,\yy)$ is given by
\[  
\begin{footnotesize}
\begin{pmatrix}
    y_1 (y_1^4 - x_2^4)(y_1^4 - x_3^4)(y_1^4 - x_4^4)(y_1^4 - x_5^4) &
    y_1 (y_1^4 - x_3^4)(y_1^4 - x_4^4)(y_1^4 - x_5^4) & 
    y_1 (y_1^4 - x_4^4)(y_1^4 - x_5^4) &
    y_1 (y_1^4 - x_5^4) &y_1
    \\
     y_2 (y_2^4 - x_2^4)(y_2^4 - x_3^4)(y_2^4 - x_4^4)(y_2^4 - x_5^4) &  
     y_2 (y_2^4 - x_3^4)(y_2^4 - x_4^4)(y_2^4 - x_5^4) & 
    y_2 (y_2^4 - x_4^4)(y_2^4 - x_5^4) &
    y_2 (y_2^4 - x_5^4) &y_2\\
      y_3 (y_3^4 - x_2^4)(y_3^4 - x_3^4)(y_4^4 - x_4^4)(y_3^4 - x_5^4) & 
      y_3 (y_3^4 - x_3^4)(y_3^4 - x_4^4)(y_3^4 - x_5^4) & 
      y_3 (y_4^4 - x_4^4)(y_3^4 - x_5^4) &
      y_3 (y_3^4 - x_5^4) &y_3
\end{pmatrix}
\end{footnotesize}.
\]
\end{example}

The matrices $P_{3,4}(\yy)$ and $F_{3,4}(\xx,\yy)$ are related in the following way.

\begin{observation}
    \label{obs:matrix-mult}
    There exists a lower triangular $n \times n$ matrix $C$ with 1s on the diagonal and entries in $S = \CC[x_1,\dots,x_n]$ so that 
    \begin{equation}
        F_{k,r}(\xx,\yy) = P_{k,r}(\yy) \cdot C.
    \end{equation}
\end{observation}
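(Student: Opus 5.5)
The plan is to reduce the statement to a single column-by-column identity and then build $C$ by a triangular change of basis. Since every row of $F_{k,r}(\xx,\yy)$ and of $P_{k,r}(\yy)$ has the same form, only with the variable $y_i$ changing, it suffices to treat one variable $y$. Write $F_j(y) := y\prod_{l=j+1}^{n}(y^r - x_l^r)$ for the $j$th entry of a row of $F$, and $P_{j'}(y) := y^{r(n-j')+1}$ for the $j'$th entry of a row of $P$. I want coefficients $C_{j',j} \in S$, independent of $y$, such that
\[
F_j(y) = \sum_{j'=1}^{n} P_{j'}(y)\, C_{j',j}.
\]
Because $C$ does not depend on the row index, this identity applied with $y = y_i$ for each $i$ gives the matrix equation $F_{k,r}(\xx,\yy) = P_{k,r}(\yy)\cdot C$ for every $k$ at once.

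Next I expand the product using elementary symmetric polynomials in $x_{j+1}^r,\dots,x_n^r$. With $m = n-j$,
\[
\prod_{l=j+1}^{n}(y^r - x_l^r) = \sum_{t=0}^{m} (-1)^t\, e_t(x_{j+1}^r,\dots,x_n^r)\, y^{r(m-t)}.
\]
Multiplying by $y$, the term indexed by $t$ is a multiple of $y^{r(n-j-t)+1} = P_{j+t}(y)$. So I set
\[
C_{j+t,\,j} := (-1)^t e_t(x_{j+1}^r,\dots,x_n^r) \quad (0 \le t \le n-j),
\]
and $C_{j',j} := 0$ whenever $j' < j$. For $t = 0$ this gives $C_{j,j} = 1$.

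It remains to check the shape of $C$. A nonzero entry $C_{j',j}$ requires $j' \ge j$, so $C$ is lower triangular, its diagonal entries are $1$, and all entries lie in $S$. I expect no real obstacle here. The only step needing care is the index bookkeeping: confirming that the exponent $r(n-j-t)+1$ matches the column index $j' = j+t$ of $P$, and that $t$ runs exactly over $0,\dots,n-j$, so that $j'$ stays within $1,\dots,n$.
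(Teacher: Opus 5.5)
Your proposal is correct and is essentially the paper's argument. The paper describes $C$ through column operations: to column $j$ of $P_{k,r}(\yy)$ it adds $(-1)^t e_t(x_{j+1}^r,\dots,x_n^r)$ times column $j+t$, and this is exactly your entrywise formula $C_{j+t,j} = (-1)^t e_t(x_{j+1}^r,\dots,x_n^r)$, which you obtain by expanding the product in elementary symmetric polynomials.
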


Observation~\ref{obs:matrix-mult} is similar to \cite[Obs. 4.8]{MRW}. In the above case of $n = 5, k = 3,$ and $r=4$, the matrix $C$ implements the following sequence column operations on $P_{3,4}(\yy)$. Let $e_j$ be the elementary symmetric polynomial of degree $j$.
\begin{enumerate}
    \item We subtract $e_1(x_2^4, x_3^4, x_4^4, x_5^4)$ times column 2 from column 1, then add $e_2(x_2^4, x_3^4, x_4^4, x_5^4)$ times column 3 to column 1, then subtract $e_3(x_2^4, x_3^4, x_4^4, x_5^4)$ times column 4 from column 1, then add $e_4(x_2^4, x_3^4, x_4^4, x_5^4)$ times column 5 to column 1.
    \item We subtract $e_1(x_3^4, x_4^4, x_5^4)$ times column 3 from column 2, then add $e_2(x_3^4, x_4^4, x_5^4)$ times column 4 to column 2, then subtract $e_3(x_3^4, x_4^4, x_5^4)$ times column 5 from column 2.
    \item We subtract $e_1(x_4^4, x_5^4)$ times column 4 from column 3, then add $e_2(x_4^4, x_5^4)$ times column 5 to column 3.
    \item We subtract $e_1(x_5^4)$ times column 5 from column 2.
\end{enumerate}
Since $C$ adds and subtracts multiples of subsequent columns from previous columns, we see that $C$ is lower triangular with 1s on the diagonal.

For $J \subseteq [n]$ with $\# J = k$, let $E_J$ be the $(n-k) \times n$ echelon matrix with 0,1-entries whose pivot columns are those indexed by $[n] - J$. For example, if $n = 5$ and $J = \{1,3,4\}$ then
\[
E_J = \begin{pmatrix} 
0 & 1 & 0 & 0 & 0 \\
0 & 0 & 0 & 0 & 1
\end{pmatrix}.
\]
The {\em augmented factor matrix} $\widetilde{F}_{k,r}(\xx,\yy,J)$ is the $n \times n$ matrix  with block form
\begin{equation}
    \widetilde{F}_{k,r}(\xx,\yy,J) = \left(\begin{array}{c}
         F_{k,r}(\xx,\yy) \\ \hline E_J
    \end{array} \right).
\end{equation}
When $n = 5, r = 4,$ and $J = \{1,3,4\}$ the matrix $\widetilde{F}_{k,r}(\xx,\yy,J)$ is given by
\[  
\begin{footnotesize}
\left(
\begin{array}{ccccc}
    y_1 (y_1^4 - x_2^4)(y_1^4 - x_3^4)(y_1^4 - x_4^4)(y_1^4 - x_5^4) &
    y_1 (y_1^4 - x_3^4)(y_1^4 - x_4^4)(y_1^4 - x_5^4) & 
    y_1 (y_1^4 - x_4^4)(y_1^4 - x_5^4) &
    y_1 (y_1^4 - x_5^4) &y_1
    \\
     y_2 (y_2^4 - x_2^4)(y_2^4 - x_3^4)(y_2^4 - x_4^4)(y_2^4 - x_5^4) &  
     y_2 (y_2^4 - x_3^4)(y_2^4 - x_4^4)(y_2^4 - x_5^4) & 
    y_2 (y_2^4 - x_4^4)(y_2^4 - x_5^4) &
    y_2 (y_2^4 - x_5^4) &y_2\\
      y_3 (y_3^4 - x_2^4)(y_3^4 - x_3^4)(y_4^4 - x_4^4)(y_3^4 - x_5^4) & 
      y_3 (y_3^4 - x_3^4)(y_3^4 - x_4^4)(y_3^4 - x_5^4) & 
      y_3 (y_4^4 - x_4^4)(y_3^4 - x_5^4) &
      y_3 (y_3^4 - x_5^4) &y_3 \\ \hline
      0 & 1 & 0 & 0 & 0 \\
      0 & 0 &0 & 0 & 1
\end{array}
\right)
\end{footnotesize}.
\]

For $U \subseteq [n]$ with $\# U = \# J =  k$ we  write $\widetilde{F}_{k,r}(\xx,\xx_U, J)$ for the matrix obtained from $\widetilde{F}_{k,r}(\xx,\yy,J)$ by substituting the variables $\{x_u \,:\, u \in U \}$ for $y_1, y_2, \dots, y_k$. We are only concerned with the up-to-sign determinant of $\widetilde{F}_{k,r}(\xx,\xx_U, J)$, so the order of this substitution does not matter.
For $J \subseteq [n]$, let $f_{J,r} \in S$ be the polynomial
\begin{equation}
    f_{J,r} := \prod_{j \in J} x_j \times \prod_{j < i \leq n} (x_j^r - x_i^r) \in S.
\end{equation}
Similar polynomials were introduced in \cite{RW} when $r = 1$ and \cite{Bhattacharya} when $r= 2$. For example, if $n = 5, r= 4,$ and $J = \{1,3,4\}$ we have
\[
f_{J,r} = x_1(x_1^4 -x_2^4)(x_1^4 - x_3^4)(x_1^4 - x_4^4)(x_1^4 - x_5)^5 \times x_3 (x_3^4 - x_4^4)(x_3^4 - x_5^4) \times x_4 (x_4^4 - x_5^4)
\]
with one factor for each element of $J$.

\begin{lemma}
    \label{lem:augmented-determinant}
    For $U \subseteq [n]$ with $\# U = \# J = k$, let $\PPP_{J,U} \in S$ be the polynomial defined up to sign by
    \begin{equation}
        \PPP_{J,U} := \det \widetilde{F}_{k,r}(\xx,\xx_U,J).
    \end{equation}
    \begin{enumerate}
        \item If $U \not\leq_{\Gale} J$ we have $\PPP_{J,U} = 0$.
        \item We have $\PPP_{J,J} = \pm f_{J,r}$.
    \end{enumerate}
\end{lemma}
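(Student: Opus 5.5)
Laplace-expand the determinant along the bottom block $E_J$ and reduce to a $k\times k$ minor of the factor matrix. Each row of $E_J$ has a single $1$, in a pivot column indexed by $[n]-J$. Expanding along these $n-k$ rows deletes exactly the columns in $[n]-J$, so up to sign
\[
\PPP_{J,U} = \det\bigl( F_{k,r}(\xx,\xx_U)_{i,j} \bigr)_{1\le i\le k,\ j\in J},
\]
the $k\times k$ matrix whose rows are indexed by $u\in U=\{u_1<\cdots<u_k\}$ (after substituting $y_i=x_{u_i}$) and whose columns are indexed by $j\in J=\{j_1<\cdots<j_k\}$. Its $(u,j)$ entry is
\[
x_u\prod_{j<i\le n}(x_u^r-x_i^r).
\]
The key feature is that this entry vanishes whenever $u>j$, because then the factor $(x_u^r-x_u^r)$ occurs.

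For (1), suppose $U\not\le_\Gale J$. Then there is an index $s$ with $u_s>j_s$. Look at the submatrix with rows $u_s,u_{s+1},\dots,u_k$ and columns $j_1,\dots,j_s$. For $t\ge s$ and $t'\le s$ we have $u_t\ge u_s>j_s\ge j_{t'}$, so every entry of this block is zero. The block has $k-s+1$ rows and $s$ columns, and $(k-s+1)+s=k+1>k$. By the standard fact that a $k\times k$ matrix with an $a\times b$ zero block, where $a+b>k$, is singular (e.g.\ Laplace or the Frobenius--König theorem), the determinant vanishes. Hence $\PPP_{J,U}=0$.

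For (2), take $U=J$. The same vanishing rule makes the matrix triangular: the entry in row $j_t$, column $j_{t'}$ is zero when $t>t'$. The determinant is therefore the product of the diagonal entries,
\[
\prod_{t} x_{j_t}\prod_{j_t<i\le n}(x_{j_t}^r-x_i^r),
\]
which is exactly $f_{J,r}$, up to the sign from the Laplace expansion and the row ordering.

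The only delicate point is sign and index bookkeeping in the Laplace expansion along $E_J$. Since both claims are only up to sign, this poses no real obstacle. The main content is the zero-block argument in (1), and that is routine.
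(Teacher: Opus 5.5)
Your proposal is correct and follows the paper's proof: you expand along $E_J$ to reduce to the $k\times k$ minor on the columns indexed by $J$, then use that the $(u,j)$ entry vanishes when $u>j$. This gives a zero block that forces vanishing in (1), and triangularity with diagonal product $f_{J,r}$ in (2). Your explicit $(k-s+1)\times s$ zero block is a more detailed rendering of the paper's remark that the zero block ``intersects the main diagonal.''
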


Lemma~\ref{lem:augmented-determinant} and its proof are similar to \cite[Lem. 4.9]{MRW}.

\begin{proof}
    Let $\overline{F}_{k,r}(\xx,\xx_U,J)$ be the maximal $k \times k$ submatrix of the upper block $F_{k,r}(\xx,\xx_U,J)$  of $\widetilde{F}_{k,r}(\xx,\xx_U,J)$  consisting of the columns indexed by $J$. We have
    \begin{equation}
    \label{eq:P-reformulation}
        \PPP_{J,U} = \det \widetilde{F}_{k,r}(\xx,\xx_U,J) = \pm \det\overline{F}_{k,r}(\xx,\xx_U,J).
    \end{equation}

    (1) If $U \not\leq_{\Gale} J$, after rearranging the rows of $\widetilde{F}_{k,r}(\xx,\xx_U,J)$ we obtain a matrix with block form
    \[
    \begin{pmatrix}
        {\bf *} & {\bf *} \\ {\bf 0} & {\bf *}
    \end{pmatrix}
    \]
    where the block of zeros intersects the main diagonal. By \eqref{eq:P-reformulation} we obtain $\PPP_{J,U} = 0$.

    (2) Up to a rearrangement of rows, the matrix $\overline{F}_{k,r}(\xx,\xx_J,J)$ is upper triangular with diagonal entries
    \begin{equation}
        \left\{ x_j \cdot \prod_{j < i \leq n} (x_j^r - x_i^r) \,:\, j \in J  \right\}.
    \end{equation}
    Since these are exactly the factors of $f_{J,r}$, we conclude from \eqref{eq:P-reformulation} that $\PPP_{J,J} = \pm f_{J,r}$.
\end{proof}

We use our running example $n = 5, r = 4$, $J = \{1,3,4\}$ to illustrate Lemma~\ref{lem:augmented-determinant}. Suppose $U = \{a < b < c\} \subseteq [5]$. Then $\PPP_{J,U} \in S$ is the $5 \times 5$ determinant
\[  \det
\begin{tiny}
\left(
\begin{array}{ccccc}
    x_a (x_a^4 - x_2^4)(x_a^4 - x_3^4)(x_a^4 - x_4^4)(x_a^4 - x_5^4) &
    x_a (x_a^4 - x_3^4)(x_a^4 - x_4^4)(x_a^4 - x_5^4) & 
    x_a (x_a^4 - x_4^4)(x_a^4 - x_5^4) &
    x_a (x_a^4 - x_5^4) & x_a
    \\
     x_b (x_b^4 - x_2^4)(x_b^4 - x_3^4)(x_b^4 - x_4^4)(x_b^4 - x_5^4) &  
     x_b (x_b^4 - x_3^4)(x_b^4 - x_4^4)(x_b^4 - x_5^4) & 
    x_b (x_b^4 - x_4^4)(x_b^4 - x_5^4) &
    x_b (x_b^4 - x_5^4) & x_b\\
      x_c (x_c^4 - x_2^4)(x_c^4 - x_3^4)(x_c^4 - x_4^4)(x_c^4 - x_5^4) & 
      x_c (x_c^4 - x_3^4)(x_c^4 - x_4^4)(x_c^4 - x_5^4) & 
      x_c (x_c^4 - x_4^4)(x_c^4 - x_5^4) &
      x_c (x_c^4 - x_5^4) & x_c \\ \hline
      0 & 1 & 0 & 0 & 0 \\
      0 & 0 &0 & 0 & 1
\end{array}
\right)
\end{tiny}
\]
of the matrix $\widetilde{F}_{k,r}(\xx,\xx_U,J)$ obtained from $\widetilde{F}_{k,r}(\xx,\yy,J)$ by substituting $(y_1,y_2,y_3) \to (x_a,x_b,x_c)$. Expanding along the bottom two rows shows that $\PPP_{J,U}$ is up to sign\footnote{which is $+1$ in this case} the $3 \times 3$ determinant
\[
\PPP_{J,U} = \pm
\begin{footnotesize} 
\det
\begin{pmatrix}
    x_a (x_a^4 - x_2^4)(x_a^4 - x_3^4)(x_a^4 - x_4^4)(x_a^4 - x_5^4) & 
     x_a (x_a^4 - x_4^4)(x_a^4 - x_5^4) &
       x_a (x_a^4 - x_5^4)  \\
    x_b (x_b^4 - x_2^4)(x_b^4 - x_3^4)(x_b^4 - x_4^4)(x_b^4 - x_5^4) & 
     x_b (x_b^4 - x_4^4)(x_b^4 - x_5^4) &
       x_b (x_b^4 - x_5^4)  \\

       x_c (x_c^4 - x_2^4)(x_c^4 - x_3^4)(x_c^4 - x_4^4)(x_c^4 - x_5^4) & 
     x_c (x_c^4 - x_4^4)(x_c^4 - x_5^4) &
       x_c (x_c^4 - x_5^4)  \\
\end{pmatrix}
\end{footnotesize}.
\]
If $\{a < b < c \} \not\leq_\Gale \{1,3,4\}$, we must have
\[
a > 1, \quad b > 3, \quad \text{or} \quad c > 4
\]
and this $3 \times 3$ determinant vanishes for support reasons. On the other hand, if $J  =\{1,3,4\}$ we have
\begin{align*}
\PPP_{J,U} &= \pm
\begin{footnotesize} 
\det
\begin{pmatrix}
    x_1 (x_1^4 - x_2^4)(x_1^4 - x_3^4)(x_1^4 - x_4^4)(x_1^4 - x_5^4) & 
     x_1 (x_1^4 - x_4^4)(x_1^4 - x_5^4) &
       x_1 (x_1^4 - x_5^4)  \\
    0 & 
     x_3 (x_3^4 - x_4^4)(x_3^4 - x_5^4) &
       x_3 (x_3^4 - x_5^4)  \\
      0 & 
     0 &
       x_4 (x_4^4 - x_5^4)  \\
\end{pmatrix}
\end{footnotesize} \\
&= \pm x_1 (x_1^4 - x_2^4)(x_1^4 - x_3^4)(x_1^4 - x_4^4)(x_1^4 - x_5^4) \times x_3 (x_3^4 - x_4^4)(x_3^4 - x_5^4) \times 
x_4 (x_4^4 - x_5^4) \\
&= \pm f_{134,4}.
\end{align*}

In order to define our desired elements of $SI_{n,r}^\perp$, we need a couple more ideas. Let $J \subseteq [n]$ satisfy $\# J = k$. Observation~\ref{obs:matrix-mult} furnishes an $n \times n$ matrix $C \in GL_n(S)$ so that $F_{k,r}(\xx,\yy) = P_{k,r}(\yy) \cdot C$. We have the matrix equation
\begin{equation}
    \widetilde{F}_{k,r}(\xx,\yy) \cdot C^{-1} = 
    \left(
    \begin{array}{c}
            F_{k,r}(\xx,\yy) \\ \hline E_J
    \end{array}
    \right) \cdot C^{-1} =
    \left(
    \begin{array}{c}
        P_{k,r}(\yy) \\ \hline H_J
    \end{array}\right)
\end{equation}
where \begin{equation}H_J := E_J \cdot C^{-1}\end{equation} is an $(n-k) \times n$ matrix with entries in $S$ which depends on $J$. Given $K \subseteq [n]$ we write $K^* := \{ n-k+1 \,:\, k \in K \}$ for its `reversal'. We use the following operators to define 

\begin{definition}
    \label{def:D-operator}
    For $J \subseteq [n]$ with $\# J = k$, we define an operator $\DDD_{J,r}: \Omega \to \Omega$ by the formula
    \begin{equation}
        \DDD_{J,r}(f) := \sum_{\# U = n-k} (-1)^{\sum U} \Delta_U(H_J) \odot d_{\rho(U)}(f).
    \end{equation}
    Here $\sum U := \sum_{u \in U}u$, $\Delta_U(H_J)$ is the maximal minor of $H_J$ with columns indexed by $U$, and $\rho(U)$ is the set
    \[
    \rho(U) := \{ (n-u) \cdot r + 1 \,:\, u \in U \}
    \]
    obtained by `reversing with respect to $n$ and rescaling' the set $U$.
\end{definition}

Definition~\ref{def:D-operator} may appear complicated at first glance, but the formula defining $\DDD_{J,r}$ mimics the expansion of the block matrix determinant
\begin{equation} \det( \widetilde{F}_{k,r}(\xx,\xx_U) \cdot C^{-1}) = 
    \det \left(\begin{array}{c}
        P_{k,r}(\xx_U) \\ \hline H_J
    \end{array}\right)
\end{equation}  
in a way that will prove useful in establishing the triangularity result of Lemma~\ref{lem:D-leading} below.
We obtain elements of the inverse system $SI_{n,r}^\perp$ by applying the $\DDD_{J,r}$ operators to the $(\ZZ_r \wr \symm_n)$-Vandermonde polynomial $\delta_{n,r} \in S$. The $\DDD_{J,r}$ are designed so that these elements of $SI_{n,r}^\perp$ have advantagenous triangularity properties.

\begin{lemma}
    \label{lem:D-leading}
    Assume $r \geq 2$. Let $J \subseteq [n]$ and consider the superspace element $\DDD_{J,r}(\delta_{n,r}) \in \Omega$.
    \begin{enumerate}
        \item We have $\DDD_{J,r}(\delta_{n,r}) \in SI_{n,r}^\perp$.
        \item The unique Gale-maximal fermionic monomial which appears in $\DDD_{J,r}(\delta_{n,r})$. Up to sign, this monomial is 
        \[
        (f_{J,r} \odot \delta_{n,r}) \cdot \theta_J.
        \]
    \end{enumerate}
\end{lemma}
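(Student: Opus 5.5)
For part (1) the plan is closure. By Steinberg's Theorem~\ref{thm:steinberg}, $\delta_{n,r}$ lies in $I_{n,r}^\perp$. Purely bosonic polynomials are annihilated by every $\partial^\theta_i$, and $p_{jr}\odot\delta_{n,r}=0$, so $\delta_{n,r}$ is killed by all the generators \eqref{eq:power-sum-generators} and lies in $SI_{n,r}^\perp$. Every index in $\rho(U)$ has the form $(n-u)r+1\equiv 1 \bmod r$. By the Swanson--Wallach Theorem~\ref{thm:swanson-wallach}, each $d_{\rho(U)}(\delta_{n,r})$ therefore lies in $SI_{n,r}^\perp$.

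It remains to see that $SI_{n,r}^\perp$ is closed under $g\odot(-)$ for $g\in S$. If $h\in SI_{n,r}$ and $F\in SI_{n,r}^\perp$, then $h\odot(g\odot F)=(hg)\odot F=0$, because $SI_{n,r}$ is an ideal and $\odot$ is a module action (bosonic $g$ commutes with everything, so no signs arise). Taking the linear combination with coefficients $\Delta_U(H_J)$ gives $\DDD_{J,r}(\delta_{n,r})\in SI_{n,r}^\perp$.

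For part (2) I will expand $d_{\rho(U)}$ fermionically. Write $\rho(U)=\{j_1,\dots,j_{n-k}\}$. Then
\[
d_{j_1}\cdots d_{j_{n-k}}(\delta)=\sum_{i_1,\dots,i_{n-k}}\pm\,\partial_{i_1}^{j_1}\cdots\partial_{i_{n-k}}^{j_{n-k}}\delta\cdot\theta_{i_1}\cdots\theta_{i_{n-k}}.
\]
Collecting by the set $I=\{i_1,\dots,i_{n-k}\}$, the coefficient of $\theta_I$ is $\big(\det(x_i^{j})_{i\in I,\,j\in\rho(U)}\big)\odot\delta$ up to a global sign, since antisymmetrizing over the orderings produces a determinant in the operators $\partial_i^j$. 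Summing over $U$ with the weights $(-1)^{\sum U}\Delta_U(H_J)$ and applying Laplace expansion along the bottom block, the coefficient of $\theta_I$ becomes
\[
\pm\det\left(\begin{array}{c} P_{k,r}(\xx_{[n]-I})\\ \hline H_J\end{array}\right)\odot\delta_{n,r}.
\]
Here the columns $\rho(U)$ of the power matrix rows are the exponents $(n-u)r+1$, and the top block is indexed by the complementary rows. This is where I must carefully match which rows (indices $I$ or $[n]-I$) fill $P$ and how $J$ pairs with $I$; the intended outcome is that the fermionic monomial $\theta_I$ pairs with $P$ evaluated at $\xx_I$ when $\#I=k$. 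The reversal convention $\rho$ is presumably built to make this work, and I expect that bookkeeping, together with the signs, to be the main obstacle.

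Granting this, the coefficient equals $\det(\widetilde F_{k,r}(\xx,\xx_I,J)C^{-1})\odot\delta$. One subtlety: $C$ has entries in $S$ and $\det C^{-1}=1$, but the determinant is formed before applying $\odot$. Since all entries are polynomials in commuting $x$'s, the product still yields $\PPP_{J,I}\odot\delta_{n,r}$. Lemma~\ref{lem:augmented-determinant} then finishes the argument. If $I\not\le_{\Gale}J$ the coefficient vanishes, so every $\theta_I$ that appears satisfies $I\le_{\Gale}J$. The coefficient of $\theta_J$ is $\pm f_{J,r}\odot\delta_{n,r}$. This is nonzero because $f_{J,r}\notin I_{n,r}$; that fact needs a brief check, for instance via its degree and the leading monomial $x_J$-staircase, or by deferring to the later colon-ideal analysis. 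Hence $\theta_J$ is the unique Gale-maximal fermionic monomial, with the stated coefficient.
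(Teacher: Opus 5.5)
Your route is the paper's: part (1) by closure of $SI_{n,r}^\perp$ under the $d_j$ with $j\equiv 1 \bmod r$ (Theorem~\ref{thm:swanson-wallach}) and under bosonic $\odot$, and part (2) by identifying the $\theta_{J'}$-coefficient with $\PPP_{J,J'}\odot\delta_{n,r}$ via a Laplace expansion, $\det C=1$, and Lemma~\ref{lem:augmented-determinant}.

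\textbf{The gap.} The nonvanishing $f_{J,r}\odot\delta_{n,r}\neq 0$ is left as ``needs a brief check.'' Without it, $\theta_J$ need not occur at all, so there is no Gale-maximal monomial to identify. This is the only place the hypothesis $r\geq 2$ enters, and your proposal never uses it. The claim fails for $r=1$: for $n=1$ and $J=\{1\}$ one has $f_{J,1}\odot\delta_{1,1}=\partial_1(1)=0$.

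Neither of your suggested routes closes this as stated. A degree count cannot show $f_{J,r}\notin I_{n,r}$. Deferring to the colon-ideal analysis is circular: the proof of Lemma~\ref{lem:colon-ideal-lemma} uses this very nonvanishing to verify the hypothesis $f_{J,r}\notin I_{n,r}$ of Lemma~\ref{lem:regular-criterion}. It also uses part (2) to put $p_{J,r,i}$ into $(I_{n,r}:f_{J,r})$.

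The paper's argument is short. First, $f_{J,r}$ divides $f_{[n],r}=(x_1\cdots x_n)\prod_{i<j}(x_i^r-x_j^r)$. Second, $\delta_{n,r}=(x_1\cdots x_n)^{r-2}f_{[n],r}$, which is a polynomial identity because $r\geq 2$. So $f_{J,r}\odot\delta_{n,r}=0$ would force $\delta_{n,r}\odot\delta_{n,r}=(x_1\cdots x_n)^{r-2}\odot(f_{[n],r}\odot\delta_{n,r})=0$. But $g\odot g$ is a positive real number for any nonzero homogeneous $g$ with real coefficients.

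Your leading-monomial idea can also be made rigorous, but it needs a Gr\"obner statement, not just a monomial basis. For lex order with $x_1>\cdots>x_n$, the $h_{n-i+1}(x_i^r,\dots,x_n^r)$ form a Gr\"obner basis of $I_{n,r}$ with initial ideal $(x_1^{rn},x_2^{r(n-1)},\dots,x_n^{r})$. The leading monomial $\prod_{j\in J}x_j^{r(n-j)+1}$ of $f_{J,r}$ lies outside this ideal exactly when $r\geq 2$.

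\textbf{The bookkeeping you flagged.} As printed, $\DDD_{J,r}$ pairs $\Delta_U(H_J)$ (with $\#U=n-k$) with $d_{\rho(U)}$. That operator raises fermionic degree by $n-k$, so it cannot produce $\theta_J$ unless $n=2k$. Your display with $P_{k,r}(\xx_{[n]-I})$ is also not what that sum yields: there both minors sit on the same column set $U$, which is not a Laplace expansion.

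Expand the block determinant with top block $P_{k,r}(\xx_{J'})$ and bottom block $H_J$ along its last $n-k$ rows. This pairs $\Delta_U(H_J)$ with the minor of $P_{k,r}(\xx_{J'})$ on the complementary columns $[n]-U$. That minor, acting by $\odot$ on $\delta_{n,r}$, is up to sign the coefficient of $\theta_{J'}$ in $d_{\rho([n]-U)}(\delta_{n,r})$. So the pairing that makes the paper's chain of equalities, and your ``granted'' step, valid is $\Delta_U(H_J)\odot d_{\rho([n]-U)}$.

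With this pairing, $(-1)^{\sum U}$ is the Laplace sign up to a constant. The reordering of the $\theta$'s and the order reversal built into $\rho$ contribute a sign independent of $U$ and $J'$. The rest of your argument then goes through as in the paper.
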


\begin{proof}
    (1) Steinberg's Theorem~\ref{thm:steinberg} implies that $\delta_{n,r} \in SI_{n,r}^\perp$. The $d$-operators appearing in the definition of $\DDD_{J,r}$ have indices of the form $(n-u) \cdot r + 1 \equiv 1 \mod r$, so by Theorem~\ref{thm:swanson-wallach} the $\DDD_{J,r}$ preserve $SI_{n,r}^\perp$.

    (2) Let $J' \subseteq [n]$ satisfy $\# J' = \# J$. We have the chain of up-to-sign equalities
    \begin{align}
        \text{coefficient of $\theta_{J'}$ in $\DDD_{J,r}(\delta_{n,r})$} &= \text{coefficient of $\theta_{J'}$ in }
        \sum_{\# U = n-k} (-1)^{\sum U} \Delta_U(H_J) \odot d_{\rho(U)}(\delta_{n,r}) \\
        &= \pm \left[ \det \left(
            \begin{array}{c}
                P_{k,r}(\xx_{J'}) \\ \hline H_J
            \end{array}
        \right) \right] \odot \delta_{n,r} \\
        &= \pm \left[ \det \left(
            \begin{array}{c}
                F_{k,r}(\xx,\xx_{J'}) \\ \hline E_J
            \end{array}
        \right) \right] \odot \delta_{n,r} \\
        &= \pm \PPP_{J,J'} \odot \delta_{n,r}
    \end{align}
    where the third equality follows because the matrix $C$ of Observation~\ref{obs:matrix-mult} satisfies $\det(C) = 1$. Lemma~\ref{lem:augmented-determinant} (1) shows that the coefficient of $\theta_{J'}$ in $\DDD_{J,r}(\delta_{n,r})$ is zero unless $J' \leq_\Gale J$. By Lemma~\ref{lem:augmented-determinant} (2), we have
    \begin{equation}
        \text{coefficient of $\theta_J$ in $\DDD_{J,r}(\delta_{n,r})$} = f_{J,r} \odot \delta_{n,r}.
    \end{equation}
    We will be done if we can establish the nonvanishing property \begin{equation}
    \label{eq:nonvanishing}
        f_{J,r} \odot \delta_{n,r} \neq 0 \quad \quad (J \subseteq [n], \, r \geq 2).
    \end{equation}
    Fixing $r \geq 2$, it is enough to prove the nonvanishing \eqref{eq:nonvanishing} when $J = [n]$. We have
    \begin{equation}\label{eq: f[n]r}
        f_{[n],r} = (x_1 \cdots x_n) \times \prod_{1\leq i < j \leq n} (x_i^r - x_j^r).
    \end{equation}
    If \eqref{eq:nonvanishing} failed, for $J = [n]$, we would have $f_{[n],r} \odot \delta_{n,r} = 0$, but then
    \begin{align}
        \delta_{n,r} \odot \delta_{n,r} &= \left[
            x_1^{r-1} \cdots x_n^{r-1} \times \prod_{1\leq i < j \leq n} (x_i^r - x_j^r)
        \right] \odot \delta_{n,r} \\
        &= (x_1^{r-2} \cdots x_n^{r-2}) \odot \left[ f_{[n],r} \odot \delta_{n,r}
        \right] \\
        &= (x_1^{r-2} \cdots x_n^{r-2}) \odot 0  \\
        &= 0.
    \end{align}
    But $\delta_{n,r} \odot \delta_{n,r}$ is a strictly positive real number and we have a contradiction. We conclude that $f_{[n],r} \odot \delta_{n,r} \neq 0$, the nonvanishing statement \eqref{eq:nonvanishing} is established, and the lemma is proven.
\end{proof}

Steinberg's Theorem~\ref{thm:steinberg} and Lemma~\ref{lem:D-leading} motivate the study of the colon ideals \[(I_{n,r}:f_{J,r}) \subseteq S.\] Even when $I \subseteq S$ is a `nice' ideal, colon ideals $(I:f)$ are typically badly behaved from a commutative algebra point of view. By contrast, the ideals $(I_{n,r}:f_{J,r})$ are complete intersections and relate to staircase combinatorics.

\begin{lemma}
    \label{lem:colon-ideal-lemma}
    Let $r \geq 2$. For any $J \subseteq [n]$, the colon ideal $(I_{n,r}:f_{J,r})$ is generated by a homogeneous regular sequence $p_{J,r,1},\dots,p_{J,r,n} \in S$ with
    \begin{equation}
        \deg(p_{J,r,i}) = \stair(J,r)_i + 1.
    \end{equation}
\end{lemma}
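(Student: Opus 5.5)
The plan is to use the Abe--Horiguchi--Maeno--Murai--Sato criterion (Lemma~\ref{lem:regular-criterion}) with $I = I_{n,r} = (p_r, p_{2r}, \dots, p_{nr})$ and $f = f_{J,r}$. The ideal $I_{n,r}$ is a complete intersection: by Lemma~\ref{lem:locus-criterion}, the common zero locus of $p_r,\dots,p_{nr}$ is $\{0\}$. This is because Newton's identities turn the vanishing of $p_r,\dots,p_{nr}$ into the vanishing of $e_1(x^r),\dots,e_n(x^r)$, which forces every $x_i^r=0$. So the socle degree of $S/I_{n,r}$ is
\[
d = \sum_{i=1}^n (ir-1) = r\binom{n+1}{2} - n .
\]
Next, $f_{J,r}\notin I_{n,r}$. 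Indeed, $f_{J,r}\odot\delta_{n,r}\neq 0$ was established in the proof of Lemma~\ref{lem:D-leading}. Since $\delta_{n,r}\in I_{n,r}^\perp$ by Theorem~\ref{thm:steinberg}, any $f\in I_{n,r}$ would satisfy $f\odot\delta_{n,r}=0$.

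The degree count runs as follows. We have $\deg f_{J,r} = \sum_{j\in J}\bigl(1 + r(n-j)\bigr)$. The target is the identity
\[
\sum_{i=1}^n \bigl(\stair(J,r)_i+1\bigr) + \deg f_{J,r} = r\binom{n+1}{2}-n .
\]
I would verify it by induction on $\#J$. For $J=\varnothing$ it is immediate. Moving an index $j$ into $J$ changes the staircase in two ways: position $j$ drops from $ir-1$ to $(i-1)r-2$, so it loses $r+1$, and every later non-$J$ position loses $r$. These losses are exactly offset by the new factor of $f_{J,r}$, whose degree is $1+r(n-j)$. This step is a routine combinatorial check.

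The main work, and the step I expect to be the obstacle, is exhibiting the regular sequence inside $(I_{n,r}:f_{J,r})$ with the prescribed degrees. I would build it by analogy with the $r=1$ construction of \cite{RW} and the $r=2$ construction of \cite{Bhattacharya}, processing indices from $n$ down to $1$.

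For $u\notin J$, the natural candidate is a complete homogeneous polynomial in $r$-th powers of the variables $x_u,\dots,x_n$. This fits the classical Gröbner basis of $I_{n,r}$, namely $h_{i}(x_i^r,\dots,x_n^r)$ for $i=1,\dots,n$, which lies in $I_{n,r}$ because $h_{n-i+1}$ of the $r$-th powers of a tail of variables is in the ideal. This candidate would be multiplied by $x_u^{r-1}$ or otherwise adjusted, so as to reach degree $ir-1+1=ir$ where $i$ is the rank of $u$ among non-$J$ indices.

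For $j\in J$, the factor $x_j\prod_{i>j}(x_j^r-x_i^r)$ of $f_{J,r}$ is meant to absorb one unit of each of $x_j$ and of the later power sums. The candidate is therefore a polynomial of degree $(i-1)r-1$ — for instance $h$ of $r$-th powers of a suitable set of variables, times $x_j^{r-1}$ appropriately reduced.

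To prove each candidate $g$ lies in the colon, I would show $g f_{J,r}\in I_{n,r}$. The first way is to write $g f_{J,r}$ as a combination of $h_{m}(x_m^r,\dots,x_n^r)$ using the telescoping identity
\[
h_a(y,\,x_{j+1}^r,\dots) - h_a(x_{j+1}^r,\dots) = y\, h_{a-1}(y,\,x_{j+1}^r,\dots).
\]
Since the leading terms of these candidates are pure powers $x_i^{\stair(J,r)_i+1}$ in a suitable term order, the system $g_1=\cdots=g_n=0$ has only the trivial solution. This gives regularity via Lemma~\ref{lem:locus-criterion}, after which Lemma~\ref{lem:regular-criterion} yields the equality of ideals.

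If the explicit construction proves unwieldy, there is a fallback. Degree considerations and the expansion map $\varphi_r$ of Definition~\ref{def:expansion-homomorphism} relate the $r=1$ colon ideal generators for $J$ (known from \cite{RW}) to the present ones: apply $\varphi_r$ and multiply by $x^{r-1}$ factors. One would then check that this transfer still lands in the colon ideal.
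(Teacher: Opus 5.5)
Your outer frame (Lemma~\ref{lem:regular-criterion} with $I=I_{n,r}$, $f=f_{J,r}$, the complete-intersection property, and $f_{J,r}\notin I_{n,r}$ via $f_{J,r}\odot\delta_{n,r}\neq 0$) matches the paper's last step. However, the substance of the lemma, a regular sequence of the right degrees inside $(I_{n,r}:f_{J,r})$, is not supplied, and the candidate you call natural is wrong. A complete homogeneous polynomial in $x_u^r,\dots,x_n^r$ alone is in general not in the colon ideal. Take $n=3$ and $J=\{2\}$, so $\stair(J,r)=(r-1,2r-2,2r-1)$ and your candidate for $u=3$ is $x_3^{2r}$. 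Put $y_i=x_i^r$. Then $S$ is free over $\CC[y]$ on the $x^c$ with $0\le c_i<r$, and $I_{3,r}$ is extended from $(e_1(y),e_2(y),e_3(y))$. Since $x_3^{2r}f_{J,r}=x_2\,y_3^2(y_2-y_3)$ and $r\ge 2$, membership would force $y_3^2(y_2-y_3)\in(e_1,e_2,e_3)$. But modulo the Gr\"obner basis $h_1(y_1,y_2,y_3)$, $h_2(y_2,y_3)$, $h_3(y_3)$ this reduces to $y_2y_3^2\neq 0$.

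The missing idea is that the earlier indices of $J$ must stay in the variable set. The paper takes $p_{J,r,i}=h_{j_i,r}(J\cup\{i,\dots,n\})$ for $i\notin J$ and $p_{J,r,i}=\partial_i h_{j_i,r}(J\cup\{i,\dots,n\})$ for $i\in J$, with $j_i=n+1-\#(J\cup\{i,\dots,n\})$. In the example, $p_{J,r,3}=h_2(x_2^r,x_3^r)$, and indeed $h_2(y_2,y_3)(y_2-y_3)=y_2^3-y_3^3\in I_3$. Also, for $j\in J$ with $u_{i-1}<j<u_i$ the required degree is $\stair(J,r)_j+1=ir-1$, not $(i-1)r-1$ (which is $-1$ when $J=[n]$). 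Membership is then proved not by telescoping but by duality. The element $q_{J,r,i}\in SI_{n,r}$ is supported on $\theta_K$ with $K\ge_{\Gale}J$ and has $\theta_J$-coefficient $\pm p_{J,r,i}$. Pairing it with $\DDD_{J,r}(\delta_{n,r})\in SI_{n,r}^\perp$, whose Gale-maximal coefficient is $f_{J,r}\odot\delta_{n,r}$ (Lemma~\ref{lem:D-leading}), gives $(p_{J,r,i}f_{J,r})\odot\delta_{n,r}=0$, hence $p_{J,r,i}f_{J,r}\in I_{n,r}$ by Theorem~\ref{thm:steinberg}.

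Your regularity step cannot work for any choice of candidates. In the same example, no term order lets elements of $(I_{3,r}:f_{\{2\},r})$ have leading terms $x_1^r,x_2^{2r-1},x_3^{2r}$. The pure power of the smallest variable is the least monomial of its degree, so it is a leading term only if it lies in the ideal itself. The computation above shows none of the three does: they give $y_1(y_2-y_3)\equiv y_2y_3+2y_3^2$, $y_2^2(y_2-y_3)\equiv y_2y_3^2$, and $y_3^2(y_2-y_3)\equiv y_2y_3^2$. The paper instead shows directly that $p_{J,r,1}=\cdots=p_{J,r,n}=0$ only at the origin and then applies Lemma~\ref{lem:locus-criterion}. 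It gets there via the identity $\partial_a h_{j,r}(K)=(x_a^r-x_b^r)\,\partial_a h_{j-1,r}(K\cup b)-r x_a^{r-1}h_{j-1,r}(K\cup b)$ (for $a\in K$, $b\notin K$) and the resulting memberships $\partial_j h_{n+1-\#J,r}(J)\in(p_{J,r,1},\dots,p_{J,r,n})$ for $j\in J$.

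Your degree bookkeeping is also off. The relation that holds, and that the criterion actually needs since $S/(I:f)$ has socle degree $d-\deg f$, is $\sum_i\stair(J,r)_i+\deg f_{J,r}=\sum_i(ir-1)$; your version fails by $n$ already at $J=\varnothing$. When $j$ enters $J$, the entry at $j$ drops by $1$ (from $ir-1$ to $ir-2$), and every later entry, in $J$ or not, drops by $r$. Finally, the $\varphi_r$ fallback of Definition~\ref{def:expansion-homomorphism} is not a proof as it stands. Here $f_{J,r}$ has linear factors $x_j$ and so is not in the image of $\varphi_r$. Moreover, for $1\in J$ the $r=1$ colon ideal $(I_{n,1}:f_{J,1})$ is the unit ideal, so there is nothing to transfer.
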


A result similar to Lemma~\ref{lem:colon-ideal-lemma} was proven in the $\symm_n$ case of $r=1$ in \cite{RW}. Lemma~\ref{lem:colon-ideal-lemma} itself was proven when $r=2$ in \cite{Bhattacharya}. 

\begin{proof}
    To start, we construct a regular sequence of homogeneous polynomials $p_{J,r,1}, \dots, p_{J,r,n} \in S$ with $\deg(p_{J,r,i}) = \stair(J,r)_i + 1$ so that $p_{J,r,i} \in (I_{n,r}:f_J)$ for all $i = 1, \dots, n$. Our construction is a generalization of the $r =2$ case found in \cite[Sec. 3]{Bhattacharya}.

    For $j \geq 1$, and any subset $K \subseteq [n]$, let $h_{j,r}(K) \in S$ be the polynomial
    \begin{equation}
        h_{j,r}(K) = \sum_{\substack{i_1 \leq \cdots \leq i_j \\ i_1, \dots, i_j \in K}} x_{i_1}^r  \cdots x_{i_j}^r.
    \end{equation}
    In other words, the polynomial $h_{j,r}(K)$ is the degree $j$ complete homogeneous symmetric polynomial in the variable set $\{x_i^r \,:\, i \in K \}$. It is well-known that $h_{j,r}(K) \in I_{n,r}$ whenever $(\# K) + j > n$.

    For $1 \leq i \leq n$, define an element $q_{J,r,i} \in \Omega$ by the rule
    \begin{equation}
        q_{J,r,i} := \begin{cases}
            h_{j_i,r}(J \cup \{i, i+1,\dots,n\}) \cdot \theta_J & i \notin J, \\
            d h_{j_i,r}(J \cup \{i, i+1,\dots,n\}) \cdot \theta_{J-i} & i \in J,
        \end{cases}
    \end{equation}
    where
    \begin{equation}
        j_i := n + 1 -  \# (J \cup \{i,i+1,\dots,n\}). 
    \end{equation}
    Since $h_{j,r}(K) \in I_{n,r}$ whenever $(\# K ) + j > n$ and the ideal $SI_{n,r}$ is closed under the Euler derivation $d: \Omega \to \Omega$, we deduce that $q_{J,r,i} \in SI_{n,r}$ for all $i = 1, \dots, n$. Furthermore, one has 
    \begin{equation}
    \label{eq:q-triangularity}
        q_{J,r,i} \in \bigoplus_{J \leq _{\Gale} K} S \cdot \theta_K
    \end{equation}
    where $\leq_{\Gale}$ is the Gale order. The projection of $q_{J,r,i}$ onto the summand $S \cdot \theta_J$ is given  by $\pm p_{J,r,i} \in S$ where
    \begin{equation}
        p_{J,r,i} := \begin{cases}
            h_{j_i,r}(J \cup \{i, i+1,\dots,n\})  & i \notin J, \\
            \partial_i h_{j_i,r}(J \cup \{i, i+1,\dots,n\})  & i \in J.
        \end{cases}
    \end{equation}
    For example, suppose $n = 5, r = 4,$ and $r = \{1,3,4\}$. We have
    \[
    q_{J,r,1} = d h_{1,4}(12345) \cdot \theta_{34}, \quad
    q_{J,r,2} = h_{1,4}(12345) \cdot \theta_{134}, \quad
    q_{J,r,3} = dh_{2,4}(1345) \cdot \theta_{14}, 
    \]
    \[
    q_{J,r,4} = dh_{2,4}(1345) \cdot \theta_{13}, \quad \text{and} \quad 
    q_{J,r,5} = h_{3,4}(134) \cdot \theta_{134}
    \]
    and 
    \[
    p_{J,r,1} = \partial_1 h_{1,4}(12345), \quad
    p_{J,r,2} = h_{1,4}(12345), \quad
    p_{J,r,3} = \partial_3 h_{2,4}(1345), 
    \]
    \[
    p_{J,r,4} = \partial_4 h_{2,4}(1345), \quad \text{and} \quad 
    p_{J,r,5} = h_{3,4}(134).
    \]
    Observe that $\stair(134,4) = (\underline{2}, 3,\underline{6}, \underline{6}, 7)$.
     Since $r \geq 2$, the polynomial $p_{J,i,r} \in S$ is homogeneous with positive degree
    \begin{equation}
    \label{eq:degree-of-p}
        \deg(p_{J,i,r}) = \stair(J,r)_i + 1.
    \end{equation}

    We claim that $p_{J,i,r} \in (I_{n,r}: f_{J,r})$ for all $i = 1, \dots, n$. Indeed, since $q_{J,i,r} \in SI_{n,r}$ and $\DDD_{J,r}(\delta_{n,r}) \in SI_{n,r}^\perp$ we have
    \begin{equation}
    \label{eq:q-triangle-one}
        q_{J,i,r} \odot \DDD_{J,r}(\delta_{n,r}) = 0.
    \end{equation}
    On the other hand, the triangularity assertions of Lemma~\ref{lem:D-leading} and \eqref{eq:q-triangularity} force
    \begin{equation}
    \label{eq:q-triangle-two}
        q_{J,i,r} \odot \DDD_{J,r}(\delta_{n,r}) = \pm p_{J,i,r} \odot \left[ f_{J,r} \odot \delta_{n,r} \right] = \pm (p_{J,i,r} \cdot f_{J,r}) \odot \delta_{n,r}.
    \end{equation}
    Combining Equations~\eqref{eq:q-triangle-one} and \eqref{eq:q-triangle-two} gives $(p_{J,i,r} \cdot f_{J,r}) \odot \delta_{n,r} = 0$. Steinberg's Theorem~\ref{thm:steinberg} implies $p_{J,i,r} \cdot f_{J,r} \in I_{n,r}$, or equivalently $p_{J,i,r} \in (I_{n,r} : f_{J,r})$.

    In the case $r = 2$ it is shown in \cite[Lem. 3.3, Lem. 3.4, Lem. 3.5]{Bhattacharya} that the variety in $\CC^n$ cut out by $p_{J,2,1} = \cdots = p_{J,2,n} = 0$ consists of the origin alone, so that Lemma~\ref{lem:locus-criterion} guarantees that $p_{J,2,1}, \dots, p_{J,2,n}$ is a regular sequence. This reasoning generalizes to $r \geq 2$ without modification. Specifically, one first proves the $r \geq 2$ analog of \cite[Lem. 3.3]{Bhattacharya} by showing
    \begin{equation}
    \label{eq:partial-h-identity}
        \partial_a h_{j,r}(K) = (x_a^r - x_b^r)  \partial_a h_{j-1,r} (K \cup b)  - r x_a^{r-1} \cdot h_{j-1,r}(K \cup b)
    \end{equation}
    for any $j \geq 1$, $r > 1$, and $K \subseteq [n]$ with $a \in S$ and $b \notin S$. Equation~\eqref{eq:partial-h-identity} is proven in exactly the same way as \cite[Lem. 3.3]{Bhattacharya}. As in \cite[Lem. 3.4]{Bhattacharya}, one uses  \eqref{eq:partial-h-identity} to establish the ideal memberships
    \begin{equation}
        \label{eq:ideal-membership}
       \partial_j  h_{(n+1-\#J),r}(J) \in (p_{J,r,1},\dots,p_{J,r,n}) \quad \text{for all $j \in J$.}
    \end{equation}
    Finally, as in the proof of \cite[Lem. 3.5]{Bhattacharya}, the ideal memberships \eqref{eq:ideal-membership} are used to show that $p_{J,r,1} = \cdots = p_{J,r,n} = 0$ has only the trivial solution $\{0\}$.

    At this stage we know that $p_{J,r,1}, \dots, p_{J,r,n}$ is a homogeneous regular sequence contained in the colon ideal $(I_{n,r}: f_{J,r})$. In particular, we have the containment of ideals \[(p_{J,r,1}, \dots, p_{J,r,n}) \subseteq (I_{n,r}:f_{J,r}).\] We want to apply Lemma~\ref{lem:regular-criterion} to upgrade this ideal containment to an equality. The hypotheses of Lemma~\ref{lem:regular-criterion} are checked as follows.
    \begin{itemize}
        \item Since $r \geq 2$, the nonvanishing property \eqref{eq:nonvanishing} and Theorem~\ref{thm:steinberg} imply $f_{J,r} \notin I_{n,r}$.
        \item The maximum nonvanishing degree of the graded quotient ring $S/I_{n,r}$ is $\sum_{i=1}^n ( r \cdot i - 1).$ From the definition of $f_{J,r}$ we have
        \begin{align}
            \sum_{i=1}^n ( r \cdot i - 1) &= \deg(f_{J,r}) + \stair(J,r)_1 + \cdots + \stair(J,r)_n + n \\
            &= \deg(f_{J,r}) + \deg(p_{J,1}) + \cdots + \deg(p_{J,n})
        \end{align}
        where the second line used \eqref{eq:degree-of-p}.
    \end{itemize}
    Lemma~\ref{lem:regular-criterion} applies to give the equality of ideals
    \begin{equation}
        (p_{J,r,1}, \dots, p_{J,r,n}) = (I_{n,r}:f_{J,r})
    \end{equation}
    and the proof is complete.
\end{proof}

\subsection{Operator theorem and monomial basis} We are ready to put the lemmas in the previous technical subsection to use.  We start by giving an operator-theoretic characterization of the inverse system $SI_{n,r}^\perp$.

\begin{theorem}
    \label{thm:operator}
    {\em (Operator Theorem)} The inverse system $SI_{n,r}^\perp$ is the smallest linear subspace of $\Omega$ which $\dots$
    \begin{itemize}
        \item contains $\delta_{n,r} \in S$,
        \item is closed under the partial derivatives $\partial_1, \dots, \partial_n$, and
        \item is closed under the higher Euler operators $d_1, d_{r+1}, \dots, d_{(n-1) \cdot r+1}$.
    \end{itemize}
\end{theorem}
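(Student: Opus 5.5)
Let $W \subseteq \Omega$ denote the smallest linear subspace containing $\delta_{n,r}$ and closed under $\partial_1,\dots,\partial_n$ and $d_1, d_{r+1},\dots,d_{(n-1)r+1}$. The plan is to prove $W \subseteq SI_{n,r}^\perp$ directly, and then to obtain equality by a dimension count that compares $W$ with the spanning set $\AAA_{n,r}$ of Lemma~\ref{lem:spanning}.

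For the containment, Steinberg's Theorem~\ref{thm:steinberg} gives $\delta_{n,r} \in I_{n,r}^\perp$. Each generator of $SI_{n,r}$ in \eqref{eq:power-sum-generators} is either in $I_{n,r}$ or contains a $\theta$, so it annihilates the purely bosonic $\delta_{n,r}$; hence $\delta_{n,r} \in SI_{n,r}^\perp$. The space $SI_{n,r}^\perp$ is closed under each $\partial_i$, because $SI_{n,r}$ is an ideal and $f \odot (\partial_i g) = (x_i f)\odot g$. It is closed under each $d_j$ with $j \equiv 1 \bmod r$ by Theorem~\ref{thm:swanson-wallach}. Together these give $W \subseteq SI_{n,r}^\perp$.

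For the reverse inequality of dimensions, fix $J$ with $\#J = k$. Each operator $\DDD_{J,r}$ is a combination of $\partial$-operators composed with the operators $d_{\rho(U)}$, whose indices $(n-u)r+1$ are exactly those allowed. So $\DDD_{J,r}(\delta_{n,r}) \in W$, and also $g \odot \DDD_{J,r}(\delta_{n,r}) \in W$ for every $g \in S$.
\begin{itemize}
\item By Lemma~\ref{lem:D-leading}, the $\theta_J$-coefficient of $g \odot \DDD_{J,r}(\delta_{n,r})$ is $\pm (g f_{J,r})\odot\delta_{n,r}$, and only $\theta_{J'}$ with $J' \leq_\Gale J$ occur.
\item By Steinberg's theorem, $(g f_{J,r})\odot \delta_{n,r} = 0$ iff $g \in (I_{n,r}:f_{J,r})$. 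So these $\theta_J$-coefficients form a space isomorphic to $S/(I_{n,r}:f_{J,r})$.
\item By Lemma~\ref{lem:colon-ideal-lemma}, this quotient is a complete intersection with generator degrees $\stair(J,r)_i+1$. Its Hilbert series is therefore $\prod_i [\stair(J,r)_i+1]_q = \Hilb(\AAA_{n,r}(J,r);q)$.
\end{itemize}

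Next, choose for each $J$ a homogeneous family $g$ whose images $(g f_{J,r})\odot\delta_{n,r}$ form a basis of that coefficient space, and collect all the resulting elements of $W$. I claim this collection is linearly independent. Extend the Gale order to a linear order on subsets of each size. In a vanishing linear combination, look at a $J$ that is maximal among those appearing. Its $\theta_J$-component comes only from the $J$-family, by the Gale triangularity above. Those components are independent by construction, so their coefficients vanish, and induction finishes the argument. Sizes $k$ are separated by fermionic degree, and the bigrading is respected, so
\[
\Hilb(W;q,z) \geq \sum_{J} z^{\#J}\, \Hilb(\AAA_{n,r}(J,r);q) = \Hilb(\AAA_{n,r};q,z)
\]
coefficientwise. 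On the other hand, Lemma~\ref{lem:spanning} gives $\Hilb(SI_{n,r}^\perp;q,z) = \Hilb(SR_{n,r};q,z) \leq \Hilb(\AAA_{n,r};q,z)$. Combining this with $W \subseteq SI_{n,r}^\perp$ forces $W = SI_{n,r}^\perp$, and as a bonus shows that $\AAA_{n,r}$ is a basis.

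The main obstacle is the case $r = 1$, where Lemmas~\ref{lem:D-leading} and \ref{lem:colon-ideal-lemma} are stated only for $r \geq 2$. There I would either cite the known type A Operator Theorem of \cite{RW}, or run the same argument with \cite{RW}'s colon-ideal results; when $1 \in J$ one has $f_{J,1}\odot\delta_{n,1}=0$ and $\AAA_{n,1}(J,1)=\varnothing$, so those $J$ contribute nothing on either side. A secondary point is to check carefully that the triangularity in Lemma~\ref{lem:D-leading} survives applying $g\odot$. It does, because $\partial$-operators act only on the bosonic coefficients of each $\theta_{J'}$.
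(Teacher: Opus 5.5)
Your proposal is correct and follows essentially the same route as the paper: containment via Steinberg and Swanson--Wallach, the lower bound $\dim W \ge \sum_J \dim_\CC S/(I_{n,r}:f_{J,r})$ from the elements $g \odot \DDD_{J,r}(\delta_{n,r})$ and Lemma~\ref{lem:colon-ideal-lemma}, and the matching upper bound from Lemma~\ref{lem:spanning}, with $r=1$ deferred to \cite{RW}. Your explicit Gale-triangularity independence argument spells out what the paper compresses into ``Lemma~\ref{lem:D-leading} implies''; for your bigraded refinement, note that $g \mapsto (g f_{J,r})\odot\delta_{n,r}$ reverses degree, which is harmless because $\prod_i [\stair(J,r)_i+1]_q$ is palindromic with top degree $\deg\delta_{n,r}-\deg f_{J,r}$, and in any case only total dimensions are needed for the theorem.
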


When $r = 1$, Theorem~\ref{thm:operator} was proven in \cite{RW}. When $r =2$, Theorem~\ref{thm:operator} was proven in \cite{Bhattacharya}. The proof of Theorem~\ref{thm:operator} presented here is more streamlined.

\begin{proof}
    When $r = 1$ this was proven in \cite{RW}, so we assume $r \geq 2$ throughout.
    Let $SH_{n,r} \subseteq \Omega$ be the linear subspace of $\Omega$ described in the statement. Theorems~\ref{thm:steinberg} and \ref{thm:swanson-wallach} give the containment $SH_{n,r} \subseteq SI_{n,r}^\perp$. A glance at Definition~\ref{def:D-operator} implies that the $\DDD_{J,r}$ operators only involve the higher Euler operators $d_1, d_{r+1}, \dots, d_{(n-1) \cdot r+1}$. Lemma~\ref{lem:D-leading} implies
    \begin{align}
        \dim_\CC SH_{n,r} &\geq \sum_{J \subseteq [n]} \dim_\CC (S/(I_{n,r}:f_{J,r}) \\
        &= \sum_{J \subseteq [n]} (\stair(J,r)_1 + 1) \cdots (\stair(J,r)_n + 1)
    \end{align}
    where the equality follows from Lemma~\ref{lem:colon-ideal-lemma}. On the other hand, Lemma~\ref{lem:spanning} implies
    \begin{equation}
        \dim_\CC SI_{n,r}^\perp = \dim_\CC SR_{n,r} \leq \# \AAA_{n,r} = \sum_{J \subseteq [n]} (\stair(J,r)_1 + 1) \cdots (\stair(J,r)_n + 1).
    \end{equation}
    Since $SH_{n,r} \subseteq SI_{n,r}^\perp$, these dimension inequalities force $SH_{n,r} = SI_{n,r}^\perp$ as desired.
\end{proof}

Next, we prove the Sagan--Swanson Conjecture~\ref{conj:SS} to give the first explicit vector space basis of $SR_{n,r}$.

\begin{theorem}
    \label{thm:ss-basis}
    The set $\AAA_{n,r}$ descends to a vector space basis of $SR_{n,r}$.
\end{theorem}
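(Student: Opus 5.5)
The plan is a dimension count: combine the spanning result of Lemma~\ref{lem:spanning} with the lower bound on $\dim_\CC SI_{n,r}^\perp$ already produced in the proof of the Operator Theorem~\ref{thm:operator}. When $r=1$ the statement is Theorem~\ref{thm:sn-basis}, so assume $r \geq 2$.

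By Lemma~\ref{lem:spanning}, the images of $\AAA_{n,r}$ span $SR_{n,r}$. Hence $\dim_\CC SR_{n,r} \leq \#\AAA_{n,r}$. Since the union \eqref{eq:basis-monomials} is disjoint and each $\AAA_{n,r}(J,r)$ is a box of exponent vectors, this count is $\sum_{J \subseteq [n]} \prod_{i=1}^n (\stair(J,r)_i + 1)$.

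For the reverse inequality I use $\dim_\CC SR_{n,r} = \dim_\CC SI_{n,r}^\perp$ together with the lower bound from Lemma~\ref{lem:D-leading}. For each $J$, the space $\{g \odot \DDD_{J,r}(\delta_{n,r}) : g \in S\}$ lies in $SI_{n,r}^\perp$, because the inverse system is closed under the $\odot$-action of $S$. The $\partial$-operators commute with the fermionic structure, so the Gale-leading $\theta_J$-coefficient of $g \odot \DDD_{J,r}(\delta_{n,r})$ is $\pm (g f_{J,r}) \odot \delta_{n,r}$. By Steinberg's Theorem~\ref{thm:steinberg}, this coefficient vanishes exactly when $g \in (I_{n,r}:f_{J,r})$. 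The resulting leading-coefficient spaces therefore have dimension $\dim S/(I_{n,r}:f_{J,r})$.

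To get independence across different $J$, I run a triangularity argument in the Gale order. Take a nontrivial dependence among elements from the families indexed by various $J$, and pick a Gale-maximal $J$ whose contribution is nonzero. Its $\theta_J$-coefficient cannot be cancelled by families $J'$ with $J' \not\geq_\Gale J$. I expect this to be the most delicate bookkeeping, since the Gale order is only partial and one must also handle $J'$ that are incomparable to $J$. The fix is that the family for $J'$ contributes only monomials $\theta_K$ with $K \leq_\Gale J'$, and $J \leq_\Gale J'$ fails, so no cancellation occurs.

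This yields $\dim SI_{n,r}^\perp \geq \sum_J \dim S/(I_{n,r}:f_{J,r})$. By Lemma~\ref{lem:colon-ideal-lemma}, each quotient is a complete intersection with Hilbert series $\prod_i [\stair(J,r)_i+1]_q$, so its dimension is $\prod_i(\stair(J,r)_i+1)$. The lower bound therefore equals $\#\AAA_{n,r}$. The two bounds coincide, so a spanning set of size $\dim_\CC SR_{n,r}$ is a basis.

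Finally, running the same argument bidegree by bidegree gives the bigraded refinement, which will be useful for Corollary~\ref{cor:hilbert}. Here one uses that the spanning argument respects bidegree, since $SI_{n,r}$ is bihomogeneous. On the other side, the family for $J$ contributes in fermionic degree $\#J$, with bosonic degrees shifted by a fixed amount.
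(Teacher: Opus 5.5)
Your proposal is correct and follows the paper's own argument. Lemma~\ref{lem:spanning} gives $\dim_\CC SR_{n,r} \leq \#\AAA_{n,r}$. The Gale-triangular elements $g \odot \DDD_{J,r}(\delta_{n,r})$ from Lemma~\ref{lem:D-leading}, together with the complete-intersection count of Lemma~\ref{lem:colon-ideal-lemma}, give the matching lower bound, exactly as in the proof of Theorem~\ref{thm:operator}. The only differences are that you spell out the triangularity step the paper leaves implicit, and you bound $SI_{n,r}^\perp$ directly rather than the operator-generated subspace inside it.
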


\begin{proof}
    Lemma~\ref{lem:spanning} says that $\AAA_{n,r}$ descends to a spanning set of $SR_{n,r}$. The proof of Theorem~\ref{thm:operator} shows that 
    \begin{equation}
        \dim_\CC SR_{n,r} = \sum_{J \subseteq [n]} (\stair(J,r)_1  +1) \cdots (\stair(J,r)_n + 1) = \# \AAA_{n,r}
    \end{equation}
    so this spanning set is also a basis.
\end{proof}

Counting elements of $\AAA_{n,r}$ by bidegree gives the Hilbert series of $SR_{n,r}$.

\begin{corollary}
    \label{cor:hilbert}
    The bigraded Hilbert series of $SR_{n,r}$ is given by
    \begin{equation}
        \Hilb(SR_{n,r};q,z) = \sum_{J \subseteq [n]} z^{\# J } \cdot [\stair(J,r)_1 + 1]_q \cdots [\stair(J,r)_n + 1 ]_q.
    \end{equation}
\end{corollary}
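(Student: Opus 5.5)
The plan is to obtain Corollary~\ref{cor:hilbert} directly from Theorem~\ref{thm:ss-basis}. That theorem says the monomials in $\AAA_{n,r}$ descend to a basis of $SR_{n,r}$. The ideal $SI_{n,r}$ is bigraded, since it is generated by the bihomogeneous elements $p_{ir}$ and $dp_{ir}$ from \eqref{eq:power-sum-generators}. So $SR_{n,r}$ splits as $\bigoplus_{i,j}(SR_{n,r})_{i,j}$. Each superspace monomial is bihomogeneous, so the images of the monomials of a fixed bidegree $(i,j)$ span $(SR_{n,r})_{i,j}$. These images are also linearly independent, being a subset of a basis. Hence $\dim_\CC (SR_{n,r})_{i,j}$ equals the number of elements of $\AAA_{n,r}$ of bidegree $(i,j)$, and $\Hilb(SR_{n,r};q,z)$ is the bidegree generating function $\sum_{m \in \AAA_{n,r}} q^{\deg_x m} z^{\deg_\theta m}$.

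Next I will evaluate this generating function using the disjoint union \eqref{eq:basis-monomials}, summing separately over $J \subseteq [n]$. For a fixed $J$, every monomial in $\AAA_{n,r}(J,r)\cdot\theta_J$ has fermionic degree $\# J$, which contributes the factor $z^{\# J}$. Its bosonic part is $x_1^{a_1}\cdots x_n^{a_n}$, where the exponents $0 \le a_i \le \stair(J,r)_i$ are chosen independently. The bosonic generating function is therefore the product $\prod_{i=1}^n \sum_{a_i=0}^{\stair(J,r)_i} q^{a_i} = \prod_{i=1}^n [\stair(J,r)_i+1]_q$. Summing over $J$ gives the stated formula.

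There is no real obstacle here. The only point needing care is the legitimacy of counting basis elements by bidegree, which follows from the bigradedness of $SI_{n,r}$ together with the bihomogeneity of the basis monomials. One should also check that every staircase entry is nonnegative, so that each bracket $[\stair(J,r)_i+1]_q$ counts the allowed exponents correctly. For $r \ge 2$ the entries $ir-1$ and $ir-2$ are nonnegative except possibly $\stair(J,r)_j = -2$ when $i=0$, which cannot occur since $r\ge 2$ gives $0\cdot r-2<0$ only for $i=0$.

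That exceptional case needs to be reconciled. When $j\in J$ lies before $u_1$, the index is $i=0$ and the entry is $\stair(J,r)_j = -2$ if $i$ is taken literally; presumably the paper's convention intends $i$ with $u_{i-1}<j<u_i$, so $i \ge 1$, and then $\stair(J,r)_j = ir-2 \ge 0$ for $r \ge 2$, matching Example~\ref{ex:staircase}. Under this reading the brackets are genuine $q$-integers. When $r=1$ an entry equal to $-1$ makes $\AAA_{n,1}(J,1)$ empty, and correspondingly $[0]_q=0$. Either way the product formula agrees with the count of $\AAA_{n,r}(J,r)$.
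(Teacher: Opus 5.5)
Your proposal is correct and is the paper's argument: the corollary follows from Theorem~\ref{thm:ss-basis} by counting the bihomogeneous basis $\AAA_{n,r}$ by bidegree, with each $J$ contributing $z^{\# J}\prod_i [\stair(J,r)_i+1]_q$. Your worry about $i=0$ does not arise, since the condition $u_{i-1}<j<u_i$ with $u_0=0$ already forces $i\ge 1$. Hence every staircase entry is $\ge r-2\ge 0$ when $r\ge 2$, and when $r=1$ an entry of $-1$ correctly gives $[0]_q=0$.
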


Corollary~\ref{cor:hilbert} proves a conjecture of Sagan and Swanson \cite[Conj. 5.3]{SS}. As explained in \cite{SS}, the Hilbert series of Corollary~\ref{cor:hilbert} can be written in another way. Define the {\em $(\ZZ_r \wr \symm_n)$-$q$-Stirling number} $\Stir^{(r)}_q(n,k)$ by the recursion
\begin{equation}
    \Stir^{(r)}_q(n,k) = \Stir^{(r)}_q(n-1,k-1) + [(k+1) \cdot r - 1]_q \cdot \Stir^{(r)}_q(n-1,k)
\end{equation}
and the initial condition
\begin{equation}
    \Stir^{(r)}_q(0,k) = \begin{cases}
        1 & k = 0, \\
        0 & k \neq 0.
    \end{cases}
\end{equation}
When $r = q = 1$, these are the usual Stirling numbers of the second kind counting set partitions of $[n]$ into $k$ blocks.
 Combining Corollary~\ref{cor:hilbert} and \cite[Prop. 5.7]{SS}, we have 
\begin{equation}
    \label{eq:hilbert-reformulation}
    \Hilb(SR_{n,r};q,z) = \sum_{k \geq 0} [kr]_q \cdot [(k-1) \cdot r]_q \cdots [r]_q \cdot \Stir^{(r)}_q(n,k) \cdot z^{n-k}.
\end{equation}

\subsection{Basis for colon ideal quotient}
In order to describe the module structure of $SR_{n,r}$, we will need a vector space basis of the colon ideal quotients $S/(I_{n,r}:f_{J,r})$. Specifically, we show that the bosonic part $\AAA_{n,r}(J,r)$ of the Sagan--Swanson basis $\AAA_{n,r}$ indexed by $J$ is a basis of $S/(I_{n,r}:f_{J,r})$. When $r = 1$ the following result was proven by Angarone--Commins--Karn--Murai--Rhoades \cite[Thm. 7.1]{ACKMR}.

\begin{proposition}
    \label{prop:colon-ideal-quotient-basis}
    Let $r \geq 1$ and $J \subseteq [n]$. The set 
    \[
    \AAA_{n,r}(J,r) = \{x_1^{a_1} \cdots x_n^{a_n} \,:\, a_i \leq \stair(J,r)_i \}
    \]
    descends to a vector space basis of $S/(I_{n,r}: f_{J,r})$.
\end{proposition}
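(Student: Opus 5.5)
The case $r=1$ is \cite[Thm. 7.1]{ACKMR}, so assume $r \geq 2$. The plan is to pin down $\dim_\CC S/(I_{n,r}:f_{J,r})$ exactly and then prove that $\AAA_{n,r}(J,r)$ either spans or is linearly independent modulo $(I_{n,r}:f_{J,r})$.

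\textbf{Dimension.} By Lemma~\ref{lem:colon-ideal-lemma}, $(I_{n,r}:f_{J,r})$ is generated by a regular sequence of degrees $\stair(J,r)_i+1$. Hence
\[
\dim_\CC S/(I_{n,r}:f_{J,r}) = \prod_i (\stair(J,r)_i+1) = \# \AAA_{n,r}(J,r).
\]

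\textbf{Gale filtration of $SR_{n,r}$.} Fix $k = \# J$. For each $J$ let
\[
\kappa_J := \Bigl\{ g \in S \,:\, g\theta_J \in SI_{n,r} + \sum_{K >_\Gale J} S\cdot\theta_K \Bigr\}.
\]
I would first show $\kappa_J \subseteq (I_{n,r}:f_{J,r})$. Take $g \in \kappa_J$ and pair $g\theta_J$ against $\DDD_{J,r}(\delta_{n,r}) \in SI_{n,r}^\perp$ (Lemma~\ref{lem:D-leading}(1)).
\begin{itemize}
\item The $SI_{n,r}$ part pairs to zero, since $\DDD_{J,r}(\delta_{n,r})$ lies in the inverse system.
\item A term in $S\cdot\theta_K$ with $K >_\Gale J$ also pairs to zero. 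Contracting by $\theta_K$ only sees the $\theta_K$-coefficient, and by Lemma~\ref{lem:D-leading}(2) every fermionic monomial of $\DDD_{J,r}(\delta_{n,r})$ is $\leq_\Gale J$, so that coefficient vanishes.
\item What survives is $\pm (g f_{J,r}) \odot \delta_{n,r} = 0$. Theorem~\ref{thm:steinberg} then gives $g f_{J,r} \in I_{n,r}$.
\end{itemize}

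Next, filter the fermionic degree $k$ part of $SR_{n,r}$ by the images of $\bigoplus_{K \geq_\Gale J} S\theta_K$, running along a linear extension of the Gale order. The successive quotients are exactly $S/\kappa_J$. Therefore
\[
\dim SR_{n,r} = \sum_J \dim S/\kappa_J \geq \sum_J \dim S/(I_{n,r}:f_{J,r}) = \#\AAA_{n,r}.
\]
Theorem~\ref{thm:ss-basis} gives equality throughout. Since $\kappa_J \subseteq (I_{n,r}:f_{J,r})$ and the dimensions agree, we get $\kappa_J = (I_{n,r}:f_{J,r})$ for every $J$.

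\textbf{Basis.} It remains to show $\AAA_{n,r}(J,r)$ spans $S/\kappa_J$; the dimension match then makes it a basis. Equivalently, for every monomial $m \in S$, the element $m\theta_J$ should be congruent, modulo $SI_{n,r}$, to a combination of $\AAA_{n,r}(J,r)\theta_J$ plus terms in $\sum_{K>_\Gale J} S\theta_K$. I would revisit the proof of Lemma~\ref{lem:spanning} and track fermionic supports. The bosonic relations $p_{rj}$ do not change $\theta$-supports. The terms $\tilde g_j\, dp_{rj}$ are trickier, and I would try to show they can be organized to contribute only $\theta$-sets Gale-above $J$ beyond the $J$-component. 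The inductive input is Theorem~\ref{thm:sn-basis} transported by $\varphi_r$; for $r=1$ this Gale-refined spanning is exactly what underlies \cite[Thm. 7.1]{ACKMR}.

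\textbf{Expected obstacle.} This Gale-compatibility of the spanning reduction is the main obstacle. If it fails, the fallback is a Gr\"obner argument directly on the regular sequence $p_{J,r,1},\dots,p_{J,r,n}$. I would look for a term order in which the leading monomial of $p_{J,r,i}$ is $x_i^{\stair(J,r)_i+1}$, using the identity \eqref{eq:partial-h-identity} to replace generators by triangular combinations if needed. Then the standard monomials would be exactly $\AAA_{n,r}(J,r)$.
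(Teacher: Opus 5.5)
\textbf{There is a genuine gap: the core claim is never proved.} Your dimension count is correct. The pairing of $g\theta_J$ against $\DDD_{J,r}(\delta_{n,r})$ is also the right computation; it is the one used inside Lemma~\ref{lem:colon-ideal-lemma}. But the proposal never establishes what the proposition actually asserts: that these particular monomials span, or are independent in, $S/(I_{n,r}:f_{J,r})$. You defer this to an ``expected obstacle,'' and neither route you sketch works as stated.

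\emph{Revisiting Lemma~\ref{lem:spanning}.} The difficulty is not the $\tilde g_j\,dp_{rj}$ terms, which already lie in $SI_{n,r}$. It is the expansion \eqref{eq:span-one}, where Theorem~\ref{thm:sn-basis} writes $m_1'$ using basis monomials $b\in\AAA_{T_1,1}$ of every fermionic support of the right size. These include supports Gale-below or incomparable to $J\cap T_1$. To control them you would need a Gale-refined $r=1$ input, which Theorem~\ref{thm:sn-basis} does not supply. You would have to derive it by running your filtration at $r=1$ together with \cite[Thm. 7.1]{ACKMR}. You would then have to check compatibility with the splitting $[n]=T_0\sqcup T_1$ and with the induction on $\low_r$. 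None of this is done.

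\emph{The Gr\"obner fallback.} It fails for the generators as given. Under any monomial order, the leading monomial of $h_{j,r}(K)$ or of $\partial_i h_{j,r}(K)$ is governed by the order-largest variable indexed by $K$. For example, take $n=3$ and $J=\{2\}$. Then $p_{J,r,2}=\partial_2 h_{2,r}(\{2,3\})$ and $p_{J,r,3}=h_{2,r}(\{2,3\})$ would require $x_2>x_3$ and $x_3>x_2$ respectively. You propose no replacement generating set.

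\emph{A smaller point about the filtration.} The successive quotients of a filtration along a linear extension $\prec$ are $S/\kappa'_J$, where $\kappa'_J$ allows all $K\succ J$. This ideal can be larger than your $\kappa_J$, so $\kappa_J=(I_{n,r}:f_{J,r})$ does not follow as written. The pairing argument works verbatim for any $K\not\le_\Gale J$, so you should work with that larger ideal instead.

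\textbf{The missing idea.} The paper's independence step is purely commutative, and it proves independence rather than spanning. Strip the linear factors: set $\widetilde f_{J,r}:=f_{J,r}/\prod_{j\in J}x_j$. This satisfies $\varphi_r(\widetilde f_{J,1})=\widetilde f_{J,r}$ for $\varphi_r:x_i\mapsto x_i^r$. The $r=1$ results of \cite{ACKMR} for $S/(I_{n,1}:\widetilde f_{J,1})$ then transport to a box basis $\widetilde\AAA_{n,r}(J,r)$ of $S/(I_{n,r}:\widetilde f_{J,r})$. The box has bounds $r\,\stair(J,1)_i+r-1=\stair(J,r)_i$ for $i\notin J$ and $r\,\stair(J,1)_i+2r-1=\stair(J,r)_i+1$ for $i\in J$.
\begin{itemize}
\item Spanning: factor each monomial as $\varphi_r(\bar m')\cdot m''$, where every exponent of $m''$ is below $r$.
\item Independence: $\varphi_r$ of the $r=1$ complete-intersection generators lies in $(I_{n,r}:\widetilde f_{J,r})$, and Lemma~\ref{lem:regular-criterion} forces equality.
\end{itemize}
Multiplication by $\prod_{j\in J}x_j$ gives a well-defined map $S/(I_{n,r}:f_{J,r})\to S/(I_{n,r}:\widetilde f_{J,r})$. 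It shifts the $J$-exponents by one and so carries $\AAA_{n,r}(J,r)$ to distinct elements of $\widetilde\AAA_{n,r}(J,r)$. Hence $\AAA_{n,r}(J,r)$ is linearly independent, and your dimension count finishes the proof.
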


\begin{proof}
    Let $\varphi_r: S \to S$ be the $\CC$-algebra homomorphism defined on variables by $x_i \mapsto x_i^r$. Drawing inspiration from the proof of \cite[Thm. 7.1]{ACKMR}, we define $\widetilde{f}_{J,r} \in S$ to be the polynomial
    \begin{equation}
        \widetilde{f}_{J,r} := \prod_{j \in J} \prod_{j < i \leq n} (x_j^r - x_i^r)
    \end{equation}
    obtained from $f_{J,r}$ by removing the factors of $x_j$ for $j \in J$. It is easily seen that
    \begin{equation}
        f_{J,r} = \prod_{j \in J} x_j \times \widetilde{f}_{J,r} \quad \text{and} \quad
        \varphi_r(\widetilde{f}_{J,1}) = \widetilde{f}_{J,r}.
    \end{equation}

    Let $\widetilde{\AAA}_{n,r}(J,r)$ be the set of monomials $x_1^{a_1} \cdots x_n^{a_n}$ whose exponent sequences satisfy
    \begin{equation}
        \begin{cases}
            a_i \leq r \cdot \stair(J,1)_i + r - 1 & \text{if $i \notin J$,} \\
            a_i \leq r \cdot \stair(J,1)_i + 2r - 1 & \text{if $i \in J$.}
        \end{cases}
    \end{equation}
    For example, if $n = 5, r= 4,$ and $J = \{1,3,4\}$ we have
    \[ \stair(J,1) = (\underline{-1}, 0, \underline{0}, \underline{0}, 1) \quad \text{and} \quad 
    \stair(J,4) = (\underline{2}, 3, \underline{6}, \underline{6},7)\]
    so that
    \[
    \widetilde{\AAA}_{n,1}(J,1) = \{ x_1^{a_1} \cdots x_5^{a_5} \,:\, (a_1,\dots,a_5) \leq (0,0,1,1,1) \}
    \]
    and
    \[
    \widetilde{\AAA}_{n,4}(J,4) = \{x_1^{a_1} \cdots x_5^{a_5} \,:\, (a_1,\dots,a_5) \leq (3,3,7,7,7) \}.
    \]
    Observe that the sequence $(3,3,7,7,7)$ characterizing $\widetilde{\AAA}_{n,4}(J,4)$ is obtained from the corresponding sequence $(0,0,1,1,1)$ for $\widetilde{A}_{n,1}(J,1)$ by multiplying each term by $r= 4$ and adding $r-1 = 3$. Observe also that, even if $1 \in J$ so that $\stair(J,1) = -1$, the set $\widetilde{A}_{n,1}(J,1)$ is nonempty (it contains the monomial 1). In the proof of \cite[Thm. 7.1]{ACKMR} it is shown that $\widetilde{\AAA}_{n,1}(J,1)$ descends to a $\CC$-basis of $S/(I_{n,1} : \widetilde{f}_{J,1})$.\footnote{The conventions in \cite{ACKMR} are such that their $\stair(J)_i$ is our $\stair(J,1)_i - 1$.} We enhance this result to $r \geq 1$.

    {\bf Claim:} {\em For $r \geq 1$, the set $\widetilde{\AAA}_{n,r}(J,r)$ descends to a $\CC$-basis of $S/(I_{n,r} : \widetilde{f}_{J,r})$.}

    To prove the Claim, we first show that $\widetilde{\AAA}_{n,r}(J,r)$ descends to a spanning set of $S/(I_{n,r} : \widetilde{f}_{J,r})$. To this end, let $m \in S$ be a monomial. There is a unique monomial factorization $m = m' \cdot m''$ where every exponent in $m'$ is divisible by $r$ and every exponent in $m''$ is $< r$. Let $\bar{m}' \in S$ be the monomial satisfying $\varphi_r(\bar{m}') = m'$. Since spanning holds for $r = 1$ by \cite[Thm. 7.1]{ACKMR}, there exist constants $\gamma_a \in \CC$ so that 
    \begin{equation}
    \label{eq:colon-span-one}
        \bar{m}' = \sum_{a \in \widetilde{\AAA}_{n,1}(J,1)} \gamma_a \cdot a + g \quad \text{where} \quad \widetilde{f}_{J,1} \cdot g \in I_{n,1}.
    \end{equation}
    Applying $\varphi_r$ to both sides of Equation~\eqref{eq:colon-span-one} yields 
    \begin{equation}
    \label{eq:colon-span-two}
        m' = \sum_{a \in \widetilde{\AAA}_{n,1}(J,1)} \gamma_a \cdot \varphi_r(a)  + \varphi_r(g) \quad \text{where} \quad \widetilde{f}_{J,r} \cdot \varphi_r(g) \in I_{n,r}.
    \end{equation}
    Since every exponent of $m''$ is $< r$, we have 
    \begin{equation}
    \label{eq:colon-span-three}
        \{ m'' \cdot \varphi_r(a) \,:\, a \in \widetilde{\AAA}_{n,1}(J,1) \} \subseteq \widetilde{\AAA}_{n,r}(J,r).
    \end{equation}
    We may therefore multiply both sides of Equation~\eqref{eq:colon-span-two} by $m''$ to get an expression of the form
    \begin{equation}
        \label{eq:colon-span-four}
        m = \sum_{a \in \widetilde{\AAA}_{n,r}(J,r)} \gamma'_a \cdot a + m'' \cdot \varphi_r(g) \quad \text{where} \quad 
        \widetilde{f}_{J,r} \cdot m'' \cdot \varphi_r(g) \in I_{n,r}
    \end{equation}
    for some constants $\gamma'_a \in \CC$. Equation~\eqref{eq:colon-span-four} implies that $m$ lies in the span of $\widetilde{\AAA}_{n,r}(J,r)$ modulo $(I_{n,r}:\widetilde{f}_{J,r})$ and the spanning part of the Claim is proven.

    The linear independence part of the Claim is established as follows. It follows from \cite[Lem. 3.3, Thm. 6.2]{ACKMR} that the ideal $(I_{n,1}:\widetilde{f}_{J,1})$ is a complete intersection generated by homogeneous polynomials $g_1, \dots, g_n \in S$ with (positive) degrees
    \begin{equation}
        \deg(g_i)  = \begin{cases}
            \stair(J,1)_i + 1 & \text{if $i \notin J$}, \\
            \stair(J,1)_i + 2 & \text{if $i \in J$.}
        \end{cases}
    \end{equation}
    Since the ring homomorphism $x_i \mapsto x_i^r$ sends regular sequences to regular sequences (for example, by Lemma~\ref{lem:locus-criterion}), we know that $\varphi_r(g_1), \dots, \varphi_r(g_n)$ is a regular sequence with 
    \begin{equation}
        \deg(\varphi_r(g_i))  = \begin{cases}
            r \cdot \stair(J,1)_i + r & \text{if $i \notin J$}, \\
            r \cdot \stair(J,1)_i + 2r & \text{if $i \in J$.}
        \end{cases}
    \end{equation}
    For $1 \leq i \leq n$, we have $\widetilde{f}_{J,1} \cdot g_i \in I_{n,1}$, and applying $\varphi_r$ yields $\widetilde{f}_{J,r} \cdot \varphi_r(g_i) \in I_{n,r}$ so that 
    \begin{equation}
    \label{eq:phi-colon-containment}
        ( \varphi_r(g_1), \dots, \varphi_r(g_n)) \subseteq (I_{n,r} : \widetilde{f}_{n,r}).
    \end{equation}
    An application of Lemma~\ref{lem:regular-criterion} implies that the containment \eqref{eq:phi-colon-containment} is an equality of ideals so that 
    \begin{equation}
        \dim_\CC S/(I_{n,r} : \widetilde{f}_{n,r}) = \prod_{i=1}^n \deg(\varphi_r(g_i)) = \# \widetilde{\AAA}_{n,r}(J,r)
    \end{equation}
    and the spanning set $\widetilde{\AAA}_{n,r}(J,r)$ of $S/(I_{n,r} : \widetilde{f}_{n,r})$ must be a basis. This completes the proof of the Claim.

    The Claim proves the proposition as follows. Since $f_{J,r} = \prod_{j \in J} x_j \times \widetilde{f}_{J,r}$, we have an  $S$-module homomorphism
    \begin{equation}
        \psi: S/(I_{n,r} : f_{J,r}) \xrightarrow{ \, \, \times \prod_{j \in J} x_j \, \, } S/(I_{n,r} : \widetilde{f}_{J,r})
    \end{equation}
    induced by multiplication by $\prod_{j \in J} x_j$. The image of $\AAA_{n,r}(J,r)$ under $\psi$ is a subset of $\widetilde{\AAA}_{n,r}(J,r)$, so the Claim forces $\AAA_{n,r}(J,r)$ to be linearly independent in $S/(I_{n,r} : f_{J,r})$. When $r \geq 2$, Lemma~\ref{lem:colon-ideal-lemma} implies that 
    \begin{equation}
        \dim_\CC S/(I_{n,r} : f_{J,r}) = \prod_{i=1}^n (\stair(J,r)_i + 1) = \# \AAA_{n,r}(J,r)
    \end{equation}
    so that the linearly independent set $\AAA_{n,r}(J,r)$ must be a basis of $S/(I_{n,r} : f_{J,r})$ and we are done. When $r= 1$, the proposition is \cite[Thm. 7.1]{ACKMR}.
\end{proof}

\section{Module Structure}
\label{sec:Module}

\subsection{$r$-ified ordered set partitions}
We present our combinatorial model for the ring $SR_{n,r}$.
Given positive integers $n,r$, a \textit{$r$-ified ordered set partition}\footnote{The term `$r$-colored ordered set partition' has seen various meanings in the literature, so we use a new word reminiscent of `rareified'.} is a pair $\sigma=(Z,\mathcal B)$ where \begin{itemize}
    \item $Z$ (called the \textit{zero block}) is a subset of $[n]$ such that every element of $Z$ is decorated with one of the colors $\{0,\ldots, r-2\}$;
    \item $\mathcal B=(B_1\mid \cdots\mid B_k)$ is an ordered set partition of $[n]-Z$, where every elements in each block $B_i$ is decorated by one of the colors $\{0,\ldots, r-1\}$.
\end{itemize}
Note that $Z$, unlike blocks of $\mathcal B$, is allowed to be empty. For notational ease, we will write the ordered set partition $(Z,\mathcal B)$ as $(Z\mid B_1\mid \cdots\mid B_k)$. Let $\mathcal{\widetilde{{OP}}}_{n,r}$ denote the set of $r$-ified ordered set partitions. For example, when $n=6$ and $r=4$, $$(1^25^0\mid 3^34^2\mid2^06^1),  \quad (\varnothing\mid 1^34^25^1\mid 2^03^0\mid 6^1), \quad(1^02^23^14^15^16^2)$$are all valid elements of $\mathcal{\widetilde{{OP}}}_{n,r}$ (the last example satisfying $Z=[n]$), whereas $(1^35^0|3^34^2|2^06^1)$ is not (since the elements of the zero block can only have colors in the set $\{0,1,2\}$). Note that this is in bijection with the set of ordered super set partitions defined in \cite{SS}.

Consider the vector space $\CC[\mathcal{\widetilde{{OP}}}_{n,r}]$ freely generated by $\mathcal{\widetilde{{OP}}}_{n,r}$. We give this vector space the structure of a $\ZZ_r\wr \mathfrak S_n$-representation by defining its action on the basis elements. To do so, it suffices to describe how $\mathfrak S_n$ and the diagonal subgroup $\ZZ_r^n$ acts on this basis. 

We will let a permutation $w \in\mathfrak S_n$ act on $\sigma\in \mathcal{\widetilde{{OP}}}_{n,r}$ by permuting the numbers, while keeping the colors in place. For example in the $n=6$, $r=4$ case, if \[\sigma=(1^25^0\mid 3^34^2\mid2^06^1),\] then the permutation $w=[4,6,3,1,2,5] \in \symm_6$ acts via \[w\cdot\sigma=(4^22^0\mid 3^31^2\mid6^05^1)=(2^04^2\mid 1^23^3\mid5^16^0).\]

Now consider $g=(\zeta^{c_1},\ldots,\zeta^{c_n})\in \mathbb Z_r^n$, where $\zeta=\exp(2\pi i/r)$ is a generator of $\ZZ_r$. The element $g \in \ZZ_r^n$ acts on $\sigma$ as follows.
\begin{itemize}
\item
If $i$ is in a non-zero block and has color $k$, it gets the color $k+c_i\pmod{r}$ in $g\cdot\sigma$. 
\item
However, if $i$ is in the zero block and has color $k$, the action of $g$ does not change this color, but scales $\sigma$ by a factor of $\zeta^{kc_i}$ within the vector space $\CC[\widetilde{\OP}_{n,r}]$.
\end{itemize}
In our previous example with $n = 6$ and $r = 4$, if $g=(\zeta^{2},\zeta^1,\zeta^0,\zeta^3,\zeta^2,\zeta^3,\zeta^1)$ then $$g\cdot (1^25^0\mid 3^34^2\mid2^06^1)=\zeta^{2\cdot 2}\cdot\zeta^{0\cdot 3}\cdot(1^25^0\mid 3^3 4^1\mid 2^16^2).$$

\begin{lemma}
    \label{lem:well-defined-action}
    The above actions of $\symm_n$ and $\ZZ_r^n$ on $\CC[\widetilde{\OP}_{n,r}]$ extend to give $\CC[\widetilde{\OP}_{n,r}]$ the structure of an ungraded $(\ZZ_r \wr \symm_n)$-module.
\end{lemma}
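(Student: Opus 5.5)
We will use the semidirect product structure $\ZZ_r \wr \symm_n = \ZZ_r^n \rtimes \symm_n$, with $\symm_n$ acting on $\ZZ_r^n$ by permuting coordinates. A standard fact is that to give a representation of a semidirect product $N \rtimes H$ on a vector space $W$, it suffices to give representations of $N$ and of $H$ on $W$ that satisfy the compatibility relation
\[
w \, g \, w^{-1} = (w \cdot g) \quad \text{as operators on } W,
\]
for all $w \in H$ and $g \in N$. Here $w \cdot g$ denotes the conjugate $w g w^{-1}$ computed inside the group. Our plan therefore has three steps.

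\textbf{Step 1: the action of $\symm_n$.} The rule relabels the entries of $\sigma$ by $w$ while carrying each color along with its entry. So $i^c$ becomes $w(i)^c$, and blocks are sent to blocks, with the zero block going to the zero block. This is an action on the set $\widetilde{\OP}_{n,r}$: the identity fixes every $\sigma$, and $(ww')\cdot\sigma = w\cdot(w'\cdot\sigma)$. It also preserves the condition that colors in $Z$ lie in $\{0,\dots,r-2\}$. Extending linearly gives a representation of $\symm_n$.

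\textbf{Step 2: the action of $\ZZ_r^n$.} For $g=(\zeta^{c_1},\dots,\zeta^{c_n})$ we define
\[
g\cdot\sigma = \Bigl(\prod_{i\in Z}\zeta^{k_i c_i}\Bigr)\,\sigma',
\]
where $k_i$ is the color of $i$, and $\sigma'$ is obtained from $\sigma$ by adding $c_i$ modulo $r$ to the color of each $i \notin Z$. The colors in $Z$ are unchanged, so the constraint on zero-block colors is preserved. The color shifts add under composition. The scalars multiply as $\zeta^{k_i c_i}\zeta^{k_i c_i'} = \zeta^{k_i(c_i+c_i')}$, and this depends only on $c_i+c_i' \bmod r$. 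Since the zero-block colors are fixed, the scalar attached to the second factor is computed from the same $k_i$. Together these show $(gg')\cdot\sigma = g\cdot(g'\cdot\sigma)$, so this is a representation of the abelian group $\ZZ_r^n$. Equivalently, $\CC[\widetilde{\OP}_{n,r}]$ decomposes as a tensor product over $i$ of per-coordinate actions.

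\textbf{Step 3: the compatibility relation.} With the convention that $w g w^{-1}$ has $\zeta^{c_{w^{-1}(j)}}$ in position $j$, we compute both sides of $w g w^{-1}\cdot\sigma = (wgw^{-1})\cdot\sigma$. On the left, the entry $j=w(i)$ of $w^{-1}\sigma$-relabeled data is the entry $i$ of $\sigma$. Applying $g$ shifts the color of $i$ by $c_i$ (or contributes the scalar $\zeta^{k c_i}$ if $i$ is in the zero block), and then $w$ moves $i$ back to position $j$. On the right, the entry $j$ is shifted by $c_{w^{-1}(j)} = c_i$. The two sides agree entry by entry, and the scalars agree as well.

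This step is the main potential obstacle, because of convention issues. We must match the permutation action on colors, which moves colors together with their numbers, to the paper's convention $w\cdot x_i = x_{w(i)}$ for how $\symm_n$ conjugates diagonal matrices. We will fix that convention first and then verify the identity on a basis element, which suffices by linearity. Once compatibility holds, the map $(g,w)\mapsto g\,w$ defines a homomorphism from $\ZZ_r \wr \symm_n$ to $GL(\CC[\widetilde{\OP}_{n,r}])$, which proves the lemma.
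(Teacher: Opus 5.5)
Your proposal is correct and follows the same route as the paper: present $\ZZ_r\wr\symm_n$ as $\ZZ_r^n\rtimes\symm_n$ and check that the two actions respect the conjugation relation $w(\zeta^{c_1},\dots,\zeta^{c_n})w^{-1}=(\zeta^{c_{w^{-1}(1)}},\dots,\zeta^{c_{w^{-1}(n)}})$. The paper leaves this check as ``one checks''; you carry it out explicitly, including the verification that the zero-block scalars $\zeta^{k c_i}$ compose correctly and agree on both sides of the relation.
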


\begin{proof}
    Let $\zeta = \exp(2 \pi i /r)$ generate $\ZZ_r$.
    The wreath product $\ZZ_r \wr \symm_n$ is generated by $\symm_n$ and $\ZZ_r^n$ subject to the internal relations in these groups together with the relations
    \begin{equation}
    \label{eq:semidirect-relation}
        w \cdot (\zeta^{c_1}, \dots, \zeta^{c_n}) \cdot w^{-1} = (\zeta^{c_{w^{-1}(1)}}, \dots, \zeta^{c_{w^{-1}(n)}})
    \end{equation}
    for all $w \in \symm_n$ and $(\zeta^{c_1}, \dots, \zeta^{c_n}) \in \ZZ_r^n$. One checks that the actions of $\symm_n$ and $\ZZ_r^n$ on $\CC[\widetilde{\OP}_{n,r}]$ described above respect the relation \eqref{eq:semidirect-relation}.
\end{proof}

Although $\CC[\widetilde{\OP}_{n,r}]$ is not a $(\ZZ_r \wr \symm_n)$-permutation representation, it is very nearly so: the representing matrix of any $g \in \ZZ_r \wr \symm_n$ with respect to the natural basis $\widetilde{\OP}_{n,r}$ is a monomial matrix (has a unique nonzero entry in every row and column). Such representations are called {\em monomial representations}.
We will show in Theorem~\ref{thm: module iso} that $\CC[\widetilde{\OP}_{n,r}]$, twisted by the action of the one-dimensional determinant representation, is isomorphic to $SR_{n,r}$ as an ungraded $(\ZZ_r\wr \mathfrak S_n)$-module: 
\begin{equation}
    SR_{n,r}\cong_{\ZZ_r\wr \mathfrak S_n}\CC[\mathcal{\widetilde{{OP}}}_{n,r}]\otimes \det.\label{combmodel}
\end{equation}
The remainder of this section is devoted to proving \eqref{combmodel}. We adopt a strategy similar to that used in \cite{MRW} for proving the Fields Conjectures.

Recall that we have the linear characters $\sign, \chi: \ZZ_r \wr \symm_n \to \CC$ given by
\[
\sign(g) := \text{sign of the underlying permutation matrix of $g$}\]
and
\[
\chi(g) := \text{product of the nonzero entries of $g$.}
\]
The first step towards proving the isomorphism \eqref{combmodel} is to generalize $\sign$ to arbitrary $r$-partitions $\llambda \vdash n$.

Consider an $r$-partition $\bm\lambda=(\lambda^{(1)},\ldots,\lambda^{(r)})\vdash_r n$, where  $\lambda^{(i)}=\left(\lambda^{(i)}_1,\lambda^{(i)}_2,\ldots,\lambda^{(i)}_{k_i}\right)$. Write $\alpha = (\alpha_1,\dots,\alpha_r) := (|\lambda^{(1)}|, \dots,|\lambda^{(r)}|)$. Define $\ZZ_r \wr \symm_\llambda \subseteq \ZZ_r \wr \symm_\alpha$ to be the following block diagonal subgroup:
\begin{equation}
\ZZ_r\wr \symm_{\bm\lambda}:=\left(\ZZ_r\wr\symm_{\lambda^{(1)}_1}\times \cdots\times \ZZ_r\wr\symm_{\lambda^{(1)}_{k_1}}\right)\times\cdots\times \left(\ZZ_r\wr\symm_{\lambda^{(r)}_1}\times\cdots\times\ZZ_r\wr\symm_{\lambda^{(r)}_{k_r}}\right)\subseteq \ZZ_r\wr\symm_n.
\end{equation}
We define a linear character 
\begin{equation}
    \sign_\llambda: \ZZ_r \wr \symm_\llambda \longrightarrow \CC
\end{equation}
as follows. For any $r$-tuple $g=(g^{(1)}_{1},\cdots,g^{(r)}_{k_r}) \in \ZZ_r \wr \symm_\llambda$, we set 
\begin{equation}
\label{eq:colored-sign-defintion}
\sign_{\bm\lambda}(g):= \\ \prod_{i=1}^{k_1}\sign(g^{(1)}_i)\cdot\chi^0(g^{(1)}_{i})\prod_{i=1}^{k_2}\sign(g^{(2)}_i)\cdot\chi^1(g^{(2)}_{i})\cdots \prod_{i=1}^{k_r}\sign(g^{(r)}_i)\cdot\chi^{r-1}(g^{(r)}_{i}).
\end{equation}
Equation~\eqref{eq:colored-sign-defintion} uses the notation $\chi^{a-1}(g_i^{(a)}) = ( \chi(g^{(a)})^{a-1}$.
This gives rise the the following element of $\CC\left[\ZZ_r\wr\symm_{n}\right]$ \begin{equation}\varepsilon_{\bm\lambda}:=\sum_{g\in \ZZ_r\wr\symm_{\bm\lambda}}\sign_{\bm\lambda}(g)\cdot g.\end{equation} 
Roughly speaking $\varepsilon_\llambda \in \CC[\ZZ_r \wr \symm_n]$ should be thought of as a `colored antisymmetrizer' corresponding to $\llambda$.

If $V$ is a $(\ZZ_r \wr \symm_n)$-module and $\llambda \vdash_r n$, the image $\varepsilon_\llambda \cdot V$ of $V$ under the action of $\varepsilon_\llambda$ is a $\CC$-vector space. The dimension of this vector space is related to the Frobenius image of $V$ as follows.

\begin{lemma}\label{perplemma}
    Let $\bm\lambda=(\lambda^{(1)},\ldots,\lambda^{(r)})$ be an $r$-partition of $n$, and let $\llambda^{\text{rev}}$ denote the $r$-partition $(\lambda^{(r)},\lambda^{(r-1)},\ldots, \lambda^{(1)})$ obtained by reversing the order of the partitions in $\llambda$. Then for any $\ZZ_r\wr\symm_n$-module $V$, $$\dim_\CC\varepsilon_{\bm\lambda}\cdot V=\left\langle \Frob(V),e_{\llambda^{\text{rev}}}\right\rangle.$$
\end{lemma}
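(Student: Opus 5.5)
The plan is to reduce the statement to Frobenius reciprocity and a computation of Frobenius images of induced one-dimensional characters.

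Write $H := \ZZ_r \wr \symm_\llambda$. Since $\sign_\llambda$ is a linear character of $H$, the element $\frac{1}{|H|}\varepsilon_\llambda$ is, up to replacing $\sign_\llambda$ by its inverse depending on conventions, the idempotent projecting any $H$-module onto its $\sign_\llambda$-isotypic (or $\overline{\sign_\llambda}$-isotypic) component. Hence
\[
\dim_\CC \varepsilon_\llambda \cdot V = \dim \Hom_H(\sign_\llambda^{\pm 1}, \Res^{\ZZ_r\wr\symm_n}_H V) = \dim \Hom_{\ZZ_r \wr \symm_n}\bigl(\Ind_H^{\ZZ_r\wr\symm_n} \sign_\llambda^{\pm1}, V\bigr)
\]
by Frobenius reciprocity. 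Since $\Frob$ turns $\dim\Hom$ into the Hall inner product, it suffices to prove
\[
\Frob\bigl(\Ind_H^{\ZZ_r\wr\symm_n}\sign_\llambda^{\pm1}\bigr) = e_{\llambda^{\text{rev}}}.
\]

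I would compute this by transitivity of induction. First I would induce the factors to $\ZZ_r \wr \symm_\alpha$, and then to $\ZZ_r\wr\symm_n$. On a single factor $\ZZ_r \wr \symm_m$, the character $\sign\cdot\chi^{a-1}$ is exactly the Specht module $U_{j} \wr V^{(1^m)}$, where $U_j$ is the $\ZZ_r$-character $\zeta\mapsto\zeta^{a-1}$ or its inverse. Using the action convention in which the diagonal element acts on $x_i$ by the inverse, one should get $j = r-(a-1)$, i.e. the color slot reverses; this is the source of $\llambda^{\text{rev}}$, and the factor $\chi^{a-1}$ in slot $a$ lands in slot $r+1-a$. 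By Specht's construction, $U_j\wr V^{(1^m)}$ has Frobenius image $e_m(\xx^{(j)})$, which is $s_{(1^m)}$ in the $j$th alphabet.

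Next, the standard fact that induction from $\ZZ_r\wr\symm_{\beta}\times\ZZ_r\wr\symm_\gamma$ corresponds to multiplication in $\Lambda^{(r)}$ gives
\[
\Frob\,\Ind\Bigl(\bigotimes_{a}\bigotimes_i (U_{j_a}\wr V^{(1^{\lambda^{(a)}_i})})\Bigr) = \prod_a\prod_i e_{\lambda^{(a)}_i}(\xx^{(j_a)}) = e_{\llambda^{\text{rev}}}.
\]
This product rule is consistent with the definition of $V^\llambda$ as an induced module. I would cite it as the standard wreath-product Frobenius characteristic (Macdonald, Appendix B).

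The main obstacle is bookkeeping. I need to check that the index $j$ really is $r+1-a$, including the case $a=1$, where $\chi^0$ is trivial and the slot is $r$, consistent with $\sign \leftrightarrow (\varnothing,\dots,(1^n))$ as stated in the paper. I also need to check that the possible inverse $\sign_\llambda^{-1}$ from the idempotent convention is harmless. For the first point, I would check against the stated correspondences $\sign\leftrightarrow(\varnothing,\dots,\varnothing,(1^n))$ and $\chi\leftrightarrow((n),\varnothing,\dots)$. For the second, the convention should be tested against Example~\ref{ex:rank-1}.
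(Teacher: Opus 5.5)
Your route is the paper's route. You identify $\varepsilon_\llambda\cdot V$ with an isotypic component of the restriction to $H=\ZZ_r\wr\symm_\llambda$, apply Frobenius reciprocity, and compute the Frobenius image of the induced linear character as a product of elementary symmetric functions. However, the bookkeeping you defer is where all the content lies. Your tentative resolution of it is wrong in two places, and the two errors happen to cancel.

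\textbf{The inverse is not harmless.} For $h\in H$ one has $h\,\varepsilon_\llambda=\sign_\llambda(h)^{-1}\varepsilon_\llambda$. So $\varepsilon_\llambda\cdot V$ is exactly the $\sign_\llambda^{-1}$-isotypic component of $\Res_H V$, and
$\dim_\CC\varepsilon_\llambda\cdot V=\dim\Hom_{\ZZ_r\wr\symm_n}(\Ind_H^{\ZZ_r\wr\symm_n}\sign_\llambda^{-1},V)$.

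\textbf{The reversal does not come from the action on the $x_i$.} That convention concerns $\Omega$, not an abstract module $V$. In Specht's construction as set up in the paper, diagonal elements act through $\rho_j$ directly and permutations act by $\sign$. Hence $U_j\wr V^{(1^m)}$ is the linear character $\sign\cdot\chi^j$ of $\ZZ_r\wr\symm_m$. Consequently $\sign\cdot\chi^{a-1}$ sits in slot $a-1$ (slot $r$ when $a=1$), not in slot $r+1-a$. Your own proposed check against $\chi\leftrightarrow((n),\varnothing,\dots,\varnothing)$ would reveal this. It follows that $\Frob(\Ind\sign_\llambda)=e_{(\lambda^{(2)},\dots,\lambda^{(r)},\lambda^{(1)})}$, which in general differs from $e_{\llambda^{\text{rev}}}$ once $r\ge 3$.

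\textbf{The correct bookkeeping.} On a factor in slot $a$, $\sign_\llambda^{-1}$ restricts to $\sign\cdot\chi^{-(a-1)}=\sign\cdot\chi^{r+1-a}=U_{r+1-a}\wr V^{(1^m)}$. Here $r+1-a\in\{1,\dots,r\}$, and $a=1$ gives the trivial $U_r$. The induced character therefore has Frobenius image $\prod_a e_{\lambda^{(a)}}(\xx^{(r+1-a)})=e_{\llambda^{\text{rev}}}$, as required.

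The paper's own write-up has the same slip. It calls $\varepsilon_\llambda\cdot V$ the $\sign_\llambda$-isotypic component and computes $e_{(\lambda^{(2)},\dots,\lambda^{(r)},\lambda^{(1)})}$. It then passes to $\llambda^{\text{rev}}$ as the ``dual in the sense of \cite{CR}'', which is exactly the complex conjugation of color labels that the correct idempotent computation supplies.
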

\begin{proof}
    Since $\antisymm\cdot V$ is precisely the $\sign_\llambda$-isotypic component of the restriction $\Res^{\ZZ_r \wr \symm_n}_{\ZZ_r \wr \symm_\llambda} (V)$, its dimension is given by 
    \begin{multline}\label{eq: dim hom}\dim_\CC\antisymm\cdot V=\dim_\CC\Hom_{\ZZ_r\wr\symm_\llambda}\left(\sign_{\llambda},\Res^{\ZZ_r \wr \symm_n}_{\ZZ_r \wr \symm_\llambda} (V)\right) \\ 
    =\dim_\CC\Hom_{\ZZ_r\wr \symm_n}\left(\Ind_{\ZZ_r \wr \symm_\llambda}^{\ZZ_r \wr \symm_n} (\sign_{\llambda}),V\right).
    \end{multline}where we have used Frobenius reciprocity. Since $$\Frob(\sign_\llambda)=e_{\lambda^{(1)}}\left(\xx^{(r)}\right)\times e_{\lambda^{(2)}}\left(\xx^{(1)}\right)\times \cdots \times e_{\lambda^{(r)}}\left(\xx^{(r-1)}\right)=e_{(\lambda^{(2)},\ldots,\lambda^{(r)},\lambda^{(1)})},$$ the quantity in \eqref{eq: dim hom} may be calculated as $\left\langle\Frob(V),e_{\llambda^{\text{rev}}}\right\rangle$ where $\llambda^{\text{rev}}$ is the dual of $(\lambda^{(2)},\ldots,\lambda^{(r)},\lambda^{(1)})$ in the sense of \cite{CR}.
\end{proof}

Our strategy for proving the module isomorphism \eqref{combmodel} is as follows. Since the set $\{e_{\bm\lambda}\}_{\bm\lambda\vdash_r n}$ is a basis of $\Lambda^{(r)}$, in order to prove that the two sides of \eqref{combmodel} have identical Frobenius series, it suffices to show
\[
\dim_\CC \varepsilon_\lambda \cdot SR_{n,r} = 
\dim_\CC \varepsilon_\llambda \cdot (\CC[\widetilde{\OP}_{n,r}] \otimes \det) 
\]
for all $r$-partitions $\bm\lambda \vdash_r n$. We prove that the vector spaces $\varepsilon_\lambda \cdot SR_{n,r}$ and $\varepsilon_\llambda \cdot (\CC[\widetilde{\OP}_{n,r}] \otimes \det)$ have the same dimension in several stages. The first step is establishing the inequality
\[
\dim_\CC \varepsilon_\lambda \cdot SR_{n,r} \leq 
\dim_\CC \varepsilon_\llambda \cdot (\CC[\widetilde{\OP}_{n,r}] \otimes \det)
\]
using the basis $\AAA_{n,r}$ of $SR_{n,r}$ furnished by Theorem~\ref{thm:ss-basis}.

\subsection{Antisymmetrized monomials and upper bound}In the previous section, we obtained a monomial basis $\mathcal A_{n,r}$ of $SR_{n,r}$. If we apply $\varepsilon_{\bm\lambda}$ to this basis, some monomials vanish, and some monomials yield the same image as other monomials up-to a scalar multiple. If we discard these obvious linear dependencies, we obtain a smaller set of antisymmetrized monomials that is guaranteed to span $\varepsilon_{\bm\lambda}\cdot SR_{n,r}$. In this subsection we enumerate the monomials in this smaller set to give an upper bound on $\dim_\CC \varepsilon_{\bm\lambda}\cdot SR_{n,r}$.
Before we start counting in earnest, we introduce some  terminology that will help us describe this collection of antisymmetrized monomials.

\begin{definition}
   Given an $r$-partition $\bm\lambda$ of $n$, a {\em signed $r$-partition of $n$}  is a pair $(\bm\lambda,\bm\gamma)$, where $\bm\gamma=(\gamma^{(1)},\ldots, \gamma^{(r)})$ is an $r$-tuple of weak compositions such that for each $1\le i\le r$, the composition $\gamma^{(i)}$ has at most as many parts as $\lambda^{(i)}$ and satisfies $\gamma^{(i)}\le \lambda^{(i)}$ componentwise.
\end{definition}

Given $\bm\lambda\vdash_rn$, we will often write $\bm\gamma\le \bm\lambda$ to mean that $(\bm\lambda,\bm\gamma)$ is a signed $r$-partition of $[n]$. As an example, suppose $r=3$, $n=10$, and consider the the 3-partition $\bm\lambda=(2,1\parallel 3,2\parallel2)$, where we have used double vertical bars to separate the 3 partitions $\lambda^{(1)}, \lambda^{(2)},$ and $\lambda^{(3)}$. One possible signed $r$-partition would be the pair $(\llambda, \bm\gamma)$ where 
$\bm\gamma = (1,1\parallel 2,0\parallel1)$. Here we have $\gamma^{(1)} = (1,1), \gamma^{(2)} = (2,0),$ and $\gamma^{(3)} = (2)$.

Let $(\llambda, \bm\gamma)$ be a signed $r$-partition. For notational ease, we will use $L_i$ to denote the total size of the first $i$ partitions in $\bm\lambda$: \begin{equation}L_i:=|\lambda^{(1)}|+\cdots+|\lambda^{(i)}|.\end{equation} Similarly, we write $G_i$ for the corresponding partial sum for $\bm\gamma$: 
\begin{equation} G_i:=|\gamma^{(1)}|+\cdots+|\gamma^{(i)}|.
\end{equation}
By convention, we set $L_0=G_0=0.$

Any $r$-partition splits the set $[n] = \{1,\ldots, n\}$ into consecutive intervals: for example, if $n = 10$ and  $\bm\lambda=(2,1\parallel 3,2\parallel2)$ we have the splitting $$(1,2\mid 3\parallel4,5,6\mid7,8\parallel9,10).$$Here we have used a single line to separate blocks corresponding to parts of the same $\lambda^{(i)}$, and doubled lines to separate blocks corresponding to different $\lambda^{(i)}$'s. Using our $L_i$ notation, the block of numbers corresponding to $\lambda^{(i)}_j$ is explictly given by $$\left\{L_{i-1}+\lambda^{(i)}_1+\cdots +\lambda^{(i)}_{j-1}+1,\ldots, L_{i-1}+\lambda^{(i)}_1+\cdots \lambda^{(i)}_j+\lambda^{(i)}_{j}\right\}.$$ Call this the \textit{$\lambda^{(i)}_j$-batch of $[n]$}. In the previous example, the $\lambda^{(2)}_1$-batch of $[10]$ would be $\{4,5,6\}$.

\begin{definition}
    Given a signed $r$-partition $(\bm\lambda,\bm\gamma)$ of $n$, define $J(\bm\lambda,\bm\gamma)$ to be the following subset of $[n]$:

    $$J(\bm\lambda,\bm\gamma)=\bigsqcup_{\substack{1\le i\le r\\1\le j\le k_i}} \left\{L_{i-1}+\lambda^{(i)}_1+\cdots+\lambda^{(i)}_j-\gamma^{(i)}_j+1,\cdots,L_{i-1}+\lambda^{(i)}_1+\cdots+\lambda^{(i)}_j\right\}.$$
\end{definition}

In other words, $J(\bm\lambda,\bm\gamma)$ is obtained by picking the largest $\gamma^{(i)}_j$ elements from the $\lambda^{(i)}_j$-batch of $[n]$. Since $\gamma^{(i)}_j \leq \lambda^{(i)}_j$, this is a well-defined operation. 
In our running example of $\llambda = (2,1 \mid \mid 3,2 \mid \mid 2)$ and $\bm \gamma = (1,1 \mid \mid 2, 0 \mid \mid 1)$, one has \[J(\bm\lambda,\bm\mu) = \{2,3,5,6,10\}.\] Recall that every subset $J\subseteq[n]$ is associated to a $(J,r)$-staircase $\stair(J,r)$. Applying this to our example $J(\bm\lambda,\bm\gamma)$, we obtain the staircase $$\stair(J(\bm\lambda,\bm\gamma),r)=\left(2,\underline{4}\mid\underline{4}\parallel5,\underline{7},\underline{7}\mid8,11\parallel14,\underline{16}\right)$$
where again single lines correspond to parts of the same $\lambda^{(i)}$ and double lines separate blocks corresponding to different $\lambda^{(i)}$'s. 

We will apply the group algebra element $\varepsilon_\llambda$ to certain superspace monomials with fermionic part $\theta_{J(\bm\lambda,\bm\gamma)}$. To describe these monomials, it remains to specify their bosonic parts for each signed $r$-partition $(\llambda, \bm\gamma)$, which we do in the following definition.

\begin{definition}
    Let $(\bm\lambda,\bm\gamma)$ be a signed $r$-partition of $n$. Define $\mathcal A_{n,r}(\bm\lambda,\bm\gamma)$ to be the collection of monomials $x_1^{a_1}\cdots x_n^{a_n}$ in $S = \CC[x_1,\dots,x_n]$, subject to the following conditions:
    \begin{enumerate}
        \item we have $a_k\le \stair(J(\bm\lambda,\bm\gamma),r)_k$ for every $k\in [n]$;
        \item for each $1\le i\le r$ and for each $1\le j\le k_i$, 
        \begin{enumerate}
            \item $a_{L_{i-1}+\lambda^{(i)}_1+\cdots+\lambda^{(i)}_{j-1}+1},\ldots,a_{L_{i-1}+\lambda^{(i)}_1+\cdots+\lambda^{(i)}_{j-1}+\lambda^{(i)}_j-\gamma^{(i)}_j}$ is a strictly increasing sequence of non-negative integers congruent to $(i-1)\pmod r$;
            \item $a_{L_{i-1}+\lambda^{(i)}_1+\cdots+\lambda^{(i)}_{j}-\gamma^{(i)}_j+1},\ldots,a_{L_{i-1}+\lambda^{(i)}_1+\cdots+\lambda^{(i)}_j}$ is a weakly increasing sequence of non-negative integers congruent to $(i-2)\pmod r$.
        \end{enumerate}
    \end{enumerate}
\end{definition}

In our running example of $\llambda = (2,1 \mid \mid 3,2 \mid \mid 2)$ and $\bm \gamma = (1,1 \mid \mid 2, 0 \mid \mid 1)$, the $\mathcal A_{n,r}(\bm\lambda,\bm\gamma)$ would consists of  monomials of the form $x_1^{a_1}x_2^{a_2}\cdots x_{10}^{a_{10}}$ where the inequality 
\begin{equation}\label{cond1}
    (a_1,\underline{a_2}\mid \underline{a_3}\parallel a_4,\underline{a_5},\underline{a_6}\mid a_7,a_8\parallel a_9,\underline{a_{10}})\le \left(2,\underline{4}\mid\underline{4}\parallel5,\underline{7},\underline{7}\mid8,11\parallel14,\underline{16}\right)
\end{equation}
holds componentwise, we have the following modular equalities: \begin{equation}\label{cond2}
\begin{cases}
a_1\equiv a_5\equiv a_6\equiv  0\pmod 3, \\ a_4\equiv a_7\equiv a_{8}\equiv a_{10}\equiv 1\pmod{3}, \\ a_2\equiv a_3\equiv a_9\equiv2\pmod{3}
\end{cases}\end{equation} and we have the inequalities: \begin{equation}\label{cond3}
\begin{cases}a_5\le a_6,\\a_7<a_8.\end{cases} \end{equation}
Here \eqref{cond2} comes from the fact that in any block corresponding to $\lambda^{(i)}$, the non-underlined integers must be $(i-1)\pmod 3$ and the underlined integers must be $(i-2)\pmod 3$. The condition \eqref{cond3} represents the fact that in each block the non-underlined integers form a strictly increasing sequence and the underlined integers form a weakly increasing sequence.

\begin{lemma}
\label{lem:parabolic-spanning-set}
    Given any multipartition $\bm\lambda\vdash_r n$, the set of superspace monomials $$\bigsqcup_{\bm\gamma\le\bm\lambda}\varepsilon_{\bm\lambda}\cdot\left(\mathcal A_{n,r}(\bm\lambda,\bm\gamma)\cdot\theta_{J(\bm\lambda,\bm\gamma)}\right)$$descends to a spanning set of $\varepsilon_{\bm\lambda}\cdot SR_{n,r}$. Here the disjoint union is taken over all $\bm\gamma$ so that $(\bm\lambda,\bm\gamma)$ forms a signed $r$-partition.
\end{lemma}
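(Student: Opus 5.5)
Since $\AAA_{n,r}$ descends to a basis of $SR_{n,r}$ by Theorem~\ref{thm:ss-basis}, the images $\varepsilon_{\llambda}\cdot m$ for $m \in \AAA_{n,r}$ span $\varepsilon_\llambda \cdot SR_{n,r}$. The plan is to show that each such $\varepsilon_\llambda \cdot m$ is either zero or a scalar multiple of $\varepsilon_\llambda \cdot m'$ for some $m' \in \mathcal A_{n,r}(\llambda,\ggamma)\cdot\theta_{J(\llambda,\ggamma)}$. The mechanism is the standard one: for $h \in \ZZ_r \wr \symm_\llambda$ we have $\varepsilon_\llambda h = \sign_\llambda(h)^{-1}\varepsilon_\llambda$, so $\varepsilon_\llambda \cdot (h\cdot m) = \sign_\llambda(h)^{-1}\,\varepsilon_\llambda\cdot m$. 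If some $h$ fixes $m$ up to a scalar $c$ with $c \ne \sign_\llambda(h)^{-1}$, then $\varepsilon_\llambda\cdot m = 0$.

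The first step treats the diagonal part. Restricted to the $\lambda^{(i)}_j$-batch, $\sign_\llambda$ on a diagonal element $\zeta^{c}$ in position $k$ equals $\chi^{i-1}$, giving $\zeta^{(i-1)c}$. That element acts on $x_k^{a}$ (respectively $x_k^a\theta_k$) by $\zeta^{-ca}$ (respectively $\zeta^{-c(a+1)}$). Hence $\varepsilon_\llambda\cdot m \neq 0$ forces $a_k \equiv -(i-1)$, up to the paper's sign conventions, for positions without $\theta_k$, and $a_k+1 \equiv$ the same residue for positions with $\theta_k$. I will fix the conventions so this reads $a_k \equiv i-1$ and $a_k \equiv i-2 \pmod r$ respectively, matching the definition of $\mathcal A_{n,r}(\llambda,\ggamma)$; if the signs come out reversed, I expect the paper's conventions for the action on $x_i$ to absorb this.

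The second step uses the permutations inside each batch $\symm_{\lambda^{(i)}_j}$, which act by $\sign$ times a factor coming from the colors. Within a batch, let $w$ permute positions. The monomial $m$ restricted to the batch is a product of bosonic factors $x_k^{a_k}$ and fermionic factors $x_k^{a_k}\theta_k$. Because $\varepsilon_\llambda$ antisymmetrizes, the following reductions hold.
\begin{enumerate}
\item Up to a sign, we may permute so that, within each batch, the $\theta$-positions are the last $\gamma^{(i)}_j$ positions, where $\gamma^{(i)}_j$ is the number of $\theta$'s in the batch. This makes the fermionic support equal to $J(\llambda,\ggamma)$.
\item Within the non-$\theta$ positions, we may sort the exponents increasingly. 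Two equal exponents there give a transposition fixing $m$ with $\sign=-1$, so the image vanishes; hence the exponents are strictly increasing.
\item Within the $\theta$-positions, swapping two $x_k^{a}\theta_k$ factors produces an extra sign from anticommutation, which cancels the $\sign$. So equal exponents survive, and sorting gives a weakly increasing sequence.
\end{enumerate}

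The main obstacle is checking that the sorted monomial still lies in the staircase bound, i.e.\ that condition (1) holds for $\stair(J(\llambda,\ggamma),r)$. Moving $\theta$'s to the right end of a batch and sorting exponents changes the staircase. I would argue that the original $m \in \AAA_{n,r}(J,r)\theta_J$ satisfies $a_k\le \stair(J,r)_k$, and that pushing $J$ to the right within each batch only increases staircase values Gale-monotonically: the staircase is weakly increasing in the index, and $\stair(J,r)$ is componentwise dominated by $\stair(J',r)$ for $J \le_{\Gale} J'$ when $J'$ is obtained by such within-batch moves. Sorting a sequence that is bounded by an increasing bound keeps it bounded, since the $t$-th smallest entry is at most the $t$-th smallest bound. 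I expect the delicate point to be the interaction between the $\theta$ and non-$\theta$ positions inside one batch, where the staircase values $ir-1$ and $ir-2$ interleave. I would verify this by a direct comparison of the two staircases position by position. If that comparison fails in some case, the alternative route is a straightening argument: use the spanning of Lemma~\ref{lem:spanning} to rewrite the out-of-bound monomials as smaller ones, inducting on some order.
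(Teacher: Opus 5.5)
Your strategy is the paper's: apply $\varepsilon_\llambda$ to the basis $\AAA_{n,r}$ of Theorem~\ref{thm:ss-basis}, discard images killed by diagonal elements or transpositions of $\ZZ_r\wr\symm_\llambda$, and normalize the survivors by permutations inside batches. Your residue computation needs no adjustment of conventions. Take $h$ to be the diagonal matrix with $\zeta$ in position $k$. Then $h\cdot m=\zeta^{-a_k}m$ and $\varepsilon_\llambda h=\sign_\llambda(h)^{-1}\varepsilon_\llambda=\zeta^{-(i-1)}\varepsilon_\llambda$, which give exactly $a_k\equiv i-1$, and $a_k\equiv i-2$ at $\theta$-positions.

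The genuine gap is the staircase bound after moving the $\theta$'s, the step you flag yourself. The mechanism you propose for it does not work as stated. The inequality $\stair(J,r)\le\stair(J',r)$ at fixed positions is true, but it is not what is needed. Moving the $\theta$'s to the end of a batch moves the non-$\theta$ exponents to the left, onto positions where $\stair(J',r)$ is smaller. So a bound at the original position coming from $\stair(J',r)$ gives no control. Sorting within each class cannot repair this, because the two classes are never mixed.

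Concretely, take a batch $B=\{b+1,b+2\}$ preceded by $c$ non-$J$ positions, with $J\cap B=\{b+1\}$ (so $J'\cap B=\{b+2\}$). Then $\stair(J,r)|_B=((c+1)r-2,(c+1)r-1)$ and $\stair(J',r)|_B=((c+1)r-1,(c+2)r-2)$. The non-$\theta$ exponent at $b+2$ must end up under $(c+1)r-1$ at position $b+1$. The dominating value $(c+2)r-2$ does not give this when $r\ge 2$.

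The fix is an explicit feature of the staircase, read off from its definition. Fix a batch $B=\{b+1,\dots,b+\ell\}$ containing $\gamma$ elements of $J$, and let $c$ be the number of non-$J$ positions before $b+1$. This $c$ is the same for $J$ and $J'$, since every batch keeps its number of $\theta$'s. Hence positions outside $B$ are unaffected, and you can work one batch at a time.

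Within $B$, the staircase values are:
\begin{itemize}
\item the $s$-th non-$J$ position of $B$ has $\stair(J,r)$-value $(c+s)r-1$, regardless of where the $J$-positions sit;
\item a $J$-position of $B$ preceded by $t$ non-$J$ positions of $B$ has value $(c+t+1)r-2\le(c+\ell-\gamma+1)r-2$.
\end{itemize}
These are exactly, respectively at most, the values of $\stair(J',r)$ at $b+s$ and at the last $\gamma$ positions of $B$.

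Now use the permutation of $B$ that sends the non-$\theta$ positions order-preservingly onto $b+1,\dots,b+\ell-\gamma$, and the $\theta$-positions order-preservingly onto the last $\gamma$ positions. It carries $m$, up to sign, into $\AAA_{n,r}(J',r)\cdot\theta_{J'}$. Only after this step does your order-statistic argument finish the proof: the non-$\theta$ part has strictly increasing bounds and the $\theta$ part has a constant bound. No straightening fallback is needed.

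For comparison, the paper's written proof passes over this point entirely. It only permutes within the first $\lambda^{(i)}_j-\gamma^{(i)}_j$ and the last $\gamma^{(i)}_j$ positions of each batch, tacitly assuming the fermionic support is already $J(\llambda,\ggamma)$.
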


It will develop (Corollary~\ref{cor:parabolic-basis}) that the spanning set of $\varepsilon_\llambda \cdot SR_{n,r}$ in Lemma~\ref{lem:parabolic-spanning-set} is in fact a basis of this vector space. 

\begin{proof}
    We start by establishing showing that the union in the lemma is actually disjoint. Write $\llambda = (\lambda^{(1)}, \dots, \lambda^{(r)}) \vdash_r n$.  For any signed $r$-partition $(\llambda,\ggamma)$ with $\ggamma \leq \llambda$ and any  element \[f_{\llambda,\ggamma} \in \varepsilon_\llambda \cdot \left(  \AAA_{n,r}(\llambda,\ggamma) \cdot \theta_{J(\llambda,\ggamma}\right),\]
    every superspace monomial appearing in $f_{\llambda,\ggamma}$ will have precisely $\gamma^{(i)}_j$ fermoinic monomials appearing among the $\lambda^{(i)}_j$-batch of indices in $[n]$. The disjointness of the union over all $\ggamma \leq \llambda$ follows.\footnote{It also follows that the sum $\sum_{\ggamma \leq \llambda} \mathrm{span}_\CC (\varepsilon_\llambda \cdot (\AAA_{n,r}(\llambda,\ggamma) \cdot \theta_{J(\llambda,\ggamma)})$ of vector spaces is direct.}

    Theorem \ref{thm:ss-basis} guarantees that $\antisymm\cdot \AAA_{n,r}$ already descends to a spanning set of $\antisymm\cdot SR_{n,r}$. We now observe that many elements in the set $\antisymm\cdot \AAA_{n,r}$ are either zero, or are scalar multiples of other elements in this set, so let us remove these ``obvious" linear dependencies. Consider any monomial $m=x_1^{a_1}\cdots x_n^{a_n}\cdot\theta_J$.

    \begin{itemize}
        \item We can permute the exponents $a_{L_{i-1}+\lambda^{(i)}_1+\cdots+\lambda^{(i)}_{j-1}+1},\ldots,a_{L_{i-1}+\lambda^{(i)}_1+\cdots+\lambda^{(i)}_{j-1}+\lambda^{(i)}_j-\gamma^{(i)}_j}$ or the exponents $a_{L_{i-1}+\lambda^{(i)}_1+\cdots+\lambda^{(i)}_{j}-\gamma^{(i)}_j+1},\ldots,a_{L_{i-1}+\lambda^{(i)}_1+\cdots+\lambda^{(i)}_j}$ by acting on $m$ by an element of $\ZZ_r\wr\symm_\llambda$, which fixes $\antisymm\cdot m$ up to sign.

        \item If any two of the exponents $a_{L_{i-1}+\lambda^{(i)}_1+\cdots+\lambda^{(i)}_{j-1}+1},\ldots,a_{L_{i-1}+\lambda^{(i)}_1+\cdots+\lambda^{(i)}_{j-1}+\lambda^{(i)}_j-\gamma^{(i)}_j}$ are equal, $\antisymm\cdot m=0$.

        \item If any of the exponents $a_{L_{i-1}+\lambda^{(i)}_1+\cdots+\lambda^{(i)}_{j-1}+1},\ldots,a_{L_{i-1}+\lambda^{(i)}_1+\cdots+\lambda^{(i)}_{j-1}+\lambda^{(i)}_j-\gamma^{(i)}_j}$ is not congruent to $(i-1)\pmod r$, or if any of the exponents $a_{L_{i-1}+\lambda^{(i)}_1+\cdots+\lambda^{(i)}_{j}-\gamma^{(i)}_j+1},\ldots,a_{L_{i-1}+\lambda^{(i)}_1+\cdots+\lambda^{(i)}_j}$ is not congruent to $(i-2)\pmod{r}$, $\antisymm\cdot m=0$.
    \end{itemize}
    Removing these dependencies, we end up with precisely the set $\bigsqcup_{\ggamma\le \llambda}\AAA_{n,r}(\llambda,\ggamma)\cdot\theta_{J(\llambda,\ggamma)}$, which implies the above lemma.
\end{proof}

Counting the elements of the set in Lemma~\ref{lem:parabolic-spanning-set} yields an upper bound for $\dim_\CC \varepsilon_{\bm\lambda}\cdot SR_{n,r}$. We will now show that this upper bound is precisely the dimension of $\varepsilon_{\bm\lambda}\cdot \left(\CC[\mathcal{\widetilde{{OP}}}_{n,r}]\otimes \det \right)$.

\begin{lemma}\label{lem: monomial count}
    For any $\bm\lambda\vdash_r n$, we have the equality $$\sum_{\bm\gamma\le \bm\lambda}\#\mathcal A_{n,r}(\bm\lambda,\bm\gamma)=\dim_\CC\varepsilon_{\bm\lambda}\cdot \left(\CC[\mathcal{\widetilde{{OP}}}_{n,r}]\otimes \det \right).$$
\end{lemma}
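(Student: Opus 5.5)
Both sides of the identity are computed by decomposing everything into independent local pieces, one per batch of the splitting of $[n]$ induced by $\llambda$, and matching the pieces batch by batch. On the representation side, $\CC[\widetilde{\OP}_{n,r}]\otimes\det$ is a monomial representation, so $\varepsilon_\llambda$ applied to a basis element $\sigma$ is either $0$ or a nonzero multiple of the orbit sum. Hence $\dim_\CC \varepsilon_\llambda\cdot(\CC[\widetilde{\OP}_{n,r}]\otimes\det)$ equals the number of $(\ZZ_r\wr\symm_\llambda)$-orbits on $\widetilde{\OP}_{n,r}$ whose stabilizer character is trivial. Concretely, the stabilizer of $\sigma$ must act on $\CC\sigma$ by the scalar $\sign_\llambda\cdot\det$ restricted to it. The first step is to compute this orbit count explicitly.

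\textbf{Step 1: choose orbit representatives.} Within a $\lambda^{(i)}_j$-batch, the diagonal part $\ZZ_r^{\lambda^{(i)}_j}$ lets us normalize the colors of elements in nonzero blocks to $0$. Elements in the zero block keep their colors, so an orbit is determined by how the batch elements are distributed among $Z,B_1,\dots,B_k$ (up to permuting inside the batch) together with the multiset of zero-block colors.

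\textbf{Step 2: test survival under $\varepsilon_\llambda$.} An element $i$ of the batch lying in the zero block with color $c$ makes $\zeta$ in coordinate $i$ act by $\zeta^{c}$. Since $\det$ contributes $\chi$ and $\sign_\llambda$ contributes $\chi^{i-1}$ on the $\lambda^{(i)}$ part, survival forces a congruence of $c$ modulo $r$ determined by $i$, or else kills the element. If two elements of the batch lie in the same nonzero block, a transposition swapping them (with compensating colors) fixes $\sigma$ but acts by $-1$ after the $\sign\cdot\sign$ twist cancels... so I need to track parity carefully. Whether same-block pairs are allowed or forbidden must come out matching the "strictly increasing" versus "weakly increasing" conditions in the definition of $\AAA_{n,r}(\llambda,\ggamma)$. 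I expect the non-underlined (non-$J$) elements to correspond to distinct-block, strictly increasing, and the $J$ elements to zero-block-type or repeated data, weakly increasing. Here $\gamma^{(i)}_j$ records how many batch elements are of the "fermionic/$\theta$" type.

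\textbf{Step 3: build the bijection with the monomial side.} For each $\ggamma$, I will set up an explicit bijection between $\AAA_{n,r}(\llambda,\ggamma)$ and the surviving orbits with the corresponding local type. The strategy is to read an exponent $a$ with $a\equiv i-1$ or $i-2 \pmod r$ as $a=r\cdot b+\text{offset}$. The quotient $b$ encodes the block index of the element, or its zero-block color together with its position, mirroring the way $\stair(J,r)_u=ir-1$ and $\stair(J,r)_j=ir-2$ bound the number of available blocks. To organize this, I will run a bijection in the spirit of the $r=1$ counting in \cite{MRW}: process the elements of $[n]$ from left to right, inserting each one either as a new block or into an existing block (or into $Z$). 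The staircase bounds grow by $r$ exactly when a new non-$J$ element appears, so they control the number of choices at each step.

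\textbf{Main obstacle.} The hard part will be matching the global staircase bound, which depends on how many non-$J$ indices precede a given index across all batches, with the global structure of the ordered set partition. Orbits only see local data per batch, whereas block positions are global. To handle this, I will first prove the equality of generating counts by a recursion on $n$: remove the last element of the last batch, show both sides satisfy the same recursion (a $q=1$ analogue of the $\Stir^{(r)}_q$ recursion refined by $\llambda$), and check the base cases.
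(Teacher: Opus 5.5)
Your framework is right, and it is what underlies the paper's $\llambda$-surviving partitions. Because $\CC[\widetilde{\OP}_{n,r}]\otimes\det$ is a monomial representation, the dimension equals the number of $(\ZZ_r\wr\symm_\llambda)$-orbits on $\widetilde{\OP}_{n,r}$ whose line stabilizer sees a trivial twisted character. Your Step 1 normalization is also correct.

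Step 2, however, is left unresolved, and how it resolves matters. On a factor $\ZZ_r\wr\symm_{\lambda^{(i)}_j}$ the twisting character is $\sign_\llambda\cdot\det=(\sign\cdot\chi^{i-1})(\sign\cdot\chi)=\chi^{i}$. The signs cancel completely, so there is \emph{no} obstruction to two elements of a batch sharing a block, zero or nonzero. The only survival condition concerns a zero-block element of a $\lambda^{(i)}$-batch: its color $c$ must satisfy $\zeta^{c}\zeta^{i}=1$, i.e.\ $c=r-i$, which is impossible for $i=1$. So zero-block colors are forced; they are not extra data for the quotient $b$ to encode, as your Step 3 suggests. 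The surviving orbits are represented by partitions in which each batch appears left to right in increasing order and all nonzero-block colors are $0$. Had you concluded that same-block pairs are forbidden, the count would already fail for $r=1$, $\llambda=((2))$: the orbit of $(12)$ survives and both sides equal $2$.

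The more serious gap is Step 3 and your proposed fix for the ``main obstacle''. You never define $\ggamma$ on the orbit side, and the recursion on $n$ does not close as stated. The number of ways to reinsert $n$ depends on which block contains $n-1$. On the monomial side, the admissible $a_n$ depends on $a_{n-1}$ and on whether $n\in J$. So neither side satisfies a recursion in $n$ and $\llambda$ alone, and there is no $\llambda$-refined $\Stir^{(r)}$ recursion to quote.

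The missing idea is to insert whole batches rather than single elements.
\begin{itemize}
\item For a surviving representative, let $\gamma^{(i)}_j$ be the batch size minus the number of \emph{new} nonzero blocks the batch creates, i.e.\ blocks containing no element of an earlier batch.
\item When the $\lambda^{(i)}_j$-batch is inserted there are exactly $t=L_{i-1}-G_{i-1}+\sum_{j'<j}(\lambda^{(i)}_{j'}-\gamma^{(i)}_{j'})$ nonzero blocks. This $t$ is precisely the number of indices outside $J(\llambda,\ggamma)$ preceding the batch, which is the only global information the staircase uses.
\item Put $m=\lambda^{(i)}_j-\gamma^{(i)}_j$. Placing the new blocks gives $\binom{t+m}{m}$ choices. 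Distributing the remaining $\gamma^{(i)}_j$ elements as a multiset over the $t+m$ nonzero blocks (plus the zero block when $i>1$) gives $\binom{t+\lambda^{(i)}_j-1}{\gamma^{(i)}_j}$ for $i=1$ and $\binom{t+\lambda^{(i)}_j}{\gamma^{(i)}_j}$ for $i>1$.
\item On the monomial side, write the non-$J$ exponents of the batch as $rb+(i-1)$; they become an $m$-subset of $\{0,\dots,t+m-1\}$. Write the $J$ exponents as $rb+(i-2)$, or as $rb+r-1$ when $i=1$. They become a weakly increasing sequence of length $\gamma^{(i)}_j$ in $\{0,\dots,t+m\}$, or in $\{0,\dots,t+m-1\}$ when $i=1$. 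These give the same binomials.
\end{itemize}
Both sides are therefore the same product over batches for each fixed $\ggamma$. This $\ggamma$-by-$\ggamma$ equality is also what the later fermionic-degree refinement needs, and a recursion on total counts would not provide it.
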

\begin{proof}
    We give a combinatorial basis of the vector space $\varepsilon_{\bm\lambda}\cdot \left(\CC[\mathcal{\widetilde{{OP}}}_{n,r}]\otimes \det \right)$ on the right-hand side. We will say an $r$-ified ordered set partition $\sigma=(Z\mid B_1\mid\cdots\mid B_k)$ of $[n]$ is \textit{$\bm\lambda$-surviving} if it satisfies for the following conditions:
    \begin{itemize}
        \item for each $1\le i\le r$ and each $1\le j\le k_i$, the numbers in the $\lambda^{(i)}_j$-batch of $[n]$ appear left-to-right in $\sigma$ in increasing order;
        \item every element in a non-zero block has color $0$;
        \item if any element appearing in the zero block belong to the $\lambda^{(i)}_j$-batch of $[n]$, then it has color $r-i$. 
    \end{itemize}
    In particular the last condition means numbers from the $\lambda^{(1)}_j$-batch (for any $j$) can never appear in the zero block.
    If $v$ is a vector spanning the $\det$ representation, it is easy to see that the set $$\{\varepsilon_{\bm\lambda}\cdot (\sigma\otimes v):\sigma\text{ is a }\bm\lambda\text{-surviving $r$-ified ordered set partition}\}$$ forms a basis for $\varepsilon_{\bm\lambda}\cdot \left(\CC[\mathcal{\widetilde{{OP}}}_{n,r}]\otimes \det \right)$. In order to count this set, we describe an algorithm to build all $\bm\lambda$-surviving $r$-ified ordered set partitions of $[n]$.

    This proceeds as follows: start with a sequence of exactly $\lambda^{(1)}_1-\gamma^{(1)}_1$ $\bullet$'s following the (initially empty) zero block: \[(\;\varnothing\mid \overbrace{\bullet\mid\bullet\mid\cdots\mid\bullet}^{\lambda^{(1)}_1 - \gamma^{(1)}_1}\;).\] Add a total of $\gamma{(1)}_1$ $\circ$'s in the parts containing black dots, possibly with multiplicity. This can be done in $\binom{\lambda^{(1)}_1-1}{\gamma^{(1)}_1}$ ways. Finally replace the $\lambda^{(1)}_1$ $\bullet$'s and $\circ$ with the $\lambda^{(1)}_1$-batch of $[n]$ (namely, $\{1,\ldots, \lambda^{(1)}_1\}$) in increasing order left to right, and color them all with $0$. This yields a valid $r$-ified order set partition of the  first batch of $[n]$ with an empty zero block in which every entry is colored 0.

    The rest of the process works similarly. Suppose we have finished putting all the integers prior to the $\lambda^{(i)}_j$-batch of $[n]$ into our $r$-ified ordered set partition. This should have created an $r$-ified ordered set partition with exactly $$S=L_{i-1}-G_{i-1}+\lambda^{(i)}_1+\cdots+\lambda^{(i)}_{j-1}-\gamma^{(i)}_1-\cdots-\gamma^{(i)}_{j-1}$$non-zero blocks. We insert $\lambda^{(i)}_j-\gamma^{(i)}_j$ new singleton non-zero blocks with a $\bullet$ in each: this can be done in $$\binom{S+\lambda^{(i)}_j-\gamma^{(i)}_j}{\lambda^{(i)}_j-\gamma^{(i)}_j}$$ways. Next, we add $\gamma^{(i)}_j$ $\circ$'s (possibly with multiplicity) to the existing blocks: if $i=1$, we only add these to the non-zero blocks, but if $i>1$, we may add these to any existing block including the zero block. This can be done in $$\binom{S+\lambda^{(i)}_j-1}{\gamma^{(i)}_j}\text{ or }\binom{S+\lambda^{(i)}_j}{\gamma^{(i)}_j}$$ways depending on whether $i=1$ or $i>1$. Finally, the $\bullet$'s and $\circ$'s can be replaced with the numbers in the $\lambda^{(i)}_j$-batch of $[n]$ left to right in increasing order. The colors of these new numbers are determined by the $\bm\lambda$-surviving condition: every number gets color $0$ if it is in a non-zero block, and color $r-i$ if it is in a zero block (this possibility does not arise if $i=1$).

    Executing this process until we process the $\lambda^{(r)}_{k_r}$-batch of $[n]$ yields a $\bm\lambda$-surviving $r$-ified ordered set partition of $[n]$. Combining the enumeration of the number of options at each stage, we see that for a given $\bm\gamma$ the number of ways to perform this algorithm is 
   
    \begin{multline}\label{eq: total count}
        \prod_{j=1}^{k_1}\binom{\lambda^{{(1)}}_1+\cdots+\lambda^{(1)}_j-\gamma^{(1)}_1-\cdots-\gamma^{(1)}_j}{\lambda^{(1)}_j-\gamma^{(1)}_{j}}\binom{\lambda^{{(1)}}_1+\cdots+\lambda^{(1)}_j-\gamma^{(1)}_1-\cdots-\gamma^{(1)}_{j-1}-1}{\gamma^{(1)}_j}\\\times\prod_{i=2}^r\left(\prod_{j=1}^{k_i}\binom{L_{i-1}-G_{i-1}+\lambda^{{(i)}}_1+\cdots+\lambda^{(i)}_j-\gamma^{(i)}_1-\cdots-\gamma^{(i)}_j}{\lambda^{(i)}_j-\gamma^{(i)}_{j}}\right.\\
        \left.\times\binom{L_{i-1}-G_{i-1}+\lambda^{{(i)}}_1+\cdots+\lambda^{(i)}_j-\gamma^{(i)}_1-\cdots-\gamma^{(i)}_{j-1}}{\gamma^{(i)}_j}\right).
    \end{multline}

    As we vary $\bm\gamma$, we obtain every possible $\bm\lambda$-surviving $\sigma$; so the total number of possible $\sigma$'s is obtained by summing the above expression over all $\bm\gamma\le\bm\lambda$. However, one may verify that the above expression is precisely $\#\mathcal A_{n,r}(\bm\lambda,\bm\gamma)$, which proves our claim.
\end{proof}
By combining the above two lemmas, we can now conclude the inequality
\begin{equation}\label{eq: upper bound}
    \dim_\CC\antisymm\cdot SR_{n,r}\le \dim _\CC\varepsilon_{\bm\lambda}\cdot \left(\CC[\mathcal{\widetilde{{OP}}}_{n,r}]\otimes \det \right).
\end{equation}
In order to prove the reverse inequality, we will switch tactics and use inverse systems. Before doing so, we give an example of the construction process used in the proof of Lemma~\ref{lem: monomial count}.

\begin{example}
    To illustrate the above argument, let us look at an example of an $\llambda$-surviving $r$-ified ordered set partition being built step-by-step.

    Let's take the example of $r=3$, $n=15$,  $\llambda=(5,3\parallel3,2\parallel2)$ and $\ggamma=(3,2\parallel 1,2\parallel 1)$ which creates the following batches: $$(\underbrace{1,2,3,4,5}_{\lambda^{(1)}_1}\mid \underbrace{6,7,8}_{\lambda^{(1)}_2}\parallel\underbrace{9,10,11}_{\lambda^{(2)}_1}\mid\underbrace{12,13}_{\lambda^{(2)}_2}\parallel\underbrace{14,15}_{\lambda^{(3)}_1}).$$
The batches are inserted as follows.
\begin{align*}
    &\varnothing\\
    &(\;\varnothing\mid \bullet\mid\bullet\;)\\
    &(\;\varnothing\mid \bullet\circ\circ\mid\bullet\circ\;)\\
    &(\;\varnothing\mid 1^02^03^0\mid4^05^0\;)\\
     &(\;\varnothing\mid 1^02^03^0\mid\bullet\mid4^05^0\;)\\
     &(\;\varnothing\mid 1^02^03^0\circ\mid\bullet\circ\mid4^05^0\;)\\
     &(\;\varnothing\mid 1^02^03^06^0\mid7^08^0\mid4^05^0\;)\\
     &(\;\varnothing\mid \bullet\mid\bullet\mid 1^02^03^06^0\mid7^08^0\mid4^05^0\;)\\
     &(\;\circ\mid \bullet\mid\bullet\mid 1^02^03^06^0\mid7^08^0\mid4^05^0\;)\\
     &(\;9^1\mid 10^0\mid11^0\mid 1^02^03^06^0\mid7^08^0\mid4^05^0\;)\\
     &(\;9^1\circ\mid 10^0\circ\mid11^0\mid 1^02^03^06^0\mid7^08^0\mid4^05^0\;)\\
     &(\;9^112^1\mid 10^013^0\mid11^0\mid 1^02^03^06^0\mid7^08^0\mid4^05^0\;)\\
     &(\;9^112^1\mid 10^013^0\mid11^0\mid \bullet\mid1^02^03^06^0\mid7^08^0\mid4^05^0\;)\\
     &(\;9^112^1\circ\mid 10^013^0\mid11^0\mid \bullet\mid1^02^03^06^0\mid7^08^0\mid4^05^0\;)\\
      &(\;9^112^114^0\mid 10^013^0\mid11^0\mid 15^0\mid1^02^03^06^0\mid7^08^0\mid4^05^0\;)\\
\end{align*}
\end{example}
\subsection{Parabolic $\DDD$-operators and lower bound} In the previous subsection, we saw that the dimension of $\varepsilon_{\bm\lambda}\cdot SR_{n,r}$ cannot exceed the target quantity $\dim_\CC \varepsilon_\llambda \cdot (\CC[\widetilde{\OP}_{n,r}] \otimes \det)$. To show the reverse bound, one must produce sufficiently many linearly independent elements of $\varepsilon_{\bm\lambda}\cdot SR_{n,r}$. 
Due to the difficulty of working with elements of $SR_{n,r}$ (which are cosets as opposed to genuine superspace elements), we turn our attention to the harmonic space $\SH$. Since $SR_{n,r}$ and $\SH$ are isomorphic as bigraded $(\ZZ_r \wr \symm_n)$-modules, one has
\begin{equation}
    \dim_\CC \antisymm \cdot SR_{n,r} = \dim_\CC \antisymm \cdot \SH
\end{equation} and it will suffice to construct sufficiently many linearly independent elements of $\antisymm\cdot \SH$.

Our method for bounding $\antisymm \cdot \SH$ from below is a more elaborate version of the argument in Subsection \ref{subsec: inverse system} which takes the $r$-partition $\llambda$ into account. Starting with the element $\delta_{n,r}\in \SH$, we hope to construct more elements of $\SH$ by applying differential operators. However, since $\antisymm\cdot\delta_{n,r}$ is not necessarily non-zero, we need to modify our approach as in the following lemma. Let $\mathbf X_{\bm\lambda}$ denote the following monomial in $\CC[\mathbf x_n]$:
$$\mathbf X_{\bm\lambda}:=\prod_{i=1}^rx_{L_{i-1}+1}^{r-i}x_{L_{i-1}+2}^{r-i}\cdots x_{L_i}^{r-i}.$$As an example, if $n=10, r=3$ and $\bm\lambda=(2,1\parallel 3,2\parallel2)$ then \[\mathbf X_{\bm\lambda}=x_1^2x_2^2x_3^2\times x_4^1x_5^1x_6^1x_7^1x_8^1\times 1.\]
We write $\delta^\llambda_{n,r} \in \CC[\xx_n]$ for the polynomial
\begin{equation}
    \delta^\llambda_{n,r} := \varepsilon_\llambda \cdot (\XX_\llambda \odot \delta_{n,r}).
\end{equation}

\begin{lemma}\label{lem: SH element}
    Let $\bm\lambda\vdash_r n$ be an $r$-partition, and let $S^{\ZZ_r \wr \symm_n} = \CC[x_1,\dots,x_n]^{\ZZ_r\wr\symm_{\bm\lambda}}$ be the fixed space of $S = \CC[x_1,\dots,x_n]$ under the action of the subgroup $\ZZ_r\wr\symm_{\bm\lambda} \subseteq \ZZ_r \wr \symm_n$. Then 
    \begin{enumerate}
        \item the element $\delta^\llambda_{n,r}$ is non-zero and belongs to $\antisymm\cdot \SH$;
        \item $\antisymm\cdot\SH$ is closed under the operator $d_j$ for $j\equiv 1\pmod{r}$;
        \item $\antisymm\cdot\SH$ is closed under the operator $f\odot (-)$ whenever $f\in S^{\ZZ_r\wr\symm_{\bm\lambda}}$.
    \end{enumerate}
\end{lemma}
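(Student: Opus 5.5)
Parts (2) and (3) rest on one observation: the $\odot$-action and the higher Euler operators interact with the group action in controlled ways. Part (1) additionally needs a nonvanishing argument, which is where the real work lies. We take the parts in the order (2), (3), (1).

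\emph{Parts (2) and (3).} For (2), the operator $d_j$ is $GL$-equivariant only up to the twist by $\partial_i^j$. We compute how $g \in \ZZ_r \wr \symm_n$ conjugates $d_j$. A permutation $w$ commutes with $d_j$. A diagonal element $\mathrm{diag}(\zeta^{c_1},\dots,\zeta^{c_n})$ scales $\partial_i^j$ by $\zeta^{jc_i}$ and scales $\theta_i$ by $\zeta^{-c_i}$. Hence the term $\partial_i^j(\cdot)\theta_i$ picks up the factor $\zeta^{(j-1)c_i}$, which equals $1$ exactly when $j \equiv 1 \pmod r$. So $d_j$ commutes with all of $\ZZ_r \wr \symm_n$, and in particular with $\varepsilon_\llambda$. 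Combining this with Theorem~\ref{thm:swanson-wallach} gives $d_j(\varepsilon_\llambda \cdot h) = \varepsilon_\llambda \cdot d_j(h) \in \varepsilon_\llambda \cdot \SH$.

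For (3), the pairing $\odot$ satisfies $g\cdot(f \odot h) = (g\cdot f)\odot(g \cdot h)$. The sign conventions for $x$ versus $\partial$ need a check here: if $x_i \mapsto \zeta^{-c}x_i$, then $\partial_i$ transforms by $\zeta^{c}$, and these agree after the standard identification of $S$ acting by differentiation. Granting this, an $f$ fixed by $\ZZ_r \wr \symm_\llambda$ satisfies $f \odot (\varepsilon_\llambda h) = \varepsilon_\llambda (f \odot h)$. Also $SI_{n,r}^\perp$ is closed under $f\odot(-)$ for every $f \in S$, since $I \odot$-annihilation is preserved because $I$ is an ideal. Therefore (3) follows.

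\emph{Part (1).} Membership is immediate. By Theorem~\ref{thm:steinberg} we have $\XX_\llambda \odot \delta_{n,r} \in I_{n,r}^\perp \subseteq \SH$, since $SI_{n,r}^\perp \cap S = I_{n,r}^\perp$; indeed the fermion-degree-$0$ part of $SI_{n,r}^\perp$ is annihilated by $p_{kr}$ and is unaffected by the $dp_{kr}$. Applying $\varepsilon_\llambda$ keeps us in $\varepsilon_\llambda \cdot \SH$.

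The main obstacle is nonvanishing. The plan is to exhibit a specific monomial in $\varepsilon_\llambda \cdot (\XX_\llambda \odot \delta_{n,r})$ with nonzero coefficient.
\begin{enumerate}
\item Expand $\delta_{n,r} = (x_1\cdots x_n)^{r-1}\sum_{w\in\symm_n}\sign(w)\,\prod_i x_{w(i)}^{r(i-1)}$, which is the Vandermonde in the $x_i^r$.
\item Apply $\XX_\llambda\odot$. For indices in the $\lambda^{(i)}$ block this lowers the exponent $r-1 + r(\cdot)$ by $r-i$, giving exponents congruent to $i-1 \pmod r$, and each term has a nonzero constant multiple.
\item Apply $\varepsilon_\llambda$. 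The diagonal part of $\ZZ_r \wr \symm_\llambda$ acts on $x_k$ in batch $i$ by $\zeta^{-c}$, and $\sign_\llambda$ contributes $\chi^{i-1}$. Averaging over $\ZZ_r^n$ therefore keeps exactly the monomials with exponent $\equiv i-1 \pmod r$ on batch $i$. The sign convention should make the surviving class match; this requires a direct check, and if it fails the fix is to note that the exponent residue is forced and just verify compatibility. All monomials produced in step 2 already have this residue, so they survive with the factor $r^n$.
\item The permutation part antisymmetrizes within each batch. A monomial of $\XX_\llambda\odot\delta_{n,r}$ whose exponents are distinct within each batch, for instance the one coming from $w=\mathrm{id}$, is carried to distinct monomials. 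Its coefficient in the antisymmetrization is a signed sum over the stabilizer orbit. Because $\delta_{n,r}$ is itself $\symm_n$-alternating in the $x^r$-parts, the contributions of $u\in\symm_\llambda$ combine with sign $\sign(u)^2=1$ and reinforce rather than cancel.
\end{enumerate}
The upshot is that the coefficient equals $|\ZZ_r\wr\symm_\llambda|$ times a nonzero product of falling factorials. If this bookkeeping proves messy, the fallback is to pair against $\delta^\llambda$ itself, namely to show $\overline{\delta^\llambda_{n,r}}\odot\delta^\llambda_{n,r}>0$.
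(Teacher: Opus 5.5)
Your proposal is correct. For parts (2), (3) and the reduction step in part (1) it follows the paper's argument. You factor $\varepsilon_\llambda$ as the $\sign_\llambda$-weighted sum over diagonal matrices times the uncolored antisymmetrizer over $\symm_\llambda$. The first factor acts by $r^n$ on the $\llambda$-stable monomials of $\XX_\llambda\odot\delta_{n,r}$. The second acts by $|\symm_\llambda|$, because $\XX_\llambda$ is $\symm_\llambda$-invariant and $\delta_{n,r}$ is alternating. Hence $\delta^\llambda_{n,r}=|\ZZ_r\wr\symm_\llambda|\cdot(\XX_\llambda\odot\delta_{n,r})$.

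You differ from the paper only in the final nonvanishing of $\XX_\llambda\odot\delta_{n,r}$:
\begin{itemize}
\item The paper uses Steinberg's Theorem~\ref{thm:steinberg} to turn this into $\XX_\llambda\notin I_{n,r}$, and then appeals to a monomial basis of $S/I_{n,r}$.
\item You expand $\delta_{n,r}$ as a signed sum over $\symm_n$. Every exponent is at least $r-1\geq r-i$, so each term survives differentiation with a nonzero falling-factorial coefficient. Distinct permutations give distinct monomials, since differentiating by a fixed monomial shifts exponent vectors by a fixed vector.
\end{itemize}
Your route is more elementary and self-contained, and it makes your pairing fallback unnecessary. The paper's route is the one that extends to Lemma~\ref{lem:element gen}, where $\XX_\llambda\cdot\sss^r(\TT)\cdot f_{J(\llambda,\ggamma),r}$ is too complicated to expand directly.

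Two small corrections.

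First, in (3) the identity $g\cdot(f\odot h)=(g\cdot f)\odot(g\cdot h)$ is false for non-real $g$. For $g=\mathrm{diag}(\zeta^{c_1},\dots,\zeta^{c_n})$ one has $g\,\partial_i\,g^{-1}=\zeta^{c_i}\partial_i$. The correct rule is therefore $g\cdot(f\odot h)=(\bar g\cdot f)\odot(g\cdot h)$, where $\bar g$ is the entrywise conjugate. This does not hurt you: $\ZZ_r\wr\symm_\llambda$ is closed under $g\mapsto\bar g$, so an $f$ fixed by that subgroup still commutes with $\varepsilon_\llambda$.

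Second, in step 3 of (1) you can drop the hedge, because the check succeeds. Let $i_k$ be the batch containing $k$. For diagonal $t$ one has $\sign_\llambda(t)=\prod_k\zeta^{(i_k-1)c_k}$. The diagonal part of $\varepsilon_\llambda$ then sends $x^a$ to $\prod_k\bigl(\sum_{c=0}^{r-1}\zeta^{c(i_k-1-a_k)}\bigr)x^a$. This equals $r^n x^a$ exactly when $a_k\equiv i_k-1 \pmod r$ for all $k$, which holds for every monomial of $\XX_\llambda\odot\delta_{n,r}$. Had the residues not matched, there would have been no fix, since the element would simply vanish.
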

\begin{proof}
    To prove (1), consider the ``uncolored" parabolic subgroup $\symm_{\bm\lambda}$ defined as $$ \symm_{\bm\lambda}:=\left(\symm_{\lambda^{(1)}_1}\times \cdots\times \symm_{\lambda^{(1)}_{k_1}}\right)\times\cdots\times \left(\symm_{\lambda^{(r)}_1}\times\cdots\times\symm_{\lambda^{(r)}_{k_r}}\right)\subseteq \ZZ_r\wr\symm_n.$$This has a corresponding antisymmetrizer $$\antisymm^0:=\sum_{g\in \symm_{\bm\lambda}}\sign(g)\cdot g.$$ One can verify that this commutes with the operator $f\odot(-)$ whenever $f$ is invariant under $\symm_{\bm\lambda}$-action. Further, if $x_1^{a_1}\cdots x_n^{a_n}$ is a monomial in which $a_k\equiv (i-1)\pmod r$ whenever $k$ belongs to the $\lambda^{(i)}_j$-batch of $[n]$, $\antisymm\cdot x_1^{a_1}\cdots x_n^{a_n}=c_{\bm\lambda}\cdot\antisymm^0\cdot x_1^{a_1}\cdots x_n^{a_n}$, where $c_{\bm\lambda}$ is a non-zero constant depending only on $\bm\lambda$. We will call such a monomial \textit{$\bm\lambda$-stable}.

    Consider the following chain of equalities: $$\antisymm\cdot \left(\mathbf X_{\bm\lambda}\odot \delta_{n,r}\right)\doteq \antisymm^0\cdot \left(\mathbf X_{\bm\lambda}\odot \delta_{n,r}\right)=\mathbf X_{\bm\lambda}\odot\left(\antisymm^0\cdot\delta_{n,r}\right)\doteq \mathbf X_{\bm\lambda}\odot\delta_{n,r} $$where $\doteq$ denotes equality up-to non-zero scaling. One can verify that every monomial in $\mathbf X_{\bm\lambda}\odot \delta_{n,r}$ is $\bm\lambda$-stable, and therefore the first equality holds. Next, $\mathbf X_{\bm\lambda}$ is fixed under the action $\symm_{\bm\lambda}$, yielding the second equality. Finally, since $w\cdot \delta_{n,r}=\sign(w)\cdot\delta_{n,r}$ for any $w\in \symm_n$, $\antisymm^0\cdot \delta_{n,r}$ is simply $\delta_{n,r}$ up-to a non-zero scalar, and the third equality follows as well. 

    It remains to show $\mathbf X_{\bm\lambda}\odot\delta_{n,r}$ is non-zero, or equivalently (by Theorem~\ref{thm:steinberg}) $\mathbf X_{\bm\lambda}\ne 0$ as an element of the quotient $S/I_{n,r}$. 
    The vector space $S/I_{n,r}$ is the classical (polynomial) coinvariant ring associated to the complex reflection group $\ZZ_r \wr \symm_n$; it is known to have basis
    \[
    \{ x_1^{b_1} \cdots x_n^{b_n} \,:\, b_i < r \cdot (n-i)  \}.
    \]
    (This basis is obtained from Proposition~\ref{prop:colon-ideal-quotient-basis} at $J = \varnothing$ so that $f_{J,r} = 1$ after transforming the variables by $x_i \leftrightarrow x_{n-i}$.)
    Since $\mathbf X_{\bm\lambda}$ lies in this set of monomials, we obtain $\mathbf X_{\bm\lambda}\ne 0$ in the quotient ring $S/I_{n,r}$ and get $\mathbf X_{\llambda} \odot \delta_{n,r} \neq 0$.

    Parts (2) and (3) are immediate from Theorems \ref{thm:steinberg} and \ref{thm:swanson-wallach}, and the observation that $\antisymm$ commutes with the operators $d_{rj+1}$ and $f\odot(-)$ for $f\in S^{\ZZ_r\wr\symm_{\bm\lambda}}$.
    \end{proof}

    In Section \ref{sec:Basis} we applied the $\mathfrak D_{J,r}$ operators on $\delta_{n,r}$ in order to generate elements of $\SH$. To extend this idea in order to get elements of $\antisymm\cdot\SH$, we use a modified form of these $\DDD$-operators that will be indexed so-called translation sequences, defined below. 

    \begin{definition}
        Given an $r$-partition $\bm\lambda\vdash_r n$, a $\bm\lambda$-translation sequence is the sequence of sets $\TT=(T^{(1)}_1,\ldots, T^{(1)}_{k_1},\ldots, T^{(r)}_1,\ldots, T^{(r)}_{k_r})$ where $T^{(i)}_j$ is a subset of the $\lambda^{(i)}_j$-batch of $[n]$. We define $\bm\gamma(\TT)$ to be the sequence of cardinalities of $T^{(i)}_j$: $\bm\gamma(\TT)=\left(\#T^{(1)}_1,\ldots, \#T^{(1)}_{k_1},\cdots, \#T^{(r)}_1,\ldots, \#T^{(r)}_{k_r}\right).$

    \end{definition}

    In our previous example with $\bm\lambda=(2,1\parallel3,2\parallel2)$ that induces the batches $$(1,2\mid 3\parallel4,5,6\mid7,8\parallel9,10),$$ one possible $\bm\lambda$-translation sequence would be $$\TT=\left(\{1\},\{3\},\{4,6\},\varnothing,\{10\}\right),$$ and we would have $\bm\gamma(\TT)=(1,1\parallel2,0\parallel 1)$. By definition, $(\bm\lambda,\bm\gamma(\TT))$ is always a signed $r$-partition. 

    Next, we associate polynomial weights to $\bm\lambda$-translation sequences that, roughly speaking, measure how far away $\TT$ is from the Gale-maximal $\bm\lambda$-translation sequence with the same $\bm\gamma(\TT)$.

    \begin{definition}
    \label{def:translation-weight}
        Start with $\bm\lambda\vdash_rn$ and a $\bm\lambda$-translation sequence $\TT=(T^{(1)}_1,\ldots,  T^{(r)}_{k_r})$, and suppose $\bm\gamma(\TT)=(\gamma^{(1)}_1,\ldots,\gamma^{(r)}_{k_r})$. We associate a partition $\nu^{(i)}_j(\TT)$ with $\leq \gamma^{(i)}_j$ parts to each set $T^{(i)}_j$ as follows. If $T^{(i)}_j=\{t_1<\ldots<t_{\gamma^{(i)}_j}\}$, then 
        \begin{multline}\nu^{(i)}_j(\TT):= \\\left(L_{i-1}+\lambda^{(i)}_1+\cdots+\lambda^{(i)}_j-\gamma^{(i)}_j+1-t_1\ge \cdots\ge L_{i-1}+\lambda^{(i)}_1+\cdots+\lambda^{(i)}_j-t_{\gamma^{(i)}_j}\right).
        \end{multline}Given this, we attach a polynomial weight $\sss^r(\TT) \in S$ to the $\llambda$-translation sequence $\TT$ given by 
        \begin{equation}
            \sss^r(\TT):=\prod_{i=1}^r\prod_{j=1}^{k_i}s_{\nu^{(i)}_j}\left(x^r_{L_{i-1}+\lambda^{(i)}_1+\cdots+\lambda^{(i)}_j-\gamma^{(i)}_j+1}, \dots, x^r_{L_{i-1}+\lambda^{(i)}_1+\cdots+\lambda^{(i)}_j}\right).
        \end{equation}
    \end{definition}
    In other words, for each $\lambda^{(i)}_j$-batch, we obtain the partition $\nu^{(i)}_j(\TT)$ by componentwise subtracting the elements of $T^{(i)}_j$ from the sequence of the $\gamma^{(i)}_j$ biggest numbers in this batch (which is the Gale-maximal possible $T^{(i)}_j$ with the same cardinality $\# T^{(i)}_j = \gamma^{(i)}_j$). Then we form the Schur function corresponding to this partition on the variable set indexed by the $\gamma^{(i)}_j$ biggest numbers in this batch, raised to the $r$-th power. Finally, multiplying these Schur functions for each $\lambda^{(i)}_j$ gives $\sss^r(\TT)$. 

    An example might help clarify Definition~\ref{def:translation-weight}. Suppose $n = 20$, $r= 3$, and \[\llambda = ( 4,3 \mid \mid 4,4 \mid \mid 5) \vdash_3 20.\] The set $[n] = [20]$ is decomposed by $\llambda$ into the batches
    \[
    (1,2,3,4 \mid 5,6,7 \mid\mid8,9,10,11 \mid 12,13,14,15 \mid \mid 16,17,18,19,20 ).
    \]
    A possible $\llambda$-translation sequence is given by
    \[
    \TT = \left(T^{(1)}_1, T^{(1)}_2, T^{(2)}_1, T^{(2)}_2, T^{(3)}_1\right) =  ( \{1,3\}, \{5\}, \varnothing, \{13,15\}, \{16,17,20\}).
    \]
    Counting the sizes of these sets gives
    \[
    \ggamma(\TT) = (2,1 \mid \mid 0, 2 \mid \mid 3).
    \]
    The $\llambda$-translation sequence $\TT$ gives rise to the partitions $\nu^{(i)}_j(\TT)$ given by
    \[
    \nu^{(1)}_1(\TT) = (2,1), \quad \nu^{(1)}_2(\TT) = (3), \quad \nu^{(2)}_1(\TT) = \varnothing, \quad \nu^{(2)}_2(\TT) = (1,0), \quad  \nu^{(3)}_1(\TT) = (2,2,0),
    \]
    where we have padded partitions with zeros to represent $\nu^{(i)}_j(\TT)$ as a sequence of length $\gamma(\TT)^{(i)}_j = \# T^{(i)}_j$. Finally, the polynomial weight $\sss^r(\TT)$ attached to $\TT$ is given by
    \[
    \sss^r(\TT) = s_{2,1}(x_3^3,x_4^3) \times s_3(x_7^3) \times 1 \times s_{1,0}(x_{14}^3,x_{15}^3) \times s_{2,2,0}(x_{18}^3,x_{19}^3,x_{20}^3)
    \]
    where the exponents $x_i^3$ occur because $r=3$.

   Recall the $k$-row power matrix $P_{k,r}(\mathbf y)$ defined in Section~\ref{sec:Basis} with entries $P_{k,r}(\yy)_{i,j}=y_i^{r(n-j)+1}$; we will use these matrices again in this section, but the factor matrices will need to be altered to take the parabolic structure given by $\llambda$ into account.
   Define the \textit{$\llambda$-factor matrix} $F_{k,r}(\xx,\yy,\bm\lambda)$ as the $k\times n$ matrix whose entries are given by $$F_{k,r}(\xx,\yy,\bm\lambda)_{i,j}:=y_i^{r(L_{p-1}+\lambda^{(p)}_1+\cdots\lambda^{(p)}_{q}-j)+1}\prod_{m=L_{p-1}+\lambda^{(p)}_1+\cdots\lambda^{(p)}_{q}+1}^n(y_i^r-x_m^r)$$where $(p,q)$ is the lex-minimal pair satisfying $r(L_{p-1}+\lambda^{(p)}_1+\cdots\lambda^{(p)}_{q})\ge rj-1$.
   
   As an example, suppose $\bm\lambda=(2,1\parallel 3,2\parallel 2)$, and assume $r = k = 3$. Then the power matrix is $$P_{3,3}(\yy)=\left(\begin{array}{cc|c||ccc|cc||cc}
      y_1^{28}  &y_1^{25}  &y_1^{22}&y_1^{19}&y_1^{16}&y_1^{13}&y_1^{10}&y_1^7&y_1^4&y_1^1  \\
      y_2^{28} &y_2^{25}&y_2^{22}&y_2^{19}&y_2^{16}&y_2^{13}&y_2^{10}&y_2^7&y_2^4&y_2^1\\
      y_3^{28} &y_3^{25}&y_3^{22}&y_3^{19}&y_3^{16}&y_3^{13}&y_3^{10}&y_3^7&y_3^4&y_3^1
   \end{array}\right),$$where we have used bars and doubled bars as usual to show the $\lambda^{(i)}_j$-batches. The full explicit form of $F_{3,3}(\xx,\yy,\bm\lambda)$ is cumbersome to write down, but for the sake of example, its $(2,5)$-entry is $$F_{3,3}(\xx,\yy,\llambda)_{2,5} = y_2^4(y_2^3-x_7^3)(y_2^3-x_8^3)(y_2^3-x_9^3)(y_2^3-x_{10}^3).$$

Every entry of the factor matrix can be expressed as a polynomial in $y_i$, where the coefficients are elementary symmetric functions in $x_i^r$, where the $x$-variables indices come from subsequent $\lambda^{(i)}_j$-batches from the entries in $[n]$. Observation \ref{obs:matrix-mult} generalizes as follows.

\begin{observation}\label{obs: lambda matrix mult}
    There exists a lower triangular $n \times n$ matrix $C(\llambda)$ with 1s on the diagonal and entries in $S^{\ZZ_r\wr \symm_{\bm\lambda}}$ so that 
    \begin{equation}
        F_{k,r}(\xx,\yy,\bm\lambda) = P_{k,r}(\yy) \cdot C(\llambda).
    \end{equation}
\end{observation}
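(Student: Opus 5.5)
The plan is to compute $C(\llambda)$ explicitly, column by column, by expanding each entry of $F_{k,r}(\xx,\yy,\llambda)$ as a polynomial in $y_i$, exactly as in the column-operation description following Observation~\ref{obs:matrix-mult}.

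\emph{Step 1: identify the exponent in each column.} Fix a column index $j$. The pair $(p,q)$ in the definition is lex-minimal with $r(L_{p-1}+\lambda^{(p)}_1+\cdots+\lambda^{(p)}_q)\ge rj-1$. Both sides are integers and the left side is a multiple of $r$, so this inequality is equivalent to $L_{p-1}+\lambda^{(p)}_1+\cdots+\lambda^{(p)}_q \ge j$. Write $b=b(j):=L_{p-1}+\lambda^{(p)}_1+\cdots+\lambda^{(p)}_q$. Then $b(j)$ is precisely the largest element of the $\lambda^{(p)}_q$-batch containing $j$, and we have $j\le b(j)\le n$. The $(i,j)$-entry is therefore
\[
F_{k,r}(\xx,\yy,\llambda)_{i,j}=y_i^{r(b-j)+1}\prod_{m=b+1}^{n}(y_i^r-x_m^r).
\]

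\emph{Step 2: expand and read off $C(\llambda)$.} Expanding the product in elementary symmetric polynomials gives
\[
F_{k,r}(\xx,\yy,\llambda)_{i,j}=\sum_{t=0}^{n-b}(-1)^t\, e_t(x_{b+1}^r,\dots,x_n^r)\; y_i^{r(b-j)+1+r(n-b-t)}.
\]
The exponent simplifies to $r(n-(j+t))+1$, which is $P_{k,r}(\yy)_{i,j+t}$. Since $j+t\le j+n-b\le n$, this is a genuine column of $P_{k,r}(\yy)$. Hence column $j$ of $F_{k,r}(\xx,\yy,\llambda)$ equals
\[
\sum_{t=0}^{n-b(j)}(-1)^t e_t(x^r_{b(j)+1},\dots,x^r_n)\cdot(\text{column } j+t \text{ of } P_{k,r}(\yy)).
\]
We therefore define $C(\llambda)_{j+t,j}:=(-1)^t e_t(x^r_{b(j)+1},\dots,x^r_n)$ for $0\le t\le n-b(j)$, and set every other entry to $0$. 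This matrix is independent of $k$ and of the row index $i$. It is lower triangular, since its nonzero entries have row index $j+t\ge j$, and its diagonal entries are $e_0=1$. With this definition the identity $F_{k,r}(\xx,\yy,\llambda)=P_{k,r}(\yy)\cdot C(\llambda)$ holds entrywise.

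\emph{Step 3: invariance of the entries.} The only point requiring thought is that each entry lies in $S^{\ZZ_r\wr\symm_\llambda}$. The index set $\{b(j)+1,\dots,n\}$ is a union of complete $\lambda^{(i')}_{j'}$-batches, because $b(j)$ is the end of a batch. Consequently:
\begin{itemize}
\item every element of $\symm_\llambda$ permutes this index set among itself, so it preserves the symmetric polynomial $e_t(x^r_{b+1},\dots,x^r_n)$;
\item diagonal elements of $\ZZ_r^n$ scale each $x_m$ by an $r$th root of unity and hence fix each $x_m^r$.
\end{itemize}
Since $\ZZ_r\wr\symm_\llambda$ is generated by these two kinds of elements, each entry of $C(\llambda)$ is invariant. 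I expect no real obstacle here. The only care needed is the bookkeeping in Step 1, namely checking that the lex-minimal pair $(p,q)$ singles out the end of $j$'s own batch, so that the $x$-variables appearing in the entries are indexed by whole batches.
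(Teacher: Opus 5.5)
Your proof is correct and is essentially the paper's argument. The paper justifies Observation~\ref{obs:matrix-mult} by exactly these column operations (adding $\pm e_t$ of the $r$th powers of later variables times later columns of $P_{k,r}(\yy)$) and simply asserts that the parabolic case generalizes; your explicit formula $C(\llambda)_{j+t,j}=(-1)^t e_t(x^r_{b(j)+1},\dots,x^r_n)$, together with the remark that $\{b(j)+1,\dots,n\}$ is a union of whole batches, makes this precise.

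One small caveat concerns Step 1. Write $B_{p,q}:=L_{p-1}+\lambda^{(p)}_1+\cdots+\lambda^{(p)}_q$. The equivalence $rB_{p,q}\ge rj-1\iff B_{p,q}\ge j$ needs $r\ge 2$. At $r=1$ the literal condition picks the batch ending at $j-1$ whenever $j>1$ starts a batch, and the identity then really fails: that entry is $\prod_{m=j}^{n}(y_i-x_m)$, which has a nonzero constant term in $y_i$. So for $r=1$ the definition must be read as $B_{p,q}\ge j$, which is evidently what is intended.
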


Let $\TT$ be a $\bm\lambda$-translation sequence. Let $T\subseteq [n]$ denote the disjoint union of the sets in $\TT$, i.e. $T = \bigsqcup_{i,j} T^{(i)}_j$. Construct the \textit{augmented factor matrix} as $$\widetilde{F_{r}}(\xx,\yy,\bm\lambda,\TT)=\left(\begin{array}{c}
    F_{\#T,r}(\xx,\yy,\bm\lambda)\\
    \hline
    E_T
\end{array}\right)$$ where $E_T$ is the $(n-\#T)\times n$ row-echelon $0,1$-matrix whose pivot columns are those indexed by $[n]-T$.

For example, suppose $\llambda = (2,1 \mid \mid 3,2 \mid \mid 2) \vdash_3 10$ so that we have the batches
\[
( 1,2 \mid 3 \mid \mid 4,5,6 \mid 7,8 \mid \mid 9,10).
\]
A possible $\llambda$-translation sequence $\TT$ is given by 
\[
\TT = (T^{(1)}_1 \mid T^{(1)}_2 \mid \mid T^{(2)}_1 \mid T^{(2)}_2 \mid \mid T^{(3)}_1) = (2 \mid \varnothing \mid 4,6 \mid 7,8 \mid 9)
\]
and for this $\TT$ we have $T = \bigsqcup_{i,j} T^{(i)}_j = \{2,4,6,7,8,9\}$ so that $[n] - T = [10] - T = \{1,3,5,10\}$ and
\[
E_T = \left(\begin{array}{c c | c || c c c | c c | | c c }
    1 & 0 & 0 & 0 & 0 & 0 & 0 & 0 & 0  & 0 \\
    0 & 0 & 1 & 0 & 0 & 0 & 0 & 0 & 0  & 0 \\
    0 & 0 & 0 & 0 & 1 & 0 & 0 & 0 & 0  & 0 \\
    0 & 0 & 0 & 0 & 0 & 0 & 0 & 0 & 0  & 1
\end{array}\right)
\]
where we have put vertical lines in the matrix $E_T$ in the positions determined by $\llambda$.

For any $J\subseteq [n]$ with the same cardinality as $T$, one may form the matrix $\widetilde{F_r}(\xx,\xx_J,\bm\lambda,\TT)$ by replacing the $y_i$'s by variables from the set $\{x_j:j\in J\}$; as before, the order of substitution will be of no concern. Then a triangularity statement similar to Lemma \ref{lem:augmented-determinant} holds here as well.

\begin{lemma}\label{lem:PPP-triangularity}
    Using the above notations for $\bm\lambda$, $\TT$, $T$ and $\bm\gamma(\TT)$, define $\PPP_{T,J}:=\det\widetilde{F_r}(\xx,\xx_J,\bm\lambda,\TT)$. Then
    \begin{enumerate}
        \item we have $\PPP_{T,J}=0$ whenever $J\not\le_\Gale J(\bm\lambda,\bm\gamma)$;
        \item we have $$\PPP_{T,J(\bm\lambda,\bm\gamma)}\doteq f_{J(\llambda,\ggamma),r}\cdot\sss^r(\TT),$$where $\doteq$ denotes equality up-to a non-zero scalar.
    \end{enumerate}
\end{lemma}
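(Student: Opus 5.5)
Following the proof of Lemma~\ref{lem:augmented-determinant}, expanding $\det \widetilde{F_r}(\xx,\xx_J,\llambda,\TT)$ along the echelon rows $E_T$ leaves, up to sign, the $k \times k$ minor $\overline{F}$ of $F_{k,r}(\xx,\xx_J,\llambda)$ on the columns indexed by $T$, where $k = \# T$. So $\PPP_{T,J} = \pm \det \overline{F}$, and both parts become statements about this $k \times k$ determinant. The basic tool is the vanishing pattern of the entries. Column $j$ has a fixed endpoint $e(j) := L_{p-1}+\lambda^{(p)}_1+\cdots+\lambda^{(p)}_q$, the last index of the batch containing $j$. The $(i,j)$ entry is then $y_i^{r(e(j)-j)+1}\prod_{m>e(j)}(y_i^r-x_m^r)$. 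After substituting $y_i = x_u$ for some $u \in J$, this entry vanishes exactly when $u > e(j)$.

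For part (1), order the columns $t_1<\cdots<t_k$ of $T$ and the rows $u_1<\cdots<u_k$ of $J$. Entry $(u_a,t_b)$ vanishes whenever $u_a > e(t_b)$. Write $J(\llambda,\ggamma) = \{v_1<\cdots<v_k\}$; its elements are the top $\gamma^{(i)}_j$ positions of each batch. I will check that $v_b$ is the largest value that can serve as the $b$-th element given the batch constraint coming from the $e(t_b)$: the number of $t$'s whose endpoint is at most a given batch end equals the number of $v$'s at most that batch end. If $J \not\leq_\Gale J(\llambda,\ggamma)$, then some $u_a > v_a$. Let $\beta$ be the batch end just below $u_a$. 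The rows $u_a,\dots,u_k$ all exceed $\beta$, so they vanish on every column $t_b$ with $e(t_b)\le\beta$. There are at least $a$ such columns, since $v_a \le \beta$ forces at least $a$ of the $v$'s, hence at least $a$ of the $t$'s, to have endpoint $\le \beta$. This produces a zero block of size $(k-a+1)\times a$, which is too large, so the determinant vanishes. This is the same block argument as in Lemma~\ref{lem:augmented-determinant}(1).

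For part (2), take $J = J(\llambda,\ggamma)$. Ordering both rows and columns by batch, the part (1) reasoning shows that $\overline{F}$ is block upper triangular, with one diagonal block per batch $\lambda^{(i)}_j$. The determinant therefore factors into per-batch determinants. Fix a batch with end $e$, top positions $v_1<\cdots<v_\gamma$ (so $v_s = e-\gamma+s$), and columns $t_1<\cdots<t_\gamma$. The diagonal block has entries $x_{v_a}^{r(e-t_b)+1}\prod_{m>e}(x_{v_a}^r - x_m^r)$. Pulling the common row factors out gives $\prod_a x_{v_a}\prod_{m>e}(x_{v_a}^r-x_m^r)$ times $\det\bigl((x_{v_a}^r)^{e-t_b}\bigr)$. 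The exponents $e-t_b$ equal $\nu_b + (\gamma-b)$, where $\nu = \nu^{(i)}_j(\TT)$ as in Definition~\ref{def:translation-weight}. By the bialternant formula, this generalized Vandermonde equals $s_{\nu}(x^r_{v_1},\dots,x^r_{v_\gamma})\cdot\prod_{a<a'}(x_{v_a}^r-x_{v_{a'}}^r)$, up to sign.

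It remains to collect factors. For each $v \in J(\llambda,\ggamma)$ we get $x_v$, the factors $(x_v^r-x_m^r)$ for $m$ beyond its batch end, and the intra-batch Vandermonde factors $(x_v^r - x_{v'}^r)$ for $v'>v$ in the top part of its batch. These should be compared with $\prod_{v<m}(x_v^r-x_m^r)$ in $f_{J,r}$. The main obstacle is this bookkeeping. The top positions are the final $\gamma$ positions of each batch, so every $m$ with $v<m\le e$ is itself a top position. Hence the intra-batch factors together with the beyond-batch factors give exactly $\prod_{m>v}(x_v^r-x_m^r)$. Multiplying over all batches yields $f_{J(\llambda,\ggamma),r}\cdot\sss^r(\TT)$ up to sign. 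Finally, I will recheck that the exponent convention $e(j)-j$ in $F_{k,r}(\xx,\yy,\llambda)$ matches the partitions $\nu^{(i)}_j(\TT)$ exactly, including any off-by-one shift from the stated lex-minimal choice of $(p,q)$.
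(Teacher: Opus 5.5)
Your proof is correct in outline, but it follows a different route from the paper.

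\textbf{How the two routes differ.} The paper does not redo the determinant computation. It quotes the $r=1$ case \cite[Lem. 4.9]{MRW} and notes that its proof never uses that the parts are weakly decreasing, so it applies to the composition $(\lambda^{(1)}_1,\dots,\lambda^{(r)}_{k_r})$ and the corresponding $\ggamma$. It then transports the result to general $r$ by the substitution $x_i\mapsto x_i^r$, removing the extra row factors of $x^{r-1}$ this creates.

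You instead carry out the direct argument that the paper only mentions as an alternative. You reduce to the minor on the columns $T$ and use that the $(u,j)$ entry vanishes when $u>e(j)$. This gives a block upper triangular matrix with one block per batch. You then evaluate each diagonal block as a generalized Vandermonde determinant, whose bialternant quotient is exactly the Schur factor of $\sss^r(\TT)$.

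Your final bookkeeping is right. The top positions form a final segment of each batch, so the intra-batch Vandermonde factors together with the factors $m>e$ give $\prod_{m>v}(x_v^r-x_m^r)$. There is also no off-by-one to worry about: $re\ge rj-1$ iff $e\ge j$, so $(p,q)$ is exactly the batch containing $j$.

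The paper's route is shorter but rests on an external lemma remaining valid for compositions. Yours is self-contained and shows where each factor of $f_{J(\llambda,\ggamma),r}\cdot\sss^r(\TT)$ comes from.

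\textbf{A step in part (1) that needs repair.} For an arbitrary $a$ with $u_a>v_a$, it need not be true that $v_a\le\beta$, because $u_a$ can lie in the same batch as $v_a$. For example, with batches $\{1,2,3\},\{4,5,6\}$ and $\ggamma=(2,1)$ one has $J(\llambda,\ggamma)=\{2,3,6\}$. For $J=\{3,4,5\}$ and $a=1$, there is no batch end below $u_1=3$ that is at least $v_1=2$.

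The fix is to choose $a$ maximal with $u_a>v_a$. Suppose $v_a$ were not the last element of its batch. Then $v_{a+1}=v_a+1$, since the top positions form a final segment of the batch. But $u_{a+1}\ge u_a+1>v_{a+1}$, contradicting maximality. Hence $v_a=e(v_a)<u_a$, so $v_a\le\beta$, and your $(k-a+1)\times a$ zero-block argument then goes through.
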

\begin{proof}
    It is possible to use an argument similar to the proof of Lemma~\ref{lem:augmented-determinant}. However, there is a quicker proof using the analogous result for $r=1$ already proved in \cite[Lem. 4.9]{MRW}. Note that the proof of the cited lemma never uses the fact that the parts of the $\lambda$ are decreasing, so we can apply it to the sequences $\lambda=(\lambda^{(1)}_1,\ldots, \lambda^{(r)}_{k_r})$ and $\gamma=(\gamma^{(1)}_1,\ldots, \gamma^{(r)}_{k_r})$. Our Lemma now follows by setting $x_i\mapsto x_i^r$ and dividing both sides by $x_1^{r_1}\cdots x_n^{r-1}$.
\end{proof}
Let $C(\bm\lambda)$ denote the lower-triangular matrix with 1's on the diagonal seen to exist in Observation~\ref{obs: lambda matrix mult}. Thanks to Observation~\ref{obs: lambda matrix mult}, the matrix $C(\llambda)$ is invertible over the ring $S = \CC[\xx_n]^{\ZZ_r \wr \symm_n}$. The matrix $H(\llambda)$ defined by 
\begin{equation}
H(\bm\lambda):=E\cdot C(\bm\lambda) 
\end{equation}
has entries in $\CC[\xx_n]^{\ZZ_r\wr\symm_{\bm\lambda}}$. One has the block matrix equality \begin{equation}
    \widetilde{F}_{r}(\xx,\yy,\bm\lambda,\TT) \cdot C(\llambda)^{-1} = 
    \left(
    \begin{array}{c}
            F_{\#T,r}(\xx,\yy,\bm\lambda) \\ \hline E_J
    \end{array}
    \right) \cdot C(\llambda)^{-1} =
    \left(
    \begin{array}{c}
        P_{k,r}(\yy) \\ \hline H(\llambda).
    \end{array}\right)
\end{equation} 
We are now ready to define the parabolic $\DDD$-operators. Recall the `reversing and rescaling' function $\rho(\cdot)$, and the notation for matrix minors $\Delta_U$ from Definition \ref{def:D-operator}

\begin{definition}
    \label{def:parabolic D-operator}
    Using notation from above, let $\DDD_{\llambda,r}^\TT: \Omega \to \Omega$ be given by
    \begin{equation}
        \DDD^\TT_{\llambda,r}(f) := \sum_{\# U = n-\#T} (-1)^{\sum U} \Delta_U(H(\llambda)) \odot d_{\rho(U)}(f).
    \end{equation}
\end{definition}

It follows from Lemma~\ref{lem: SH element} that the superspace element \[\DDD_{\llambda,r}^\TT(\delta^\llambda_{n,r}) = \DDD_{\llambda,r}^\TT( \varepsilon_\llambda \cdot (\XX_\llambda \odot \delta_{n,r}))\] lies in $\varepsilon_\llambda \cdot \SH$. In analogy with Lemma~\ref{lem:D-leading}, the following result shows that these elements have convenient triangularity properties.

\begin{lemma}\label{lem:element gen}
    If $1\not\in T^{(1)}_1$, then $\DDD_{\llambda,r}^{\TT}(\delta^\llambda_{n,r})$ is a nonzero element of $\antisymm\cdot\SH$ with unique Gale-maximal fermionic monomial $\theta_{J(\llambda,\ggamma)}$. Furthermore, the coefficient of $\theta_{J(\llambda,\ggamma)}$, up to a nonzero scaling factor, equals $$\left(\XX_\llambda\cdot\sss^r(\TT)\cdot f_{J(\llambda,\ggamma),r}\right)\odot\delta_{n,r}.$$
\end{lemma}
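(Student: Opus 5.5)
We follow the template of Lemma~\ref{lem:D-leading}, with the role of $\delta_{n,r}$ played by $\delta^\llambda_{n,r}$ and the role of Lemma~\ref{lem:augmented-determinant} played by Lemma~\ref{lem:PPP-triangularity}.

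\textbf{Step 1: membership in $\antisymm\cdot\SH$.} By Lemma~\ref{lem: SH element}(1), $\delta^\llambda_{n,r}\in\antisymm\cdot\SH$. The operator $\DDD^\TT_{\llambda,r}$ is a sum of terms $\Delta_U(H(\llambda))\odot d_{\rho(U)}$. Each index in $\rho(U)$ is $\equiv 1 \pmod r$, and each minor $\Delta_U(H(\llambda))$ has entries in $S^{\ZZ_r\wr\symm_\llambda}$ by Observation~\ref{obs: lambda matrix mult}. Lemma~\ref{lem: SH element}(2),(3) therefore gives $\DDD^\TT_{\llambda,r}(\delta^\llambda_{n,r})\in\antisymm\cdot\SH$.

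\textbf{Step 2: the $\theta$-coefficients.} Each coefficient is a determinant. The main point is that $\Delta_U(H(\llambda))\odot(-)$ and the $d_j$ commute with $\antisymm$, and $\antisymm$ acts on $\XX_\llambda\odot\delta_{n,r}$ by a nonzero scalar. The latter is the chain of $\doteq$ equalities in the proof of Lemma~\ref{lem: SH element}, since $\antisymm\cdot(\XX_\llambda\odot\delta_{n,r})\doteq \XX_\llambda\odot\delta_{n,r}$. So, up to a nonzero scalar,
\[\DDD^\TT_{\llambda,r}(\delta^\llambda_{n,r})\doteq \DDD^\TT_{\llambda,r}(\XX_\llambda\odot\delta_{n,r}).\]
Now expand $d_{\rho(U)}$ exactly as in Lemma~\ref{lem:D-leading}(2). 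The coefficient of $\theta_J$ (for $\#J=\#T$) is, up to sign,
\[\Big[\det\begin{pmatrix}P_{\#T,r}(\xx_J)\\ \hline H(\llambda)\end{pmatrix}\Big]\odot(\XX_\llambda\odot\delta_{n,r}).\]
This requires the Laplace expansion along the bottom block to match the signs $(-1)^{\sum U}$, with the top rows evaluated at $\partial$'s of the $x_j$, $j\in J$. Since $\det C(\llambda)=1$, this coefficient equals
\[\pm\,(\PPP_{T,J}\cdot\XX_\llambda)\odot\delta_{n,r}.\]
Lemma~\ref{lem:PPP-triangularity}(1) kills every $J\not\le_\Gale J(\llambda,\ggamma)$. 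Lemma~\ref{lem:PPP-triangularity}(2) shows the coefficient of $\theta_{J(\llambda,\ggamma)}$ is $\doteq(\XX_\llambda\,\sss^r(\TT)\,f_{J(\llambda,\ggamma),r})\odot\delta_{n,r}$, as claimed.

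\textbf{Step 3 (main obstacle): nonvanishing.} It remains to show $\XX_\llambda\,\sss^r(\TT)\,f_{J(\llambda,\ggamma),r}\notin I_{n,r}$, by Theorem~\ref{thm:steinberg}. This is where $1\notin T^{(1)}_1$ must enter. If $1\in T^{(1)}_1$, then $1\in J(\llambda,\ggamma)$ and the factor $x_1$ of $f_{J,r}$ combines with $x_1^{r-1}$ from $\XX_\llambda$. We would then have $x_1^r\prod_{1<i}(x_1^r-x_i^r)$, which lies in $I_{n,r}$ since it is $h$-type in $x_1^r$ of top degree.

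The plan is to use the colon ideal description of Lemma~\ref{lem:colon-ideal-lemma} and Proposition~\ref{prop:colon-ideal-quotient-basis}. We must show $\XX_\llambda\,\sss^r(\TT)\notin(I_{n,r}:f_{J,r})$ with $J=J(\llambda,\ggamma)$. Reduce $\XX_\llambda\,\sss^r(\TT)$ modulo the colon ideal using a term order, e.g.\ lex with $x_n>\cdots>x_1$, adapted to the staircase. Then check that its leading monomial lies in $\AAA_{n,r}(J,r)$ and cannot cancel.

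In more detail, the leading term of $s_\nu(x^r_{\dots})$ is $\prod x^{r\nu_\ell}$ on the top $\gamma$ indices of each batch. The exponents of $\XX_\llambda$ are $r-i\le r-1$, with the $r-1$ occurring only in the first $\llambda$-component. We would verify componentwise that these stay below $\stair(J,r)$. The first position is the delicate one: $\stair(J,r)_1=r-2$ if $1\in J$, which fails against the exponent $r-1$, and this is exactly excluded by the hypothesis.

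If that basis comparison is not triangular enough, the fallback is the $r=1$ argument of \cite[Lem.~4.10]{MRW}, transported via $x_i\mapsto x_i^r$. That argument shows $\sss(\TT)\,f_{J}\notin I_{n,1}$ for the $r=1$ analogues, and transporting it requires controlling the extra $\XX_\llambda$ factor. The natural tool is to pair against $\delta_{n,r}$ and use positivity, as in Lemma~\ref{lem:D-leading}.
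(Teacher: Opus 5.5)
Your Steps 1 and 2 agree with the paper. Step 3, the nonvanishing argument, has a gap, and that step is the real content of the lemma.

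The term-order argument is not available. Proposition~\ref{prop:colon-ideal-quotient-basis} only says that $\AAA_{n,r}(J,r)$ descends to a basis of $S/(I_{n,r}:f_{J,r})$. It does not say that $\AAA_{n,r}(J,r)$ is the set of standard monomials for lex or any other order. So knowing that the leading monomial of $\XX_\llambda\cdot\sss^r(\TT)$ lies in $\AAA_{n,r}(J,r)$ does not stop other monomials outside $\AAA_{n,r}(J,r)$ from reducing onto it and cancelling it. The missing observation, which is what the paper proves, is that \emph{every} monomial of $\XX_\llambda\cdot\sss^r(\TT)$ lies in $\AAA_{n,r}(J(\llambda,\ggamma),r)$. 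This holds because every term of $s_\nu$ evaluated at $r$th powers has $x_\alpha$-degree at most $r\nu_1$. A nonzero polynomial supported on basis monomials is then nonzero in the quotient, and no term order is needed.

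You have also put the hypothesis $1\notin T^{(1)}_1$ in the wrong place. It is false that $1\in T^{(1)}_1$ forces $1\in J(\llambda,\ggamma)$; that happens only when $\gamma^{(1)}_1=\lambda^{(1)}_1$. For example, take $r=2$, $\llambda=((2),\varnothing)$ and $T^{(1)}_1=\{1\}$.
\begin{itemize}
\item Then $J=\{2\}$, $\nu^{(1)}_1=(1)$ and $\XX_\llambda\cdot\sss^2(\TT)=x_1x_2^3$.
\item Position $1$ is harmless, since $\stair(\{2\},2)=(1,2)$. The violation is at position $2$.
\item Indeed $x_1x_2^3\cdot f_{\{2\},2}=x_1x_2^4$ has degree $5>\deg\delta_{2,2}$, so the leading coefficient vanishes.
\end{itemize}

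In general the tight positions are the last $\gamma^{(1)}_1$ indices of the first batch. No non-$J$ index precedes them, so the staircase there is $r(\lambda^{(1)}_1-\gamma^{(1)}_1+1)-2$. At those positions $\XX_\llambda$ contributes $r-1$ and $s_{\nu^{(1)}_1}$ contributes up to $r(\lambda^{(1)}_1-\gamma^{(1)}_1+1-\min T^{(1)}_1)$. Fitting under the staircase therefore requires exactly $\min T^{(1)}_1\geq 2$.

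Everywhere else the cruder bound $\nu_1\leq\lambda^{(i)}_j-\gamma^{(i)}_j$ suffices. For $i=1$ and $j\geq 2$, at least $\lambda^{(1)}_1-\gamma^{(1)}_1\geq 1$ non-$J$ indices precede the batch, since the hypothesis forces $\gamma^{(1)}_1<\lambda^{(1)}_1$. For $i\geq 2$, $\XX_\llambda$ contributes only $r-i\leq r-2$.
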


If $\lambda^{(1)}$ happens to be the empty partition, $T^{(1)}_1$ will not exist. In this case, the condition $1\not\in T^{(1)}_1$ is assumed to vacuously hold true.
\begin{proof}
    The same argument used to prove Lemma~\ref{lem:D-leading} shows that the coefficient of $\theta_J$ in $\DDD_{\llambda,r}^\TT(\delta'_{n,r})$ equals $\PPP_{T,J}\odot\delta^\llambda_{n,r}$, which, according to Lemma~\ref{lem:PPP-triangularity}, vanishes whenever $J\not\le_\Gale J(\llambda,\ggamma)$. In case $J=J(\llambda,\ggamma)$, by Lemma~\ref{lem:PPP-triangularity} this coefficient is equal to $\left(f_{J(\llambda,\ggamma),r}\cdot\sss^r(\TT)\right)\odot \delta^\llambda_{n,r}$ up to non-zero scaling, which equals \begin{equation}
    \label{eq:desired-nonzero}\left(f_{J(\llambda,\ggamma),r}\cdot\sss^r(\TT)\right)\odot(\XX_\llambda\odot\delta_{n,r})=\left(\XX_\llambda\cdot\sss^r(\TT)\cdot f_{J(\llambda,\ggamma),r}\right)\odot\delta_{n,r}.\end{equation} 
    It remains to show \eqref{eq:desired-nonzero} is nonzero.

    By Steinberg's Theorem~\ref{thm:steinberg}, the polynomial \eqref{eq:desired-nonzero} is nonzero if and only if  
    \begin{equation}\XX_\llambda\cdot \sss^r(\TT) \in S = \CC[x_1,\dots,x_n]\end{equation} descends to a nonzero element in the quotient $S/\left(I_{n,r}:f_{J(\llambda,\ggamma),r}\right)$. However, we derived a monomial basis $\AAA_{n,r}(J(\llambda,\ggamma),r)$ for this quotient in Proposition~\ref{prop:colon-ideal-quotient-basis} given by 
    \begin{equation}
        \AAA_{n,r}(J(\llambda,\ggamma),r) = \{ x_1^{a_1} \cdots x_n^{a_n} \,:\, a_i \le \stair(J(\llambda,\ggamma),r)_i \}.
    \end{equation}
    The monomial expansion of $\XX_\llambda \cdot \sss^r(\TT)$ has the following crucial property.

    {\bf Claim:}
    {\em All monomials that appear in the expansion of $\XX_\llambda\cdot \sss^r(\TT)$ as a polynomial in $S = \CC[x_1,\dots,x_n]$ lie in the set $\AAA_{n,r}(J(\llambda,\ggamma),r)$.}

    To see why the Claim is true, consider an index $\alpha$ in the $\lambda^{(i)}_j$-batch of $[n]$. If $\alpha$ is in the among the $\lambda^{(i)}_j-\gamma^{(i)}_j$ smallest elements in this batch, the polynomial $\sss^r(\TT)$ does not involve the variable $x_{\alpha}$. It follows that 
    \begin{equation}
        \text{degree of $x_\alpha$ in $\XX_\llambda \cdot \sss^r(\TT)$} \leq 
        \begin{cases}
            r-1 & i =1, \\
            r-2 & \text{otherwise}.
        \end{cases}
    \end{equation}
    In either case this degree is $\leq \stair(J(\llambda,\ggamma),r)_\alpha$.

    If instead $\alpha$ is among the $\gamma^{(i)}_j$ biggest entries of this batch, the largest exponent $x_{\alpha}$ can have in $\sss^r(\TT)$ is $r$ times the size of the largest part of $\nu^{(i)}_j$, which is at most $\lambda^{(i)}_j-\gamma^{(i)}_j$ (or $\lambda^{(i)}_j-\gamma^{(i)}_j-1$ if $i=1$, due to the $1\not\in T^{(1)}_1$ assumption). The exponent of $x_{\alpha}$ in $\XX_\llambda$ is $r-i$ by definition. Therefore 
    \begin{equation}
        \text{degree of $x_\alpha$ in $\XX_\llambda \cdot \sss^r(\TT)$} \leq \begin{cases}
            r-1+r(\lambda^{(i)}_j-\gamma^{(i)}_j-1) & i=1, \\
            r-2+r(\lambda^{(i)}_j-\gamma^{(i)}_j) & \text{otherwise.}
        \end{cases}
    \end{equation}
    Either of these numbers is $\leq \stair(J(\llambda,\ggamma),r)_\alpha$.

    Thus the exponent sequences all monomials of $\XX_\llambda\cdot \sss^r(\TT)$ fit under the staircase $\stair(J(\llambda,\ggamma),r)$ and the Claim follows.
    Proposition~\ref{prop:colon-ideal-quotient-basis} implies that $\XX_\llambda \cdot \sss^r(\TT)$ is nonzero in the quotient ring $S/(I_{n,r}:f_{J(\llambda,\ggamma),r})$ if and only if $\XX_\llambda \cdot \sss^r(\TT)$ is nonzero as a polynomial in $S$. Since we certainly have $\XX_\llambda \cdot \sss^r(\TT) \neq 0$ in $S$, the proof is complete.
\end{proof}

The previous lemma gives us a way to generate elements of $\antisymm \cdot \SH$ with leading fermionic term $\left(\left(\XX_\llambda\cdot\sss^r(\TT)\cdot f_{J(\llambda,\ggamma),r}\right)\odot\delta_{n,r}\right)\cdot\theta_{J(\llambda,\ggamma)}$. Our goal is to create sufficiently many linearly independent elements of $\antisymm \cdot \SH$ by combining terms of this form for varying $\llambda$-translation sequences $\TT$.

Consider all the different polynomials $\XX_\llambda\cdot\sss^r(\TT)$ one could conjure by varying $\TT$. All such polynomials are products of the form $$\prod_{i=1}^rx_{L_{i-1}+1}^{r-i}x_{L_{i-1}+2}^{r-i}\cdots x_{L_i}^{r-i}\prod_{j=1}^{k_r}s_{\nu^{(i)}_j}\left(x^r_{L_{i-1}+\lambda^{(i)}_1+\cdots+\lambda^{(i)}_j-\gamma^{(i)}_j+1}, \cdots, x^r_{L_{i-1}+\lambda^{(i)}_1+\cdots+\lambda^{(i)}_j}\right).$$The partitions $\nu^{(i)}_j$ fit inside the boxes $$\nu^{(1)}_j\subseteq\left(\lambda^{(1)}_j-\gamma^{(1)}_j-1\right)^{\gamma^{(1)}_j},\text{ and }\nu^{(i)}_j\subseteq\left(\lambda^{(i)}_j-\gamma^{(i)}_j\right)^{\gamma^{(i)}_j}\text{ for }i>1,$$and all such partitions can appear depending on $\TT$. These considerations lead us to the following definition.

\begin{definition}
    For a signed $r$-partition $(\llambda,\ggamma)$ with $\llambda\vdash_rn$, let $Z_{\llambda,\ggamma} \subseteq S$ be the set of polynomials of the form $$\prod_{i=1}^rx_{L_{i-1}+1}^{r-i}\cdots x_{L_i}^{r-i}\prod_{j=1}^{k_r}s_{\nu^{(i)}_j}\left(x^r_{L_{i-1}+\lambda^{(i)}_1+\cdots+\lambda^{(i)}_j-\gamma^{(i)}_j+1}, \cdots, x^r_{L_{i-1}+\lambda^{(i)}_1+\cdots+\lambda^{(i)}_j}\right)$$ where the partitions $\nu^{(i)}_j$ satisfy 
    \[\nu^{(1)}_j\subseteq\left(\lambda^{(1)}_j-\gamma^{(1)}_j-1\right)^{\gamma^{(1)}_j} \quad \text{and}  \quad  \nu^{(i)}_j\subseteq\left(\lambda^{(i)}_j-\gamma^{(i)}_j\right)^{\gamma^{(i)}_j} \text{ for $i>1$}.\] 
    Let $R_{\llambda,\ggamma}$ be the $S^{\ZZ_r\wr\symm_\llambda}$-submodule of $S/\left(I_{n,r}:f_{J(\llambda,\ggamma),r}\right)$ generated by (the image of) $Z_{\llambda,\ggamma}$.
\end{definition}

The following lemma tells us that the various modules $R_{\llambda,\ggamma}$   may be used to bound $\dim_\CC \antisymm\cdot\SH$ from below.

\begin{lemma}\label{lem:consolidate basis}
    We have $$\dim_\CC\antisymm\cdot\SH\ge \sum_{\ggamma\le \llambda}\dim_\CC R_{\llambda,\ggamma}.$$
\end{lemma}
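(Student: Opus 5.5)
We follow the strategy of the proof of the Operator Theorem~\ref{thm:operator}, now working inside $\antisymm\cdot\SH$. For each $\ggamma\le\llambda$ we construct a subspace $W_{\llambda,\ggamma}\subseteq\antisymm\cdot\SH$ whose elements all have Gale-leading fermionic monomial $\theta_{J(\llambda,\ggamma)}$, and we build a linear injection $R_{\llambda,\ggamma}\hookrightarrow\{\text{leading coefficients of elements of }W_{\llambda,\ggamma}\}$. Concretely, we send a polynomial $g\in S$ to
\[
(g\cdot f_{J(\llambda,\ggamma),r})\odot\delta_{n,r}.
\]
By Steinberg's Theorem~\ref{thm:steinberg}, this map is injective on $S/(I_{n,r}:f_{J(\llambda,\ggamma),r})$, so it is injective on the submodule $R_{\llambda,\ggamma}$.

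\textbf{Step 1: generators.} A general element of $R_{\llambda,\ggamma}$ is the image of $\sum_\ell h_\ell\cdot z_\ell$ with $h_\ell\in S^{\ZZ_r\wr\symm_\llambda}$ and $z_\ell\in Z_{\llambda,\ggamma}$. As $\TT$ ranges over $\llambda$-translation sequences with $\ggamma(\TT)=\ggamma$ and $1\notin T^{(1)}_1$, the products $\XX_\llambda\sss^r(\TT)$ exhaust $Z_{\llambda,\ggamma}$. This is the remark preceding the definition, applied with Gale-shifted subsets of each batch, where the restriction $1\notin T^{(1)}_1$ is exactly what produces the smaller box for $i=1$. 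We consider the element
\[
F:=\sum_\ell h_\ell\odot\DDD^{\TT_\ell}_{\llambda,r}(\delta^\llambda_{n,r}).
\]
By Lemma~\ref{lem:element gen} together with Lemma~\ref{lem: SH element}(2),(3), $F$ lies in $\antisymm\cdot\SH$. The invariance of $h_\ell$ is exactly what Lemma~\ref{lem: SH element}(3) needs.

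\textbf{Step 2: leading coefficient.} Since $h_\ell\odot(-)$ acts only on the bosonic variables, it does not change the fermionic support. Hence by Lemma~\ref{lem:element gen} every term of $F$ is supported on $\theta_J$ with $J\le_\Gale J(\llambda,\ggamma)$. The coefficient of $\theta_{J(\llambda,\ggamma)}$ is
\[
\sum_\ell c_\ell\,(h_\ell\XX_\llambda\sss^r(\TT_\ell)f_{J(\llambda,\ggamma),r})\odot\delta_{n,r},
\]
where the $c_\ell$ are the nonzero scalars from Lemma~\ref{lem:element gen}. After absorbing these scalars into the $h_\ell$, this is the image of the chosen element of $R_{\llambda,\ggamma}$ under the injective map above. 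The main obstacle sits here: the scalars of Lemma~\ref{lem:element gen} must be controlled uniformly, or at least be nonzero, so that they can be absorbed. We also need that $\DDD^\TT_{\llambda,r}$ depends on $\TT$ only through $T$ and that the Gale leading term is genuinely $\theta_{J(\llambda,\ggamma)}$ for all such $\TT$. Both follow from Lemma~\ref{lem:PPP-triangularity}, since $\PPP_{T,J}$ is a determinant computed independently of $\ell$. So the map $R_{\llambda,\ggamma}\to W_{\llambda,\ggamma}$ is well defined and linear.

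\textbf{Step 3: independence across $\ggamma$.} The leading monomials $\theta_{J(\llambda,\ggamma)}$ are distinct for distinct $\ggamma$, since $\ggamma$ is recovered from how many indices of each batch lie in $J$. We choose a linear extension of the Gale order on fermionic monomials and collect the subspaces $W_{\llambda,\ggamma}$. Suppose a nontrivial sum of nonzero elements $F_{\ggamma}\in W_{\llambda,\ggamma}$ vanishes. Take a $\ggamma$ with $J(\llambda,\ggamma)$ maximal among those appearing. Its leading coefficient is nonzero by the injectivity above, and it cannot be cancelled by the other terms, since each other $F_{\ggamma'}$ is supported on $\theta_J$ with $J\le_\Gale J(\llambda,\ggamma')$, and no such $J$ equals $J(\llambda,\ggamma)$.

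Hence $\bigoplus_\ggamma W_{\llambda,\ggamma}$ embeds in $\antisymm\cdot\SH$, and
\[
\dim\antisymm\cdot\SH\ge\sum_{\ggamma\le\llambda}\dim R_{\llambda,\ggamma}.
\]
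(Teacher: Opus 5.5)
This is essentially the paper's proof. You lift generators of $R_{\llambda,\ggamma}$ to elements $h\odot\DDD^{\TT}_{\llambda,r}(\delta^\llambda_{n,r})$ of $\antisymm\cdot\SH$, read off their $\theta_{J(\llambda,\ggamma)}$-coefficients via Lemma~\ref{lem:element gen} and Steinberg's Theorem~\ref{thm:steinberg}, and separate different $\ggamma$ by their distinct Gale-leading monomials. The nonzero scalars you flagged as the main obstacle are harmless.

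The one thing to repair is Step~2's claim that the map $R_{\llambda,\ggamma}\to W_{\llambda,\ggamma}$ is well defined. The lower Gale components of $\sum_\ell h_\ell\odot\DDD^{\TT_\ell}_{\llambda,r}(\delta^\llambda_{n,r})$ may depend on the chosen expression $\sum_\ell h_\ell z_\ell$, not only on its class, and Lemma~\ref{lem:PPP-triangularity} does not control this. Instead, as the paper does, fix one lift for each member of a basis $g_1,\dots,g_N$ of $R_{\llambda,\ggamma}$ and let $W_{\llambda,\ggamma}$ be their span. Then every nonzero element of $W_{\llambda,\ggamma}$ has nonzero $\theta_{J(\llambda,\ggamma)}$-coefficient, which is exactly what Step~3 uses.
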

\begin{proof}
    For a fixed $\ggamma \leq \llambda$, let $N = \dim_\CC R_{\llambda,\ggamma}$ and suppose $g_1,\dots,g_N \in S$ are $S^{\ZZ_r \wr \symm_n}$-linear combinations of the elements of $Z_{\llambda,\ggamma}$ which descend to a basis of $R_{\llambda,\ggamma} \subseteq S/(I_{n,r}:f_{J(\llambda,\ggamma),r}).$
    Since the operation $f\odot(-)$ for $f\in S^{\ZZ_r\wr\symm_\llambda}$ is preserves $\antisymm \cdot \SH$, by Lemma~\ref{lem:element gen} we can obtain $N$ elements in $\antisymm \cdot SI_{n,r}^\perp$ which only involve fermionic monomials  which are $\leq_\Gale \theta_{J(\llambda,\ggamma})$ such that the coefficients of $\theta_{J(\llambda,\ggamma})$ in these elements of $\antisymm \cdot SI_{n,r}^\perp$ are given by 
    \begin{equation}
    \label{eq:N-elements}
            (g_1\cdot f_{J(\llambda,\ggamma),r})\odot\delta_{n,r} \,, \quad \ldots \quad ,(g_N\cdot f_{J(\llambda,\ggamma),r}) \odot \delta_{n,r}.
    \end{equation}
Since $g=0$ in $S/\left(I_{n,r}:f_{J(\llambda,\ggamma),r}\right)$ if and only if $(g\cdot f_{J(\llambda,\ggamma),r})\odot\delta_{n,r}=0$ in $S$, the polynomials \eqref{eq:N-elements} are linearly independent in $S$. We therefore obtain $N$ linearly independent elements of $\antisymm \cdot \SH$, each with unique Gale-maximal fermionic monomial $\theta_{J(\llambda,\ggamma)}$. Performing this procedure for different choices of $\ggamma$ leads to elements of $\antisymm \cdot SI_{n,r}^\perp$ with different fermionic leading terms $\theta_{J(\llambda,\ggamma)}$, so varying $\ggamma$ over all possible $\ggamma \leq \llambda$ leads to $\sum_{\ggamma \leq \llambda} \dim_\CC R_{\llambda,\ggamma}$ linearly independent elements of $\antisymm\cdot\SH$, proving the lemma.
\end{proof}

The problem of bounding $\varepsilon_\llambda \cdot SI_{n,r}^\perp$ from below  reduces to showing $\dim_\CC R_{\llambda,\ggamma}$ is sufficiently large. The analysis of the polynomials in $Z_{\llambda,\ggamma}$ generating $R_{\llambda,\ggamma}$ simplifies dramatically if we focus on the variable set indexed by each $\lambda^{(i)}_j$. The following definitions helps us do precisely that.

\begin{definition}
\label{def:L-set}
    For integers $m,k,t\ge 0$ and $i\in [r]$, define a set of polynomials \[\LLL_i(m,k,t)\subseteq\CC[x_1,\dots,x_m]\] as follows:
    $$\LLL_i(m,k,t):=\{e_{\mu}(x_1^r,\ldots,x_m^r)\cdot x_1^{r-i}\cdots x_m^{r-i} \cdot s_{\nu}(x^r_{m-k+1},\ldots, x^r_m):\nu\subseteq (m-k)^k,\;\mu\subseteq (m)^{t_{\nu,i}}\},$$
    where $$t_{\nu.i}=\begin{cases}
         t-1 & i=1\text{ and }\nu_1=m-k,\\ t &\text{ otherwise.}
    \end{cases}\;\;$$
\end{definition}
Roughly speaking, $\LLL_i(m,k,t)$ represents the kind of polynomials that must be multiplied together for each variable set indexed by the $\lambda^{(i)}_j$-th batches of $[n]$ in order to get basis elements of $R_{\llambda,\ggamma}$. By multiplying together all possible choices for polynomials for each such batch, we will obtain a basis for $R_{\llambda,\ggamma}$. Before doing so, we will prove some useful properties of the $\LLL_i(m,k,t)$.

\begin{lemma}\;\label{lem: L lin indep}
\begin{enumerate}
    \item For any monomial $x_1^{b_1}\cdots x_m^{b_m}$ appearing in an element of $\LLL_i(m,k,t)$ the exponent vector $(b_1,\ldots, b_m)$ is componentwise bounded from above by  $$ (r(t+1)-1,r(t+2)-1,\ldots, r(t+m-k)-1,\underbrace{r(t+m-k+1)-2,\ldots,r(t+m-k+1)-2}_{k\text{ copies}}).$$
    \item The set $\LLL_i(m,k,t)$ is linearly independent.
    \item We have $$\#\LLL_i(m,k,t)=\begin{cases}
        \dbinom{m-k+t}{m-k}\dbinom{m+t-1}{k} & i=1\\
        \dbinom{m-k+t}{m-k}\dbinom{m+t}{k} & i>1
    \end{cases}$$
\end{enumerate}  
\end{lemma}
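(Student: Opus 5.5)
The plan is to prove the three parts in order: (1) by a direct degree count, (2) by reducing to a classical freeness statement for partially symmetric polynomials, and (3) by counting index pairs $(\mu,\nu)$, which (2) shows are in bijection with $\LLL_i(m,k,t)$.

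For (1), fix a generator $e_\mu(x_1^r,\ldots,x_m^r)\cdot x_1^{r-i}\cdots x_m^{r-i}\cdot s_\nu(x^r_{m-k+1},\ldots,x^r_m)$ and bound the exponent of each variable factor by factor.
\begin{itemize}
\item Since $\mu$ has at most $t_{\nu,i}$ parts and each $e_{\mu_j}$ is squarefree, every variable has exponent at most $r\,t_{\nu,i}$ in $e_\mu(x^r)$.
\item The middle factor contributes exactly $r-i\le r-1$.
\item The Schur factor $s_\nu$ only involves $x_{m-k+1},\ldots,x_m$. Since $\nu_1\le m-k$, it contributes exponent at most $r(m-k)$ to each of these.
\end{itemize}
For $b\le m-k$ this gives $b$-th exponent at most $rt+r-1\le r(t+b)-1$. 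For the last $k$ variables we get at most $rt_{\nu,i}+(r-i)+r(m-k)$. When $i\ge 2$ this is at most $r(t+m-k+1)-2$ immediately. When $i=1$, the extra $+1$ would only overshoot if simultaneously $\nu_1=m-k$ and $\mu$ had $t$ parts, and the definition of $t_{\nu,1}$ excludes exactly this case. So the bound holds in all cases.

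For (2), first factor out the common monomial $x_1^{r-i}\cdots x_m^{r-i}$, which is a nonzerodivisor. Then substitute $y_j=x_j^r$, which is an injective ring map $\CC[y]\to\CC[x]$. It therefore suffices to show that the polynomials $e_\mu(y_1,\ldots,y_m)\,s_\nu(y_{m-k+1},\ldots,y_m)$ are linearly independent, where $\nu\subseteq(m-k)^k$ and $\mu$ has largest part $\le m$, for distinct pairs $(\mu,\nu)$. This follows from two classical facts.
\begin{itemize}
\item The products $e_\mu(y_1,\ldots,y_m)$ with $\mu_1\le m$ are exactly the monomials in $e_1,\ldots,e_m$, hence form a basis of $\CC[y]^{\symm_m}$.
\item $\CC[y]^{\symm_{m-k}\times\symm_k}$ is a free $\CC[y]^{\symm_m}$-module with basis $\{s_\nu(y_{m-k+1},\ldots,y_m):\nu\subseteq(m-k)^k\}$. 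This is the standard Grassmannian cohomology / Schubert basis statement, and it can be deduced from the fact that the coinvariant algebra for $\symm_m$ relative to $\symm_{m-k}\times\symm_k$ has these Schur polynomials as a basis.
\end{itemize}
A nontrivial linear relation would give a relation $\sum_\nu c_\nu(e)\,s_\nu=0$ with some coefficient $c_\nu\in\CC[y]^{\symm_m}$ nonzero, contradicting freeness. Distinct $(\mu,\nu)$ thus give distinct, independent elements, and (2) follows. The main step to get right is identifying the correct freeness theorem to invoke and checking that the restriction $\mu_1\le m$ keeps us inside a basis of $\CC[y]^{\symm_m}$.

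For (3), by (2) we simply count pairs. The number of $\mu$ contained in the box $(m)^{s}$ is $\binom{m+s}{m}$, and the number of $\nu\subseteq(m-k)^k$ is $\binom{m}{k}$.
\begin{itemize}
\item For $i>1$ the total is $\binom{m}{k}\binom{m+t}{m}=\frac{(m+t)!}{k!\,(m-k)!\,t!}$, which agrees with $\binom{m-k+t}{m-k}\binom{m+t}{k}$ after expanding factorials.
\item For $i=1$ we must subtract a correction from the partitions $\nu$ with $\nu_1=m-k$. These number $\binom{m-1}{k-1}$ by a lattice-path count, since removing the full first row leaves a partition in the box $(m-k)^{k-1}$. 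For each such $\nu$ the count of $\mu$ drops by $\binom{m+t}{m}-\binom{m+t-1}{m}=\binom{m+t-1}{m-1}$. So the total is
\[\binom{m}{k}\binom{m+t}{m}-\binom{m-1}{k-1}\binom{m+t-1}{m-1}.\]
A routine factorial manipulation should show this equals $\binom{m-k+t}{m-k}\binom{m+t-1}{k}$.
\end{itemize}
I would also check the degenerate cases $k=0$, $k=m$, and $t=0$ separately, since the convention $\binom{\cdot}{-1}=0$ must be used there.
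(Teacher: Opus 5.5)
Your proposal is correct. Parts (1) and (3) follow the paper's argument essentially verbatim. In (1) you use the same factor-by-factor exponent bound, with $t_{\nu,1}=t-1$ absorbing the one-unit overshoot when $i=1$. In (3) you use the same count of pairs $(\mu,\nu)$ and subtract the same correction $\binom{m-1}{k-1}\binom{m+t-1}{t}$ when $i=1$. The factorial identity you leave as routine is handled in the paper by rewriting that correction as $\binom{m-k+t}{m-k}\binom{m+t-1}{k-1}$ and then applying Pascal's rule. Your remark that (2) makes the map (pairs) $\to\LLL_i(m,k,t)$ injective is a point the paper uses only implicitly.

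For (2) you take a different route.
\begin{itemize}
\item \textbf{Your route.} You pass to $y_j=x_j^r$ and cite the classical theorem that $\CC[y]^{\symm_{m-k}\times\symm_k}$ is a free $\CC[y]^{\symm_m}$-module with basis $\{s_\nu(y_{m-k+1},\dots,y_m)\}_{\nu\subseteq(m-k)^k}$, i.e. the Schur/Schubert basis of $H^*(\mathrm{Gr}(k,m))$.
\item \textbf{The paper's route.} It stays in $S$. Every monomial of $s_\nu(x_{m-k+1}^r,\dots,x_m^r)$ has exponent at most $r(m-k)\le jr-1$ in position $j\ge m-k+1$, so it lies in the staircase monomial basis $\{x^b : b_j\le jr-1\}$ of $S/I_{m,r}$. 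Hence the $s_\nu$ are linearly independent modulo $I_{m,r}$. Since $I_{m,r}$ is generated by the regular sequence $e_1(x^r),\dots,e_m(x^r)$, the products $e_1^{d_1}\cdots e_m^{d_m}\, s_\nu$ are then independent in $S$.
\end{itemize}

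The paper's argument is more self-contained. It proves only the independence half of the theorem you cite, using standard monomials plus a regular sequence, and it reuses the staircase machinery already developed in Section~3. Yours is conceptually cleaner and needs no staircase estimate. The cost is importing the Grassmannian basis theorem as a black box, including its freeness statement, which itself rests on a graded Nakayama / Cohen--Macaulay argument.
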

\begin{proof}
    Part (1) is a simple counting problem: the highest possible exponent of $x_i$ from $e_{\mu}(x_1^r,\ldots,x_m^r)$ is $r\cdot \mu'_1$, and the highest possible exponent of $x_i$ appearing in $s_{\nu}(x^r_{m-k+1},\ldots, x^r_m)$ is $r\cdot \nu_1$. Since the elements of the sets $\LLL_i(m,k,t)$ are chosen such that $r\cdot \mu'_1+(r-i)\le r(t+1)-1$ and $r\cdot\mu'_1+(r-i)+r\cdot\nu_1\le r(t+m-k+1)-2\cdot$, the claim follows.

    For (2), note that the elements $s_{\nu}(x^r_{m-k+1},\ldots, x^r_m)$ for varying $\nu \subseteq (m-k)^k$ are clearly linearly independent in $S$. Since each monomial in the expansion of $s_{\nu}(x^r_{m-k+1},\ldots, x^r_m)$ lies in the basis $\AAA_{n,r}(\varnothing,r)$ of $S/I_{m,r}$, the elements $s_{\nu}(x^r_{m-k+1},\ldots, x^r_m)$ as $\nu$ varies over partitions $\nu \subseteq (m-k)^k$ therefore descend to a linearly independent subset of $S/I_{m,r}$. The ideal $I_{m,r}$ is a complete intersection and generated by the regular sequence $e_1(x_1^r,\ldots, x_m^m),\ldots,e_m(x_1^r,\ldots, x_m^m).$ 
    The set of polynomials 
    \begin{equation}
    \label{eq:linearly-independent-set-1}
    e_1^{d_1}(x_1^r,\ldots, x_m^m)\cdots e_m^{d_m}(x_1^r,\ldots, x_m^m)\cdot s_{\nu}(x^r_{m-k+1},\ldots, x^r_m) \quad \quad \nu \subseteq (m-k)^k, \, \, d_1,\dots,d_m \geq 0
    \end{equation}
    is therefore linearly independent in $S$. Mulyiplying the polynomials in \eqref{eq:linearly-independent-set-1} by the monomial $x_1^{r-i}\cdots x_m^{r-i}$ gives rise to a new linearly indepdenent subset of $S$ given by
    \begin{multline}
    \label{eq:linearly-independent-set-2}
        (x_1^{r-i}\cdots x_m^{r-i}) \cdot e_1^{d_1}(x_1^r,\ldots, x_m^m)\cdots e_m^{d_m}(x_1^r,\ldots, x_m^m)\cdot s_{\nu}(x^r_{m-k+1},\ldots, x^r_m)  \\
        \nu \subseteq (m-k)^{k}, \, \, d_1, \dots, d_m \geq 0.
    \end{multline}
    By definition, the set $\LLL_i(m,k,t)$ is a subset of the polynomials in \eqref{eq:linearly-independent-set-2}, and (2) follows.

    For (3), when $i>1$ there are $\binom{m}{k}$ choices for $\nu$ and $\binom{m+t}{t}$ choices for $\mu$ in Definition~\ref{def:L-set}, so the total number of choices is $$\binom{m}{k} \binom{m+t}{t}=\binom{m-k+t}{m-k}\binom{m+t}{k}$$ which agrees with (3) in the case $i>1$. For $i=1$ we must subtract the cases where $\nu_1=m-k$ and $\mu'_1=t$. The number of ways this could happen is $\binom{m-1}{k-1}\binom{m+t-1}{t}=\binom{m-k+t}{m-k}\binom{m+t-1}{k-1}$, so the remaining number of choices for $i=1$ is $$\binom{m-k+t}{m-k}\left\{\binom{m+t}{k}-\binom{m+t-1}{k-1}\right\}=\binom{m-k+t}{m-k}\binom{m+t-1}{k}$$
    which agrees with (3) in the case $i=1$.
\end{proof}

We are ready to show how to combine elements from $\LLL_i(m,k,t)$ to generate elements from $R_{\llambda,\ggamma}$. Note that the sequence $\stair(J(\llambda,\ggamma),r)$ can be divided into batches corresponding to each part in $\llambda$, where the batch corresponding to $\lambda^{(i)}_j$ looks like $$r(t^{(i)}_j+1)-1,r(t^{(i)}_j+2)-1,\ldots, r(t^{(i)}_j+\lambda^{(i)}_j-\gamma^{(i)}_j)-1,\underbrace{r(t^{(i)}_j+\lambda^{(i)}_j-\gamma^{(i)}_j+1)-2,\ldots}_{\gamma^{(i)}_j\text{ copies}}$$where 
\begin{equation}\label{tij formula}
    t^{(i)}_j=L_{i-1}-G_{i-1}+\lambda^{(i)}_1+\cdots +\lambda^{(i)}_{j-1}-\gamma^{(i)}_1+\cdots +\gamma^{(i)}_{j-1}.
\end{equation}

Let $\LLL_i\left(X^{(i)}_j,\gamma^{(i)}_j,t^{(i)}_j\right)$ denote the result of substituting the $\lambda^{(i)}_j$ variables $$x_{L_{i-1}+\lambda^{(i)}_1+\cdots +\lambda^{(i)}_{j-1}+1},\ldots, x_{L_{i-1}+\lambda^{(i)}_1+\cdots +\lambda^{(i)}_{j}}$$ into $\LLL_i\left(\lambda^{(i)}_j,\gamma^{(i)}_j,t^{(i)}_j\right)$. Consider the set 
\begin{equation}\label{EEE defn}
    \EEE(\llambda,\ggamma):=\left\{\prod_{i=1}^r\prod_{j=1}^{k_i}f^{(i)}_j:f^{(i)}_j\in \LLL_i\left(X^{(i)}_j,\gamma^{(i)}_j,t^{(i)}_j\right)\right\}.
\end{equation}

\begin{lemma}
\label{lem:E-linearly-independent}
    The set $\EEE(\llambda,\ggamma)$ descends to a linearly independent set in $R_{\llambda,\ggamma}$.
\end{lemma}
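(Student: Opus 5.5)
The argument has two halves. First, every element of $\EEE(\llambda,\ggamma)$ lies in $R_{\llambda,\ggamma}$. Second, these elements are linearly independent modulo $(I_{n,r}:f_{J(\llambda,\ggamma),r})$. For the first half we unpack a product $\prod_{i,j} f^{(i)}_j$ with $f^{(i)}_j = e_{\mu^{(i)}_j}\cdot x^{r-i}\cdots x^{r-i}\cdot s_{\nu^{(i)}_j}$ on the batch variables. The monomial factors multiply to $\XX_\llambda$, and the Schur factors multiply to one of the products appearing in $Z_{\llambda,\ggamma}$. The case condition on $i=1$ matches: when $t_{\nu,1}$ would otherwise allow it, $\nu^{(1)}_j\subseteq(\lambda^{(1)}_j-\gamma^{(1)}_j-1)^{\gamma^{(1)}_j}$ is exactly what survives after the $\nu_1=m-k$ case is handled. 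Each $e_{\mu^{(i)}_j}(x^r)$ over a batch is invariant under $\ZZ_r\wr\symm_\llambda$. So the product equals $(\text{element of }S^{\ZZ_r\wr\symm_\llambda})\cdot(\text{element of }Z_{\llambda,\ggamma})$ and lies in $R_{\llambda,\ggamma}$. One wrinkle: if $\nu_1 = m-k$ for $i=1$, the Schur factor is not literally in $Z_{\llambda,\ggamma}$. I would check carefully whether Definition~\ref{def:L-set} intends $\nu\subseteq(m-k-1)^k$ when $i=1$. If it does not, I would use the $t-1$ bound on $\mu$ together with a Pieri-type rewriting to reduce to $Z_{\llambda,\ggamma}$ modulo the colon ideal.

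For independence, the plan is to reduce to a statement inside $S$ using the monomial basis of Proposition~\ref{prop:colon-ideal-quotient-basis}. The key observation is that the $\lambda^{(i)}_j$-batch of $\stair(J(\llambda,\ggamma),r)$ is exactly the shifted staircase displayed before \eqref{tij formula}. By Lemma~\ref{lem: L lin indep}(1) with $(m,k,t)=(\lambda^{(i)}_j,\gamma^{(i)}_j,t^{(i)}_j)$, every monomial of $f^{(i)}_j$ has its exponents, on that batch's variables, bounded by this batch of the staircase. Since the batches use disjoint variable sets, every monomial of every product in $\EEE(\llambda,\ggamma)$ lies in $\AAA_{n,r}(J(\llambda,\ggamma),r)$. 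As this set descends to a basis of $S/(I_{n,r}:f_{J(\llambda,\ggamma),r})$, the span of these monomials meets the colon ideal only in $0$. Linear independence in the quotient therefore reduces to linear independence in $S$.

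Finally, independence in $S$ of products over disjoint variable sets follows from a tensor-product argument. $S$ is the tensor product of the polynomial rings in the batch variables, and each $\LLL_i(X^{(i)}_j,\gamma^{(i)}_j,t^{(i)}_j)$ is linearly independent there by Lemma~\ref{lem: L lin indep}(2). The products of independent families in a tensor product are therefore independent, and distinct choices of the factors give distinct products. The step I expect to be the main obstacle is the staircase-matching check. It requires verifying that the offset $t^{(i)}_j$ in \eqref{tij formula} indeed makes the batch of $\stair(J(\llambda,\ggamma),r)$ coincide with the bound in Lemma~\ref{lem: L lin indep}(1). This means counting the non-$J$ indices preceding the batch, which is $L_{i-1}-G_{i-1}$ plus the earlier parts of $\lambda^{(i)}$ minus the earlier parts of $\gamma^{(i)}$, and confirming that the $J$-entries at the batch's end receive the value $r(\cdot+1)-2$.
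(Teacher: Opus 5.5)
Your route is the paper's. You show membership of $\EEE(\llambda,\ggamma)$ in $S^{\ZZ_r\wr\symm_\llambda}\cdot Z_{\llambda,\ggamma}$. You then use staircase containment and Proposition~\ref{prop:colon-ideal-quotient-basis} to reduce independence modulo $(I_{n,r}:f_{J(\llambda,\ggamma),r})$ to independence in $S$, and finish with Lemma~\ref{lem: L lin indep}.

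Your independence half is correct and more careful than the paper's one line. Both the tensor-product argument over disjoint batches and the matching of the $\lambda^{(i)}_j$-batch of $\stair(J(\llambda,\ggamma),r)$ with the bound of Lemma~\ref{lem: L lin indep}(1) at $t=t^{(i)}_j$ go through.

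\textbf{The gap: membership, and neither branch of your contingency works.}

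\emph{Definition~\ref{def:L-set} does keep $\nu_1=m-k$ for $i=1$.} Only the range of $\mu$ shrinks to $t-1$ rows. These terms are needed so that the count in Lemma~\ref{lem: L lin indep}(3) matches Lemma~\ref{lem: monomial count}. So your remark that $(\lambda^{(1)}_j-\gamma^{(1)}_j-1)^{\gamma^{(1)}_j}$ is ``exactly what survives'' is wrong.

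\emph{When $t^{(1)}_j\ge 1$, such products genuinely lie outside $R_{\llambda,\ggamma}$ as defined, so no Pieri rewriting can help.} Take $r=2$, $\llambda=((2,2),\varnothing)$ and $\gamma^{(1)}=(0,1)$.
\begin{itemize}
\item Then $J(\llambda,\ggamma)=\{4\}$, $f_{J,2}=x_4$, $t^{(1)}_2=2$, the staircase is $(1,3,5,6)$, and $Z_{\llambda,\ggamma}=\{x_1x_2x_3x_4\}$.
\item Yet $x_1x_2\cdot x_3x_4^3\in\EEE(\llambda,\ggamma)$, coming from $\nu=(1)$, $\mu=\varnothing$ in $\LLL_1(2,1,2)$.
\item In degree $6$, $R_{\llambda,\ggamma}$ is spanned by $(x_1^2+x_2^2)\XX_\llambda$ and $(x_3^2+x_4^2)\XX_\llambda$. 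These are negatives of each other modulo $p_2$, so $R_{\llambda,\ggamma}$ in degree $6$ is the line through $x_1x_2x_3^3x_4+x_1x_2x_3x_4^3$.
\item Both monomials are distinct staircase basis elements, so $x_1x_2x_3x_4^3\notin R_{\llambda,\ggamma}$.
\end{itemize}
Thus the lemma is false with $Z_{\llambda,\ggamma}$ as written. The paper's ``by construction'' has the same defect.

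\textbf{The repair belongs on the $Z$ side, not the $\LLL$ side.} Lemma~\ref{lem:consolidate basis} never needs $\DDD^\TT_{\llambda,r}(\delta^\llambda_{n,r})\neq 0$. It uses only Gale-triangularity and the leading coefficient $(\XX_\llambda\,\sss^r(\TT)\,f_{J(\llambda,\ggamma),r})\odot\delta_{n,r}$. Lemma~\ref{lem:PPP-triangularity} supplies both for every $\llambda$-translation sequence $\TT$.

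So you may define $Z_{\llambda,\ggamma}$ with $\nu^{(i)}_j\subseteq(\lambda^{(i)}_j-\gamma^{(i)}_j)^{\gamma^{(i)}_j}$ for all $(i,j)$. For batches with $t^{(1)}_j=0$, such as $j=1$, $\LLL_1(m,k,0)$ has no $\nu_1=m-k$ terms anyway, which is the role of $1\notin T^{(1)}_1$. With this $Z$, each product $\prod f^{(i)}_j=\bigl(\prod e_{\mu^{(i)}_j}\bigr)\cdot\XX_\llambda\cdot\sss^r(\TT)$ lies literally in $S^{\ZZ_r\wr\symm_\llambda}\cdot Z_{\llambda,\ggamma}$. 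Your independence argument then completes the proof, and the downstream lower bound is unaffected.
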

\begin{proof}
    By construction, elements in $\EEE(\llambda,\ggamma)$ lie in $S^{\ZZ_r\wr \symm_\llambda}\cdot Z_{\llambda,\ggamma}$, so we only need to show this set is linearly independent in $S/\left(I_{n,r}:f_{J(\llambda,\ggamma),r}\right)$. However, by Lemma \ref{lem: L lin indep}, $\EEE(\llambda,\ggamma)$ is linearly independent in $S$, and each monomial appearing in an element of this set lies under the staircase $\stair(J(\llambda,\ggamma),r)$ and thus is a basis element for $S/\left(I_{n,r}:f_{J(\llambda,\ggamma),r}\right)$ as in Proposition~\ref{prop:colon-ideal-quotient-basis}. These two facts combined prove the lemma.
\end{proof}

We have thus laid out a procedure to generate a large collection of linearly independent elements in $\antisymm\cdot\SH$: start with $\EEE(\llambda,\ggamma)$ for each $\ggamma$, use Lemma~\ref{lem:E-linearly-independent} to create linearly independent elements of $R_{\llambda,\ggamma}$, and finally use Lemma~\ref{lem:consolidate basis} to consolidate them over varying $\ggamma$ to obtain a set of linearly independent elements in $\antisymm\cdot\SH$. It remains to verify this has enough elements to provide a satisfactory lower bound. The following lemma confirms this.

\begin{lemma}\label{lem: dim equal}
    We have \begin{equation}\label{eq: dim equal}
    \dim\antisymm\cdot SR_{n,r}= \dim \varepsilon_{\bm\lambda}\cdot \left(\CC[\mathcal{\widetilde{{OP}}}_{n,r}]\otimes \det \right).
\end{equation}
\end{lemma}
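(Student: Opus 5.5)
The plan is to sandwich $\dim_\CC \antisymm\cdot SR_{n,r}$ between two quantities that are then shown to be equal. The upper bound is already in hand: Lemma~\ref{lem:parabolic-spanning-set} and Lemma~\ref{lem: monomial count} combine into \eqref{eq: upper bound}. For the lower bound we use $\dim_\CC\antisymm\cdot SR_{n,r}=\dim_\CC\antisymm\cdot\SH$. Lemma~\ref{lem:consolidate basis} gives $\dim_\CC\antisymm\cdot\SH\ge\sum_{\ggamma\le\llambda}\dim_\CC R_{\llambda,\ggamma}$, and Lemma~\ref{lem:E-linearly-independent} gives $\dim_\CC R_{\llambda,\ggamma}\ge\#\EEE(\llambda,\ggamma)$. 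It therefore suffices to prove, for each fixed signed $r$-partition $(\llambda,\ggamma)$, the identity
\[
\#\EEE(\llambda,\ggamma)=\#\AAA_{n,r}(\llambda,\ggamma).
\]
Summing over $\ggamma\le\llambda$ and invoking Lemma~\ref{lem: monomial count} then turns the chain of inequalities into equalities.

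To count $\EEE(\llambda,\ggamma)$, first note that the factors $f^{(i)}_j$ live in disjoint variable sets (the batches). Lemma~\ref{lem: L lin indep}(2) together with this disjointness shows the products are distinct. Hence $\#\EEE(\llambda,\ggamma)=\prod_{i,j}\#\LLL_i(\lambda^{(i)}_j,\gamma^{(i)}_j,t^{(i)}_j)$. Lemma~\ref{lem: L lin indep}(3) evaluates each factor as
\[
\binom{\lambda^{(i)}_j-\gamma^{(i)}_j+t^{(i)}_j}{\lambda^{(i)}_j-\gamma^{(i)}_j}\binom{\lambda^{(i)}_j+t^{(i)}_j-\delta_{i,1}}{\gamma^{(i)}_j},
\]
with $t^{(i)}_j$ as in \eqref{tij formula}. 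Here $t^{(i)}_j$ is exactly the number $S$ of nonzero blocks present before the $\lambda^{(i)}_j$-batch is processed in the algorithm from the proof of Lemma~\ref{lem: monomial count}. Substituting shows that this product coincides term by term with \eqref{eq: total count}. (I will double check the sign convention in \eqref{tij formula}: it should read $-\gamma^{(i)}_1-\cdots-\gamma^{(i)}_{j-1}$, matching $S$.)

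It remains to confirm that \eqref{eq: total count} equals $\#\AAA_{n,r}(\llambda,\ggamma)$, which the proof of Lemma~\ref{lem: monomial count} asserted with ``one may verify''. I would make this check explicit batch by batch. Within the $\lambda^{(i)}_j$-batch, the staircase entries are $r(t+1)-1,\dots,r(t+m-k)-1$ followed by $k$ copies of $r(t+m-k+1)-2$, where $m=\lambda^{(i)}_j$, $k=\gamma^{(i)}_j$, $t=t^{(i)}_j$.

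For the first $m-k$ exponents: a strictly increasing sequence of integers $\equiv i-1 \pmod r$ bounded componentwise by this increasing staircase is determined by its set of quotients. Writing $a=rq+(i-1)$, the bound becomes $q\le t+s-1$ for $i=1$ and $q\le t+s-1$ for $i\ge 2$ as well (since $r(t+s)-1\ge rq+i-1$ iff $q\le t+s-1$ when $i\le r$). By the usual bijection, choosing strictly increasing $q_1<\cdots<q_{m-k}$ with $q_s\le t+s-1$ is the same as choosing an $(m-k)$-subset of $\{0,\dots,t+m-k-1\}$; the componentwise bound is automatic. This gives $\binom{t+m-k}{m-k}$.

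For the last $k$ exponents: a weakly increasing sequence $\equiv i-2 \pmod r$ bounded by $r(t+m-k+1)-2$ is a multiset of size $k$ from the allowed residues. For $i\ge2$ the allowed values are $q\in\{0,\dots,t+m-k\}$, giving $\binom{t+m}{k}$. For $i=1$ the residue is $r-1$, so $a=rq+r-1\le r(t+m-k+1)-2$ forces $q\le t+m-k-1$, giving $\binom{t+m-1}{k}$.

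These match Lemma~\ref{lem: L lin indep}(3) exactly. So the main obstacle is just this residue bookkeeping, particularly the $i=1$ versus $i>1$ boundary. Once it is settled, the products agree, and summing over $\ggamma$ yields \eqref{eq: dim equal}.
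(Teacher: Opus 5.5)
Your proposal is correct and follows the paper's proof. You combine the upper bound \eqref{eq: upper bound} with the lower bound from Lemmas~\ref{lem:consolidate basis} and \ref{lem:E-linearly-independent}, and you identify $\#\EEE(\llambda,\ggamma)$ with the product \eqref{eq: total count} via Lemma~\ref{lem: L lin indep}(3), exactly as the paper does. Your batch-by-batch residue count showing that this product equals $\#\AAA_{n,r}(\llambda,\ggamma)$ supplies the step the paper leaves as ``one may verify'' in Lemma~\ref{lem: monomial count}, and you are right that \eqref{tij formula} should read $-\gamma^{(i)}_1-\cdots-\gamma^{(i)}_{j-1}$.
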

\begin{proof}
    Using our count for $\LLL_i(m,k,t)$ from Lemma~\ref{lem: L lin indep} together with \eqref{tij formula} and \eqref{EEE defn}, we see that 

\begin{multline*}
        \#\EEE(\llambda,\ggamma)=\prod_{j=1}^{k_1}\binom{\lambda^{{(1)}}_1+\cdots+\lambda^{(1)}_j-\gamma^{(1)}_1-\cdots-\gamma^{(1)}_j}{\lambda^{(1)}_j-\gamma^{(1)}_{j}}\binom{\lambda^{{(1)}}_1+\cdots+\lambda^{(1)}_j-\gamma^{(1)}_1-\cdots-\gamma^{(1)}_{j-1}-1}{\gamma^{(1)}_j}\\\times\prod_{i=2}^r\left(\prod_{j=1}^{k_i}\binom{L_{i-1}-G_{i-1}+\lambda^{{(i)}}_1+\cdots+\lambda^{(i)}_j-\gamma^{(i)}_1-\cdots-\gamma^{(i)}_j}{\lambda^{(i)}_j-\gamma^{(i)}_{j}}\right.\\
        \left.\times\binom{L_{i-1}-G_{i-1}+\lambda^{{(i)}}_1+\cdots+\lambda^{(i)}_j-\gamma^{(i)}_1-\cdots-\gamma^{(i)}_{j-1}}{\gamma^{(i)}_j}\right).
    \end{multline*}
Varying $\ggamma$ and using the reasoning in the paragraph immediately preceding this lemma, we obtain $$\sum_{\ggamma\le\llambda}\#\EEE(\llambda,\ggamma)$$linearly independent elements in $\antisymm\cdot\SH$. However, this coincides with our
earlier count for $\dim \varepsilon_{\bm\lambda}\cdot \left(\CC[\mathcal{\widetilde{{OP}}}_{n,r}]\otimes \det \right)$ obtained in the proof of Lemma~\ref{lem: monomial count}. Thus we have the inequality\begin{equation}\label{eq: lower bound}
    \dim\antisymm\cdot \SH\ge\dim \varepsilon_{\bm\lambda}\cdot \left(\CC[\mathcal{\widetilde{{OP}}}_{n,r}]\otimes \det \right).\end{equation}
The lemma now follows readily from \eqref{eq: upper bound}, \eqref{eq: lower bound} and the fact that $SR_{n,r}$ and $\SH$ are isomorphic as $\ZZ_r\wr\symm_n$-modules.
\end{proof}

Combining Lemma~\ref{lem:parabolic-spanning-set}, Lemma~\ref{lem: monomial count}, and Lemma~\ref{lem: dim equal} gives a vector space basis of $\varepsilon_\llambda \cdot SR_{n,r}$.

\begin{corollary}
    \label{cor:parabolic-basis}
    For any $r$-partition $\llambda \vdash n$, the set of superspace elements
    \[
    \bigsqcup_{\ggamma\leq\llambda} \varepsilon_\llambda \cdot (\AAA(\llambda,\ggamma) \cdot \theta_{J(\llambda,\ggamma)})
    \]
    descends to a basis of $\varepsilon_\llambda \cdot SR_{n,r}$.
\end{corollary}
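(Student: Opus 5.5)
The plan is to combine the spanning statement of Lemma~\ref{lem:parabolic-spanning-set} with a cardinality count. Lemma~\ref{lem:parabolic-spanning-set} already says that the set
\[
\mathcal{S}_\llambda := \bigsqcup_{\ggamma \leq \llambda} \varepsilon_\llambda \cdot \bigl(\AAA(\llambda,\ggamma) \cdot \theta_{J(\llambda,\ggamma)}\bigr)
\]
descends to a spanning set of $\varepsilon_\llambda \cdot SR_{n,r}$. A spanning set of a finite-dimensional vector space is a basis exactly when its number of elements equals the dimension. So it suffices to show
\[
\#\mathcal{S}_\llambda \le \dim_\CC \varepsilon_\llambda \cdot SR_{n,r}.
\]

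First I would pin down $\#\mathcal{S}_\llambda$. The family $\mathcal{S}_\llambda$ is indexed by pairs $(\ggamma, m)$ with $\ggamma \leq \llambda$ and $m \in \AAA_{n,r}(\llambda,\ggamma)$. Its cardinality, counted with this indexing, is
\[
\sum_{\ggamma \le \llambda} \#\AAA_{n,r}(\llambda,\ggamma).
\]
I will treat $\mathcal{S}_\llambda$ as an indexed family. Then linear independence of the family automatically forces the images to be distinct and nonzero, so no separate distinctness check is needed.

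Next I chain the earlier results. Lemma~\ref{lem: monomial count} identifies the count above with $\dim_\CC \varepsilon_\llambda \cdot (\CC[\widetilde{\OP}_{n,r}] \otimes \det)$. Lemma~\ref{lem: dim equal} then identifies that quantity with $\dim_\CC \varepsilon_\llambda \cdot SR_{n,r}$. Together these give
\[
\sum_{\ggamma \le \llambda} \#\AAA_{n,r}(\llambda,\ggamma) = \dim_\CC \varepsilon_\llambda \cdot SR_{n,r}.
\]
An indexed family that spans a space and has exactly $\dim$ members is linearly independent, hence a basis. This proves the corollary.

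The main point needing care is the indexing issue: the spanning lemma was phrased as a set, while the count is of indices. The disjointness remark in the proof of Lemma~\ref{lem:parabolic-spanning-set} handles different $\ggamma$: different $\ggamma$ give different fermionic supports batchwise. Within a fixed $\ggamma$, distinct sorted monomials lie in distinct orbits of $\ZZ_r \wr \symm_\llambda$, so their antisymmetrizations are distinct. In any case the dimension argument makes this automatic, since a spanning family with exactly $\dim$ members has no repeats. Beyond this bookkeeping there is no real obstacle; all substantive work sits in Lemmas~\ref{lem: monomial count} and~\ref{lem: dim equal}.
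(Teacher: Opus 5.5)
Your proposal is correct and follows the paper's own argument: the paper gets the corollary by combining the spanning statement of Lemma~\ref{lem:parabolic-spanning-set}, the count of Lemma~\ref{lem: monomial count}, and the dimension equality of Lemma~\ref{lem: dim equal}, so that a spanning family of exactly the right size must be a basis. Treating the family as indexed, so that the dimension count itself rules out repeats, is a clean way to handle the set-versus-index bookkeeping, which the paper leaves implicit.
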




\subsection{Module isomorphisms}
We are ready to establish our combinatorial model \eqref{combmodel} for the $\ZZ_r \wr \symm_n$-module $SR_{n,r}$. With the machinery we have developed so far, we need only invoke the appropriate lemmas.

\begin{theorem}\label{thm: module iso}
    For any $n,r \geq 1$ we have the isomorphism of ungraded $\ZZ_r \wr \symm_n$-modules
    \begin{equation*}
    SR_{n,r}\cong_{\ZZ_r\wr \mathfrak S_n}\CC[\mathcal{\widetilde{{OP}}}_{n,r}]\otimes \det.
    \end{equation*}
\end{theorem}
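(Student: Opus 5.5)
The plan is to compare Frobenius images through the colored antisymmetrizers. Two finite-dimensional $\ZZ_r \wr \symm_n$-modules are isomorphic exactly when their Frobenius images in $\Lambda^{(r)}$ agree. Since $\{e_\llambda\}_{\llambda \vdash_r n}$ is a basis of $\Lambda^{(r)}$ and the Hall inner product is nondegenerate, it suffices to show
\[
\langle \Frob(SR_{n,r}), e_{\mmu} \rangle = \langle \Frob(\CC[\widetilde{\OP}_{n,r}] \otimes \det), e_{\mmu} \rangle
\]
for every $r$-partition $\mmu \vdash_r n$.

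The reversal map $\llambda \mapsto \llambda^{\text{rev}}$ is a bijection on $r$-partitions of $n$. So we may write $\mmu = \llambda^{\text{rev}}$ and apply Lemma~\ref{perplemma} to both modules. This converts each pairing into a dimension: $\dim_\CC \varepsilon_\llambda \cdot SR_{n,r}$ on one side and $\dim_\CC \varepsilon_\llambda \cdot (\CC[\widetilde{\OP}_{n,r}] \otimes \det)$ on the other.

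Lemma~\ref{lem: dim equal} asserts that these two dimensions are equal for every $\llambda \vdash_r n$. That lemma packages the upper bound \eqref{eq: upper bound}, which comes from the spanning set of Lemma~\ref{lem:parabolic-spanning-set} and the count in Lemma~\ref{lem: monomial count}. It also packages the lower bound \eqref{eq: lower bound}, which comes from the parabolic $\DDD$-operators acting on $\delta^\llambda_{n,r}$ inside $\SH$, using $SR_{n,r} \cong \SH$. Hence all the pairings with $e_\mmu$ coincide, the two Frobenius images are equal, and the modules are isomorphic. The grading on $SR_{n,r}$ is discarded throughout, which is why the isomorphism is only ungraded.

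The main point to check is that Lemma~\ref{perplemma} applies uniformly, including when some $\lambda^{(i)}$ are empty and for $r = 1$. We also need $\{e_\llambda\}$ to be a basis of the degree-$n$ component of $\Lambda^{(r)}$; this holds because it is a tensor product of elementary bases. For $r = 1$ the statement should specialize to the known type A result of \cite{MRW}, and the same argument goes through verbatim, since every lemma used was stated for general $r \geq 1$. The delicate content all sits in Lemma~\ref{lem: dim equal}, which we are taking as given, so beyond this bookkeeping the theorem is a formal consequence.
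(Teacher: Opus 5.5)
Your proposal is correct and is essentially the paper's own proof. Since the $e_{\llambda}$ form a basis of $\Lambda^{(r)}$ (and reversal permutes the $r$-partitions of $n$), Lemma~\ref{perplemma} reduces the isomorphism to the equality $\dim_\CC \varepsilon_\llambda \cdot SR_{n,r} = \dim_\CC \varepsilon_\llambda \cdot (\CC[\widetilde{\OP}_{n,r}] \otimes \det)$ for every $\llambda \vdash_r n$, which is exactly Lemma~\ref{lem: dim equal}.
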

\begin{proof}
    Since the set $\{e_{\llambda}\}_{\llambda\vdash_r n}$ spans $\Lambda^{(r)}$, in view of Lemma~\ref{perplemma}, it suffices to show that 
    \begin{equation*}
        \dim_\CC  \antisymm\cdot SR_{n,r}= \dim_\CC \varepsilon_{\bm\lambda}\cdot \left(\CC[\mathcal{\widetilde{{OP}}}_{n,r}]\otimes \det \right)
    \end{equation*} for any  $\llambda\vdash_r n$, which was established in Lemma~\ref{lem: dim equal}.
\end{proof}

Theorem~\ref{thm: module iso} refines to give the fermionic-graded module structure of $SR_{n,r}$. Let $\mathcal{\widetilde{{OP}}}^{(k)}_{n,r}$ denote the set of $r$-ified ordered set partitions of $[n]$ that have exactly $k$ non-zero blocks.   We write $(SR_{n,r})_{*,n-k} := \bigoplus_{i \geq 0} (SR_{n,r})_{i,n-k}$ for the fermionic-degree $n-k$ part of $SR_{n,r}$.
\begin{theorem}
\label{thm:fermionic-piece-iso}
    We have an isomorphism of $\ZZ_r\wr\symm_n$-modules $$(SR_{n,r})_{*,n-k}\cong_{\ZZ_r \wr \symm_n} \CC[\mathcal{\widetilde{{OP}}}^{(k)}_{n,r}]\otimes \det.$$
\end{theorem}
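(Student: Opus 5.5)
The plan is to refine the proof of Theorem~\ref{thm: module iso} degree by degree in the fermionic grading. Every group action involved preserves fermionic degree on $SR_{n,r}$ and preserves the number of nonzero blocks on $\CC[\widetilde{\OP}_{n,r}]$. Hence both sides decompose as direct sums of submodules indexed by $k$, and Lemma~\ref{perplemma} reduces the statement to the equality
\[
\dim_\CC \varepsilon_\llambda \cdot (SR_{n,r})_{*,n-k} = \dim_\CC \varepsilon_\llambda\cdot\left(\CC[\widetilde{\OP}^{(k)}_{n,r}]\otimes\det\right)
\]
for every $\llambda\vdash_r n$ and every $k$. The idea is that the upper and lower bounds from the previous subsections are already compatible with this grading, term by term in $\ggamma$.

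For the right-hand side, I will track the construction algorithm in the proof of Lemma~\ref{lem: monomial count}. For a fixed signed $r$-partition $(\llambda,\ggamma)$, the $\bullet$'s create exactly $\sum_{i,j}(\lambda^{(i)}_j-\gamma^{(i)}_j)=n-|\ggamma|$ nonzero blocks, and the $\circ$'s only join existing blocks. Therefore the $\llambda$-surviving partitions built from $\ggamma$ lie in $\widetilde{\OP}^{(n-|\ggamma|)}_{n,r}$. This gives
\[
\dim \varepsilon_\llambda\cdot\left(\CC[\widetilde{\OP}^{(k)}_{n,r}]\otimes\det\right)=\sum_{|\ggamma|=n-k}\#\AAA_{n,r}(\llambda,\ggamma).
\]

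For the left-hand side, the spanning set of Lemma~\ref{lem:parabolic-spanning-set} indexed by $\ggamma$ consists of elements with fermionic factor $\theta_{J(\llambda,\ggamma)}$, of fermionic degree $\#J(\llambda,\ggamma)=|\ggamma|$. Applying $\varepsilon_\llambda$ preserves this degree. So the pieces with $|\ggamma|=n-k$ span $\varepsilon_\llambda\cdot(SR_{n,r})_{*,n-k}$, and the upper bound $\dim\varepsilon_\llambda\cdot(SR_{n,r})_{*,n-k}\le\sum_{|\ggamma|=n-k}\#\AAA_{n,r}(\llambda,\ggamma)$ follows.

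For the matching lower bound, I will observe that the elements produced in Lemma~\ref{lem:consolidate basis} for a given $\ggamma$ come from $\DDD^\TT_{\llambda,r}(\delta^\llambda_{n,r})$ followed by $f\odot(-)$. These are bihomogeneous of fermionic degree $\#T=|\ggamma|$, since the $d_j$ each raise fermionic degree by one and $\DDD^\TT_{\llambda,r}$ is built from $n-\#T$ minors. The inverse system is bigraded and isomorphic to $SR_{n,r}$ as a bigraded module, so this yields $\sum_{|\ggamma|=n-k}\#\EEE(\llambda,\ggamma)$ linearly independent elements of $\varepsilon_\llambda\cdot \SH$ of fermionic degree $n-k$.

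The main obstacle is checking that the count equality in Lemma~\ref{lem: dim equal} holds $\ggamma$ by $\ggamma$, not merely after summing over all $\ggamma$; otherwise the graded pieces need not match. The explicit formulas settle this. The formula for $\#\EEE(\llambda,\ggamma)$ in Lemma~\ref{lem: dim equal} coincides with the per-$\ggamma$ product \eqref{eq: total count}, which equals $\#\AAA_{n,r}(\llambda,\ggamma)$ by the proof of Lemma~\ref{lem: monomial count}. Summing over $|\ggamma|=n-k$ then gives equal upper and lower bounds in each fermionic degree, and the theorem follows as in Theorem~\ref{thm: module iso}.
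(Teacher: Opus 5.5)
Your proposal is correct. Its upper-bound half matches the paper: you restrict the spanning set of Lemma~\ref{lem:parabolic-spanning-set} to those $\ggamma$ with $|\ggamma|=n-k$, and you note that the surviving partitions built from $\ggamma$ have exactly $n-|\ggamma|$ nonzero blocks.

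Where you differ is the lower bound. The paper never goes back to the inverse system. It sums the graded inequalities \eqref{eq: graded bound} over $k$ and observes that the totals are the two sides of the ungraded equality \eqref{eq: dim equal} from Lemma~\ref{lem: dim equal}. So each graded inequality is forced to be an equality. You instead prove a graded lower bound directly, by tracking the fermionic degree of the elements produced in Lemma~\ref{lem:consolidate basis}.

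What each route buys:
\begin{itemize}
\item The paper's squeeze is shorter. It needs only that the spanning sets respect the fermionic grading.
\item Your route is more informative. It shows that the explicit $\DDD^\TT_{\llambda,r}$-elements of fermionic degree $n-k$ already give a basis of $\varepsilon_\llambda\cdot(\SH)_{*,n-k}$. The cost is that you must verify their homogeneity.
\end{itemize}
The per-$\ggamma$ identity you flag as the main obstacle is needed in the paper's route too, so that the graded upper bound matches the graded right-hand side. Both routes get it from the same product \eqref{eq: total count}.

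Your stated reason for the degree is muddled and should be replaced. Argue as follows. Every summand of $\DDD^\TT_{\llambda,r}$ applies the same number of $d_j$'s, so $\DDD^\TT_{\llambda,r}(\delta^\llambda_{n,r})$ is fermionic-homogeneous. Lemma~\ref{lem:element gen} says $\theta_{J(\llambda,\ggamma)}$ appears with nonzero coefficient, which pins the degree at $\#J(\llambda,\ggamma)=|\ggamma|$. Finally, $f\odot(-)$ with $f$ bosonic preserves fermionic degree, so the linear combinations used in Lemma~\ref{lem:consolidate basis} stay in that degree.

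Your phrase ``built from $n-\#T$ minors'' does not give this. Read literally, Definition~\ref{def:parabolic D-operator} applies $d_{\rho(U)}$ with $\#U=n-\#T$, which would suggest degree $n-\#T$. The Laplace expansion the definition imitates actually pairs the minor of $H(\llambda)$ on columns $U$ with $d$-operators indexed by the complementary $\#T$ columns; only under that reading is the degree $\#T$.
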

\begin{proof}
    As in the proof of Theorem~\ref{thm: module iso}, it suffices to establish the dimensional equality $$\dim_\CC \antisymm\cdot (SR_{n,r})_{*,n-k}=\dim_\CC\antisymm\cdot\left(\CC[\mathcal{\widetilde{{OP}}}^{(k)}_{n,r}]\otimes \det\right)$$holds for any $r$-partition $\llambda\vdash_r n$.

Borrowing terminology from the proof Lemma~\ref{lem: monomial count}, we see that $\antisymm\cdot\left(\CC[\mathcal{\widetilde{{OP}}}^{(k)}_{n,r}]\otimes \det\right)$ has a basis in bijection with $\llambda$-surviving $r$-ified ordered set partitions with exactly $k$ non-zero parts. The procedure in the proof of Lemma~\ref{lem: monomial count} constructs such a set partition, adding exactly $\lambda^{(i)}_j-\gamma^{(i)}_j$ non-zero parts at each step, so the resulting set partition has exactly $$(\lambda^{(1)}_1+\cdots+\lambda^{(r)}_{k_r})-(\gamma^{(1)}_1+\cdots+\gamma^{(r)}_{k_r})=n-|J(\llambda,\ggamma)|$$parts. So the number of all such $r$-ified ordered set partitions with exactly $k$ non-zero partitions is obtained by summing the expression in \eqref{eq: total count} over all $\ggamma$ satisfying $|J(\llambda,\ggamma)|=n-k$. By the same proof, this also counts the number of elements in $$\bigsqcup_{\substack{\ggamma\le \llambda\\|J(\llambda,\ggamma)|=n-k}}\AAA_{n,r}(\llambda,\ggamma)\cdot\theta_{J(\llambda,\ggamma)}.$$But applying $\antisymm$ to this set generates a spanning set of $\antisymm\cdot (SR_{n,r})_{*,n-k}$, which means 
\begin{equation}\label{eq: graded bound}
    \dim_\CC \antisymm\cdot (SR_{n,r})_{*,n-k}\le \dim_\CC \antisymm\cdot\left(\CC[\mathcal{\widetilde{{OP}}}^{(k)}_{n,r}]\otimes \det\right).
\end{equation}
Summing \eqref{eq: graded bound} over all $\llambda\vdash_r n$ yields the two sides of \eqref{eq: dim equal}, and so equality must hold in \eqref{eq: graded bound} for each $\llambda \vdash_r n$, as desired.
\end{proof}

For $r=1$, the colors of the zero block in an $r$-ified ordered set partition are restricted to the empty set, which forces the zero block to be empty. All the colors in the non-zero blocks must be $0$. Thus for the symmetric group, there is a natural bijection between $\widetilde{\mathcal{OP}_{n,1}}$ and the $\mathcal{OP}_n$, the family of  ordered set partitions of $[n]$, which commutes with the action of $\symm_n$. The $r =1$ case of Theorem~\ref{thm: module iso} therefore recovers one of the main results of \cite{MRW}:
\begin{equation}
    SR_{n,1} \cong_{\symm_n} \CC[\OP_n] \otimes \det.
\end{equation}

For $r=2$, the wreath product $\ZZ_2\wr\symm_n$ is the hyperoctahedral group $B_n$ of signed permutations of $[n]$. 
The {\em  $B_n$-reflection arrangement} is the hyperplane arrangement $\mathcal{A}(B_n)$ in $\RR^n$ with hyperplanes
\[
\mathcal{A}(B_n) = \{ x_i \pm x_j = 0 \,:\, 1 \leq i < j \leq n \} \cup \{x_i = 0 \,:\, 1 \leq i \leq n \}.
\]
The hyperplanes of $\mathcal{A}(B_n)$ partition $\RR^n$ into {\em faces} of various dimensions. We write $\OP^B_n$ for the set of these faces; this set is the {\em type B Coxeter complex}. The collection $\OP^B_n$ carries a permutation action of $B_n$ given by reflections in the hyperplanes of $\mathcal{A}(B_n)$. 

Given an element $\sigma = (Z,\BBB) \in \widetilde{\OP}_{n,2}$ with $B = (B_1 \mid \cdots \mid B_k)$, the elements of $Z$ must be colored 0 and the elements of $B_1, \dots, B_k$ are colored 0 or 1. Given $\sigma = (Z,\BBB) \in \widetilde{\OP}_{n,2}$ with $\BBB = (B_1 \mid \cdots \mid B_k)$, we obtain a face $F(\sigma) \in \OP^B_n$ characterized by the following coordinate (in)equalities.
\begin{itemize}
    \item If $i \in Z$ lies in the zero block of $\sigma$ then $x_i = 0$ on $F(\sigma)$.
    \item If $1 \leq i \neq i' \leq n$ and $i^c \sim j^d$ are blockmates in $\BBB$ with colors $c, d \in \{0,1\}$ then $x_{i^c} = x_{i^d} > 0$ on $F(\sigma)$. Here we interpret $x_{i^0} := x_i$ and $x_{i^1} := -x_i$.
    \item If $i^c$ and $j^d$ are colored indices appearing in different blocks of $\BBB$ such that $i^c$'s block is before $j^d$'s block, then $x_{i^c} < x_{j^d}$  on $F(\sigma)$.
\end{itemize}
For example, if $n = 8$ and $\sigma = (Z, \BBB)$ where $Z = \{2^0,5^0,6^0\}$ and $\BBB = ( 3 ^0 7^1 \mid 4^1 \mid 1^0 8^1 9^0)$ then $F(\sigma)$ is given by 
\[
0 = x_2 = x_5 = x_6 < x_3 = -x_7 < -x_4 < x_1 = -x_8 = x_9.
\]
It is not difficult to see that $\sigma \mapsto F(\sigma)$ is a $B_n$-equivariant bijection from $\widetilde{\OP}_{n,2}$ to $\OP^B_n$. The following result was conjectured by the second author \cite[(7.2)]{Bhattacharya}.

\begin{corollary}
\label{cor:B-identification}
    We have an isomorphism of ungraded $B_n$-modules $$SR_{n,2}\cong_{B_n}\CC[\mathcal{OP}^B_n]\otimes \det.$$
\end{corollary}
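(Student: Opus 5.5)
The plan is to deduce Corollary~\ref{cor:B-identification} directly from the $r=2$ case of Theorem~\ref{thm: module iso}, which gives $SR_{n,2}\cong_{B_n}\CC[\widetilde{\OP}_{n,2}]\otimes\det$. It then suffices to identify $\CC[\widetilde{\OP}_{n,2}]$ with the permutation module $\CC[\OP^B_n]$ as $B_n$-modules. For this we use the map $\sigma\mapsto F(\sigma)$ described just before the statement.

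\textbf{Step 1: $\CC[\widetilde{\OP}_{n,2}]$ is a genuine permutation module.} When $r=2$, elements of the zero block can only carry color $0\in\{0,\dots,r-2\}$. Hence the scalar $\zeta^{kc_i}$ appearing in the $\ZZ_2^n$-action is always $\zeta^0=1$. So the monomial representation of Lemma~\ref{lem:well-defined-action} is actually the linearization of a $B_n$-action on the set $\widetilde{\OP}_{n,2}$. In this action a diagonal sign $-1$ in position $i$ flips the color of $i$ if $i$ lies in a nonzero block, and fixes $\sigma$ if $i\in Z$.

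\textbf{Step 2: $F$ is a bijection onto the faces.} Faces of $\mathcal{A}(B_n)$ are determined by their sign vectors, i.e.\ by which of $x_i$ and $\pm x_i\pm x_j$ are zero, positive or negative. We check that the data of $F(\sigma)$ is exactly this information. The zero block records which coordinates vanish. The remaining signed coordinates $x_{i^c}$ are all positive and are grouped into equal values ordered by the blocks, which determines every comparison $x_{i^c}\lessgtr x_{j^d}$.

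Conversely, given a face, one recovers $\sigma$ as follows. The zero coordinates form $Z$, colored $0$. Each nonzero coordinate $x_i$ is replaced by $|x_i|$, with color $1$ if $x_i<0$. The indices are then grouped by the common value of $|x_i|$ and ordered increasingly. This shows $F$ is injective and surjective.

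\textbf{Step 3: $F$ is $B_n$-equivariant.} Under the paper's conventions a permutation $w$ sends $x_i\mapsto x_{w(i)}$ while keeping colors attached to positions, which matches relabeling the defining (in)equalities. A diagonal sign change at $i$ sends $x_i\mapsto -x_i$, which toggles the color of $i$ in a nonzero block and leaves the equation $x_i=0$ unchanged. This matches Step 1. Some care is needed to ensure the action on $\RR^n$ and the contragredient action on coordinate functions are aligned. Since $B_n$ acts by real orthogonal signed permutation matrices, with $g^{-T}=g$, any discrepancy amounts at most to an automorphism that does not change the isomorphism class of the permutation module.

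Steps 1--3 give $\CC[\widetilde{\OP}_{n,2}]\cong\CC[\OP^B_n]$, and tensoring with $\det$ and applying Theorem~\ref{thm: module iso} finishes the proof. The only real obstacle is the bookkeeping in Step 3, where the action conventions must be matched exactly. The rest is immediate from the earlier theorem.
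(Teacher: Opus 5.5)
Your proposal is correct and follows the paper's route. The paper also specializes Theorem~\ref{thm: module iso} to $r=2$ and invokes the $B_n$-equivariant bijection $\sigma\mapsto F(\sigma)$ from $\widetilde{\OP}_{n,2}$ to $\OP^B_n$, which it only calls ``not difficult to see''; your Steps 1--3 supply that verification, including the point (left implicit in the paper) that zero-block colors are forced to be $0$, so the monomial module is an honest permutation module.
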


Let $G$ be a real reflection group and let $\OP_G$ be the set of faces of the $G$-reflection arrangement. Then $\CC[\OP_G]$ is naturally a permutation $G$-module. As explained in the introduction,
Murai, Rhoades, and Wilson conjectured \cite[Conj. 7.1]{MRW} that there is a $G$-equivariant surjection 
\begin{equation}
    \varphi: SR_G \twoheadrightarrow \CC[\OP_G] \otimes \det.
\end{equation}
In \cite{MRW} it is proven that such an epimorphism $\varphi$ exists in type A, where it is in fact an isomorphism.
Corollary~\ref{cor:B-identification} proves \cite[Conj. 7.1]{MRW} in type BC, again giving an isomorphism in this case. It is observed in \cite{MRW} that an epimorphism exists in type F$_4$, but fails to be an isomorphism due to a single additional irreducible character in two consecutive exterior degrees of $SR_G$.



\section{Conclusion}
\label{sec:Conclusion}

In this paper we extended the results on the symmetric group superspace coinvariant ring of \cite{RW, ACKMR, MRW} to the wreath product groups $\ZZ_r \wr \symm_n = G(r,1,n)$. It is natural to consider the larger family of complex reflection groups $G(r,p,n)$ where $p \mid r$. These are the groups of monomial matrices
\begin{equation}
    G(r,p,n) := \left\{
        g \in G(r,1,n) \,:\, \begin{array}{c}\text{the product of the nonzero entries of $g$} \\ \text{is a $(r/p)^{th}$ root-of-unity}\end{array}
    \right\}.
\end{equation}  
For example, the group $G(2,2,n)$ is the Weyl group of type D. It was observed by Swanson and Wallach \cite{SW} that the inverse system $SI_G^\perp$ for $G$ of type D is not generated by its $\det$-isotypic component under the $\odot$-action. The Operator Theorem~\ref{thm:operator} would therefore need to take a substantially different form for $G(r,p,n)$ and many of our methods would require significant changes.

Chan and Rhoades considered  \cite{CR} an alternative combinatorial model of ordered set partitions associated to $\ZZ_r \wr \symm_n$. Define a {\em $\ZZ_r \wr \symm_n$-face} to be an element $\sigma = (Z,\BBB) \in \widetilde{\OP}_{n,r}$ such that every element of the zero block $Z$ gets the color 0. Write $\FFF_{n,r} \subseteq \widetilde{\OP}_{n,r}$ for the family of $\ZZ_r \wr \symm_n$-faces. The monomial action of $\ZZ_r \wr \symm_n$ on $\CC[\widetilde{\OP}_{n,r}]$ preserves the subspace $\CC[\FFF_{n,r}]$. 

Generalizing work of Haglund--Rhoades--Shimozono \cite{HRS} in the case of $\symm_n$, Chan and Rhoades \cite{CR} defined a singly-graded quotient of the polynomial ring $\CC[x_1,\dots,x_n]$ whose algebraic properties are governed by the combinatorics of $\ZZ_r \wr \symm_n$-faces. We give a superspace quotient whose combinatorics is conjecturally related to $\FFF_{n,r}$.

\begin{conjecture}
\label{conj:face-conjecture}
    Assume $r \geq 2$ and let $SI'_{n,r} \subseteq \Omega$ be the superspace ideal generated by $\dots$
    \begin{itemize}
        \item the elements $x_1^{ir} + \cdots + x_n^{ir}$ for $i = 1,2,\dots,n$, and
        \item the element $x_1^{(i-1)r + 1} \theta_1 + \cdots + x_n^{(i-1)r + 1} \theta_n$ for $i = 1,2,\dots,n$.
    \end{itemize}
    Then $SI'_{n,r}$ is stable under the action of $\ZZ_r \wr \symm_n$ so that $\Omega/SI'_{n,r}$ is a bigraded $\ZZ_r \wr \symm_n$-module. There exists a module isomorphism
    \[
    \Omega/SI'_{n,r} \cong_{\ZZ_r \wr \symm_n} \CC[\FFF_{n,r}] \otimes \det.
    \]
\end{conjecture}

 Theorem~\ref{thm: module iso} proves Conjecture~\ref{conj:face-conjecture} when $r=2$. For $r > 2$, although the ideal $SI'_{n,r}$ has a similar-looking generating set to the ideal $SI_{n,r}$, the methods of \cite{MRW} and this paper do not seem to easily apply to Conjecture~\ref{conj:face-conjecture}. 

The ideal $SI_{n,r}'$ appearing in Conjecture~\ref{conj:face-conjecture} has a more intrinsic definition. Let $G \subseteq GL(V)$ be a complex reflection group and let $\Omega' := \CC[V] \otimes \wedge(V)$ be the modified version of the differential forms ring $\Omega = \CC[V] \otimes \wedge(V^*)$ where the exterior algebra is generated by $V$ rather than $V^*$. The natural action of $G$ on $V$ induces an action on $\Omega'$, and one can consider the ideal $SI'_G \subseteq \Omega'$ generated by $G$-invariants with vanishing constant term.

We claim that when $G = G(r,1,n)$ the ideal $SI'_G \subseteq \Omega'$ is given by $SI'_G = SI'_{n,r}$. The key is the following analogue of Theorem~\ref{thm:solomon} modified to suit this new $G(r,1,n)$-action.

\begin{proposition}
\label{prop:modified-generation}
    Let $n \geq 1$ and $r \geq 2$. For any $1 \leq i \leq n$, let $p_{ir} \in \Omega'$ denote the usual power sum $p_{ir} := x_1^{ir}+\cdots+x_n^{ir}$. Define $g_{i} \in \Omega'$ by $$g_i:=x_1^{(i-1)r + 1} \theta_1 + \cdots + x_n^{(i-1)r + 1} \theta_n.$$Then any $\ZZ_r \wr \symm_n$-invariant element $\Phi\in (\Omega')^{\ZZ_r \wr \symm_n}$ of homogeneous fermionic degree $k$  can be written in the form $$\Phi=\sum_{1\le i_1<\ldots<i_k\le n}f_{i_1,\ldots, i_k}\cdot g_{i_1}g_{i_2}\cdots g_{i_k}$$where $f_{i_1,\ldots, i_k}\in S^{\ZZ_r \wr \symm_n}$ is a $\ZZ_r\wr\symm_n$-invariant polynomial.
\end{proposition}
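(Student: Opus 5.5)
The plan is to reduce the statement to Solomon's Theorem~\ref{thm:solomon} for the symmetric group $\symm_n$, using a variant of the expansion homomorphism $\varphi_r$ of Definition~\ref{def:expansion-homomorphism} adapted to $\Omega'$. The point is that in $\Omega' = \CC[V] \otimes \wedge(V)$ the diagonal subgroup $\ZZ_r^n$ acts on $\theta_i$ by the inverse of the character by which it acts on $x_i$. Concretely, $\mathrm{diag}(\zeta^{c_1},\dots,\zeta^{c_n})$ sends $x_i \mapsto \zeta^{-c_i} x_i$ and $\theta_i \mapsto \zeta^{c_i}\theta_i$. Consequently a superspace monomial $x_1^{a_1}\cdots x_n^{a_n}\theta_J$ is $\ZZ_r^n$-invariant if and only if
\[
a_j \equiv 0 \pmod r \text{ for } j \notin J
\qquad\text{and}\qquad
a_j \equiv 1 \pmod r \text{ for } j \in J .
\]

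\textbf{Step 1.} Define a $\CC$-algebra map $\psi: \Omega \to \Omega'$ from the ordinary rank $n$ superspace ring by $\psi(x_i) = x_i^r$ and $\psi(\theta_i) = x_i\theta_i$. Exactly as for $\varphi_r$, the images satisfy the defining relations of $\Omega$, so $\psi$ is a well-defined algebra homomorphism. It sends the monomial $x^{b}\theta_J$ to $x^{rb + \mathbf{1}_J}\theta_J$. Hence $\psi$ maps distinct monomials to distinct monomials, so $\psi$ is injective. Moreover, $\psi$ is $\symm_n$-equivariant and preserves fermionic degree.

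\textbf{Step 2.} Use the invariance criterion above to identify $(\Omega')^{\ZZ_r^n}$. Since $\ZZ_r^n$ acts diagonally on the monomial basis, an element is $\ZZ_r^n$-invariant if and only if each of its monomials is. The invariant monomials are precisely the images $\psi(x^b\theta_J)$, so $(\Omega')^{\ZZ_r^n} = \psi(\Omega)$. Taking $\symm_n$-invariants and using that $\psi$ is injective and $\symm_n$-equivariant gives
\[
(\Omega')^{\ZZ_r\wr\symm_n} = \psi(\Omega^{\symm_n}).
\]

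\textbf{Step 3.} Apply Solomon's Theorem~\ref{thm:solomon} to $\symm_n$ with fundamental invariants $p_1,\dots,p_n$. This shows that every element of $\Omega^{\symm_n}$ of fermionic degree $k$ has the form $\sum f_{i_1,\dots,i_k}\, dp_{i_1}\cdots dp_{i_k}$ with $f_{i_1,\dots,i_k} \in S^{\symm_n}$. We then apply $\psi$ and compute the images of the pieces:
\[
\psi(p_i) = p_{ir},
\qquad
\psi(dp_i) = \sum_j i\, x_j^{r(i-1)}\cdot x_j\theta_j = i\, g_i .
\]
Therefore $\psi(S^{\symm_n}) = \CC[p_r,\dots,p_{nr}] = S^{\ZZ_r\wr\symm_n}$, and each $\Phi \in (\Omega')^{\ZZ_r \wr \symm_n}$ of fermionic degree $k$ is $\sum \psi(f_{i_1,\dots,i_k})\, i_1\cdots i_k\, g_{i_1}\cdots g_{i_k}$, which has the required form. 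Here we use that fermionic homogeneity is preserved by $\psi$, so a degree-$k$ invariant is $\psi$ of a degree-$k$ invariant.

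\textbf{Main obstacle.} The only genuinely delicate point is Step 2: getting the sign conventions of the $V$ versus $V^*$ action on the $\theta$'s right, so that the exponent condition is $a_j \equiv 1$ rather than $a_j \equiv r-1$ on $J$. This convention is exactly what distinguishes $g_i$ from the generator $dp_{ir}$ used for $\Omega$. I would verify it directly on the rank one case, in the manner of Example~\ref{ex:rank-1}, before carrying out the general argument.
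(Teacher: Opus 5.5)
Your proof is correct, and it takes a genuinely different route from the paper's.

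\textbf{The paper's route.} The paper adapts Solomon's original argument directly. It shows that the $g_i$ anticommute and that $g_1\cdots g_n=\pm f_{[n],r}\,\theta_1\cdots\theta_n$. From this it deduces that the products $g_{i_1}\cdots g_{i_k}$ are linearly independent over the fraction field $K$, and hence span over $K$. It then averages over the group to obtain coefficients in $K^{\ZZ_r\wr\symm_n}$. Finally, it shows these coefficients are polynomials by multiplying by the complementary $g$'s and invoking Solomon's lemma that $\det$-semi-invariants are divisible by $\delta_{n,r}$.

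\textbf{Your route.} You instead exploit the semidirect structure $\ZZ_r^n\rtimes\symm_n$. Because the diagonal subgroup acts by scalars on monomials, $(\Omega')^{\ZZ_r^n}$ is exactly the image of your injective, $\symm_n$-equivariant, fermionic-degree-preserving map $\psi$. This $\psi$ is the $\Omega'$-analogue of the paper's $\varphi_r$, with $\theta_i\mapsto x_i\theta_i$ in place of $x_i^{r-1}\theta_i$. The whole statement is then pulled back from Theorem~\ref{thm:solomon} for $\symm_n$ via $\psi(p_i)=p_{ir}$ and $\psi(dp_i)=i\,g_i$.

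\textbf{What each buys.} Your argument is shorter and uses only the $\symm_n$ case as a black box. It also gives freeness of $(\Omega')^{\ZZ_r\wr\symm_n}$ over $S^{\ZZ_r\wr\symm_n}$ at no extra cost, from Solomon's freeness and the injectivity of $\psi$. The same trick with $\varphi_r$ re-derives Theorem~\ref{thm:solomon} for $\ZZ_r\wr\symm_n$ on $\Omega$ itself. The paper's argument does not depend on a diagonal normal subgroup acting monomially on $\Omega'$. It is therefore the template more likely to extend to the other reflection groups $G$ for which $\Omega'/SI'_G$ is raised in the conclusion.

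\textbf{One point to make explicit.} Your identification $(\Omega')^{\ZZ_r^n}=\psi(\Omega)$ is exactly where the hypothesis $r\geq 2$ is used. Only when $r\geq 2$ does $a_j\equiv 1 \pmod r$ together with $a_j\geq 0$ force $a_j\geq 1$, so that $a_j=rb_j+1$ with $b_j\geq 0$. For $r=1$ the statement genuinely fails: $\theta_1+\cdots+\theta_n$ is invariant but has bosonic degree $0$, while every $g_i$ has bosonic degree at least $1$.
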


The proof of Proposition~\ref{prop:modified-generation} is a variant of Solomon's classical argument proving Theorem~\ref{thm:solomon} \cite{Solomon}.

\begin{proof}
    It is clear that $p_{ir}$'s and $g_i$'s are all invariant under the action of $\ZZ_r\wr\symm_n$ on $\Omega'$. We begin by noting two important properties of $g_i$'s:
    \begin{itemize}
        \item The $g_i$'s anticommute: we have $g_ig_j=-g_jg_i$ for any $1 \leq i, j \leq n$. In particular, we have $g_i^2=0$.
        \item We have $$g_1g_2\cdots g_n=\det\left(x_j^{r(i-1)+1}\right)_{1\le i,j\le n}\theta_1\theta_2\cdots\theta_n=\pm f_{[n],r}\cdot\theta_1\theta_2\cdots\theta_n$$where $f_{[n],r}$ is as in \eqref{eq: f[n]r}.
    \end{itemize}
    Let $K = \CC(V) \cong \CC(x_1,\dots,x_n)$ denote the field of fractions of $S = \CC[V] \cong \CC(x_1,\dots,x_n)$. We claim that the set $$\{g_{i_1}\cdots g_{i_k}:1\le i_1<\cdots<i_k\le n\}$$ is linearly independent over $K$. Indeed, suppose
    \begin{equation}
    \label{eq:fake-linear-dependence}
    \sum_{i_1<\ldots<i_k}f_{i_1,\ldots, i_k}\cdot g_{i_1}g_{i_2}\cdots g_{i_k}=0
    \end{equation}
    where $f_{i_1,\ldots, i_k}\in K$. Fix an index set $\{i_1<\ldots<i_k\}$ and multiply both sides of \eqref{eq:fake-linear-dependence} by $g_{j_1}\cdots g_{j_{n-k}}$ where $\{j_1<\ldots<j_{n-k}\}=[n]-\{i_1<\ldots<i_k\}$. Since the $g_i$'s anticommute, this yields 
    \begin{equation}f_{i_1,\ldots, {i_k}}\cdot g_1\cdots g_n=f_{i_1,\ldots, {i_k}}\cdot f_{[n],r}\cdot\theta_1\cdots\theta_n=0,
    \end{equation}
    implying $f_{i_1,\ldots, i_k}=0$.

    The $\binom{n}{k}$ elements of the form $\theta_{i_1}\cdots\theta_{i_k}$ span the $K$-vector space $K \otimes_\CC \wedge^k V$.
    It follows the $\binom{n}{k}$ $K$-linearly independent elements of the form $g_{i_1}\cdots g_{i_k}$ must $K$-span the $k$-th fermionic graded piece of $K \otimes_\CC \wedge^k V$ as well. Thus we have 
    \begin{equation}
    \label{eq:phi-into-K}
    \Phi=\sum_{1\le i_1<\ldots<i_k\le n}\varphi_{i_1,\ldots, i_k}\cdot g_{i_1}g_{i_2}\cdots g_{i_k}\end{equation} 
    for some $\varphi_{i_1,\ldots, i_k}\in K$. Applying the averaging group algebra element \[\eta_{n,r}:=\frac{1}{\left|\ZZ_r\wr\symm_n\right|}\sum_{g\in \ZZ_r\wr\symm_n}g \in \CC[\ZZ_r \wr \symm_n]\] on both sides of \eqref{eq:phi-into-K} yields \begin{equation}
    \label{eq:phi-into-K-symmetrized}
    \Phi=\sum_{1\le i_1<\ldots<i_k\le n}f_{i_1,\ldots, i_k}\cdot g_{i_1}g_{i_2}\cdots g_{i_k}\end{equation}
    where $f_{i_1,\ldots, i_k}:=\eta_{n,r}\cdot \varphi_{i_1,\ldots, i_k} \in K^{\ZZ_r \wr \symm_n}$ is a $\ZZ_r\wr\symm_n$-invariant element of $K$. It remains to show $f_{i_1,\ldots, i_k}$ is in fact a polynomial in $S$.
    To this end, multiply both sides of \eqref{eq:phi-into-K-symmetrized} by $g_{j_1,\ldots, j_{n-k}}$, where $\{j_1<\ldots<j_{n-k}\}=[n]-\{i_1<\ldots<i_k\}$. This yields an equation of the form \begin{equation}\phi\cdot \theta_1\ldots\theta_n=f_{i_1,\ldots, i_k}\cdot g_1\cdots g_n=f_{i_1,\ldots, i_k}\cdot f_{[n],r}\cdot\theta_1\cdots\theta_n\end{equation}
    where $\phi\in S$. This implies $\phi/f_{[n],r}=(x_1\cdots x_n)^{r-2}\phi/\delta_{n,r}$ is $\ZZ_r\wr\symm_n$-invariant, and thus the polynomial $\psi:=(x_1\cdots x_n)^{r-2}\phi$ satisfies $g\cdot \psi=\det(g)\cdot\psi$ for all $\psi\in \ZZ_r\wr\symm_n$. (Since we are assuming $r \geq 2$, the element $\psi$ is in fact a polynomial.) By \cite[\S 3, Lemma]{Solomon}, $\psi$ must be divisible by $\delta_{n,r}$. Therefore $\phi/f_{[n],r}=f_{i_1,\ldots, {i_k}}\in S$ as claimed. 
\end{proof}

It may be interesting to understand the combinatorics of $\Omega'/SI'_G$ in for more general reflection groups $G$. Conjecture~\ref{conj:face-conjecture} and the work of Sagan and Swanson \cite{SS} suggest that the bigraded $\Omega'/SI'_G$ could be tied to the coexponents of $G$.

\section*{Acknowledgements}

The authors are grateful to John Lentfer, Bruce Sagan, and Josh Swanson for helpful conversations. B. Rhoades was partially supported by NSF Grant DMS-2246846.

\end{document}